	\let\over=\@@over \let\overwithdelims=\@@overwithdelims
	\let\atop=\@@atop \let\atopwithdelims=\@@atopwithdelims
  	\let\above=\@@above \let\abovewithdelims=\@@abovewithdelims
\tikzstyle{int}=[draw, fill=blue!20, minimum size=2em]
\tikzstyle{dot}=[circle, draw, fill=blue!20, minimum size=2em]
\tikzstyle{init} = [pin edge={to-,thin,black}]
	\newcommand{\eqref}[1]{~(\ref{#1})}
	\def\mod{\mathop{\rm mod}}
\def\exp{\mathop{\rm exp}}
\def\EE{\Expect}
\def\Var{\mathrm{Var}}
\def\PP{\mathbb{P}}
\def\eqdef{\triangleq}
\def\simiid{\stackrel{iid}{\sim}}
\newcommand{\scr}[1]{\mathscr{#1}}
\newcommand{\abs}[1]{\left| #1 \right|}
\newcommand{\mymat}{M}
\def\bbordermatrix#1{\begingroup \m@th
	\@tempdima 4.75\p@
	\setbox\z@\vbox{%
		\def\cr{\crcr\noalign{\kern2\p@\global\let\cr\endline}}%
		\ialign{$##$\hfil\kern2\p@\kern\@tempdima&\thinspace\hfil$##$\hfil
			&&\quad\hfil$##$\hfil\crcr
			\omit\strut\hfil\crcr\noalign{\kern-\baselineskip}%
			#1\crcr\omit\strut\cr}}%
	\setbox\tw@\vbox{\unvcopy\z@\global\setbox\@ne\lastbox}%
	\setbox\tw@\hbox{\unhbox\@ne\unskip\global\setbox\@ne\lastbox}%
	\setbox\tw@\hbox{$\kern\wd\@ne\kern-\@tempdima\left[\kern-\wd\@ne
		\global\setbox\@ne\vbox{\box\@ne\kern2\p@}%
		\vcenter{\kern-\ht\@ne\unvbox\z@\kern-\baselineskip}\,\right]$}%
	\null\;\vbox{\kern\ht\@ne\box\tw@}\endgroup}
\newcommand{\M}{M}
\newcommand{\ones}{\mathbf{1}}
\newcommand{\stepa}[1]{\overset{\rm (a)}{#1}}
\newcommand{\stepb}[1]{\overset{\rm (b)}{#1}}
\newcommand{\stepc}[1]{\overset{\rm (c)}{#1}}
\newcommand{\stepd}[1]{\overset{\rm (d)}{#1}}
\newcommand{\stepe}[1]{\overset{\rm (e)}{#1}}
\newcommand{\Poi}{\mathrm{Poi}}
\newcommand{\Unif}{\mathrm{Uniform}}
\newcommand{\ceil}[1]{{\left\lceil {#1} \right \rceil}}
\newcommand{\reals}{\mathbb{R}}
\newcommand{\naturals}{\mathbb{N}}
\newcommand{\integers}{\mathbb{Z}}
\newcommand{\Expect}{\mathbb{E}}
\newcommand{\expect}[1]{\mathbb{E}\left[#1\right]}
\newcommand{\Prob}{\mathbb{P}}
\newcommand{\prob}[1]{\mathbb{P}\left[#1\right]}
\newcommand{\iid}{iid\xspace}
\newcommand{\pth}[1]{\left( #1 \right)}
\newcommand{\qth}[1]{\left[ #1 \right]}
\newcommand{\sth}[1]{\left\{ #1 \right\}}
\newcommand{\eqlaw}{{\stackrel{\rm law}{=}}}
\newcommand{\iiddistr}{{\stackrel{\text{\iid}}{\sim}}}
\newcommand{\var}{\Var}
\newcommand{\Binom}{\text{Binom}}
\newcommand{\indc}[1]{{\mathbf{1}_{\left\{{#1}\right\}}}}
\definecolor{myblue}{rgb}{.8, .8, 1}
\definecolor{mathblue}{rgb}{0.2472, 0.24, 0.6} 
\definecolor{mathred}{rgb}{0.6, 0.24, 0.442893}
\definecolor{mathyellow}{rgb}{0.6, 0.547014, 0.24}
\newcommand{\calM}{{\mathcal{M}}}
\newcommand{\calO}{{\mathcal{O}}}
\newcommand{\calP}{{\mathcal{P}}}
\newcommand{\calS}{{\mathcal{S}}}
\newcommand{\calV}{{\mathcal{V}}}
\newcommand{\calX}{{\mathcal{X}}}
\newcommand{\calY}{{\mathcal{Y}}}
\newcommand{\diverge}{\to \infty}
\def\unifto{\mathop{{\mskip 3mu plus 2mu minus 1mu%
	\setbox0=\hbox{$\mathchar"3221$}%
	\raise.6ex\copy0\kern-\wd0%
	\lower0.5ex\hbox{$\mathchar"3221$}}\mskip 3mu plus 2mu minus 1mu}}
\def\simleq{{{\mskip 3mu plus 2mu minus 1mu%
	\setbox0=\hbox{$\mathchar"013C$}%
	\raise.2ex\copy0\kern-\wd0%
	\lower0.9ex\hbox{$\mathchar"0218$}}\mskip 3mu plus 2mu minus 1mu}}
\def\simleq{\lesssim}
\def\simgeq{{{\mskip 3mu plus 2mu minus 1mu%
	\setbox0=\hbox{$\mathchar"013E$}%
	\raise.2ex\copy0\kern-\wd0%
	\lower0.9ex\hbox{$\mathchar"0218$}}\mskip 3mu plus 2mu minus 1mu}}
\def\simgeq{\gtrsim}
\newtheorem{theorem}{Theorem}
\newtheorem{lemma}[theorem]{Lemma}
\newtheorem{corollary}[theorem]{Corollary}
\theoremstyle{definition}
\newtheorem{remark}{Remark}
\newif\ifmapx
\edef\jobnametmp{\expandafter\string\csname ic_apx\endcsname}
\edef\jobnameapx{\expandafter\mkillslash\jobnametmp}
\edef\jobnameexpand{\jobname}
\newcommand{\Red}{\mathsf{Red}}
\newcommand{\Risk}{\mathsf{Risk}}
\renewcommand{\hat}{\widehat}
\renewcommand{\tilde}{\widetilde}
\renewcommand{\calO}{O}
\begin{document}
\ifpdf
\DeclareGraphicsExtensions{.pgf,.jpg,.pdf}
\graphicspath{{figures/}{plots/}}
\fi

\title{Optimal prediction of Markov chains with and without spectral gap}

\author{Yanjun Han, Soham Jana, and Yihong Wu\thanks{
			Y.~Han is with the Simons Institute for the Theory of Computing, University of California, Berkeley, email: \url{yjhan@berkeley.edu}.
			S.~Jana and Y.~Wu are with the Department of Statistics and Data Science, Yale University, New Haven, CT, email: \url{soham.jana@yale.edu} and \url{yihong.wu@yale.edu}.
			Y.~Wu is
supported in part by NSF Grant CCF-1900507, an NSF CAREER award CCF-1651588,
and an Alfred Sloan fellowship.}
			}
	

\maketitle

\begin{abstract}
We study the following learning problem with dependent data: Observing a trajectory of length $n$ from a stationary Markov chain with $k$ states, the goal is to predict the next state.
For  $3 \leq k \leq O(\sqrt{n})$, using techniques from universal compression, the optimal prediction risk in Kullback-Leibler divergence is shown to be $\Theta(\frac{k^2}{n}\log \frac{n}{k^2})$, in contrast to the optimal rate of $\Theta(\frac{\log \log n}{n})$ for $k=2$ previously shown in \cite{FOPS2016}. These rates, slower than the parametric rate of $O(\frac{k^2}{n})$, can be attributed to the memory in the data, as the spectral gap of the Markov chain can be arbitrarily small. To quantify the memory effect, we study irreducible reversible chains with a prescribed spectral gap. In addition to characterizing the optimal prediction risk for two states, we show that, as long as the spectral gap is not excessively small, the prediction risk in the Markov model is $O(\frac{k^2}{n})$, which coincides with that of an iid model with the same number of parameters. 
Extensions to higher-order Markov chains are also obtained.
\end{abstract}



\tableofcontents

\section{Introduction}
\label{sec:intro}



Learning distributions from samples is a central question in statistics and machine learning. 
While significant progress has been achieved in property testing and estimation based on independent and identically distributed (iid) data, for many applications, most notably natural language processing, two new challenges arise:
(a) Modeling data as independent observations fails to capture their temporal dependency;
(b) Distributions are commonly supported on a large domain whose cardinality is comparable to or even exceeds the sample size.
Continuing the progress made in \cite{FOPS2016,HOP18}, in this paper we study the following prediction problem with dependent data modeled as Markov chains.

Suppose $X_1,X_2,\dots$ is a stationary first-order Markov chain on state space $[k]\eqdef\sth{1,\dots,k}$ with unknown statistics.
Observing a trajectory $X^n\triangleq(X_1,\ldots,X_n)$, the goal is to predict the next state $X_{n+1}$ by estimating its distribution conditioned on the present data. 
We use the Kullback-Leibler (KL) divergence as the loss function:
For distributions $P=\qth{p_1,\dots,p_k},Q=\qth{q_1,\dots,q_k}$, $D(P\|Q)=\sum_{i=1}^k p_i\log \frac{p_i}{q_i}$ if $p_i=0$ whenever $q_i=0$ and $D(P\|Q)=\infty$ otherwise.
The minimax prediction risk is given by
	\begin{align}
	\Risk_{k,n} 
	&\eqdef \inf_{\hat M} \sup_{\pi,M} \Expect[D(M(\cdot|X_n) \| \hat M(\cdot|X_n))]
	=\inf_{\hat M} \sup_{\pi,M} \sum_{i=1}^k \Expect[D(M(\cdot|i) \| \hat M(\cdot|i)) \indc{X_n=i}]
	\label{eq:riskkn}
\end{align}
where the supremum is taken over all stationary distributions $\pi$ and transition matrices $M$ (row-stochastic) such that $\pi M=\pi$, 
the infimum is taken over all estimators $\hat M=\hat M(X_1,\dots,X_n)$ that are proper Markov kernels (i.e. rows sum to 1), and $M(\cdot|i)$ denotes the $i$th row of $M$.
Our main objective is to characterize this minimax risk within universal constant factors as a function of $n$ and $k$.

The prediction problem \prettyref{eq:riskkn} is distinct from the parameter estimation problem such as estimating the transition matrix \cite{bartlett1951frequency,anderson1957statistical,B61,wolfer2019minimax} or its properties \cite{csiszar2000consistency,kamath2016estimation,HJLWWY18,hsu2019mixing} in that the quantity to be estimated (conditional distribution of the next state) depends on the sample path itself. This is precisely what renders the prediction problem closely relevant to natural applications such as autocomplete and text generation. In addition, this formulation allows more flexibility with far less assumptions compared to the estimation framework. 
For example, if certain state has very small probability under the stationary distribution, consistent estimation of the transition matrix with respect to usual loss function, e.g. squared risk, may not be possible, whereas the prediction problem is unencumbered by such rare states.

In the special case of iid data, 
the prediction problem reduces to estimating the distribution in KL divergence. In this setting the optimal risk is well understood, which is known to be ${k-1\over 2n}(1+o(1))$
when $k$ is fixed and $n\diverge$ \cite{BFSS02} and $\Theta(\frac{k}{n})$ for $k=O(n)$ \cite{paninski2004variational,KOPS15}.\footnote{Here and below $\asymp, \lesssim, \gtrsim$ or $\Theta(\cdot),O(\cdot),\Omega(\cdot)$ denote equality and inequalities up to universal multiplicative constants.}
Typical in parametric models, this rate $\frac{k}{n}$ is commonly referred to the ``parametric rate'', which leads to a sample complexity that scales proportionally to the number of parameters and inverse proportionally to the desired accuracy.

In the setting of Markov chains, however, the prediction problem is much less understood especially for large state space. Recently the seminal work \cite{FOPS2016} showed the surprising result that for stationary Markov chains on two states, the optimal prediction risk satisfies
\begin{align}
	\Risk_{2,n} = \Theta\pth{\log\log n\over n},
	\label{eq:risk-mc1}
\end{align}
which has a nonparametric rate even when the problem has only two parameters. 
The follow-up work \cite{HOP18} studied general $k$-state chains and showed a lower bound of $\Omega(\frac{k\log\log n}{n})$
for uniform (not necessarily stationary) initial distribution; however, the upper bound $O(\frac{k^2\log\log n}{n})$ in \cite{HOP18} relies on implicit assumptions on mixing time such as spectral gap conditions: the proof of the upper bound for prediction (Lemma 7  in the supplement) and for estimation (Lemma 17 of the supplement) is based on Berstein-type concentration results of the empirical transition counts, which depend on spectral gap.
The following theorem resolves the optimal risk for $k$-state Markov chains:

\begin{theorem}[Optimal rates without spectral gap]
\label{thm:optimal}
There exists a universal constant $C>0$ such that for all $3 \leq k \leq \sqrt{n}/C$,
\begin{equation}
	\frac{k^2}{C n}\log\left(\frac{n}{k^2}\right) \leq 	\Risk_{k,n} \leq \frac{C k^2}{n}\log\left(\frac{n}{k^2}\right).
	\label{eq:optimal}
	\end{equation}
	Furthermore, the lower bound continues to hold even if the Markov chain is restricted to be irreducible and reversible.
\end{theorem}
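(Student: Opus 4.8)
The two bounds call for different arguments, but both go through universal compression.

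\textbf{Upper bound.} I would first reduce the one-step prediction risk to a cumulative redundancy. By redundancy--capacity (minimax) duality for logarithmic loss, $\Risk_{k,n}=\sup_{\text{prior}}\Expect[D(M(\cdot|X_n)\,\|\,P_{X_{n+1}|X^n})]=\sup_{\text{prior}}I(X_{n+1};M\mid X^n)$, the supremum over priors supported on stationary pairs $(\pi,M)$. For a stationary mixture of Markov chains, $t\mapsto H(X_{t+1}\mid X^t)$ is non-increasing (discarding the oldest coordinate only raises entropy), while $H(X_{t+1}\mid M,X^t)=H(X_2\mid M,X_1)$ does not depend on $t$; hence $t\mapsto I(X_{t+1};M\mid X^t)$ is non-increasing, so $I(X_{n+1};M\mid X^n)\le\tfrac1n\sum_{t=1}^n I(X_{t+1};M\mid X^t)\le\tfrac1n I(X^{n+1};M)$. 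It then suffices to bound the minimax redundancy $\sup_{\text{prior}}I(X^{n+1};M)$ of the $k$-state Markov family. I would do this by running, for each state, a universal code for an i.i.d.\ $k$-ary source attaining the optimal cumulative log-loss regret $g(m)\asymp k\log(2+m/k)$ over $m$ rounds, fed with the subsequence of states following that state. Conditioned on the visit counts $N_1,\dots,N_k$ (which sum to $n$), the transitions out of state $i$ are i.i.d.\ $M(\cdot|i)$, so the induced code has redundancy at most $D(\pi\|\mathrm{unif})+\Expect\sum_i g(N_i)\le\log k+k\,g(n/k)$ using concavity of $g$ and Jensen's inequality. Since $k\le\sqrt n/C$ gives $n/k\gtrsim k$, this is $\asymp k^2\log(n/k^2)$, whence $\Risk_{k,n}\lesssim\tfrac{k^2}{n}\log\tfrac{n}{k^2}$.

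\textbf{Lower bound.} Here I would again bound the Bayes risk from below: for any prior supported on irreducible reversible stationary chains, $\Risk_{k,n}\ge I(X_{n+1};M\mid X^n)$, and I would look for a prior making this single-step information $\Omega(\tfrac{k^2}{n}\log\tfrac{n}{k^2})$. The naive choice---near-i.i.d.\ rows, each visited $\asymp n/k$ times---yields only the parametric $\Theta(k^2/n)$: the $\pi_i$-weights exactly cancel the visit counts, and i.i.d.\ estimation of a $k$-ary row from $m$ samples costs $\Theta(k/m)$ in KL with no logarithm. The extra $\log\tfrac{n}{k^2}$ must be extracted from the memory of the chain (its spectral gap is unconstrained here), and the natural route is to make the per-step difficulty $I(X_{t+1};M\mid X^t)$ essentially \emph{flat} over $t\in[1,n]$, so that it is $\Theta(\tfrac1n)$ times the total $I(X^{n+1};M)$, which one pushes to $\Omega(k^2\log\tfrac{n}{k^2})$ via the matching redundancy lower bound (a ``$\tfrac d2\log(\text{samples per parameter})$'' phenomenon with $d\asymp k^2$, realized e.g.\ by independent $\mathrm{Dir}(\beta,\dots,\beta)$ rows with $\beta\asymp1/k$). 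Concretely I would take such a product-of-Dirichlet prior, symmetrised and tilted to enforce reversibility and a prescribed near-uniform stationary law, and prove that after $n$ observations the relevant row $M(\cdot|X_n)$ is still unresolved at the scale $\tfrac kn\log\tfrac{n}{k^2}$; the ``irreducible and reversible'' clause then reduces to verifying that this restriction is free of charge. A separate, easier contribution covers the small-$k$ range, where the redundancy lower bound $\Omega(k^2\log n)$ already matches.

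\textbf{Main obstacle.} The genuinely hard step is the lower-bound construction: exhibiting a prior on irreducible reversible stationary $k$-state chains with controlled stationary law for which $I(X_{t+1};M\mid X^t)$ does not decay appreciably from the first step to the $n$-th---equivalently, showing that $n$ observations do not suffice to predict the next state to KL accuracy better than $\tfrac{k^2}{n}\log\tfrac{n}{k^2}$. This is exactly where $k\ge3$ parts ways with $k=2$: for two states the parameter space is too small (the Jeffreys volume is finite), so the per-step difficulty does decay and the risk collapses to $\Theta(\tfrac{\log\log n}{n})$, whereas for $k\ge3$ there is enough room to keep any predictor perpetually surprised across a logarithmic range of scales. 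The recurring technical nuisance is that the chain is not i.i.d., so one must carefully control the correlation between the event $\{X_n=i\}$ and the part of the trajectory informative about the $i$-th row, and do so without breaking stationarity or reversibility.
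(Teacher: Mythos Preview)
Your upper bound is correct and essentially identical to the paper's: the monotonicity of $t\mapsto I(X_{t+1};M\mid X^t)$ under stationarity gives $\Risk_{k,n}\le\frac1n\Red_{k,n}$, and the redundancy bound you sketch (per-row add-one/Laplace coding plus Jensen over visit counts) is exactly Lemma~\ref{lmm:red-addone}.

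The lower bound, however, has a real gap. You correctly diagnose that what is needed is a prior for which $I(X_{n+1};M\mid X^n)$ stays at the level $\frac1n\Red_{k,n}$ rather than decaying to the parametric $\Theta(k^2/n)$, but the mechanism you propose does not achieve this. A product-of-Dirichlet prior on the rows, symmetrised to have a near-uniform stationary law, produces chains with spectral gap $\Theta(1)$ with high probability (the rows are perturbations of the uniform distribution, so $M$ is close to rank-one). For such chains each state is visited $\Theta(n/k)$ times, the row $M(\cdot|X_n)$ is resolved to KL accuracy $\Theta(k/(n/k))=\Theta(k^2/n)$, and no logarithm appears. The ``$\tfrac d2\log$'' redundancy lower bound you cite is a \emph{cumulative} bound; it does not by itself force the \emph{last} increment $I(X_{n+1};M\mid X^n)$ to be large---indeed for Dirichlet priors the increments decay like $k^2/t$, so the cumulative sum is $\Theta(k^2\log n)$ while the final term is only $\Theta(k^2/n)$.

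The paper's missing ingredient is structural rather than prior-theoretic: augment the state space by one ``trap'' state that the chain enters with probability $\Theta(1)$ at time $1$ and exits with probability $1/n$ per step, so the time $t$ spent there is approximately uniform on $[n]$. Conditioned on exit at time $t$, the remaining $n-t$ steps are a stationary $(k-1)$-state chain with symmetric transition matrix $T$, and the Bayes prediction risk at step $n$ equals the Bayes prediction risk for $T$ at sample size $n-t$. Averaging over the (near-uniform) $t$ turns the single-step risk into an average of single-step risks over all sample sizes, i.e.\ into a cumulative quantity:
\[
\Risk_{k,n}\;\gtrsim\;\frac1n\sum_{t=1}^{n-1}I(T;Y_{n-t+1}\mid Y^{n-t})\;=\;\frac1n\bigl(I(T;Y^n)-I(T;Y_1)\bigr)\;\ge\;\frac1n\bigl(\Red_{k-1,n}^{\mathsf{sym}}-\log k\bigr).
\]
This is how the $\log(n/k^2)$ is extracted: not by keeping the per-step information flat under a fixed prior, but by randomising the effective sample size so that the worst step is forced to pay the average. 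Your Dirichlet construction cannot substitute for this because it fixes the sample-size allocation deterministically (via mixing) rather than randomising it.
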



\begin{remark}
\label{rmk:ach}	
The optimal prediction risk of $O(\frac{k^2}{n} \log \frac{n}{k^2})$ can be achieved by an average version of the \emph{add-one estimator} (i.e.~Laplace's rule of succession).
Given a trajectory $x^n=(x_1,\ldots,x_n)$ of length $n$, denote the transition counts (with the convention $N_i\equiv N_{ij}\equiv 0$ if $n=0,1$)
\begin{align}\label{eq:transition.count}
	N_{i}=\sum_{\ell=1}^{n-1}\indc{x_\ell=i},\quad N_{ij}=\sum_{\ell=1}^{n-1}\indc{x_\ell=i,x_{\ell+1}=j}.
\end{align}
The add-one estimator for the transition probability $M(j|i)$ is given by 
\begin{equation}
\hat M^{+1}_{x^n}(j|i) \triangleq {N_{ij}+1\over N_i+k},
\label{eq:addone}
\end{equation}
which is an additively smoothed version of the empirical frequency.
Finally, the optimal rate in 
\prettyref{eq:optimal} can be achieved by	the following estimator $\hat M$ defined as an average of add-one estimators over different sample sizes:
	\begin{equation}
		\hat M_{x^n}(x_{n+1}|x_n) \triangleq \frac{1}{n} \sum_{t=1}^{n} \hat M^{+1}_{x_{n-t+1}^n}(x_{n+1}|x_n).
		\label{eq:markov-add-1}
		\end{equation}
		In other words, we apply the add-one estimator to the most recent $t$ observations $(X_{n-t+1},\ldots,X_n)$ to predict the next $X_{n+1}$, then average over $t=1,\ldots,n$.
Such Ces\`aro-mean-type estimators have been introduced before in the density estimation literature (see, e.g., \cite{YB99}).
It remains open whether the usual add-one estimator (namely, the last term in \prettyref{eq:markov-add-1} which uses all the data) or any add-$c$ estimator for constant $c$ achieves the optimal rate.
In contrast, for two-state chains the optimal risk \prettyref{eq:risk-mc1} is attained by a hybrid strategy \cite{FOPS2016}, applying add-$c$ estimator for $c = \frac{1}{\log n}$ for trajectories with at most one transition and $c=1$ otherwise. Also note that the estimator in \eqref{eq:markov-add-1} can be computed in $O(nk)$ time. To derive this first note that given any $j\in [k]$ calculating $\hat M^{+1}_{x^{n-1}_{1}}(j|x_{n-1})$ takes $O(n)$ time and given any $M^{+1}_{x^{n-1}_{n-t+1}}(j|x_{n-1})$ we need $O(1)$ time to calculate $\hat M^{+1}_{x^{n-1}_{n-t+2}}(j|x_{n-1})$. Summing over all $j$ we get the algorithmic complexity upper bound.
\end{remark}

	%
\prettyref{thm:optimal} shows that the departure from the parametric rate of $\frac{k^2}{n}$, first discovered in \cite{FOPS2016,HOP18} for binary chains, is even more pronounced for larger state space.
As will become clear in the proof, there is some fundamental difference between two-state and three-state chains, resulting in $\Risk_{3,n} = \Theta(\frac{\log n}{n}) \gg \Risk_{2,n} = \Theta(\frac{\log \log n}{n})$. 
It is instructive to compare the sample complexity for prediction in the iid and Markov model. 
Denote by $d$ the number of parameters, which is $k-1$ for the iid case and $k(k-1)$ for Markov chains.
Define the sample complexity $n^*(d,\epsilon)$ as the smallest sample size $n$ in order to achieve a prescribed prediction risk $\epsilon$.
For $\epsilon = O(1)$, we have
\begin{equation}
n^*(d,\epsilon) \asymp
\begin{cases}
\frac{d}{\epsilon}  &  \text{iid}\\
\frac{d}{\epsilon} \log \log \frac{1}{\epsilon}  & \text{Markov with $2$ states}\\
\frac{d}{\epsilon} \log \frac{1}{\epsilon} & \text{Markov with $k\geq 3$ states}.
\end{cases}
\label{eq:samplecomplexity}
\end{equation}

At a high level, the nonparametric rates in the Markov model can be attributed to the memory in the data.
On the one hand, \prettyref{thm:optimal} as well as \prettyref{eq:risk-mc1} affirm that one can obtain meaningful prediction without imposing any mixing conditions;\footnote{To see this, it is helpful to consider the extreme case where the chain does not move at all or is periodic, in which case predicting the next state is in fact easy.}
such decoupling between learning and mixing has also been observed in other problems such as learning linear dynamics \cite{simchowitz2018learning,dean2019sample}.
On the other hand, the dependency in the data does lead to a strictly higher sample complexity than that of the iid case; in fact, the lower bound in \prettyref{thm:optimal} is proved by constructing chains with spectral gap  as small as $O(\frac{1}{n})$ (see \prettyref{sec:optimal}).
Thus, it is conceivable that with sufficiently favorable mixing conditions, the prediction risk improves over that of the worst case and, at some point, reaches the parametric rate.
To make this precise, we focus on Markov chains with a prescribed spectral gap.

It is well-known that for an irreducible and reversible chain, the transition matrix $M$ has $k$ real eigenvalues satisfying $1=\lambda_1\geq \lambda_2\geq\dots\lambda_k\geq -1$. 
The \emph{absolute spectral gap} of $M$, defined as
\begin{equation}
\gamma_*\triangleq 1-\max\sth{\abs{\lambda_i}:i\neq 1},
\label{eq:gamma-def}
\end{equation}
quantifies the memory of the Markov chain. For example, the mixing time is determined by $1/\gamma^*$ (relaxation time) up to logarithmic factors.
As extreme cases, the chain which does not move ($M$ is identity) and which is iid ($M$ is rank-one) have spectral gap equal to 0 and 1, respectively.
 We refer the reader to \cite{LevinPeres17} for more background. Note that the definition of absolute spectral gap requires irreducibility and reversibility, thus we restrict ourselves to this class of Markov chains (it is possible to use more general notions such as pseudo spectral gap to quantify the memory of the process, which is beyond the scope of the current paper).
Given $\gamma_0\in (0,1)$, define $\calM_k(\gamma_0)$ as the set of transition matrices corresponding to irreducible and reversible chains whose absolute spectral gap exceeds $\gamma_0$.
Restricting \prettyref{eq:riskkn} to this subcollection and noticing the stationary distribution here is uniquely determined by $M$, we define the corresponding minimax risk: 
\begin{align} 
\Risk_{k,n}(\gamma_0)&\triangleq\inf_{\hat \mymat}\sup_{\mymat\in \calM_k(\gamma_0)}\EE\qth{D(\mymat(\cdot|X_n)\|\hat \mymat(\cdot|X_n))}
\label{eq:risk-gamma}
\end{align}

Extending the result \prettyref{eq:risk-mc1} of \cite{FOPS2016}, the following theorem characterizes the optimal prediction risk for two-state chains with prescribed spectral gaps (the case $\gamma_0=0$ correspond to the minimax rate in \cite{FOPS2016} over all binary Markov chains):
\begin{theorem}[Spectral gap dependent rates for binary chain]
\label{thm:gamma2}
	For any $\gamma_0\in (0,1)$
	\[
	\Risk_{2,n}(\gamma_0)
	\asymp \frac{1}{n} \max\sth{1,\log\log\pth{\min\sth{n,\frac 1{\gamma_0}}}}.
	\]
\end{theorem}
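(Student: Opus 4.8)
\noindent\emph{Setup and reduction.} Parametrize a two-state chain by $p=M(2|1)$ and $q=M(1|2)$; its eigenvalues are $1$ and $1-p-q$, so $\gamma_*=1-|1-p-q|=\min\{p+q,\,2-p-q\}$, and $\calM_2(\gamma_0)$ is exactly the set of $(p,q)\in(0,1)^2$ with $\gamma_0<p+q<2-\gamma_0$. Write $m^*\eqdef\min\{n,1/\gamma_0\}$. Conditioned on $X_n=i$, predicting $M(\cdot\,|\,i)$ amounts to estimating the single Bernoulli parameter $p_i\eqdef M(3-i\,|\,i)$ from the trajectory, so the problem is two coupled Bernoulli–estimation problems. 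The plan is to follow \cite{FOPS2016} but with $n$ replaced by $m^*$ at the points where a ``stickiest run'' enters — the legitimacy of this replacement being exactly what the spectral gap buys.

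\noindent\emph{Upper bound.} Use the hybrid estimator of \cite{FOPS2016} with smoothing constant $c^*\eqdef\min\{1,\,1/\log m^*\}$: on a trajectory with at most one transition predict by the add-$c^*$ rule \eqref{eq:addone} with $c^*$ replacing $1$, and by add-one otherwise. Bound $\Risk\le\sum_{i=1}^{2}\Expect[D(M(\cdot|i)\|\hat M(\cdot|i))\indc{X_n=i}]$ and split each term by the number of transitions. On trajectories with at least two transitions the add-one analysis of \cite{FOPS2016} gives $O(1/n)$, unchanged. On trajectories with at most one transition one is estimating $p_i$ by the add-$c^*$ rule from $N_i$ ``stay'' observations with $N_i$ random, and the only regime that is not $O(1/n)$ is when $p_i$ is so small that $N_{i,3-i}=0$ with constant probability. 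Here the spectral gap enters: $p_i$ small forces $p_{3-i}>\gamma_0-p_i\gtrsim\gamma_0$, so every excursion away from state $i$ lasts $O(1/\gamma_0)$ steps, and hence $N_i$ — equivalently, the ratio between $p_i$ and its add-$c^*$ estimate $\frac{N_{i,3-i}+c^*}{N_i+2c^*}$ — ranges over only $O(\log m^*)$ dyadic scales rather than $O(\log n)$. The add-$c^*$ estimate then has risk $\lesssim\frac{c^*}{n}\log m^*+\frac1n\log\frac1{c^*}$ on this event, and $c^*\asymp1/\log m^*$ balances the two terms at $O\!\big(\frac1n\max\{1,\log\log m^*\}\big)$; every remaining term is $O(1/n)$. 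The work is bookkeeping: carrying $(p,q)$ and the stationary weights $\pi_i=\frac{p_{3-i}}{p+q}$ through the sums and verifying that, with $\gamma_0<p+q$ in force, no configuration of $(p,q)$ beats this rate.

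\noindent\emph{Lower bound.} First, $\Risk_{2,n}(\gamma_0)\gtrsim1/n$ always: an i.i.d.\ binary source is a chain with rank-one $M$, i.e.\ $p+q=1$ and $\gamma_*=1$, so it lies in $\calM_2(\gamma_0)$; restricting to such chains recovers the $\Theta(1/n)$ minimax risk of estimating a Bernoulli in KL. Second, when $\log\log m^*\gtrsim1$ we show $\Risk_{2,n}(\gamma_0)\gtrsim\frac{\log\log m^*}{n}$ via a Bayes bound: fix $q\eqdef1/n$ and draw $p$ log-uniformly on $[1/m^*,\,c_0]$ for a small absolute constant $c_0$; since $p\ge1/m^*\ge\gamma_0$ and $q>0$ we have $p+q>\gamma_0$, and $p+q\le c_0+1/n<2-\gamma_0$, so every such chain lies in $\calM_2(\gamma_0)$. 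Condition on $X_n=1$, so the loss is $D(\Bern(p)\,\|\,\Bern(\hat p))$. With constant probability the trajectory is a long sojourn in state $2$ followed by a single still-ongoing excursion into state $1$ of random length $N_1=:t$; on this event the only statistic informative about $p$ is $(N_1,N_{12})=(t,0)$, so the posterior of $p$ is proportional to $(1-p)^t$ times the prior, which for $1\le t\lesssim m^*$ is spread over $\asymp\log(m^*/t)$ scales. Consequently the posterior Bayes risk is $\rho(t)\asymp\frac{\log\log(m^*/t)}{t\,\log(m^*/t)}$ while the probability of ``one excursion of length $\asymp t$'' is $\mu(t)\asymp\frac1n\cdot\frac{\log(m^*/t)}{\log m^*}$, so
\[
\Risk_{2,n}(\gamma_0)\ \gtrsim\ \sum_{t=1}^{m^*}\mu(t)\,\rho(t)\ \asymp\ \frac1{n\log m^*}\sum_{t=1}^{m^*}\frac{\log\log(m^*/t)}{t}\ \asymp\ \frac{\log\log m^*}{n}.
\]
Combined with the upper bound this yields $\Risk_{2,n}(\gamma_0)\asymp\frac1n\max\{1,\log\log m^*\}$.

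\noindent\emph{Main obstacle.} The crux is the second lower bound. The $\log\log$ barrier of \cite{FOPS2016} is a scale-$n$ effect; under the spectral-gap constraint the relevant run lengths are capped at $m^*$, and one must reconstruct the barrier at that scale. The device is to make the \emph{large} transition probability $p\asymp\gamma_0$ the hard parameter — not a tiny one — the hardness coming from the chain having been observed in state $1$ only $\Theta(1/\gamma_0)$ times; this is compatible with $\gamma_*>\gamma_0$ precisely because $p\asymp\gamma_0$ then forces $q\lesssim1/n$, i.e.\ an $O(1)$ expected number of excursions into state $1$, and it is the size-biased geometric law of the length of that single excursion that supplies the $\asymp\log m^*$ scales. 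On the upper-bound side the matching chore is to prove, uniformly in $(p,q)\in\calM_2(\gamma_0)$, that the spectral gap truncates the ``many scales'' sum from $\log n$ to $\log m^*$, which is what licenses the larger smoothing constant $c^*\asymp1/\log m^*$.
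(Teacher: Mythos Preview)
Your approach is essentially the paper's: the upper bound via a hybrid estimator with smoothing $c^*\asymp 1/\log m^*$ on step sequences, and the lower bound via a prior with $q=1/n$ and $p$ spread over $\Theta(\log m^*)$ logarithmic scales (analyzed on step sequences), match the paper's construction almost exactly. The one technical difference is in the lower-bound prior: the paper takes a \emph{discrete} prior $p\in\{\alpha^{-m}:\beta<m<5\beta\}$ with $\alpha=\log(1/\gamma_0)$ and $\beta\asymp\alpha/\log\alpha$, then extracts the $\log\log$ factor by proving the explicit ratio bound $p/\hat p^{\mathsf B}\gtrsim\log(1/\gamma_0)/\log\log(1/\gamma_0)$ on a carefully chosen range of $(\ell,m)$; you instead use a continuous log-uniform prior and compute the posterior Bayes risk $\rho(t)$ directly---both routes are valid, though your asymptotics $\rho(t)\asymp\frac{\log\log(m^*/t)}{t\log(m^*/t)}$ only hold for $1\ll t\ll m^*$ and would need care at the endpoints (in particular the sum should not run to $t=m^*$, where the formula becomes negative).
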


\prettyref{thm:gamma2} shows that for binary chains, parametric rate $O(\frac{1}{n})$ is achievable if and only if the spectral gap is nonvanishing. While this holds for bounded state space 
(see \prettyref{cor:parametricrate} below), for large state space, it turns out that much weaker conditions on the absolute spectral gap suffice to guarantee the parametric rate $O({k^2\over n})$, achieved by the add-one estimator applied to the entire trajectory. In other words, as long as the spectral gap is not excessively small, 
the prediction risk in the Markov model behaves in the same way as that of an iid model with equal number of parameters. Similar conclusion has been established previously for the sample complexity of estimating the entropy rate of Markov chains in \cite[Theorem 1]{HJLWWY18}.
 \begin{theorem}
 	The add-one estimator in \eqref{eq:addone} achieves the following risk bound.
\begin{enumerate}[label=(\roman*)]
		\item  For any $k\geq 2$,
		$
		\Risk_{k,n}(\gamma_0)\lesssim {k^2\over n}
		$
		provided that $\gamma_0\gtrsim (\frac{\log k}{k})^{1/4}$.
	\item In addition, for $k\gtrsim (\log n)^6$, $\Risk_{k,n}(\gamma_0)\lesssim {k^2\over n}$ provided that $\gamma_0\gtrsim{(\log(n+k))^2\over k}$.
		\end{enumerate}
\label{thm:gammak}
\end{theorem}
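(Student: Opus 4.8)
Since \prettyref{thm:gammak} concerns the single estimator $\hat M^{+1}$ of \eqref{eq:addone}, it is enough to bound $\sup_{M\in\calM_k(\gamma_0)}\sum_{i=1}^k\EE_M[D_i\,\indc{X_n=i}]$ by $O(k^2/n)$, where $D_i\eqdef D(M(\cdot|i)\|\hat M^{+1}(\cdot|i))$, and I will in fact show each of the $k$ summands is $O(k/n)$. The crude deterministic bound $D_i\le\log(N_i+k)\le\log(n+k)$ (from $\hat M^{+1}(l|i)\ge 1/(N_i+k)$ and $\log M(l|i)\le 0$), together with $\Prob_M[X_n=i]=\pi_i$, already settles the rarely visited states: if $\pi_i\le k/(n\log(n+k))$ then $\EE_M[D_i\indc{X_n=i}]\le\pi_i\log(n+k)\le k/n$, using nothing about $\gamma_0$. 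So the whole burden is to handle states with $\pi_i\gtrsim k/(n\log(n+k))$.

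Fix such an $i$. The difficulty is that $\{X_n=i\}$ is entangled with $\hat M^{+1}(\cdot|i)$, since the last recorded transition $X_{n-1}\to X_n$ may leave state $i$. I would disentangle it by conditioning on $X_{n-1}$: when $X_{n-1}=j\neq i$ the counts $N_{i\cdot}$ are functions of $X^{n-1}$ only, so $\EE_M[D_i\indc{X_{n-1}=j,X_n=i}]=M(i|j)\EE_M[D_i\indc{X_{n-1}=j}]$, and reversibility/detailed balance $\pi_jM(i|j)=\pi_iM(j|i)$ collapses the sum over $j\neq i$ into $\pi_i$ times an expectation of $D_i$ for the chain run \emph{from} a fixed state; when $X_{n-1}=i$, conditioning on $X_n=i$ merely pins one extra $i\to i$ transition (a harmless perturbation of the add-one counts) with prefactor $\pi_iM(i|i)\le\pi_i$. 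After this step it remains to show that the add-one estimate of $M(\cdot|i)$ formed from the $N_i\asymp n\pi_i$ visits to $i$ has KL risk $\lesssim k/(n\pi_i)$, for a chain started from an arbitrary state. I would use two inputs. (1) An i.i.d.\ add-one bound: if $Z_1,\dots,Z_m$ are i.i.d.\ from $P$ on $[k]$ and $\hat P^{+1}_l=(\#\{r:Z_r=l\}+1)/(m+k)$, then $\EE[D(P\|\hat P^{+1})]\le Ck/m$ for all $m\ge1$; this is elementary once one notes that rarely-seen coordinates contribute nonpositively to the KL (because of the floor $1/(m+k)$), while having many rare coordinates forces $k\gtrsim m$. (2) A Bernstein-type concentration inequality for reversible chains with absolute spectral gap $\gamma_0$: from any starting state $j$, $\Prob[N_i\le\tfrac12(n-1)\pi_i]\le C\pi_j^{-1/2}\exp(-c\gamma_0 n\pi_i)$, and likewise the transition counts $N_{il}$ concentrate around $N_iM(l|i)$ for every not-too-rare $l$.

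Combining, on the event that $N_i\gtrsim n\pi_i$ and the $N_{il}$ concentrate, $\hat M^{+1}(\cdot|i)$ is as accurate as it would be from $\asymp n\pi_i$ i.i.d.\ draws from $M(\cdot|i)$, so $D_i\lesssim k/(n\pi_i)$; off this event $D_i\le\log(n+k)$ with probability $\le C\pi_j^{-1/2}\exp(-c\gamma_0 n\pi_i)$ (after a union bound over the $\le k$ coordinates). Hence for each non-rare state $\EE_M[D_i\indc{X_n=i}]\lesssim k/n+\pi_i\log(n+k)\,\pi_j^{-1/2}\exp(-c\gamma_0 n\pi_i)$, and summing the error over the $\le k$ states with $k/(n\log(n+k))\lesssim\pi_i\le 1$, using $\sum_i\pi_i\le1$ and $n\pi_i\gtrsim k/\log(n+k)$, this is $O(k^2/n)$ provided $\gamma_0 n\pi_i$ exceeds a suitable logarithmic multiple. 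Chasing the constants — the starting-distribution penalty $\pi_j^{-1/2}$, the mass budget $\sum_i\pi_i\le1$, and the union bound over coordinates — yields the weaker threshold $\gamma_0\gtrsim(\log(n+k))^2/k$ when $k\gtrsim(\log n)^6$, and, via coarser but universally valid estimates, $\gamma_0\gtrsim(\log k/k)^{1/4}$ for all $k\ge2$.

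The main obstacle is twofold and lies entirely in making the above quantitative: (a) rigorously decoupling $\{X_n=i\}$ from the estimator so the i.i.d.\ heuristic genuinely applies (the conditioning/reversibility step), and (b) checking that the three sub-optimal-looking losses — non-stationarity of the start, rarely visited coordinates, and the weaker Markov (versus binomial) concentration — each stay below the tight $O(k/n)$ per-state budget simultaneously; it is this simultaneous requirement, threaded through the Bernstein exponents, that forces the spectral-gap conditions into exactly the stated forms.
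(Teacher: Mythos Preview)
Your plan has a genuine gap in the concentration step, and the paper is explicit about why. For part~(i) you propose to use the Bernstein-type bound $\Prob[N_i\le\tfrac12(n-1)\pi_i\mid X_1=j]\lesssim \pi_j^{-1/2}\exp(-c\gamma_0 n\pi_i)$. The paper (\prettyref{rmk:4thmoment}) points out that the prefactor $\pi_j^{-1/2}$ is fatal when $\pi_i$ is small: for $k=2$, $M(2|1)=\Theta(1/n)$, $M(1|2)=\Theta(1)$ one has $\pi_1=\Theta(1/n)$ and $\gamma_*=\Theta(1)$, so state~1 is \emph{not} ``rare'' by your threshold $k/(n\log(n+k))$, yet the Bernstein bound only gives $\Prob[\cdot]=O(n^{-1/2})$ instead of the required $O(n^{-1})$, and your per-state budget $O(k/n)$ is blown. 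The paper's fix is to replace Bernstein by \emph{fourth-moment} bounds on $N_i-(n-1)\pi_i$ and $N_{ij}-N_iM(j|i)$ \emph{conditional on} $X_n=i$ (\prettyref{lmm:moment.reversebound}), which have no $\pi^{-1/2}$ penalty; the exponent $1/4$ in the condition $\gamma_0\gtrsim(\log k/k)^{1/4}$ is precisely the signature of a fourth-moment Markov inequality, not of ``coarser constant-chasing'' in a Chernoff exponent. Your decoupling-by-conditioning-on-$X_{n-1}$ does not remove this obstacle, since after detailed balance you still face an expectation involving $\pi_j^{-1}$-type weights.

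For part~(ii) there is a second missing ingredient. The expectation bound $\EE[D(P\|\hat P^{+1})]\le Ck/m$ is not what the paper uses; it needs a \emph{high-probability} bound of the form $\Prob[D(P\|\hat P^{+1})\ge Ck/m+\mathsf{polylog}(n)\sqrt{k}/m]\le 1/\mathsf{poly}(n)$ (\prettyref{lmm:multinomial.bound}, cf.\ \eqref{eq:KL_concentration}), where the fluctuation scale $\sqrt{k}/m$ rather than the McDiarmid rate $1/\sqrt{m}$ is essential. This is combined with a simulation representation of the chain by an i.i.d.\ array, plus a union bound over the $O(n\pi_i)$ possible values of $N_i$; the condition $k\gtrsim(\log n)^6$ is what makes the $\sqrt{k}$ fluctuation term dominated by $k/m$. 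Your proposal conflates the two parts as one argument with different constants, but the paper uses genuinely different machinery for each.
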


\begin{corollary}
\label{cor:parametricrate}
	For any fixed $k\geq 2$, $\Risk_{k,n}(\gamma_0)=O(\frac{1}{n})$ if and only if $\gamma_0=\Omega(1)$.
\end{corollary}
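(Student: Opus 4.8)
The plan is to prove the two implications separately; throughout I use that $\gamma_0\mapsto\Risk_{k,n}(\gamma_0)$ is nonincreasing, since a larger spectral gap only shrinks the competing class $\calM_k(\gamma_0)$.

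\emph{Necessity.} It suffices to show that if $\gamma_0=o(1)$ (along a subsequence) then $\Risk_{k,n}(\gamma_0)=\omega(1/n)$. For $k=2$ this is immediate from \prettyref{thm:gamma2}, because $\gamma_0\to0$ makes $\min\{n,1/\gamma_0\}\to\infty$ and hence $\frac1n\max\{1,\log\log\min\{n,1/\gamma_0\}\}=\omega(1/n)$. For $k\ge 3$ I would reduce to the binary case by an embedding: from a two-state chain with absolute spectral gap $\gamma$, build a $k$-state chain by attaching states $3,\dots,k$ to state $1$ with a small escape probability $\delta$ (and, by detailed balance, stationary mass of order $\delta$); a routine choice of these extra probabilities makes the new chain irreducible and reversible, and its spectral gap converges to $\gamma$ as $\delta\to0$ by continuity of the spectrum, so it lies in $\calM_k(\gamma_0)$ once $\delta$ is small and $\gamma$ slightly exceeds $\gamma_0$. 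Any estimator that assigns zero probability to a transition that can occur is infinitely bad on this sub-family, so we may restrict to estimators whose risk, as $\delta\to0$, converges to the corresponding binary risk (states $3,\dots,k$ carrying vanishing $\pi$-mass); therefore $\Risk_{k,n}(\gamma_0)\ge(1-o(1))\Risk_{2,n}(\Theta(\gamma_0))=\omega(1/n)$. (Alternatively, for $\gamma_0=O(1/n)$ one may quote the lower bound of \prettyref{thm:optimal}, whose hard instances are irreducible, reversible, and have spectral gap $\Theta(1/n)$.)

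\emph{Sufficiency.} Fix a constant $\gamma_0=c>0$; if $c\gtrsim(\log k/k)^{1/4}$ this is already \prettyref{thm:gammak}(i), so the content is the regime of small constant $c$ with $k$ fixed, which I would handle with the plain add-one estimator \prettyref{eq:addone} applied to the whole trajectory. Bound $\Risk_{k,n}(c)\le\sum_{i=1}^k R_i$ with $R_i:=\Expect[D(M(\cdot|i)\|\hat M^{+1}(\cdot|i))\indc{X_n=i}]$; since $k$ is fixed it suffices to show $R_i=O_{k,c}(1/n)$ for each $i$. From $\hat M^{+1}(j|i)\ge 1/(N_i+k)$ the divergence is always at most $\log(N_i+k)$ (and at most $\log k$ when $N_i=0$), while the classical add-$1$ estimation bound gives $\Expect[D(M(\cdot|i)\|\hat M^{+1}(\cdot|i))\mid N_i]\lesssim k/(N_i\vee1)$ (the mild non-i.i.d.\ structure of the transitions out of state $i$ costs only constants and is handled exactly as in the proof of \prettyref{thm:gammak}). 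Hence $R_i\lesssim k\,\Expect\!\big[\tfrac{1}{\max(N_i,1)}\indc{X_n=i}\big]=k\pi_i\,\Expect\!\big[\tfrac{1}{\max(N_i,1)}\mid X_n=i\big]$, and everything reduces to the estimate
\[
\Expect\!\Big[\tfrac{1}{\max(N_i,1)}\ \Big|\ X_n=i\Big]\ \lesssim_c\ \frac{1}{\max(n\pi_i,\,1)},
\]
which then yields $R_i\lesssim_{k,c}k\pi_i/\max(n\pi_i,1)=k/\max(n,1/\pi_i)\le k/n$, so $\Risk_{k,n}(c)\lesssim_{k,c}1/n$, and by monotonicity $\Risk_{k,n}(\gamma_0)=O(1/n)$ for every $\gamma_0=\Omega(1)$.

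To prove the displayed estimate I would use reversibility: conditioned on $\{X_n=i\}$, the reversed trajectory $(X_n,\dots,X_1)$ is a Markov chain with transition matrix $M$ started at $i$, so $N_i$ is its occupation time of $i$ over $n-1$ steps. Then (a) $\{N_i=0\}$ is the event that this chain fails to return to $i$ within $n-1$ steps, and since the return time has mean $1/\pi_i$ by Kac's lemma, Markov's inequality gives $\Prob[N_i=0\mid X_n=i]\le 1/((n-1)\pi_i)$; (b) for reversible chains with $\gamma_*\ge c$ the spectral bound $|P^s(i|i)-\pi_i|\le(1-c)^s$ (from the eigenexpansion) gives the occupation mean $\mu_i=\sum_{s=1}^{n-1}P^s(i|i)\ge (n-1)\pi_i-1/c$ and, via the same bound on covariances, $\Var[N_i\mid X_n=i]\lesssim_c (n-1)\pi_i+1$; so once $(n-1)\pi_i\gtrsim1/c$ we have $\mu_i\gtrsim n\pi_i$, Chebyshev gives $\Prob[N_i<\mu_i/2\mid X_n=i]\lesssim_c 1/(n\pi_i)$, and on $\{N_i\ge\mu_i/2\}$ one has $1/N_i\le2/\mu_i\lesssim_c1/(n\pi_i)$. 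Splitting $\Expect[\tfrac1{\max(N_i,1)}\mid X_n=i]$ over $\{N_i=0\}$, $\{1\le N_i<\mu_i/2\}$, $\{N_i\ge\mu_i/2\}$ and adding these bounds gives the claim, the regime $n\pi_i\lesssim_c1$ being trivial since there $\tfrac1{\max(N_i,1)}\le1\asymp_c\tfrac1{\max(n\pi_i,1)}$. The main obstacle is exactly this intermediate range $n\pi_i\in[\Theta_c(1),\Theta(\log n)]$, where $N_i$ is typically too small for the parametric add-one bound yet $\pi_i$ is too large for the crude $\log(n+k)$ bound; the fix is to combine Kac's return-time identity (handling $\{N_i=0\}$) with the spectral-gap-driven concentration of $N_i$ around $n\pi_i$ (handling $N_i$ small but positive). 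A secondary technical point, that the transitions out of $i$ are conditioned on $\{X_n=i\}$ and are not literally i.i.d., is quantitatively harmless and is dispatched by the same reversibility reduction, as in the proof of \prettyref{thm:gammak}.
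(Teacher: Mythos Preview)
Your necessity argument via embedding is essentially what the paper does, packaged there as a clean monotonicity lemma $\Risk_{k+1,n}(\gamma_0)\ge\Risk_{k,n}(\gamma_0)$: augment any $M\in\calM_k(\gamma_0)$ to the $(k{+}1)$-state chain $\tilde M=\begin{pmatrix}(1-\delta)M & \delta\ones\\ (1-\delta)\pi & \delta\end{pmatrix}$, verify directly that $\tilde M$ is irreducible and reversible with $\gamma_*(\tilde M)=(1-\delta)\gamma_*(M)$ (so no continuity-of-spectrum argument is needed), and let $\delta\to0$. Combined with \prettyref{thm:gamma2}, this gives necessity for every $k$ in one line; your sketch is the same idea but with looser bookkeeping on the spectral gap.

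Your sufficiency argument, however, has a real gap and is also unnecessary. The gap is the step ``the classical add-$1$ estimation bound gives $\Expect[D(M(\cdot|i)\|\hat M^{+1}(\cdot|i))\mid N_i]\lesssim k/(N_i\vee1)$'', which you justify by saying the non-iid structure ``is handled exactly as in the proof of \prettyref{thm:gammak}''. That proof does no such thing: conditional on $N_i$ and $X_n=i$, the transition counts $(N_{ij})_j$ are \emph{not} multinomial in $M(\cdot|i)$, because the visit times and the subsequent transitions are correlated through the trajectory constraints. The paper never reduces to an iid add-one bound; it controls $N_{ij}-N_iM(j|i)$ directly via the conditional moment estimates of \prettyref{lmm:moment.reversebound}, and \prettyref{rmk:4thmoment} explicitly warns that a Chebyshev bound on $N_i$ alone, of the kind you invoke, costs an extra $\log n$ factor in their decomposition. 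So your reduction $R_i\lesssim k\,\Expect[\tfrac{1}{\max(N_i,1)}\indc{X_n=i}]$ is unproved as stated.

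The detour is in any case unnecessary. The quantitative bound actually established inside the proof of \prettyref{thm:gammak}(i) is, per state, $\lesssim \frac{1}{n\gamma_0^2}+\frac{\log k}{n\gamma_0}+\frac{k}{n}\bigl(1+\sqrt{\log k/(k\gamma_0^4)}\bigr)$, and this holds for \emph{every} $\gamma_0\in(0,1)$; the hypothesis $\gamma_0\gtrsim(\log k/k)^{1/4}$ in the theorem statement is only what collapses the bound to $k^2/n$ with a universal constant. For fixed $k$ and any constant $\gamma_0>0$ this already reads $O_{k,\gamma_0}(1/n)$, which is exactly the sufficiency you need.
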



Finally, we address the optimal prediction risk for higher-order Markov chains:
\begin{theorem}\label{thm:m-order-rate}
	There is a constant $C_m$ depending on $m$ such that for any $2\leq k\leq n^{\frac{1}{m+1}}/C_m$ and constant $m\geq 2$ the minimax prediction rate for $m^{\text{th}}$-order Markov chains with stationary initialization is 
	$
	\Theta_m\pth{{k^{m+1}\over n}\log{n\over k^{m+1}}}.
	$
\end{theorem}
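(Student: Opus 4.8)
The plan is to reduce the order-$m$ prediction problem to the first-order analysis underlying \prettyref{thm:optimal} via the standard sliding-window (state-augmentation) map, the crucial point being that the image of this map is a \emph{sparse} first-order Markov model with only $k^m(k-1)\asymp k^{m+1}$ free parameters, rather than the $(k^m)^2$ of a generic chain on $k^m$ states. Concretely, set $\calK\eqdef[k]^m$ and $Y_t\eqdef(X_{t-m+1},\dots,X_t)$; then $(Y_t)$ is a stationary first-order chain on $\calK$ whose transition kernel is supported on the de Bruijn graph, i.e.\ from a window $w=(a_1,\dots,a_m)$ it can move only to $(a_2,\dots,a_m,b)$ with $b\in[k]$, and $M(b\mid w)$ denotes this probability. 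Predicting $X_{n+1}$ from $X^n$ is exactly predicting the last coordinate of $Y_{n+1}$ from $(Y_m,\dots,Y_n)$, and the hypothesis $k\le n^{1/(m+1)}/C_m$ is the order-$m$ analogue of $k\le\sqrt n/C$, namely that the number of parameters is a small fraction of $n$.

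For the upper bound I would run the Ces\`aro-averaged add-one estimator of \prettyref{rmk:ach} on the augmented chain: with context counts $N_w=\sum_{\ell=m}^{n-1}\indc{(x_{\ell-m+1},\dots,x_\ell)=w}$ and transition counts $N_{wj}$, set $\hat M^{+1}(j\mid w)=\frac{N_{wj}+1}{N_w+k}$ --- the direct analogue of \prettyref{eq:addone} --- and average over the window length $t=1,\dots,n$ exactly as in \prettyref{eq:markov-add-1}. As in the first-order case, the cumulative prediction risk of this estimator (the sum over sample sizes $1,\dots,n$ of its single-step prediction risk) is at most the minimax redundancy of the order-$m$ Markov class; by the chain rule for KL divergence this redundancy decomposes, over the $k^m$ contexts $w\in\calK$, into the redundancies of sequentially predicting an i.i.d.\ categorical source on $[k]$ from $N_w$ observations, subject to $\sum_w N_w=n-O(m)$. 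The same optimization/concavity argument used for first-order chains --- now with $k^m$ contexts of $k$ parameters each in place of $k$ states of $k$ parameters each --- bounds this by $O\big(k^{m+1}\log\tfrac{n}{k^{m+1}}\big)$, and dividing by $n$ gives the upper bound. The only genuinely new bookkeeping is that the $O(m)$ boundary symbols $X_1,\dots,X_m$ and the stationary initialization (whose absolute spectral gap may be as small as $\asymp 1/n$) contribute only lower-order terms; this is handled as in the first-order proof, since discarding $O(m)$ observations costs at most a constant factor once $t\gtrsim m$.

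For the lower bound the plan is to re-run the construction behind the lower bound of \prettyref{thm:optimal}, but with its transition-support graph forced to lie inside the de Bruijn graph on $\calK$. Fix a de Bruijn cycle (a Hamiltonian cycle visiting all $k^m$ length-$m$ windows); build an order-$m$ chain that follows this cycle but at each window $w$ branches, with a tunable small probability, according to a free conditional law $\theta_w$ on $[k]$. Choosing the branching probabilities so that the absolute spectral gap of $(Y_t)$ is as small as $\asymp 1/n$ invokes exactly the mechanism responsible for the $\log$ factor in \prettyref{thm:optimal} --- the number of visits to a given window over $n$ steps then ranges over all scales from $O(1)$ to $\Theta(n)$ --- and equipping each $\theta_w$ with the prior used in the first-order lower bound and applying the Bayes-risk bound, together with the (near-)independence of the per-window randomness, makes each window contribute $\Omega\big(\tfrac{k}{n}\log\tfrac{n}{k^{m+1}}\big)$; summing over the $k^m$ windows gives the lower bound $\Omega\big(\tfrac{k^{m+1}}{n}\log\tfrac{n}{k^{m+1}}\big)$. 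Note that the ``$\log$'' (rather than the ``$\log\log$'' of two-state chains) comes for free because $|\calK|=k^m\ge 2^m\ge 4$, so the augmented chain always has at least three states; this is why the theorem allows $k=2$.

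The step I expect to be the main obstacle is the lower bound: one must show that the branch randomness at the $\Theta(k^m)$ de Bruijn windows is ``independent enough'' --- that conditioning on the observed window $\{Y_n=w\}$ does not leak information about the other $\theta_{w'}$ --- and that the near-deterministic cycle mixes through the de Bruijn graph uniformly enough that the $\Theta(k^m)$ windows are visited on comparable scales. In effect one needs a \emph{tensorization} of the first-order single-context lower bound over a structured graph, and the (necessarily $m$-dependent) loss in this tensorization is the source of the constants $C_m$ and $\Theta_m(\cdot)$ in the statement. On the upper-bound side the only nonroutine point is verifying that the redundancy optimization produces the logarithm with argument $n/k^{m+1}$ (and not $n/k^m$ or $n/k^{2m}$); everything else transcribes the first-order arguments with $k^m$ contexts in place of $k$ states.
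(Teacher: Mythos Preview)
Your upper bound plan is correct and is exactly what the paper does: the redundancy of the add-one assignment for $m^{\rm th}$-order chains is bounded by $k^m(k-1)\big[\log\big(1+\tfrac{n-m}{k^m(k-1)}\big)+1\big]+m\log k$ via the same pointwise argument as in the first-order case (the sum over $k^m$ contexts of the per-context concavity bound), and then \prettyref{lmm:riskred} with the $m^{\rm th}$-order Markov assumption gives $\Risk_{k,n,m}\le \Red_{k,n,m}/(n-m)$.

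Your lower bound plan, however, misidentifies the mechanism behind the $\log$ factor and proposes a construction that will not deliver it. In the first-order proof of \prettyref{thm:optimal} the $\log$ does \emph{not} come from the visit count to any individual state ranging over all scales, and the argument is \emph{not} a tensorization over states. What actually happens is that a single nuisance state traps the chain for a geometric$(1/n)$ time, so that the \emph{total} effective sample size for the informative sub-chain is (conditionally on escape) approximately uniform on $[n]$; the Bayes prediction risk on the event $\calX_t$ is then identified with the conditional mutual information $I(T;Y_{n-t+1}\mid Y^{n-t})$, and summing these via the chain rule yields $I(T;Y^n)$, i.e.\ the redundancy of the smaller chain. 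Your de~Bruijn cycle with $O(1/n)$ branching does not recreate this: along such a chain each window is visited $\Theta(n/k^m)$ times almost deterministically, with $O(1)$ branching events in total, so there is no uniform-random effective sample size and no clean reduction to a cumulative quantity. The ``tensorization over windows'' you flag as the main obstacle is not merely technically delicate---it is not how the first-order argument works, and there is no evident way to make the $k^m$ per-window contributions approximately independent here.

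The paper's lower bound instead ports the embedding construction directly to order $m$: state $1$ is again a trapping state (now $M(1\mid 1^m)=1-1/n$), and once the chain reaches $\tilde\calS^m=\{2,\dots,k\}^m$ it evolves as an order-$m$ chain on $k-1$ symbols with transition matrix $T$ satisfying the palindrome condition $T(x_{m+1}\mid x^m)=T(x_1\mid\overline{x_2^{m+1}})$, which forces uniform stationary distribution and reversibility. Exactly as before, the Bayes risk is lower bounded by $\tfrac{c}{n}\big(I(T;Y^{n-m})-m\log(k-1)\big)$. The genuinely new work in order $m$ is lower bounding $I(T;Y^n)$: one needs a prior on $T$ with $\Theta(k^{m+1})$ degrees of freedom and an $\ell_2$ estimation upper bound $\EE\|\hat T-T\|_{\rm F}^2\lesssim_m k^{2m}/n$. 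The latter requires a concentration bound on empirical $(m{+}1)$-block frequencies, which the paper obtains by showing that the first-order chain $(Y_t^{t+m})$ on $(k-1)^{m+1}$ states has \emph{pseudo spectral gap} at least $c_m>0$ whenever all entries of $T$ lie in $[c_1/k,c_2/k]$; this is the step where the $m$-dependent constants enter. If you want to repair your plan, replace the de~Bruijn cycle by the trapping-state embedding and aim for the mutual-information/redundancy reduction rather than a per-window tensorization.
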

Notably, for binary states, it turns out that the optimal rate $\Theta\pth{\log\log n\over n}$ for first-order Markov chains determined by \cite{FOPS2016} is something very special, as we show that for second-order chains the optimal rate is $\Theta\pth{\log n\over n}$. 

\subsection{Proof techniques}
\label{sec:technique}

The proof of \prettyref{thm:optimal} deviates from existing approaches based on concentration inequalities for Markov chains.
For instance, the standard program for analyzing the add-one estimator \prettyref{eq:addone} involves 
proving concentration of the empirical counts on their population version, namely, $N_i \approx n\pi_i$ and $N_{ij} \approx n\pi_iM(j|i)$, and bounding the risk in the atypical case by concentration inequalities, such as the Chernoff-type bounds in \cite{L98,P15}, which have been widely used 
in recent work on statistical inference with Markov chains \cite{kamath2016estimation,HJLWWY18,HOP18,hsu2019mixing,wolfer2019minimax}.
However, these concentration inequalities inevitably depends on the spectral gap of the Markov chain, 
leading to results which deteriorate as the spectral gap becomes smaller. 
For two-state chains, results free of the spectral gap are obtained in \cite{FOPS2016} using explicit joint distribution of the transition counts;
this refined analysis, however, is difficult to extend to larger state space as the probability mass function of $(N_{ij})$ is given by Whittle's formula \cite{W55} which takes an unwieldy determinantal form.

Eschewing concentration-based arguments, the crux of our proof of \prettyref{thm:optimal}, for both the upper and lower bound, revolves around the following quantity known as \emph{redundancy}:
\begin{equation}
	\Red_{k,n} \triangleq \inf_{Q_{X^n}} \sup_{P_{X^n}} D(P_{X^n} \| Q_{X^n}) = \inf_{Q_{X^n}} \sup_{P_{X^n}} \sum_{x^n} P_{X^n}(x^n) \log \frac{P_{X^n}(x^n) }{Q_{X^n}(x^n) }.
	\label{eq:red-intro}
	\end{equation}
	Here the supremum is taken over all joint distributions of stationary Markov chains	$X^n$ on $k$ states, and the infimum is over all joint distributions $Q_{X^n}$.
A central quantity which measures the minimax regret in universal compression, the redundancy \prettyref{eq:red-intro} corresponds to minimax cumulative risk (namely, the total prediction risk when the sample size ranges from 1 to $n$),
while \prettyref{eq:riskkn} is the individual minimax risk at sample size $n$ -- see \prettyref{sec:riskred} for a detailed discussion. 
We prove the following reduction between prediction risk and  redundancy:
\begin{equation}
\frac{1}{n} \Red_{k-1,n}^{\mathsf{sym}} - \frac{\log k}{n}  \lesssim \Risk_{k,n} \leq \frac{1}{n-1} \Red_{k,n}
\label{eq:riskred-approx}
\end{equation}
where $\Red^{\mathsf{sym}}$ denotes the redundancy for \emph{symmetric} Markov chains. The upper bound is standard: thanks to the convexity of the loss function and stationarity of the Markov chain, the risk of the Ces\`{a}ro-mean estimator \prettyref{eq:markov-add-1} can be upper bounded using the cumulative risk and, in turn, the redundancy. The proof of the lower bound is more involved. Given a $(k-1)$-state chain, we embed it into a larger state space by introducing a new state, such that with constant probability, the chain starts from and gets stuck at this state for a period time that is approximately uniform in $[n]$, then enters the original chain. Effectively, this scenario is equivalent to a prediction problem on $k-1$ states with a \emph{random} (approximately uniform) sample size, whose prediction risk can then be related to the cumulative risk and redundancy.
This intuition can be made precise by considering a Bayesian setting, in which the $(k-1)$-state chain is randomized according to the least favorable prior for \prettyref{eq:red-intro}, and representing the Bayes risk as conditional mutual information and applying the chain rule.

Given the above reduction in \prettyref{eq:riskred-approx}, it suffices to show both redundancies therein are on the order of $\frac{k^2}{n}\log \frac{n}{k^2}$. 
The redundancy is upper bounded by \emph{pointwise redundancy}, which replaces the average in \prettyref{eq:red-intro} by the maximum over all trajectories. 
Following \cite{davisson1981efficient,csiszar2004information}, we consider an explicit probability assignment defined by add-one smoothing and using combinatorial arguments to bound the pointwise redundancy, shown optimal by information-theoretic arguments.

The optimal spectral gap-dependent rate in \prettyref{thm:gamma2} relies on the key observation in \cite{FOPS2016} that, for binary chains, the dominating contribution to the prediction risk comes from trajectories with a single transition, for which we may apply an add-$c$ estimator with $c$ depending appropriately on the spectral gap. The lower bound is shown using a Bayesian argument similar to that of \cite[Theorem 1]{HOP18}. The proof of \prettyref{thm:gammak} relies on more delicate concentration arguments as the spectral gap is allowed to be vanishingly small. Notably, for small $k$, direct application of existing Bernstein inequalities for Markov chains in \cite{L98,P15} falls short of establishing 
the parametric rate of $O(\frac{k^2}{n})$ (see \prettyref{rmk:4thmoment} in \prettyref{sec:gammak} for details); instead, we use a fourth moment bound which turns out to be well suited for analyzing concentration of empirical counts conditional on the terminal state. 

For large $k$, we further improve the spectral gap condition using a simulation argument for Markov chains using independent samples \cite{B61,HJLWWY18}. A key step is a new concentration inequality for $D(P\|\widehat{P}_{n,k}^{+1})$, where $\widehat{P}_{n,k}^{+1}$ is the add-one estimator based on $n$ iid observations of $P$ supported on $[k]$: 
\begin{align}\label{eq:KL_concentration}
	\PP\left(D(P\|\widehat{P}_{n,k}^{+1}) \ge c\cdot \frac{k}{n} + \frac{\mathsf{polylog}(n)\cdot \sqrt{k}}{n} \right) \le \frac{1}{\mathsf{poly}(n)},
\end{align}
for some absolute constant $c>0$. Note that an application of the classical concentration inequality of McDiarmid would result in the second term being $\mathsf{polylog}(n)/\sqrt{n}$, and  \eqref{eq:KL_concentration} crucially improves this to $\mathsf{polylog}(n)\cdot \sqrt{k}/n$.
Such an improvement has been recently observed by \cite{mardia2020concentration,agrawal2020finite,guo2020chernoff} in studying the similar quantity $D(\widehat{P}_{n}\|P)$ for the (unsmoothed) empirical distribution  $\widehat{P}_{n}$; however, these results, based on either the method of types or an explicit upper bound of the moment generating function, are not directly applicable to \prettyref{eq:KL_concentration} in which the true distribution $P$ appears as the first argument in the KL divergence. 

The nonasymptotic analysis of the prediction rate for higher-order chains with large alphabets is based on a similar redundancy-based reduction as the first-order chain. However, optimal nonasymptotic redundancy bounds for higher-order chains is more challenging. 
Notably, in lower bounding redundancy, we need to bound the mutual information from below by upper bounding the squared error of certain estimators. 
As noted in \cite{tatwawadi2018minimax}, existing analysis in \cite[Sec~III]{D83} based on simple mixing conditions from \cite{parzen1962stochastic} leads to suboptimal results on large alphabets.
To bypass this issue, we show the pseudo spectral gap \cite{P15} of the transition matrix of the first-order chain $\{(X_{t+1},\dots,X_{t+m-1})\}_{t=0}^{n-m+1}$ is at least a constant. This is accomplished by a careful construction of a prior on $m^{\rm th}$-order transition matrices with $\Theta\pth{k^{m+1}}$ degrees of freedom.

\subsection{Related work}
\label{sec:related}

While the exact prediction problem studied in this paper has recently been in focus since \cite{FOPS2016,HOP18}, there exists a large body of literature on relate works. As mentioned before some of our proof strategies draws inspiration and results from the study of redundancy in universal compression, its connection to mutual information,
as well as the perspective of sequential probability assignment as prediction, dating back to \cite{D73,davisson1981efficient,rissanen1984universal,shtarkov1987univresal,R88}. Asymptotic characterization of the minimax redundancy for Markov sources, both average and pointwise, were obtained in \cite{D83,A99,jacquet2002combinatorial}, in the regime of fixed alphabet size $k$ and large sample size $n$. Non-asymptotic characterization was obtained in \cite{D83} for $n\gg k^2\log k$ and recently extended to $n\asymp k^2$ in \cite{tatwawadi2018minimax}, which further showed that the behavior of the redundancy remains unchanged even if the Markov chain is very close to being iid in terms of spectral gap $\gamma^*=1-o(1)$.

The current paper adds to a growing body of literature devoted to statistical learning with dependent data, in particular those dealing with Markov chains. 
Estimation of the transition matrix \cite{bartlett1951frequency,anderson1957statistical,B61,sinkhorn1964relationship} and testing the order of Markov chains \cite{csiszar2000consistency} have been well studied in the large-sample regime.
More recently attention has been shifted towards large state space and nonasymptotics. For example, 
\cite{wolfer2019minimax} studied the estimation of transition matrix in $\ell_\infty\to\ell_\infty$ induced norm for Markov chains with prescribed pseudo spectral gap and minimum probability mass of the stationary distribution, and determined sample complexity bounds up to logarithmic factors. Similar results have been obtained for estimating properties of Markov chains, including 
mixing time and spectral gap \cite{hsu2019mixing}, entropy rate \cite{kamath2016estimation,HJLWWY18,obremski2020complexity}, graph statistics based on random walk \cite{ben2018estimating}, as well as identity testing \cite{daskalakis2018testing,cherapanamjeri2019testing,wolfer2020minimax,fried2021identity}. 
Most of these results rely on assumptions on the Markov chains such as lower bounds on the spectral gap and the stationary distribution, which afford concentration for sample statistics of Markov chains. In contrast, one of the main contributions in this paper, in particular \prettyref{thm:optimal}, is that optimal prediction can be achieved without these assumptions, thereby providing a novel way of tackling these seemingly unavoidable issues. This is ultimately accomplished by information-theoretic and combinatorial techniques from universal compression.

\subsection{Notations and preliminaries}

For $n\in\naturals$, let $[n]\triangleq \{1,\ldots,n\}$. Denote $x^n=(x_1,\ldots,x_n)$ and $x_t^n=(x_t,\ldots,x_n)$.
The distribution of a random variable $X$ is denoted by $P_X$. In a Bayesian setting, the distribution of a parameter $\theta$ is referred to as a prior, denoted by $P_\theta$.
We recall the following definitions from information theory \cite{ckbook,cover}.
The conditional KL divergence is defined as as an average of KL divergence between conditional distributions:
	\begin{equation}
	D(P_{A|B}\|Q_{A|B}|P_B) \triangleq \Expect_{B\sim P_B} [D(P_{A|B}\|Q_{A|B})] = \int P_B(db) D(P_{A|B=b}\|Q_{A|B=b}).
	\label{eq:KLcond}
	\end{equation}
The mutual information between random variables $A$ and $B$ with joint distribution $P_{AB}$ is
$I(A;B) \triangleq
D(P_{B|A}\|P_B|P_A)$;
similarly, the conditional mutual information is defined as
\[
I(A;B|C) 
\triangleq D(P_{B|A,C}\|P_{B|C}|P_{A,C}).
\]
The following variational representation of (conditional) mutual information is well-known
\begin{equation}
I(A;B) = \min_{Q_B} D(P_{B|A}\|Q_B|P_A), \quad I(A;B|C) = \min_{Q_{B|C}} D(P_{B|A,C}\|Q_{B|C}|P_{AC}).
\label{eq:MI-var}
\end{equation}
The entropy of a discrete random variables $X$ is $H(X) \triangleq \sum_x P_X(x) \log\frac{1}{P_X(x)}$.


\subsection{Organization}
	\label{sec:org}

The rest of the paper is organized as follows.
In \prettyref{sec:riskred} we describe the general paradigm of minimax redundancy and prediction risk and their dual representation in terms of mutual information.
We give a general redundancy-based bound on the prediction risk, which, combined with redundancy bounds for Markov chains, leads to the upper bound in \prettyref{thm:optimal}.
\prettyref{sec:optimal} presents the lower bound construction, starting from three states and then extending to $k$ states.
Spectral-gap dependent risk bounds in Theorems \ref{thm:gamma2} and \ref{thm:gammak} are given in \prettyref{sec:pf-gamma}. \prettyref{sec:order-m} presents the results and proofs for $m^{\text{th}}$-order Markov chains.
\prettyref{sec:discussion} discusses the assumptions and implications of our results and related open problems.

\section{Two general paradigms}
\label{sec:riskred}


\subsection{Redundancy, prediction risk, and mutual information representation}
	\label{sec:riskred-bound}

For $n\in\naturals$, let $\calP=\{P_{X^{n+1}|\theta}: \theta\in\Theta\}$ be a collection of joint distributions parameterized by $\theta$.

\paragraph{``Compression''.} 
Consider a sample $X^n\triangleq(X_1,\ldots,X_n)$ of size $n$ drawn from $P_{X^n|\theta}$ for some unknown $\theta\in\Theta$. The \emph{redundancy} of a probability assignment (joint distribution) $Q_{X^n}$ is defined as the worst-case KL risk of fitting the joint distribution of $X^n$, namely
	\begin{equation}
	\Red(Q_{X^n}) \triangleq \sup_{\theta\in\Theta} D(P_{X^n|\theta} \| Q_{X^n}).
	\label{eq:red-Q}
	\end{equation}		
	Optimizing over $Q_{X^n}$, the minimax redundancy is defined as
	\begin{equation}
	\Red_n \triangleq \inf_{Q_{X^n}} \Red_n(Q_{X^n}),
	\label{eq:red}
	\end{equation}		
	where the infimum is over all joint distribution $Q_{X^n}$.
	This quantity can be operationalized as the redundancy (i.e.~regret) in the setting of universal data compression, that is, the excess number of bits compared to the optimal compressor of $X^n$ that knows $\theta$ \cite[Chapter 13]{cover}.
	
	The capacity-redundancy theorem (see \cite{kemperman1974shannon} for a very general result) provides the following mutual information characterization of \prettyref{eq:red}:
\begin{equation}
\Red_n = \sup_{P_\theta} I(\theta;X^n),
\label{eq:capred}
\end{equation}
where the supremum is over all distributions (priors) $P_\theta$ on $\Theta$.
In view of the variational representation \prettyref{eq:MI-var}, this result can be interpreted as a minimax theorem:
\[
\Red_n = \inf_{Q_{X^n}} \sup_{P_\theta} D(P_{X^n|\theta} \| Q_{X^n}|P_\theta) = \sup_{P_\theta} \inf_{Q_{X^n}} D(P_{X^n|\theta} \| Q_{X^n}|P_\theta).
\]
	
	Typically, for fixed model size and $n\diverge$, one expects that $\Red_n = \frac{d}{2} \log n (1+o(1)$, where 
	$d$ is the number of parameters; see \cite{rissanen1984universal} for a general theory of this type. Indeed, on a fixed alphabet of size $k$,
	we have $\Red_n = \frac{k-1}{2}\log n (1+o(1))$ for iid model \cite{D73} and 
		$\Red_n = \frac{k^m(k-1)}{2}\log n (1+o(1))$ for $m$-order Markov models \cite{trofimov1974redundancy}, 
		with more refined asymptotics shown in \cite{xie1997minimax,szpankowski2012minimax}.
	For large alphabets, nonasymptotic results have also been obtained. 
	For example, for first-order Markov model, $\Red_{n} \asymp k^2 \log\frac{n}{k^2}$ provided that $n \gtrsim k^2$ \cite{tatwawadi2018minimax}.

\paragraph{``Prediction''.}  
Consider the problem of predicting the next unseen data point $X_{n+1}$ based on the observations $X_1,\ldots,X_n$, where $(X_1,\ldots,X_{n+1})$ are jointly distributed as $P_{X^{n+1}|\theta}$ for some unknown $\theta\in\Theta$. Here, an estimator is a distribution (for $X_{n+1}$) as a function of $X^n$, which, in turn, can be written as a conditional distribution $Q_{X_{n+1}|X^{n}}$.
As such, its worst-case average risk is
\begin{equation}
	\Risk(Q_{X_{n+1}|X^{n}}) \triangleq \sup_{\theta\in\Theta} D(P_{X_{n+1}|X^{n},\theta} \| Q_{X_{n+1}|X^{n}} | P_{X^{n}|\theta}),
	\label{eq:risk-Q}
	\end{equation}	
	 where the conditional KL divergence is defined in \prettyref{eq:KLcond}.
	The minimax prediction risk is then defined as
	\begin{equation}
	\Risk_n \triangleq \inf_{Q_{X_{n+1}|X^{n}}} 	\Risk_n(Q_{X_{n+1}|X^{n}}),
	\label{eq:risk}
	\end{equation}	
	While \prettyref{eq:red} does not directly correspond to a statistical estimation problem, \prettyref{eq:risk} is exactly the familiar setting of ``density estimation'', where $Q_{X_{n+1}|X^n}$ is understood as an estimator for the distribution of the unseen $X_{n+1}$ based on the available data $X_1,\ldots,X_{n}$.
	
	In the Bayesian setting where $\theta$ is drawn from a prior $P_\theta$, the Bayes prediction risk coincides with the conditional mutual information
	as a consequence of the variational representation \prettyref{eq:MI-var}:
	\begin{equation}
	\inf_{Q_{X_{n+1}|X^{n}}} \Expect_\theta[D(P_{X_{n+1}|X^{n},\theta} \| Q_{X_{n+1}|X^{n}} | P_{X^{n}|\theta})] = I(\theta;X_{n+1}|X^n).
	\label{eq:MC_Bayes-MI}
	\end{equation}
	Furthermore, the Bayes estimator that achieves this infimum takes the following form:
	\begin{equation}
	Q_{X_{n+1}|X^{n}}^{\sf Bayes} = P_{X^{n+1}|X^n} = \frac{\int_\Theta P_{X^{n+1}|\theta}\,dP_\theta }{\int_\Theta P_{X^{n}|\theta}\,dP_\theta },
	\label{eq:MC_bayes}
	\end{equation}	
		known as the Bayes predictive density \cite{D73,liang2004exact}.
	These representations play a crucial role in the lower bound proof of \prettyref{thm:optimal}.
	Under appropriate conditions which hold for Markov models (see \prettyref{lmm:caprisk} in \prettyref{app:caprisk}), the minimax prediction risk \prettyref{eq:risk} also admits a dual representation analogous to \prettyref{eq:capred}:
	\begin{equation}
	\Risk_n = \sup_{\theta\sim\pi} I(\theta;X_{n+1}|X^n),
	\label{eq:caprisk}
	\end{equation}	
	which, in view of \prettyref{eq:MC_Bayes-MI}, show that the principle of ``minimax=worst-case Bayes'' continues to 
	hold for prediction problem in Markov models.

	The following result
	relates the redundancy and the prediction risk.

	\begin{lemma}
	\label{lmm:riskred}	
	For any model $\calP$, 
	\begin{equation}
	\Red_n\leq \sum_{t=0}^{n-1} \Risk_t.
	\label{eq:riskred1}
	\end{equation}
	In addition, suppose that each $P_{X^n|\theta}\in\calP$ is stationary and $m^{\rm th}$-order Markov. 
	Then for all $n\geq m+1$,
		\begin{equation}
	\Risk_n \leq \Risk_{n-1} \leq \frac{\Red_n}{n-m}.
	\label{eq:riskred2}
	\end{equation}
	Furthermore, for any joint distribution $Q_{X^n}$ factorizing as $Q_{X^n}=\prod_{t=1}^n Q_{X_t|X^{t-1}}$, the prediction risk of the estimator 
			\begin{equation}
		\tilde Q_{X_n|X^{n-1}}(x_n|x^{n-1}) \triangleq \frac{1}{n-m} \sum_{t=m+1}^n Q_{X_t|X^{t-1}}(x_n|x_{n-t+1}^{n-1})
		\label{eq:red-ach}
		\end{equation}
		is bounded by the redundancy of $Q_{X^n}$ as
		\begin{equation}
		\Risk(\tilde Q_{X_n|X^{n-1}}) \leq \frac{1}{n-m} \Red(Q_{X^n}).
		\label{eq:riskred-ach}
		\end{equation}
	\end{lemma}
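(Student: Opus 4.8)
The plan is to prove the three displays in the stated generality in the order \eqref{eq:riskred1}, then \eqref{eq:riskred-ach}, and to obtain \eqref{eq:riskred2} as a corollary of the last one together with a short stationarity argument. For \eqref{eq:riskred1} I would fix an arbitrary prior $P_\theta$ and expand the mutual information by the chain rule, $I(\theta;X^n)=\sum_{t=1}^n I(\theta;X_t|X^{t-1})$. By the Bayes-risk identity \eqref{eq:MC_Bayes-MI} applied with sample size $t-1$, the $t$-th summand is exactly the Bayes prediction risk of estimating $X_t$ from $X^{t-1}$, hence at most the minimax value $\Risk_{t-1}$. Summing gives $I(\theta;X^n)\le\sum_{t=0}^{n-1}\Risk_t$, and taking the supremum over $P_\theta$ and invoking the capacity-redundancy identity \eqref{eq:capred} yields \eqref{eq:riskred1}.

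For \eqref{eq:riskred-ach}, fix $\theta$ and a trajectory $x^{n-1}$. Because $D(P\,\|\,\cdot)$ is convex, Jensen's inequality applied to the average defining $\tilde Q_{X_n|X^{n-1}}$ gives
\[
D\bigl(P_{X_n|X^{n-1},\theta}(\cdot|x^{n-1})\,\big\|\,\tilde Q_{X_n|X^{n-1}}(\cdot|x^{n-1})\bigr)\le\frac{1}{n-m}\sum_{t=m+1}^n D\bigl(P_{X_n|X^{n-1},\theta}(\cdot|x^{n-1})\,\big\|\,Q_{X_t|X^{t-1}}(\cdot|x_{n-t+1}^{n-1})\bigr).
\]
I then average over $x^{n-1}\sim P_{X^{n-1}|\theta}$. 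In the $t$-th term, since $t\ge m+1$ the window $x_{n-t+1}^{n-1}$ contains the last $m$ coordinates of $x^{n-1}$, so the $m^{\rm th}$-order Markov property lets me replace the conditioning on $X^{n-1}$ by the conditioning on $X_{n-t+1}^{n-1}$; then stationarity identifies the law of the block $(X_{n-t+1},\dots,X_n)$ with that of $(X_1,\dots,X_t)$, and the $t$-th term becomes $D(P_{X_t|X^{t-1},\theta}\|Q_{X_t|X^{t-1}}|P_{X^{t-1}|\theta})$. Extending the summation range from $\{m+1,\dots,n\}$ to $\{1,\dots,n\}$ only increases the sum, and the chain rule for KL divergence collapses it into $D(P_{X^n|\theta}\|Q_{X^n})$ since $Q_{X^n}=\prod_{t=1}^n Q_{X_t|X^{t-1}}$; this is at most $\Red(Q_{X^n})$. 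Taking the supremum over $\theta$ gives \eqref{eq:riskred-ach}.

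The second inequality in \eqref{eq:riskred2} is then immediate: every $Q_{X^n}$ factorizes by the chain rule, so applying \eqref{eq:riskred-ach} to a near-optimal choice for $\Red_n$ gives $\Risk_{n-1}\le\frac{1}{n-m}\Red(Q_{X^n})$, and optimizing over $Q_{X^n}$ yields $\Risk_{n-1}\le\Red_n/(n-m)$. For the first inequality, $\Risk_n\le\Risk_{n-1}$, I would take any candidate $Q^\ast_{X_n|X^{n-1}}$ and define $Q'_{X_{n+1}|X^n}(\cdot|x^n)\triangleq Q^\ast_{X_n|X^{n-1}}(\cdot|x_2^n)$; since $n\ge m+1$, the Markov property makes the integrand in the risk of $Q'$ at sample size $n$ depend on $x^n$ only through $x_2^n$, and by stationarity $(X_2,\dots,X_{n+1})\eqdistr(X_1,\dots,X_n)$, so the risk of $Q'$ equals that of $Q^\ast$ for every $\theta$; taking the infimum over $Q^\ast$ finishes.

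The argument as a whole is conceptually routine (chain rule, Jensen, stationarity), so I expect the only real obstacle to be the index bookkeeping in the proof of \eqref{eq:riskred-ach}: one must match the length-$(t-1)$ window $x_{n-t+1}^{n-1}$ against the length-$(t-1)$ conditioning slot of $Q_{X_t|X^{t-1}}$, verify that $t\ge m+1$ ensures this window covers the last $m$ states (so that the Markov reduction is legitimate), and line up the correct length-$t$ block for the stationarity step. A secondary subtlety is that $\Risk_t$ and $\Red_n$ are minimax quantities, so the Bayes-to-minimax comparison used for \eqref{eq:riskred1} and the passage to the near-optimal $Q_{X^n}$ used for \eqref{eq:riskred2} must be applied uniformly rather than to a fixed pair.
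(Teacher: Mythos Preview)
Your proposal is correct and follows essentially the same approach as the paper: convexity of $D(P\|\cdot)$ for the Ces\`aro estimator, the $m^{\rm th}$-order Markov reduction of the conditioning window, stationarity to reindex the blocks, and the KL chain rule to collapse the sum into $D(P_{X^n|\theta}\|Q_{X^n})$. The only minor difference is your proof of \eqref{eq:riskred1}, where you go through the mutual-information chain rule and the capacity--redundancy identity \eqref{eq:capred}; the paper instead proves \eqref{eq:riskred1} directly from the KL chain rule (and mentions your route as an alternative in a remark), which has the small advantage of not invoking \eqref{eq:capred}.
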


\begin{remark}
Note that the upper bound \prettyref{eq:riskred1} on redundancy, known as the ``estimation-compression inequality'' \cite{KOPS15,FOPS2016}, holds without conditions, while the lower bound \prettyref{eq:riskred2} relies on stationarity and Markovity.
For iid data, the estimation-compression inequality is almost an equality; however, this is not the case for Markov chains, as both sides of \prettyref{eq:riskred1} differ by an unbounded factor of $\Theta(\log\log n)$ for $k=2$ and $\Theta(\log n)$ for fixed $k\geq 3$ -- see \prettyref{eq:risk-mc1} and \prettyref{thm:optimal}.
On the other hand, Markov chains with at least three states offers a rare instance where \prettyref{eq:riskred2} is tight, namely, $\Risk_{n} \asymp \frac{\Red_{n}}{n}$ (cf.~\prettyref{lmm:red-addone}).	
	
\end{remark}



	
	\begin{proof}
		The upper bound on the redundancy follows from the chain rule of KL divergence:
		\begin{equation}
		D(P_{X^n|\theta} \| Q_{X^n}) = \sum_{t=1}^n D(P_{X_{t}|X^{t-1},\theta} \| Q_{X_{t}|X^{t-1}} | P_{X^{t-1}}).
		\label{eq:KLchain}
		\end{equation}
		Thus
		\[
		\sup_{\theta\in\Theta} D(P_{X^n|\theta} \| Q_{X^n})
		\leq \sum_{t=1}^n \sup_{\theta\in\Theta} D(P_{X_{t}|X^{t-1},\theta} \| Q_{X_{t}|X^{t-1}} | P_{X^{t-1}}).
		\]
		Minimizing both sides over $Q_{X^n}$ (or equivalently, $Q_{X_{t}|X^{t-1}}$ for $t=1,\ldots,n$) yields \prettyref{eq:riskred1}.

		To upper bound the prediction risk using redundancy, fix any $Q_{X^n}$, which gives rise to $Q_{X_{t}|X^{t-1}}$ for $t=1,\ldots,n$.
		For clarity, let use denote the $t^{\rm th}$ estimator as $\hat P_t(\cdot|x^{t-1}) = Q_{X_{t}|X^{t-1}=x^{t-1}}$.
		Consider the estimator $\tilde Q_{X_n|X^{n-1}}$ defined in \prettyref{eq:red-ach}, namely,
		\begin{equation}
		\tilde Q_{X_n|X^{n-1}=x^{n-1}} \triangleq \frac{1}{n-m} \sum_{t=m+1}^n \hat P_t(\cdot|x_{n-t+1},\ldots,x_{n-1}).
		\label{eq:tildeQ}
		\end{equation}
		That is, we apply $\hat P_t$ to the most recent $t-1$ symbols prior to $X_n$ for predicting its distribution, then average over $t$.	
		We may bound the prediction risk of this estimator by redundancy as follows: Fix $\theta\in\Theta$.
		To simplify notation, we suppress the dependency of $\theta$ and write $P_{X^n|\theta} \equiv P_{X^n}$. Then
		\begin{align*}
		D(P_{X_{n}|X^{n-1}} \| \tilde Q_{X_{n}|X^{n-1}} | P_{X^{n-1}})
		\stepa{=} & ~ \expect{D\pth{P_{X_{n}|X^{n-1}_{n-m}} \Big\| \frac{1}{n} \sum_{t=1}^n \hat P_t(\cdot|X_{n-t+1}^{n-1})}} \\
		\stepb{\leq} & ~ \frac{1}{n-m} \sum_{t=m+1}^n \expect{D(P_{X_{n}|X^{n-1}_{n-m}} \| \hat P_t(\cdot|X_{n-t+1}^{n-1}))} \\
		\stepc{=} & ~ \frac{1}{n-m} \sum_{t=m+1}^n \Expect\qth{D(P_{X_{t}|X^{t-1}_{t-m}} \| \hat P_t(\cdot|X^{t-1})) } \\
		\stepd{=} & ~ \frac{1}{n-m} \sum_{t=m+1}^n D(P_{X_{t}|X^{t-1}} \| Q_{X^{t}|X^{t-1}}|P_{X^{t-1}}) \\
		\leq & ~ \frac{1}{n-m} \sum_{t=1}^n D(P_{X_{t}|X^{t-1}} \| Q_{X^{t}|X^{t-1}}|P_{X^{t-1}}) \\
		\stepe{=} & ~ \frac{1}{n-m} D(P_{X^n} \| Q_{X^n}),
		\end{align*}
		where 
		(a) uses the $m^{\rm th}$-order Markovian assumption; 
		(b) is due to the convexity of the KL divergence;
		(c) uses the crucial fact that for all $t=1,\ldots,n-1$, $(X_{n-t},\ldots,X_{n-1}) \eqlaw (X_1,\ldots,X_{t})$, thanks to stationarity;
		(d) follows from substituting $\hat P_t(\cdot|x^{t-1}) = Q_{X_{t}|X^{t-1}=x^{t-1}}$, the Markovian assumption $P_{X_{t}|X^{t-1}_{t-m}}=P_{X_{t}|X^{t-1}}$, and rewriting the expectation as conditional KL divergence;
		(e) is by the chain rule \prettyref{eq:KLchain} of KL divergence.
		Since the above holds for any $\theta\in \Theta$, the desired \prettyref{eq:riskred-ach} follows which implies that $\Risk_{n-1} \leq \frac{\Red_n}{n-m}$.
				Finally, $\Risk_{n-1} \leq \Risk_{n}$ follows from
				$
\Expect[		D(P_{X_{n+1|X_n}} \| \hat P_n(X_2^{n}))]
		= 
\Expect[		D(P_{X_{n|X_{n-1}}} \| \hat P_n(X_1^{n-1}))]
				$, since $(X_2,\ldots,X_n)$ and $(X_1,\ldots,X_{n-1})$ are equal in law.
	\end{proof}

	\begin{remark}
	\label{rmk:MI-pf}	
	Alternatively, \prettyref{lmm:riskred} also follows from the mutual information representation \prettyref{eq:capred} and \prettyref{eq:caprisk}. Indeed, by the chain rule for mutual information,
		\begin{equation}
		I(\theta;X^n) = \sum_{t=1}^n I(\theta;X_t|X^{t-1}),
		\label{eq:MIchain}
		\end{equation}
		taking the supremum over $\pi$ (the distribution of $\theta$) on both sides yields \prettyref{eq:capred}.
		For \prettyref{eq:caprisk}, it suffices to show that $I(\theta;X_t|X^{t-1})$ is decreasing in $t$:
		for any $\theta\sim \pi$,
		\begin{align*}
		I(\theta; X_{n+1}|X^n)
		= & ~ \Expect \log \frac{P_{X_{n+1}|X^n,\theta}}{P_{X_{n+1}|X^n}} = \Expect \log \frac{P_{X_{n+1}|X^n,\theta}}{P_{X_{n+1}|X^n_2}} + \underbrace{\Expect \log \frac{P_{X_{n+1}|X^n_2}}{P_{X_{n+1}|X^n}}}_{-I(X_1;X_{n+1}|X_2^n)},
		\end{align*}
		and the first term is 
		\[
		\Expect \log \frac{P_{X_{n+1}|X^n,\theta}}{P_{X_{n+1}|X^n_2}} = 
		\Expect \log \frac{P_{X_{n+1}|X^n_{n-m+1},\theta}}{P_{X_{n+1}|X^n_2}}
		= \Expect \log \frac{P_{X_{n}|X^{n-1}_{n-m},\theta}}{P_{X_{n}|X^{n-1}}}
		= I(\theta;X_{n}|X^{n-1})
		\]
		where the first and second equalities follow from the $m^{\rm th}$-order Markovity and stationarity, respectively.
		Taking supremum over $\pi$ yields $\Risk_n \leq \Risk_{n-1}$. 
		Finally, by the chain rule \prettyref{eq:MIchain}, we have 
		$I(\theta;X^n) \geq (n-m) I(\theta;X_n|X^{n-1})$, yielding
		$\Risk_{n-1} \leq \frac{\Red_n}{n-m}$.		
	\end{remark}

\subsection{Proof of the upper bound part of \prettyref{thm:optimal}}
	\label{sec:main-ub}

Specializing to first-order stationary Markov chains with $k$ states, 
we denote the redundancy and prediction risk in \prettyref{eq:red} and \prettyref{eq:risk} by $\Red_{k,n}$ and $\Risk_{k,n}$, the latter of which is precisely the quantity previously defined in \prettyref{eq:riskkn}. 
Applying \prettyref{lmm:riskred} yields $\Risk_{k,n} \leq \frac{1}{n-1}\Red_{k,n}$.
To upper bound $\Red_{k,n}$, consider the following probability assignment:
\begin{align}
Q(x_1,\cdots,x_n) = \frac{1}{k}\prod_{t=1}^{n-1} \widehat{M}_{x^t}^{+1}(x_{t+1}|x_t)
\label{eq:addone-joint}
\end{align}
where $\widehat{M}^{+1}$ is the add-one estimator defined in \prettyref{eq:addone}.
This $Q$ factorizes as $Q(x_1)=\frac{1}{k}$ and $Q(x_{t+1}|x^t) = \widehat{M}_{x^t}^{+1}(x_{t+1}|x_t)$.
The following lemma bounds the redundancy of $Q$:
\begin{lemma}
\label{lmm:red-addone}	
	$\Red(Q) \leq k(k-1)\left[\log \left(1+\frac{n-1}{k(k-1)}\right)+1\right] + \log k.
	$
\end{lemma}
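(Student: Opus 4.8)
The plan is to bound the \emph{pointwise} redundancy, i.e.\ to exhibit a constant $B=B(n,k)$ with $\log\frac{P_{X^n|\theta}(x^n)}{Q(x^n)}\le B$ for every trajectory $x^n$ and every stationary transition matrix $M$ (with stationary law $\pi$). Since $\Red(Q)=\sup_\theta D(P_{X^n|\theta}\|Q)$ is an average of such log-ratios and $Q(x^n)>0$ for all $x^n$ (so the divergence is finite on the support of $P$), this immediately yields $\Red(Q)\le B$. This is the standard Davisson/Csisz\'ar--Shields route for add-one probability assignments.

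The first step is to rewrite $Q$ from \eqref{eq:addone-joint} in closed form. The online add-one rule $\widehat M^{+1}_{x^t}(\cdot\,|x_t)$ consults only the counts $N_{x_t,\cdot}(x^t)$, which are the running counts of transitions out of state $x_t$ seen strictly before time $t$; hence grouping the $n-1$ factors of \eqref{eq:addone-joint} by their source state decouples the product. For each state $i$, the successor symbols emitted at the visits to $i$ form a string over $[k]$ of length $N_i$ in which symbol $j$ has multiplicity $N_{ij}$, and the product of the corresponding add-one predictions telescopes to the Laplace (Dirichlet with all-ones parameter) probability of that string,
\[
\prod_{t:\,x_t=i}\widehat M^{+1}_{x^t}(x_{t+1}\,|\,i)=\frac{(k-1)!\,\prod_{j=1}^k N_{ij}!}{(N_i+k-1)!}.
\]
Multiplying over $i$ and including the $\tfrac1k$ from $Q(x_1)$ gives $Q(x^n)=\tfrac1k\prod_{i=1}^k\frac{(k-1)!\,\prod_j N_{ij}!}{(N_i+k-1)!}$.

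The second step compares this with $P_{X^n|\theta}(x^n)=\pi(x_1)\prod_{i,j}M(j|i)^{N_{ij}}$. Using the identity $\binom{N_i+k-1}{k-1}\binom{N_i}{N_{i1},\dots,N_{ik}}=\frac{(N_i+k-1)!}{(k-1)!\prod_j N_{ij}!}$ and $\pi(x_1)\le1$,
\[
\log\frac{P_{X^n|\theta}(x^n)}{Q(x^n)}\le\log k+\sum_{i=1}^k\log\binom{N_i+k-1}{k-1}+\sum_{i=1}^k\log\!\Big(\binom{N_i}{N_{i1},\dots,N_{ik}}\prod_{j=1}^k M(j|i)^{N_{ij}}\Big).
\]
For each $i$ the last parenthesis is a single term of the multinomial expansion of $\big(\sum_j M(j|i)\big)^{N_i}=1$, hence $\le1$, so its logarithm is nonpositive; this leaves $\log\frac{P}{Q}\le\log k+\sum_i\log\binom{N_i+k-1}{k-1}$ uniformly in $\theta$. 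Since $m\mapsto\log\binom{m+k-1}{k-1}=\sum_{j=1}^{k-1}\log(m+j)-\log(k-1)!$ is concave on $[0,\infty)$ and $\sum_i N_i=n-1$, Jensen gives $\sum_i\log\binom{N_i+k-1}{k-1}\le k\log\binom{\frac{n-1}{k}+k-1}{k-1}$, and then $\binom{a}{b}\le(ea/b)^b$ with $a=\frac{n-1}{k}+k-1$, $b=k-1$ yields $k(k-1)\big[1+\log(1+\tfrac{n-1}{k(k-1)})\big]$. Adding $\log k$ proves the claimed bound.

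I expect the only non-routine point to be the bookkeeping in the first step: verifying that the prefix counts $N_i(x^t)$ and $N_{ij}(x^t)$ used by the online add-one rule at the times $t$ with $x_t=i$ are exactly the running counts of the sub-string of $i$-transitions, so that the telescoping Laplace/Dirichlet identity applies verbatim to each source state in isolation. Everything afterwards is Gibbs' inequality together with concavity plus the elementary binomial estimate, all routine.
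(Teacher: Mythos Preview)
Your proof is correct and follows the same route as the paper: bound the pointwise redundancy via the Laplace closed form for $Q$, reduce to $\log k+\sum_i\log\binom{N_i+k-1}{k-1}$, and finish by concavity. Your two shortcuts---bounding the multinomial term $\binom{N_i}{N_{i1},\dots,N_{ik}}\prod_j M(j|i)^{N_{ij}}\le 1$ directly rather than passing through Gibbs plus the type-class inequality \eqref{eq:MC_claim}, and applying Jensen to $m\mapsto\log\binom{m+k-1}{k-1}$ followed by $\binom{a}{b}\le(ea/b)^b$ rather than the paper's integral bound on $\sum_\ell\log(1+(k-1)/\ell)$---are harmless simplifications of the same argument.
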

Combined with \prettyref{lmm:riskred}, 
\prettyref{lmm:red-addone}	shows that 
$\Risk_{k,n} \leq C \frac{k^2}{n} \log \frac{n}{k^2}$ for all $k \leq \sqrt{n/C}$ and some universal constant $C$, 
achieved by the estimator \prettyref{eq:markov-add-1}, which is obtained by applying the rule 
\prettyref{eq:red-ach} to \prettyref{eq:addone-joint}.



It remains to show \prettyref{lmm:red-addone}. To do so, we in fact bound 
the pointwise redundancy of the add-one probability assignment \prettyref{eq:addone-joint} over all (not necessarily stationary) Markov chains on $k$ states.
The proof is similar to those of \cite[Theorems 6.3 and 6.5]{csiszar2004information}, which, in turn, follow the arguments of \cite[Sec.~III-B]{davisson1981efficient}. 
\begin{proof}
We show that for every Markov chain with transition matrix $M$ and initial distribution $\pi$, and every trajectory $(x_1,\cdots,x_n)$, it holds that
\begin{align}\label{eq:pointwise-red}
\log\frac{\pi(x_1)\prod_{t=1}^{n-1} M(x_{t+1}|x_t) }{Q(x_1,\cdots,x_n)} \le k(k-1)\left[\log \left(1+\frac{n}{k(k-1)}\right)+1\right] + \log k,
\end{align}
where we abbreviate the add-one estimator $M_{x^t}(x_{t+1}|x_t)$ defined in \prettyref{eq:addone} as $M(x_{t+1}|x_t)$.

To establish \eqref{eq:pointwise-red}, note that $Q(x_1,\cdots,x_n)$ could be equivalently expressed using the empirical counts $N_i$ and $N_{ij}$ in \prettyref{eq:transition.count} as
\begin{align*}
	Q(x_1,\cdots,x_n) = \frac{1}{k}\prod_{i=1}^k \frac{\prod_{j=1}^k N_{ij}!}{k\cdot (k+1)\cdot \cdots\cdot (N_i+k-1)}.
\end{align*}
Note that 
\[
\prod_{t=1}^{n-1} M(x_{t+1}|x_t) = \prod_{i=1}^k \prod_{j=1}^k M(j|i)^{N_{ij}} \le \prod_{i=1}^k \prod_{j=1}^k (N_{ij}/N_i)^{N_{ij}},
\]
where the inequality follows from $\sum_j \frac{N_{ij}}{N_i}\log \frac{N_{ij}/N_i }{M(j|i)}\geq 0$ for each $i$, by the nonnegativity of the KL divergence.
Therefore, we have
\begin{align}\label{eq:likelihood-ratio}
\frac{\pi(x_1)\prod_{t=1}^{n-1} M(x_{t+1}|x_t) }{Q(x_1,\cdots,x_n)} \le k\cdot \prod_{i=1}^k \frac{k\cdot (k+1)\cdot \cdots\cdot (N_i+k-1)}{N_i^{N_i}} \prod_{j=1}^k\frac{ N_{ij}^{N_{ij}} }{N_{ij}!}.
\end{align}
We claim that: for $n_1,\cdots,n_k\in \integers_+$ and $n=\sum_{i=1}^k n_i \in\naturals$, it holds that
\begin{align}\label{eq:MC_claim}
\prod_{i=1}^k \left(\frac{n_i}{n}\right)^{n_i} \le \frac{\prod_{i=1}^k n_i!}{n!},
\end{align}
with the understanding that $(\frac{0}{n})^{0} = 0!=1$.
Applying this claim to \eqref{eq:likelihood-ratio} gives
\begin{align*}
	\log \frac{\pi(x_1)\prod_{t=1}^{n-1} M(x_{t+1}|x_t) }{Q(x_1,\cdots,x_n)} &\le \log k+\sum_{i=1}^k \log \frac{k\cdot (k+1)\cdot \cdots\cdot (N_i+k-1)}{N_i!} \\
	&= \log k + \sum_{i=1}^k \sum_{\ell = 1}^{N_i} \log\left(1+\frac{k-1}{\ell}\right)\\
	&\le \log k + \sum_{i=1}^k \int_0^{N_i} \log\left(1+\frac{k-1}{x}\right)dx \\
	&= \log k + \sum_{i=1}^k \left((k-1)\log\left(1+\frac{N_i}{k-1}\right) + N_i\log\left(1+\frac{k-1}{N_i}\right) \right) \\
	&\stepa{\le} k(k-1)\log \left(1+\frac{n-1}{k(k-1)}\right) + k(k-1) + \log k,
\end{align*}
where (a) follows from the concavity of $x\mapsto \log x$, $\sum_{i=1}^k N_i=n-1$, and $\log(1+x)\le x$. 

It remains to justify \prettyref{eq:MC_claim}, which has a simple information-theoretic proof:
Let $T$ denote the collection of sequences $x^n$ in $[k]^n$ whose \emph{type} is given by $(n_1,\ldots,n_k)$. Namely, for each $x^n \in T$, 
$i$ appears exactly $n_i$ times for each $i \in[k]$.
Let $(X_1,\ldots,X_n)$ be drawn uniformly at random from the set $T$.
Then
\[
\log \frac{n!}{\prod_{i=1}^k n_i!} = H(X_1,\ldots,X_n) \stepa{\leq} \sum_{j=1}^n H(X_j) \stepb{=}  n \sum_{i=1}^k \frac{n_i}{n}\log\frac{n}{n_i},
\]
where (a) follows from the fact that the joint entropy is at most the sum of marginal entropies; (b) is because each $X_j$ is distributed as $(\frac{n_1}{n},\ldots,\frac{n_k}{n})$.
\end{proof}

\section{Optimal rates without spectral gap}
\label{sec:optimal}
In this section, we prove the lower bound part of \prettyref{thm:optimal}, which shows the optimality of the average version of the add-one estimator \prettyref{eq:red-ach}.
We first describe the lower bound construction for three-state chains, which is subsequently extended to $k$ states.

\subsection{Warmup: an $\Omega(\frac{\log n}{n})$ lower bound for three-state chains}
\label{sec:threestates}

\begin{theorem}
\label{thm:threestates}
	$
	\Risk_{3,n} = \Omega\pth{\frac{\log n}{n}}.
	$
	
\end{theorem}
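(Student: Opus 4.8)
The plan is to prove the $k=3$ special case of the reduction announced in \prettyref{eq:riskred-approx}, i.e.\ to show $\Risk_{3,n}\gtrsim \tfrac1n\Red^{\mathsf{sym}}_{2,n}$, and then to invoke the standard fact that the minimax redundancy of the one–parameter family of symmetric two–state Markov chains is of order $\log n$ (identifying a trajectory with its list of ``flips'', this redundancy equals that of $\Bern(p)$ from $n-1$ i.i.d.\ samples, which is $\tfrac12\log n+O(1)$). The two–state chain gets planted inside a three–state chain by adjoining a ``holding'' state $0$ that the chain leaves at rate $\Theta(1/n)$; the effect is that the number of steps actually carrying information about the planted parameter is a \emph{random} quantity, essentially uniform on $\{0,\dots,n-1\}$, so that the individual prediction risk at time $n$ turns into an average of prediction risks over all sample sizes $0,\dots,n-1$, i.e.\ the cumulative risk, which by the estimation–compression inequality \prettyref{eq:riskred1} is comparable to the redundancy.

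Concretely, I would fix the family $\{M_p:p\in[\tfrac14,\tfrac13]\}$ of chains on $\{0,1,2\}$ with $M_p(0|0)=1-\tfrac2n$, $M_p(1|0)=M_p(2|0)=\tfrac1n$, $M_p(0|1)=M_p(0|2)=\tfrac2n$, $M_p(1|1)=M_p(2|2)=1-\tfrac2n-p$, $M_p(2|1)=M_p(1|2)=p$. Each $M_p$ is irreducible and reversible (the swap $1\leftrightarrow2$ is an automorphism, so Kolmogorov's cycle criterion holds automatically), with stationary law $(\tfrac12,\tfrac14,\tfrac14)$ and absolute spectral gap $\Theta(1/n)$; this already delivers the ``furthermore'' part of \prettyref{thm:optimal} for three states. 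Endow $p$ with a prior $P_p$, to be chosen. Since the $M_p$ are stationary, $\Risk_{3,n}$ is at least the Bayes risk, which by \prettyref{eq:MC_Bayes-MI} equals $I(p;X_{n+1}\mid X^n)$, so it remains to lower bound this conditional mutual information.

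Let $\tau\in[n]$ be the number of leading zeros and let $E=\{X_{\tau+1},\dots,X_n\in\{1,2\}\}$ be the event that after the initial sojourn at $0$ the chain never returns to $0$. Because the escape probability $M_p(0|1)=M_p(0|2)=\tfrac2n$ is the same from $1$ and from $2$ and does not depend on $p$, one computes $\Prob[E\cap\{\tau=t\}]=\tfrac1n(1-\tfrac2n)^{n-2}$ for every $t\in\{1,\dots,n-1\}$, independently of $p$, hence $\ge c_0/n$ for a universal $c_0>0$. Restricting the nonnegative integrand in $I(p;X_{n+1}\mid X^n)$ to the $X^n$–measurable events $E\cap\{\tau=t\}$ gives
\[
I(p;X_{n+1}\mid X^n)\ \ge\ \sum_{t=1}^{n-1}\Prob[E\cap\{\tau=t\}]\; I\!\left(p;X_{n+1}\mid X^n,E,\tau=t\right).
\]
Conditioned on $E\cap\{\tau=t\}$ (write $m=n-t$), the leading zeros are deterministic and carry no information, while a change of measure shows that $(X_{t+1},\dots,X_n)=:Y^m$ is a symmetric two–state chain with flip probability $\tilde q(p)=p/(1-\tfrac2n)$ started from the uniform distribution; the rescaling of the flip probability is \emph{exact} since, by the $1\leftrightarrow2$ symmetry, the subchain's probability of avoiding $0$ over the remaining window is identical from $1$ and from $2$. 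Since $M_p(\cdot|X_n)$ splits off the mass $\tfrac2n$ on state $0$ (predicted with zero error), a short computation yields $I(p;X_{n+1}\mid X^n,E,\tau=t)=(1-\tfrac2n)\,I(\tilde q;Y_{m+1}\mid Y^m)$, the right–hand side referring to the length–$(m{+}1)$ symmetric two–state chain with parameter $\tilde q$. Summing over $t$, using the chain rule and $I(\tilde q;Y_1)=0$ (the first state is uniform whatever $\tilde q$ is), $\sum_{t=1}^{n-1} I(p;X_{n+1}\mid X^n,E,\tau=t)=(1-\tfrac2n)\sum_{m=1}^{n-1} I(\tilde q;Y_{m+1}\mid Y^m)=(1-\tfrac2n)\,I(\tilde q;Y^n)$. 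Choosing $P_p$ so that the induced prior on $\tilde q$ is capacity–achieving for this (restricted) symmetric two–state family makes $I(\tilde q;Y^n)$ equal to its minimax redundancy (\prettyref{eq:capred}), which is $\Theta(\log n)$ as above; hence $\Risk_{3,n}\ge \tfrac{c_0}{n}(1-\tfrac2n)\,I(\tilde q;Y^n)=\Omega(\log n/n)$.

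The step I expect to be the crux is the conditional analysis in the third paragraph: one must verify that $E\cap\{\tau=t\}$ has probability exactly $\tfrac1n(1-\tfrac2n)^{n-2}$ for \emph{every} $t$ (so that a common factor is preserved and the sum over $t$ genuinely reconstructs the full redundancy $I(\tilde q;Y^n)$ rather than only a $\Theta(1/n)$ parametric term), and that conditioning on this event reduces the data about $p$ \emph{exactly} to a length–$m$ symmetric two–state trajectory with a rescaled parameter. Both facts hinge on the symmetric, $p$–independent placement of the escape transitions ($M_p(0|1)=M_p(0|2)=\tfrac2n$) and the $1\leftrightarrow2$ symmetry of the subchain; the remaining manipulations (the estimation–compression bound \prettyref{eq:riskred1}, the variational identity \prettyref{eq:MI-var}, and the order of the Bernoulli redundancy) are routine.
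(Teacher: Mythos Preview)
Your proposal is correct. It does, however, take a different route from the paper's proof of \prettyref{thm:threestates} in \prettyref{sec:threestates}. There the paper places the uniform prior on $p$, computes the Bayes estimator of $p$ in closed form via the beta–binomial identity of \prettyref{lmm:addone}, evaluates the Bayes MSE restricted to the step trajectories as the harmonic sum $\frac{1}{n-1}\sum_{t}\frac{1}{n-t+1}=\Theta(\tfrac{\log n}{n})$, and then passes from squared loss to KL via Pinsker. You instead invoke the mutual-information representation \prettyref{eq:MC_Bayes-MI}, restrict to the step events (whose probability you correctly show to be \emph{exactly} the same for every $t$ and independent of $p$), identify the conditional problem with the embedded symmetric two-state chain at a rescaled parameter, telescope by the chain rule to recover $I(\tilde q;Y^n)$, and finish with \prettyref{eq:capred} together with the $\Theta(\log n)$ Bernoulli redundancy. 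In effect you have rediscovered, for $k=3$, the paper's general reduction \prettyref{lmm:riskred_markov} of \prettyref{sec:kstates} (with slightly different escape rates). What the paper's direct proof buys is full self-containment---no capacity theorem, no external redundancy bound, just a one-line beta integral; what your approach buys is that it is already the argument that scales to arbitrary $k$ and makes the heuristic \prettyref{eq:riskred-approx} precise.
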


To show \prettyref{thm:threestates}, consider the following one-parameter family of transition matrices:
\begin{equation}
\calM = \sth{ \M_p=\qth{\begin{matrix} 
1-\frac{2}{n} & \frac{1}{n} & \frac{1}{n}\\  
\frac{1}{n} & 1-\frac{1}{n}-p & p\\  
\frac{1}{n} & p & 1-\frac{1}{n}-p 
 \end{matrix}}\colon   0 \leq p \leq 1-\frac{1}{n}}.
\label{eq:M3}
\end{equation}
Note that each transition matrix in $\calM$ is symmetric (hence doubly stochastic), whose corresponding chain is reversible with a uniform stationary distribution and spectral gap $\Theta(\frac{1}{n})$; see \prettyref{fig:M3}.
\begin{figure}[ht]%
\centering
\begin{center}
	\begin{tikzpicture}[scale=0.6,
		roundnode/.style={circle, draw=black, thick, minimum size=5mm},
		squarednode/.style={rectangle, draw=black,semithick, minimum size=5mm},minimum size= 1mm, every edge/.style={
			draw,->,>=stealth',auto,semithick}]
		\node[roundnode] (s1) at (0,2*1.732) {1};
		\node[roundnode] (s2) at (-2,0) {2};
		\node[roundnode] (s3) at (2,0) {3};
		\path[->] (s1) edge [bend left=15] node [right] {$\frac 1n$}(s2)
		edge [bend left=15] node [right] {$\frac 1n$}(s3)
		edge [loop left,every loop/.style={looseness=12, in=60,out=120}] node [above]{$1-\frac 2n$}();
		
		\path[->] (s2) edge [bend left=15] node [left] {$\frac 1n$}(s1)
		edge [bend left=15] node [above] {$p$}(s3)
		edge [loop left,every loop/.style={looseness=12, in=150,out=210}] node {$1-\frac 1n-p$}();
		
		\path[->] (s3) edge [bend left=15] node [below] {$p$}(s2)
		edge [bend left=15] node [left] {$\frac 1n$}(s1)
		edge [loop right, every loop/.style={looseness=12, in=330,out=30}] node {$1-\frac 1n-p$}();
	\end{tikzpicture}
\end{center}
\caption{Lower bound construction for three-state chains.}%
\label{fig:M3}%
\end{figure}
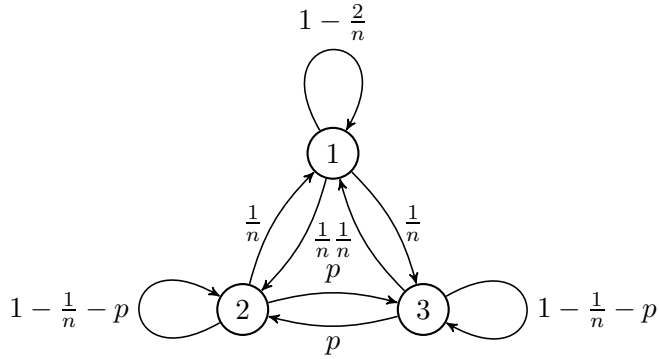

The main idea is as follows. 
Notice that by design, with constant probability, the trajectory is of the following form: The chain starts and stays at state 1 for $t$ steps, and then transitions into state 2 or 3 and never returns to state 1, where $t=1,\ldots,n-1$. Since $p$ is the single unknown parameter, the only useful observations are visits to state $2$ and $3$ and each visit entails one observation about $p$ by flipping a coin with bias roughly $p$. Thus the effective sample size for estimating $p$ is $n-t-1$ and we expect the best estimation error is of the order of $\frac{1}{n-t}$. However, $t$ is not fixed. In fact, conditioned on the trajectory is of this form, $t$ is roughly uniformly distributed between $1$ and $n-1$. As such, we anticipate the estimation error of $p$ is approximately
\[
\frac{1}{n-1}\sum_{i=1}^{n-1} \frac{1}{n-t} = \Theta\pth{\frac{\log n}{n}}.
\]
Intuitively speaking, the construction in \prettyref{fig:M3} ``embeds'' a symmetric two-state chain (with states 2 and 3) with unknown parameter $p$ into a space of three states, by adding a ``nuisance'' state 1, which effectively slows down the exploration of the useful part of the state space, so that in a trajectory of length $n$, the effective number of observations we get to make about $p$ is roughly uniformly distributed between $1$ and $n$. This explains the extra log factor in \prettyref{thm:threestates}, which actually stems from the harmonic sum in $\Expect[\frac{1}{\Unif([n])}]$.
We will fully explore this embedding idea in \prettyref{sec:kstates} to deal with larger state space.

	
	

Next we make the above intuition rigorous using a Bayesian argument.
	Let us start by recalling the following well-known lemma.
\begin{lemma}
\label{lmm:addone}	
	Let $q \sim \Unif(0,1)$. Conditioned on $q$, let $N \sim \Binom(m,q)$.
	Then the Bayes estimator of $q$ given $N$ is the ``add-one'' estimator:
	\[
	\Expect[q|N] = \frac{N+1}{m+2}
	\]
	and the Bayes risk is given by
	\[
	\Expect[(q-\Expect[q|N])^2] = \frac{1}{6(m+2)}.
	\]
	
\end{lemma}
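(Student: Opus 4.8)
The plan is to exploit Beta--Binomial conjugacy. First I would compute the posterior density of $q$ given $N=j$: since the prior is $\Unif(0,1) = \mathrm{Beta}(1,1)$ and the likelihood is $\binom{m}{j} q^j (1-q)^{m-j}$, Bayes' rule shows the posterior is proportional to $q^j(1-q)^{m-j}$ on $[0,1]$, i.e.\ $\mathrm{Beta}(j+1,\, m-j+1)$. Normalizing with the Beta integral $\int_0^1 q^{a-1}(1-q)^{b-1}\,dq = \frac{\Gamma(a)\Gamma(b)}{\Gamma(a+b)}$ and reading off the mean of $\mathrm{Beta}(a,b)$ as $a/(a+b)$ yields $\Expect[q\mid N=j] = \frac{j+1}{m+2}$, which is the claimed add-one estimator; being the posterior mean, it is automatically the Bayes estimator under squared loss.

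For the Bayes risk I would invoke the standard MMSE identity $\Expect[(q-\Expect[q\mid N])^2] = \Expect[q^2] - \Expect[(\Expect[q\mid N])^2]$, a consequence of the orthogonality principle. Here $\Expect[q^2] = \int_0^1 q^2\,dq = \frac13$, while $\Expect[(\Expect[q\mid N])^2] = \frac{1}{(m+2)^2}\Expect[(N+1)^2]$. The moments of $N$ follow from the tower property: $\Expect[N\mid q] = mq$ and $\Var(N\mid q) = mq(1-q)$, so $\Expect[N] = m\Expect[q] = \frac m2$ and $\Expect[N^2] = \Expect[mq(1-q)+m^2q^2] = m\Expect[q] + (m^2-m)\Expect[q^2] = \frac m2 + \frac{m^2-m}{3} = \frac{m^2}{3} + \frac m6$. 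Hence $\Expect[(N+1)^2] = \frac{m^2}{3} + \frac{7m}{6} + 1$, and simplifying $\frac13 - \frac{1}{(m+2)^2}\bigl(\frac{m^2}{3} + \frac{7m}{6} + 1\bigr)$ collapses the numerator to $\frac{m+2}{6}$, giving the Bayes risk $\frac{1}{6(m+2)}$.

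There is no genuine obstacle here; the only thing to watch is the arithmetic in the final simplification. As a cross-check (or as an alternative to the MMSE identity), one may instead average the posterior variance: the variance of $\mathrm{Beta}(a,b)$ is $\frac{ab}{(a+b)^2(a+b+1)}$, so with $a=N+1$, $b=m-N+1$, $a+b=m+2$ the conditional variance is $\frac{(N+1)(m-N+1)}{(m+2)^2(m+3)}$; using $\Expect[(N+1)(m-N+1)] = (m+2)\Expect[N+1] - \Expect[(N+1)^2] = \frac{(m+2)(m+3)}{6}$ with the moments above again yields $\frac{1}{6(m+2)}$.
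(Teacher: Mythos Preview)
Your proof is correct. The paper itself does not prove this lemma: it introduces it as a ``well-known'' fact and simply states the conclusion, so there is no paper proof to compare against. Your Beta--Binomial conjugacy argument is the standard way to justify both claims, and the arithmetic checks out (including your cross-check via averaging the posterior variance).
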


\begin{proof}[Proof of \prettyref{thm:threestates}]
Consider the following Bayesian setting: First, we draw $p$ uniformly at random from $[0,1-\frac{1}{n}]$. Then, we generate the sample path $X^n=(X_1,\ldots,X_n)$ of a stationary (uniform) Markov chain with transition matrix $\M_p$ as defined in \prettyref{eq:M3}.
Define 
	\begin{align}\label{eq:MC_Xt-sets}
		\begin{gathered}
	\calX_t = \{x^n: x_1=\ldots=x_t=1, x_i \neq 1, i=t+1,\ldots,n\}, \quad t=1,\dots,n-1,\\
	\quad \calX = \cup_{t=1}^{n-1} \calX_t.
	\end{gathered}
	\end{align}
	Let $\mu(x^n|p) = \prob{X=x^n}$. Then 
	\begin{equation}
	\mu(x^n|p) = \frac{1}{3} \pth{1-\frac{2}{n}}^{t-1} \frac{2}{n} p^{N(x^n)}  \pth{1-\frac{1}{n}-p}^{n-t-1-N(x^n)}, \quad x^n \in \calX_t,
	\label{eq:muxp}
	\end{equation}
	where $N(x^n)$ denotes the number of transitions from state 2 to 3 or from 3 to 2.
	Then 
	\begin{align}
	\prob{X^n \in \calX_t}
	= & ~ \frac{1}{3} \pth{1-\frac{2}{n}}^{t-1}  \frac{2}{n}  \sum_{k=0}^{n-t-1}\binom{n-t-1}{k}  p^{k}  \pth{1-\frac{1}{n}-p}^{n-t-1-k} \nonumber \\
	= & ~ 	\frac{1}{3} \pth{1-\frac{2}{n}}^{t-1}  \frac{2}{n} \pth{1-\frac{1}{n}}^{n-t-1} = 	\frac{2}{3n} \pth{1-\frac{1}{n}}^{n-2}  \pth{1-\frac{1}{n-1}}^{t-1}
	\label{eq:calXt}
	\end{align}
	and hence
	\begin{align}
	\prob{X^n \in \calX} = & ~ \sum_{t=1}^{n-1} \prob{X^n \in \calX_t} = \frac{2(n-1)}{3n}  \pth{1-\frac{1}{n}}^{n-2}\pth{1-\pth{1-\frac{1}{n-1}}^{n-1}}	\label{eq:calX} 	\\
	= & ~ \frac{2(1-1/e)}{3e}  +o_n(1) \nonumber.
	\end{align}
	Consider the Bayes estimator (for estimating $p$ under the mean-squared error)
	\[
	\hat p(x^n) = \Expect[p|x^n] = \frac{\Expect[p \cdot \mu(x^n|p)]}{\Expect[\mu(x^n|p)]}.
	\]
	For $x^n\in \calX_t$, using \prettyref{eq:muxp} we have  
	\begin{align*}
	\hat p(x^n)
	= & ~ \frac{\expect{ p^{N(x^n)+1}  \pth{1-\frac{1}{n}-p}^{n-t-1-N(x^n)}}}{ \expect{ p^{N(x^n)}  \pth{1-\frac{1}{n}-p}^{n-t-1-N(x^n)}}  }, \quad p \sim \Unif\pth{0,\frac{n-1}{n}} \\
	= & ~ \frac{n-1}{n}	\frac{\expect{ U^{N(x^n)+1}  \pth{1-U}^{n-t-1-N(x^n)}}}{ \expect{ U^{N(x^n)}  \pth{1-U}^{n-t-1-N(x^n)}}  }, \quad U \sim \Unif(0,1)	\\
	= & ~ \frac{n-1}{n} \frac{N(x^n)+1}{n-t+1},
	\end{align*}
	where the last step follows from \prettyref{lmm:addone}.
	From \prettyref{eq:muxp}, we conclude that conditioned on $X^n \in \calX_t$ and on $p$, $N(X^n) \sim \Binom(n-t-1,q)$, where $q = \frac{p}{1-\frac{1}{n}} \sim \Unif(0,1)$.
	Applying \prettyref{lmm:addone} (with $m=n-t-1$ and $N=N(X^n)$), we get
	\begin{align*}
	\Expect[(p-\hat p(X^n))^2|X^n \in \calX_t]
	= & ~ \pth{\frac{n-1}{n}}^2 \Expect\qth{\pth{q- \frac{N(x^n)+1}{n-t+1}  }^2} \\
	= & ~  \pth{\frac{n-1}{n}}^2 \frac{1}{6(n-t+1)}.
	\end{align*}
	Finally, 
	note that conditioned on $X^n \in \calX$, the probability of $X^n \in \calX_t$ is close to uniform. Indeed, from \prettyref{eq:calXt} and \prettyref{eq:calX} we get
	\[
	\prob{X^n \in \calX_t| \calX}
	= \frac{1}{n-1} 
	\frac{\pth{1-\frac{1}{n-1}}^{t-1}}{1- \pth{1-\frac{1}{n-1}}^{n-1} } \geq \frac{1}{n-1} \pth{ \frac{1}{e-1} + o_n(1)}, \quad t=1,\ldots,n-1.
	\]
	Thus
	\begin{align}
	\Expect[(p-\hat p(X^n))^2 \indc{X^n \in \calX}]   
	= & ~ \prob{X^n \in \calX}  \sum_{t=1}^{n-1} \Expect[(p-\hat p(X^n))^2 | X^n \in \calX_t] \prob{X^n \in \calX_t| \calX}   \nonumber \\
	\gtrsim & ~ \frac{1}{n-1} \sum_{t=1}^{n-1} \frac{1}{n-t+1}  = \Theta\pth{\frac{\log n}{n}} \label{eq:pbayes}.
	\end{align}
	
	Finally, we relate \prettyref{eq:pbayes} formally to the minimax prediction risk under the KL divergence.
	Consider any predictor $\hat \M(\cdot|i)$ (as a function of the sample path $X$) for the $i$th row of $\M$, $i=1,2,3$.
	By Pinsker inequality, we conclude that
	\begin{align}
	D(\M(\cdot|2) \| \hat \M(\cdot|2))
 \geq  \frac{1}{2} \|\M(\cdot|2)-\hat \M(\cdot|2)\|_{\ell_1}^2 \geq \frac{1}{2} (p-\hat \M(3|2))^2
	\end{align}
	and similarly, $D(\M(\cdot|3) \| \hat \M(\cdot|3)) \geq \frac{1}{2}(p-\hat \M(2|3))^2$.
	Abbreviate $\hat \M(3|2) \equiv \hat p_2$ and $\hat \M(2|3) \equiv \hat p_3$, both functions of $X$.
	Taking expectations over both $p$ and $X$, 
	the Bayes prediction risk can be bounded as follows
	\begin{align}
	& \sum_{i=1}^3 \Expect[D( \M(\cdot|i)\|\hat \M(\cdot|i)) \indc{X_n=i} ]	\nonumber \\
	\geq & ~ \frac{1}{2} \Expect[(p-\hat p_2)^2 \indc{X_n=2} + (p-\hat p_3)^2 \indc{X_n=3}]	\nonumber \\
	\geq  & ~ \frac{1}{2} \sum_{x \in \calX} \mu(x^n)\pth{ \Expect[(p-\hat p_2)^2|X=x^n]  \indc{x_n=2} + \Expect[(p-\hat p_3)^2|X=x^n] \indc{x_n=3} }\nonumber \\
	\geq  & ~ \frac{1}{2} \sum_{x^n \in \calX} \mu(x^n) \Expect[(p-\hat p(x^n))^2|X=x^n]  (\indc{x_n=2} + \indc{x_n=3}) \nonumber \\
	= & ~ \frac{1}{2} \sum_{x^n \in \calX} \mu(x^n) \Expect[(p-\hat p(x^n))^2|X=x^n] \nonumber \\
	= & ~ \frac{1}{2} \Expect[(p-\hat p(X))^2 \indc{X\in\calX}] \overset{\prettyref{eq:pbayes}}{=} \Theta\pth{\frac{\log n}{n}}.  \nonumber
	\end{align}	
\end{proof}

\subsection{$k$-state chains}
\label{sec:kstates}

The lower bound construction for $3$-state chains in \prettyref{sec:threestates}
 can be generalized to $k$-state chains.
The high-level argument is again to augment a $(k-1)$-state chain into a $k$-state chain. Specifically, we partition the state space $[k]$ into two sets $\calS_1=\{1\}$ and $\calS_2=\{2,3,\cdots,k\}$. 
Consider a $k$-state Markov chain such that the transition probabilities from $\calS_1$ to $\calS_2$, and from $\calS_2$ to $\calS_1$, are both very small (on the order of $\Theta(1/n)$). At state $1$, the chain either stays at $1$ with probability $1-1/n$ or moves to one of the states in $\calS_2$ with equal probability $\frac{1}{n(k-1)}$; 
at each state in $\calS_2$, the chain moves to $1$ with probability $\frac{1}{n}$; otherwise, within the state subspace $\calS_2$, the chain evolves according to some symmetric transition matrix $T$. (See \prettyref{fig:Mk} in Section~\ref{subsec:chain_construction} for the precise transition diagram.)


The key feature of such a chain is as follows. Let $\calX_t$ be the event that $X_1,X_2,\cdots,X_t\in \calS_1$ and $X_{t+1},\cdots,X_n\in \calS_2$. For each $t\in [n-1]$, 
one can show that $\PP(\calX_t)\ge c/n$ for some absolute constant $c>0$. Moreover, conditioned on the event $\calX_t$, $(X_{t+1},\ldots,X_n)$ is equal in law to a stationary Markov chain $(Y_1,\cdots,Y_{n-t})$ on state space $\calS_2$ with symmetric transition matrix $T$. 
It is not hard to show that estimating $M$ and $T$ are nearly equivalent. Consider the Bayesian setting where $T$ is drawn from some prior. We have
\begin{align*}
	\inf_{\widehat{M}}\EE_{T}\left[\EE[D(M(\cdot | X_n) \| \widehat{M}(\cdot|X_n)) | \calX_t]\right] \approx \inf_{\widehat{T}}\EE_{T}\left[\EE[D(T(\cdot | Y_{n-t}) \| \widehat{T}(\cdot|Y_{n-t}))]\right] 
	=  I(T; Y_{n-t+1} | Y^{n-t}),
\end{align*}
where the last equality follows from the representation \prettyref{eq:MC_Bayes-MI} of Bayes prediction risk as conditional mutual information.
Lower bounding the minimax risk by the Bayes risk, we have
\begin{align}\label{eq:riskred3}
	\Risk_{k,n} &\ge \inf_{\widehat{M}}\EE_{T}\left[\EE[D(M(\cdot | X_n) \| \widehat{M}(\cdot|X_n))]\right] \nonumber \\
	&\ge \inf_{\widehat{M}}\sum_{t=1}^{n-1} \EE_{M}\left[\EE[D(M(\cdot | X_n) \| \widehat{M}(\cdot|X_n))|\calX_t] \cdot \PP(\calX_t)\right] \nonumber \\
	&\ge \frac{c}{n}\cdot \sum_{t=1}^{n-1} \inf_{\widehat{M}}\EE_{M}\left[\EE[D(M(\cdot | X_n) \| \widehat{M}(\cdot|X_n))|\calX_t] \right] \nonumber \\
	&\approx \frac{c}{n}\cdot \sum_{t=1}^{n-1} I(T;Y_{n-t+1}|Y^{n-t}) = \frac{c}{n}\cdot (I(T;Y^n) - I(T;Y_1)). 
\end{align}
Note that $I(T;Y_1)\le H(Y_1)\le \log(k-1)$ since $Y_1$ takes values in $\calS_2$. 
Maximizing the right hand side over the prior $P_T$ and recalling the dual representation for redundancy in \prettyref{eq:capred}, the above inequality \eqref{eq:riskred3} leads to a risk lower bound of $\Risk_{k,n} \gtrsim \frac{1}{n} (\Red_{k-1,n}^{\sf sym} - \log k)$, 
where $\Red_{k-1,n}^{\sf sym}=\sup I(T;Y_1)$ is the redundancy for \emph{symmetric} Markov chains with $k-1$ states and sample size $n$. 
Since symmetric transition matrices still have $\Theta(k^2)$ degrees of freedom, it is expected that $\Red_{k,n}^{\sf sym} \asymp k^2\log \frac{n}{k^2}$ for $n\gtrsim k^2$, so that \eqref{eq:riskred3} yields the desired lower bound $\Risk_{k,n} = \Omega(\frac{k^2}{n} \log \frac{n}{k^2})$ in Theorem \ref{thm:optimal}.

	
Next we rigorously carry out the lower bound proof sketched above: In Section \ref{subsec:chain_construction}, we explicitly construct the $k$-state chain which satisfies the desired properties in \prettyref{sec:kstates}. In Section \ref{subsec:bayes_lower_bound}, we make the steps in \eqref{eq:riskred3} precise and bound the Bayes risk from below by an appropriate mutual information. In Section \ref{subsec:prior_construction}, we choose a prior distribution on the transition probabilities and prove a lower bound on the resulting mutual information, thereby completing the proof of Theorem \ref{thm:optimal}, with the added bonus that the construction is restricted to irreducible and reversible chains.

\subsubsection{Construction of the $k$-state chain}\label{subsec:chain_construction}
We construct a $k$-state chain with the following transition probability matrix:
\begin{align}\label{eq:M_construction}
	M = \left[\begin{matrix}
		1 - \frac{1}{n} & \begin{matrix} \frac{1}{n(k-1)} & \frac{1}{n(k-1)}  & \cdots & \frac{1}{n(k-1)}  \end{matrix} \\
		\begin{matrix} 1/n \\ 1/n \\ \vdots \\ 1/n \end{matrix} & \mbox{\LARGE $\left(1-\frac{1}{n}\right)T$}
	\end{matrix}\right],
\end{align}
where $T\in \reals^{\calS_2\times \calS_2}$ is a symmetric stochastic matrix to be chosen later. The transition diagram of $M$ is shown in Figure \ref{fig:Mk}. 
One can also verify that the spectral gap of $M$ is $\Theta(\frac{1}{n})$.

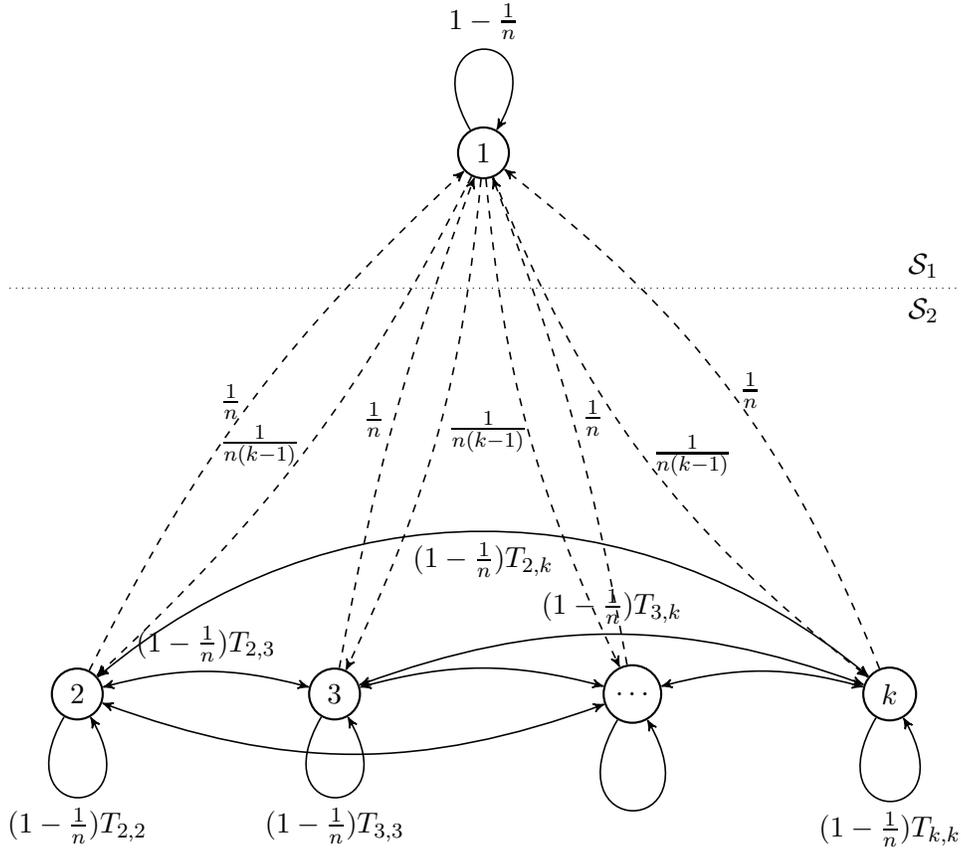
\begin{figure}[ht]%
	\centering
	\begin{center}
		\begin{tikzpicture}[scale=0.9,
			roundnode/.style={circle, draw=black, thick, minimum size=5mm},
			squarednode/.style={rectangle, draw=black,semithick, minimum size=5mm},minimum size= 1mm, every edge/.style={draw,->,>=stealth',auto,semithick}]
			\node [roundnode] (s1) at (4,6) {1}; 
			\node [roundnode] (s2) at (-2,-2) {2};
			\node [roundnode] (s3) at (1.8,-2) {3};
			\node [roundnode] (s4) at (6.2,-2) {$\ldots$};
			\node [roundnode] (s5) at (10,-2) {$k$};
			\draw [dotted] (-3,4) -- (11,4); 
			\node [above] at (10.5,4) {$\calS_1$}; \node [below] at (10.5,4) {$\calS_2$};
			
			\path (s1) edge [dashed, bend left=10] node [left] {$\frac{1}{n(k-1)}$} (s2)
			edge [dashed, bend left=10] node [right] {$\frac{1}{n(k-1)}$} (s3)
			edge [dashed, bend right=10] (s4)
			edge [dashed, bend right=15] node [right] {$\frac{1}{n(k-1)}$} (s5)
			edge [loop left,every loop/.style={looseness=12, in=60,out=120}] node [above]{$1-\frac{1}{n}$}();
			
			\path [->] (s2) edge [dashed, bend left=10] node [left] {$\frac{1}{n}$} (s1)
			edge [bend left=15, <->] node [above] {$(1-\frac{1}{n})T_{2,3}$} (s3)
			edge [bend right=20, <->] (s4)
			edge [bend left=40, <->] node [below] {$(1-\frac{1}{n})T_{2,k}$} (s5)
			edge [loop left,every loop/.style={looseness=12, in=300,out=240}] node [below]{$(1-\frac{1}{n})T_{2,2}$}();
			
			\path [->] (s3) edge [dashed, bend left=5] node [left] {$\frac{1}{n}$} (s1)
			edge [bend left=15, <->] (s4)
			edge [loop left,every loop/.style={looseness=12, in=300,out=240}] node [below]{$(1-\frac{1}{n})T_{3,3}$}()
			edge [bend left=20, <->] node [above] {$(1-\frac{1}{n})T_{3,k}$} (s5);
			
			\path [->] (s4) edge [bend right=5, dashed] node [right] {$\frac{1}{n}$} (s1)
			edge [loop left,every loop/.style={looseness=12, in=300,out=240}] ();
			
			\path [->] (s5) edge [bend right=15, dashed] node [right] {$\frac{1}{n}$} (s1)
			edge [loop left,every loop/.style={looseness=12, in=300,out=240}] node [below]{$(1-\frac{1}{n})T_{k,k}$}()
			edge [bend right=15, <->] (s4);
		\end{tikzpicture}
	\end{center}
	\caption{Lower bound construction for $k$-state chains. Solid arrows represent transitions within $\calS_1$ and $\calS_2$, and dashed arrows represent transitions between $\calS_1$ and $\calS_2$. The double-headed arrows denote transitions in both directions with equal probabilities.}%
	\label{fig:Mk}%
\end{figure}

Let $(X_1,\ldots,X_n)$ be the trajectory of a stationary Markov chain with transition matrix $M$. 
We observe the following properties:
\begin{enumerate}[label=(P\arabic*)]
	\item \label{pt:1} This Markov chain is irreducible and reversible, with stationary distribution $(\frac{1}{2},\frac{1}{2(k-1)},\cdots,\frac{1}{2(k-1)})$; 
	\item \label{pt:2} For $t\in [n-1]$, let $\calX_t$ denote the collections of trajectories $x^n$ such that $x_1,x_2,\cdots,x_t\in \calS_1$ and $x_{t+1},\cdots,x_n\in \calS_2$. Then
	\begin{align}\label{eq:Xt_prob}
		\PP(X^n\in\calX_t)&= \PP(X_1=\cdots=X_t=1)\cdot \PP(X_{t+1}\neq 1 | X_t = 1)\cdot \prod_{s=t+1}^{n-1} \PP(X_{s+1}\neq 1|X_s\neq 1) \nonumber \\
		&= \frac{1}{2}\cdot \left(1-\frac{1}{n}\right)^{t-1}\cdot \frac{1}{n}\cdot \left(1-\frac{1}{n}\right)^{n-1-t} \ge \frac{1}{2en}.
	\end{align}
	Moreover, this probability does not depend of the choice of $T$; 
	
	\item \label{pt:3} Conditioned on the event that $X^n\in\calX_t$, the trajectory $(X_{t+1},\cdots,X_n)$ has the same distribution as a length-$(n-t)$ trajectory of a stationary Markov chain with state space $\calS_2=\{2,3,\cdots,k\}$ and transition probability $T$, and the uniform initial distribution.
	Indeed,
	\begin{align*}
	\prob{X_{t+1}=x_{t+1},\ldots,X_n=x_n|X^n\in\calX_t}
	= & ~ \frac{\frac{1}{2}\cdot \left(1-\frac{1}{n}\right)^{t-1}\cdot \frac{1}{n(k-1)} \prod_{s=t+1}^{n-1} M(x_{s+1}|x_s)  }{\frac{1}{2}\cdot \left(1-\frac{1}{n}\right)^{t-1}\cdot \frac{1}{n}\cdot \left(1-\frac{1}{n}\right)^{n-1-t}} \\
	= & ~ \frac{1}{k-1} 
 \prod_{s=t+1}^{n-1} T(x_{s+1}|x_s).
	\end{align*}
\end{enumerate}

\subsubsection{Reducing the Bayes prediction risk to redundancy}\label{subsec:bayes_lower_bound}
Let $\calM_{k-1}^{\mathsf{sym}}$ be the collection of all symmetric transition matrices on state space $\calS_2=\{2,\ldots,k\}$.
Consider a Bayesian setting where the transition matrix $M$ is constructed in \eqref{eq:M_construction} and the submatrix $T$ is drawn from an arbitrary prior on $\calM_{k-1}^{\mathsf{sym}}$.
The following lemma lower bounds the Bayes prediction risk.

\begin{lemma}\label{lmm:riskred_markov}
Conditioned on $T$, let $Y^n=(Y_1,\ldots,Y_n)$ denote a stationary Markov chain on state space $\calS_2$ with transition matrix $T$ and uniform initial distribution. Then
\begin{align*}
	\inf_{\widehat{M}}\EE_{T}\left[\EE[D(M(\cdot|X_n)\| \widehat{M}(\cdot|X_n)) ]\right] \ge \frac{n-1}{2en^2}\left(I(T;Y^n) - \log (k-1)\right). 
\end{align*}
\end{lemma}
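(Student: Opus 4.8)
The plan is to turn the heuristic chain \eqref{eq:riskred3} into a rigorous argument, using properties \ref{pt:2} and \ref{pt:3} of the construction in \prettyref{fig:Mk} together with the identification \eqref{eq:MC_Bayes-MI} of Bayes prediction risk with conditional mutual information. First I would discard everything outside the event $\calX=\cup_{t=1}^{n-1}\calX_t$: since KL divergence is nonnegative, for any predictor $\widehat M$ the Bayes risk is at least $\sum_{t=1}^{n-1}\PP(\calX_t)\,\EE_T\EE[D(M(\cdot|X_n)\|\widehat M(\cdot|X_n))\mid\calX_t]$, and by \prettyref{eq:Xt_prob} each $\PP(\calX_t)\ge\frac{1}{2en}$. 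Using $\inf_{\widehat M}\sum_t f_t(\widehat M)\ge\sum_t\inf_{\widehat M}f_t(\widehat M)$ and the fact that every conditional mutual information below is nonnegative, it then suffices to show, for each fixed $t$, that the conditional Bayes risk $\inf_{\widehat M}\EE_T\EE[D(M(\cdot|X_n)\|\widehat M(\cdot|X_n))\mid\calX_t]$ is at least $(1-\tfrac1n)\,I(T;Y_{n-t+1}\mid Y^{n-t})$.

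Next I would reduce a $k$-state predictor to a $(k-1)$-state predictor on the block $\calS_2$. On the event $\calX_t$ the prefix is deterministically $X_1=\cdots=X_t=1$ and the value of $t$ is read off from $X^n$, so $\widehat M(\cdot|X_n)$ is a measurable function of $(X_{t+1},\dots,X_n)$ alone, and by property \ref{pt:2} the prefix carries no information about $T$. Writing any candidate row $\widehat M(\cdot|i)$ with $i\in\calS_2$ as mass $\widehat a\in(0,1)$ on state $1$ and $(1-\widehat a)\widehat T(\cdot|i)$ on $\calS_2$ (if $\widehat a\in\{0,1\}$ the divergence is $+\infty$, so this is no loss), the grouping identity for KL divergence over the partition $\{\{1\},\calS_2\}$ gives
\[
D(M(\cdot|i)\|\widehat M(\cdot|i))=d\!\left(\tfrac1n\,\big\|\,\widehat a\right)+\left(1-\tfrac1n\right)D(T(\cdot|i)\|\widehat T(\cdot|i))\ \ge\ \left(1-\tfrac1n\right)D(T(\cdot|i)\|\widehat T(\cdot|i)),
\]
where $d(\cdot\|\cdot)$ is the binary KL divergence and $T(\cdot|i)$ is the $i$th row of $T$. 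Thus every $\widehat M$ induces a bona fide Markov kernel $\widehat T$ on $\calS_2$, measurable with respect to $(X_{t+1},\dots,X_n)$, incurring only the factor $1-\tfrac1n$.

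Then I would invoke property \ref{pt:3}: conditioned on $\calX_t$, the block $(X_{t+1},\dots,X_n)$ is distributed exactly as $(Y_1,\dots,Y_{n-t})$, so $X_n$ plays the role of $Y_{n-t}$ and $\widehat T(\cdot|X_n)$ becomes an admissible predictor of $Y_{n-t+1}$ from $Y^{n-t}$. Combining the previous two steps with Markovianity of $Y$ (which gives $T(\cdot|Y_{n-t})=P_{Y_{n-t+1}\mid Y^{n-t},T}$), the representation \eqref{eq:MC_Bayes-MI} applied with sample size $n-t$ yields $\inf_{\widehat M}\EE_T\EE[D(\cdot)\mid\calX_t]\ge(1-\tfrac1n)\,I(T;Y_{n-t+1}\mid Y^{n-t})$. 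Summing over $t=1,\dots,n-1$, re-indexing $s=n-t$, and applying the chain rule for mutual information,
\[
\sum_{t=1}^{n-1}I(T;Y_{n-t+1}\mid Y^{n-t})=\sum_{s=1}^{n-1}I(T;Y_{s+1}\mid Y^s)=I(T;Y^n)-I(T;Y_1)\ \ge\ I(T;Y^n)-\log(k-1),
\]
since $I(T;Y_1)\le H(Y_1)\le\log(k-1)$ as $Y_1$ is supported on $\calS_2$. Multiplying by $\frac{1}{2en}(1-\tfrac1n)=\frac{n-1}{2en^2}$ gives the claim (and the bound is vacuously true whenever $I(T;Y^n)\le\log(k-1)$).

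The one place that requires genuine care is the reduction $\widehat M\mapsto\widehat T$ in the middle two steps: I would need to verify carefully that conditioning on $\calX_t$ does not leak any information about $T$ through the all-$1$ prefix (it does not, precisely because $\PP(\calX_t)$ is independent of $T$ by property \ref{pt:2}), and that the induced $\widehat T$ is a legitimate, appropriately measurable Markov kernel, hence an admissible competitor in the variational problem underlying \eqref{eq:MC_Bayes-MI}. Dropping the binary-KL remainder $d(\tfrac1n\|\widehat a)\ge0$ and everything else is routine bookkeeping.
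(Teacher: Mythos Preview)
Your proposal is correct and follows essentially the same route as the paper: restrict to the events $\calX_t$, use \ref{pt:2} and \ref{pt:3} to identify the conditional problem with prediction in the $Y$-chain, invoke \eqref{eq:MC_Bayes-MI}, and sum via the chain rule. The only cosmetic difference is that the paper computes the Bayes predictive density $\widehat M_n$ explicitly and uses the exact identity $D(\tfrac1n\delta_1+(1-\tfrac1n)P\,\|\,\tfrac1n\delta_1+(1-\tfrac1n)Q)=(1-\tfrac1n)D(P\|Q)$ to get an equality in \eqref{eq:reduction_M_N}, whereas you work with an arbitrary $\widehat M$, use the grouping decomposition $D(M\|\widehat M)=d(\tfrac1n\|\widehat a)+(1-\tfrac1n)D(T\|\widehat T)$, and drop the nonnegative binary-KL term; both arrive at the same lower bound.
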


Lemma \ref{lmm:riskred_markov} is the formal statement of the inequality \eqref{eq:riskred3} presented in the proof sketch. Maximizing the lower bound over the prior on $T$ and in view of the mutual information representation \prettyref{eq:capred}, we obtain the following corollary. 

\begin{corollary}
Let $\Risk_{k,n}^{\mathsf{sym}}$ denote the minimax prediction risk for stationary irreducible and reversible Markov chains on $k$ states and $\Red_{k,n}^{\mathsf{sym}}$ the redundancy 
for stationary symmetric Markov chains on $k$ states. Then
\begin{align*}
	\Risk_{k,n}^{\sf rev} \ge \frac{n-1}{2en^2}(\Red_{k-1,n}^{\mathsf{sym}} - \log(k-1)).
\end{align*}

\end{corollary}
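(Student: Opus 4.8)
The plan is to derive the corollary from \prettyref{lmm:riskred_markov} by the usual ``minimax $\ge$ Bayes'' reduction, followed by optimizing over the prior and invoking the capacity--redundancy theorem.

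First I would fix an arbitrary prior $P_T$ supported on $\calM_{k-1}^{\mathsf{sym}}$ and let $M$ be the $k$-state transition matrix built from $T$ via \eqref{eq:M_construction}. By property~\ref{pt:1}, every such $M$ is irreducible and reversible, so $P_T$ induces a prior on a subfamily of the stationary irreducible reversible $k$-state chains. Since the predictors $\widehat M$ appearing in \prettyref{lmm:riskred_markov} are exactly the (trajectory-measurable) estimators competing in the definition of $\Risk_{k,n}^{\sf rev}$, and the minimax risk is at least the Bayes risk of any prior supported on the parameter set, we get
$$\Risk_{k,n}^{\sf rev} \;\ge\; \inf_{\widehat M}\, \EE_T\big[\EE[D(M(\cdot|X_n)\,\|\,\widehat M(\cdot|X_n))]\big] \;\ge\; \frac{n-1}{2en^2}\big(I(T;Y^n)-\log(k-1)\big),$$
where the second inequality is precisely \prettyref{lmm:riskred_markov} and $Y^n$ is the stationary $T$-chain on $\calS_2$.

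Next, observing that the displayed inequality holds for \emph{every} prior $P_T$, I would take the supremum over $P_T$ and pull the prior-free constant $\tfrac{n-1}{2en^2}$ and the $-\log(k-1)$ term outside, obtaining $\Risk_{k,n}^{\sf rev} \ge \tfrac{n-1}{2en^2}\big(\sup_{P_T} I(T;Y^n)-\log(k-1)\big)$. The last ingredient is the identification $\sup_{P_T} I(T;Y^n)=\Red_{k-1,n}^{\mathsf{sym}}$: a \emph{stationary} symmetric Markov chain on the $k-1$ states of $\calS_2$ is exactly one driven by a symmetric stochastic matrix $T$ with the (forced) uniform stationary and initial distribution, so $\{P_{Y^n|T}:T\in\calM_{k-1}^{\mathsf{sym}}\}$ is the model whose minimax redundancy is $\Red_{k-1,n}^{\mathsf{sym}}$, and the capacity--redundancy theorem \eqref{eq:capred} applied to this model gives the claim. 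Substituting yields the stated bound.

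All the analytic content --- the explicit factor $\tfrac{n-1}{2en^2}$ and the $\log(k-1)$ loss (arising from $I(T;Y_1)\le H(Y_1)\le \log(k-1)$) --- is already packaged inside \prettyref{lmm:riskred_markov}, so this argument is essentially bookkeeping. The only point that warrants a moment of care is the identification in the last paragraph: one must check that imposing stationarity on a symmetric-transition-matrix chain coincides with the uniform initialization used in \prettyref{lmm:riskred_markov}, so that $\sup_{P_T} I(T;Y^n)$ genuinely equals $\Red_{k-1,n}^{\mathsf{sym}}$ as defined through \eqref{eq:red}. There is no substantive obstacle here.
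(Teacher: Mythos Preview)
Your proposal is correct and matches the paper's approach exactly: the paper derives the corollary by maximizing the bound of \prettyref{lmm:riskred_markov} over the prior on $T$ and invoking the capacity--redundancy representation \eqref{eq:capred}, which is precisely your minimax $\ge$ Bayes step followed by the identification $\sup_{P_T} I(T;Y^n)=\Red_{k-1,n}^{\mathsf{sym}}$. Your closing remark about checking that uniform initialization coincides with stationarity for symmetric transition matrices is apt and correct (uniform is always stationary for a doubly stochastic $T$), so there is nothing to add.
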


We make use of the properties \ref{pt:1}--\ref{pt:3} in Section \ref{subsec:chain_construction} to prove Lemma \ref{lmm:riskred_markov}.
\begin{proof}[Proof of Lemma \ref{lmm:riskred_markov}]
Recall that in the Bayesian setting, we first draw $T$ from some prior on $\calM_{k-1}^{\mathsf{sym}}$, then generate the stationary Markov chain $X^n=(X_1,\ldots,X_n)$ with state space $[k]$ and transition matrix $M$ in \prettyref{eq:M_construction}, and $(Y_1,\ldots,Y_n)$ with state space $\calS_2=\{2,\ldots,k\}$ and transition matrix $T$.

We first relate the Bayes estimator of $M$ and $T$ (given the $X$ and $Y$ chain respectively).
For clarity, we spell out the explicit dependence of the estimators on the input trajectory.
For each $t\in[n]$, denote by $\hat M_t=\hat M_t(\cdot|x^t)$ 
the Bayes estimator of $M(\cdot|x_t)$ give $X^t=x^t$, and 
$\hat T_t(\cdot|y^t)$ 
the Bayes estimator of $T(\cdot|y_t)$ give $Y^t=y^t$. 
For each $t=1,\ldots,n-1$ and for each trajectory $x^n=(1,\ldots,1,x_{t+1},\ldots,x_n) \in \calX_t$, recalling the form \prettyref{eq:MC_bayes} of the Bayes estimator, 
we have, for each $j\in\calS_2$, 
\begin{align*}
\hat M_n(j|x^n) 
= & ~ \frac{\prob{X^{n+1}=(x^n,j)}}{\prob{X^n=x^n}} \\
= & ~ \frac{\Expect[\frac{1}{2}  M(1|1)^{t-1} M(x_{t+1}|1) M(x_{t+2}|x_{t+1}) \ldots M(x_n|x_{n-1}) M(j|x_n) ]}{\Expect[\frac{1}{2} M(1|1)^{t-1} M(x_{t+1}|1) M(x_{t+2}|x_{t+1}) \ldots M(x_n|x_{n-1}) ]} \\
= & ~ \pth{1-\frac{1}{n}} \frac{\Expect[T(x_{t+2}|x_{t+1}) \ldots T(x_n|x_{n-1}) T(j|x_n) ]}{\Expect[T(x_{t+2}|x_{t+1}) \ldots T(x_n|x_{n-1}) ]} \\
= & ~ \pth{1-\frac{1}{n}} \hat T_{n-t}(j|x_{t+1}^n) ,
\end{align*}
where we used the stationary distribution of $X$ in \ref{pt:1} and the uniformity of the stationary distribution of $Y$, neither of which depends on $T$.
Furthermore, by construction in \prettyref{eq:M_construction}, $\hat M_n(1|x^n) = \frac{1}{n}$ is deterministic.
In all, we have
\begin{equation}
\hat M_n(\cdot|x^n) = \frac{1}{n} \delta_1 + \pth{1-\frac{1}{n}}\hat T_{n-t}(\cdot|x_{t+1}^n) , \quad x^n\in\calX_t.
\label{eq:MC_bayesMT}
\end{equation}
with $\delta_1$ denoting the point mass at state 1, which parallels the fact that 
\begin{equation}
M(\cdot|x) = \frac{1}{n} \delta_1 + \pth{1-\frac{1}{n}} T(\cdot|x), \quad x \in \calS_2.
\label{eq:MT}
\end{equation}

By \ref{pt:2}, each event $\{X^n\in\calX_t\}$ occurs with probability at least $1/(2en)$, and is independent of $T$. Therefore,
\begin{align}\label{eq:decomposition}
	\EE_{T}\left[\EE[D(M(\cdot|X_n)\| \widehat{M}(\cdot|X^n)) ]\right]  \ge \frac{1}{2en}\sum_{t=1}^{n-1}\EE_{T}\left[\EE[D(M(\cdot|X_n)\| \widehat{M}(\cdot|X^n)) | X^n\in\calX_t ]\right].  
\end{align}
By \ref{pt:3},  the conditional joint law of $(T,X_{t+1},\ldots,X_n)$ on the event $\{X^n\in\calX_t\}$ is the same as the joint law of $(T,Y_{1},\ldots,Y_{n-t})$. 
Thus, we may express the Bayes prediction risk in the $X$ chain as 
\begin{align}\label{eq:reduction_M_N}
	\EE_{T}\left[\EE[D(M(\cdot|X_n)\| \widehat{M}(\cdot|X^n)) | X^n\in\calX_t ]\right] 
&	\stepa{=} \left(1-\frac{1}{n}\right)\cdot \EE_{T}\left[\EE[D(T(\cdot|Y_{n-t}) \| \widehat{T}(\cdot|Y^{n-t}))]\right] \nonumber \\
	& \stepb{=} \left(1-\frac{1}{n}\right)\cdot I(T;Y_{n-t+1}|Y^{n-t}),
\end{align}
where (a) follows from \prettyref{eq:MC_bayesMT}, \prettyref{eq:MT}, and the fact that for distributions $P,Q$ supported on $\calS_2$,
$D(\epsilon \delta_1 + (1-\epsilon) P \|\epsilon \delta_1 + (1-\epsilon) Q)=(1-\epsilon) D(P\|Q)$;
(b) is the mutual information representation \prettyref{eq:MC_Bayes-MI} of the Bayes prediction risk.
Finally, the lemma follows from \eqref{eq:decomposition}, \eqref{eq:reduction_M_N}, and the chain rule
\begin{align*}
	\sum_{t=1}^{n-1} I(T;Y_{n-t+1}|Y^{n-t}) = I(T;Y^{n}) - I(T;Y_1) \ge  I(T;Y^{n}) - \log(k-1),
\end{align*} 
as $I(T;Y_1)\le H(Y_1)\le \log(k-1)$. 
\end{proof}

\subsubsection{Prior construction and lower bounding the mutual information}\label{subsec:prior_construction}
In view of Lemma \ref{lmm:riskred_markov}, it remains to find a prior on $\calM_{k-1}^{\mathsf{sym}}$ for $T$, such that the mutual information $I(T;Y^n)$ is large. 
We make use of the connection identified in \cite{davisson1981efficient,D83,rissanen1984universal} between estimation error and mutual information (see also
\cite[Theorem 7.1]{csiszar2004information} for a self-contained exposition). 
To lower the mutual information, a key step is to find a good estimator $\widehat{T}(Y^n)$ of $T$. This is carried out in the following lemma.

\begin{lemma}\label{lmm:L2_upper}
In the setting of \prettyref{lmm:riskred_markov}, suppose that $T\in \calM_{k}^{\mathsf{sym}}$ with $T_{ij}\in [\frac{1}{2k},\frac{3}{2k}]$ for all $i,j\in [k]$. Then there is an estimator $\widehat{T}$ based on $Y^n$ such that
\begin{align*}
	\EE[\|\widehat{T} - T \|_{\mathsf{F}}^2] \le \frac{16k^2}{n-1},
\end{align*}
where $\|\widehat{T} - T \|_{\mathsf{F}} = \sqrt{\sum_{ij} (\widehat{T}_{ij} - T_{ij})^2}$ denotes the Frobenius norm.
\end{lemma}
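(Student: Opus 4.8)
The plan is to take $\widehat{T}$ to be a rescaled empirical bigram-frequency estimator and to bound its mean-squared error by a direct second-moment computation; the only real input is that the hypothesis $T_{ij}\ge\frac{1}{2k}$ forces $Y^n$ to mix in $O(1)$ steps. Let $N_{ij}=\sum_{\ell=1}^{n-1}\indc{Y_\ell=i,Y_{\ell+1}=j}$ and set $\widehat{T}_{ij}\triangleq \frac{k N_{ij}}{n-1}$. Since $T$ is symmetric it is doubly stochastic, so the uniform distribution $u$ is stationary and $Y^n$ (started from $u$) has $\PP(Y_\ell=i)=\tfrac1k$ for every $\ell$; hence $\EE[N_{ij}]=(n-1)\tfrac1k T_{ij}$, the estimator is unbiased, and $\EE[\|\widehat{T}-T\|_{\mathsf F}^2]=\sum_{i,j=1}^{k}\Var(\widehat{T}_{ij})=\frac{k^2}{(n-1)^2}\sum_{i,j=1}^{k}\Var(N_{ij})$. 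So it suffices to prove $\Var(N_{ij})\le 10(n-1)\cdot\frac{T_{ij}}{k}$.

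To bound $\Var(N_{ij})$ I would estimate the covariances of the indicators $Z_\ell\triangleq \indc{Y_\ell=i,Y_{\ell+1}=j}$. Conditioning step by step, for $d\ge 2$ one gets $\EE[Z_\ell Z_{\ell+d}]=\frac1k T_{ij}^2\,(T^{d-1})_{ji}$, so $\Cov(Z_\ell,Z_{\ell+d})=\frac1k T_{ij}^2\big((T^{d-1})_{ji}-\tfrac1k\big)$, and for $d=1$ one checks directly that $|\Cov(Z_\ell,Z_{\ell+1})|\le \frac1k T_{ij}^2$. The key point is that, since every entry of $T$ is at least $\frac{1}{2k}=\frac12\cdot\frac1k$, the kernel $T$ satisfies the Doeblin minorization $T(i,\cdot)\ge\frac12 u$, so the standard coupling argument (e.g.\ \cite{LevinPeres17}) gives $\|(T^{s})(j,\cdot)-u\|_{\TV}\le 2^{-s}$, whence $|(T^{s})_{ji}-\tfrac1k|\le 2^{1-s}$ for every $s\ge 1$. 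Therefore $\sum_{d\ge 1}|\Cov(Z_\ell,Z_{\ell+d})|\le \frac1k T_{ij}^2\big(1+\sum_{d\ge 2}2^{-(d-2)}\big)=\frac3k T_{ij}^2$, and combining with $\Var(Z_\ell)\le \frac1k T_{ij}$ yields $\Var(N_{ij})\le (n-1)\frac1k T_{ij}+6(n-1)\frac1k T_{ij}^2=(n-1)\frac1k T_{ij}(1+6T_{ij})\le 10(n-1)\frac1k T_{ij}$, using $T_{ij}\le\frac{3}{2k}$.

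Plugging this back, $\Var(\widehat{T}_{ij})\le \frac{10 k}{n-1}T_{ij}\le \frac{15}{n-1}$, and summing over the $k^2$ pairs $(i,j)$ gives $\EE[\|\widehat{T}-T\|_{\mathsf F}^2]\le \frac{15k^2}{n-1}\le \frac{16k^2}{n-1}$, which is the claimed bound.

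I expect the \emph{mixing step} to be the part needing the most care, and it is also where the hypothesis $T_{ij}\ge\frac1{2k}$ is indispensable: for a generic symmetric (indeed reversible) chain the count $N_{ij}$ can have variance of order $(n-1)^2/k^2$ (consider a chain that barely moves), so an $O(n)$ variance bound genuinely requires the geometric mixing supplied by the minorization. Correspondingly, in the covariance estimate one must keep the centered quantity $(T^{s})_{ji}-\tfrac1k$ rather than bounding $(T^{s})_{ji}\le \tfrac1k+2^{-s}$ term by term, which would destroy the cancellation and leave an $O(n^2)$ bound.
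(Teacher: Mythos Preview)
Your proof is correct and uses the same estimator $\widehat T_{ij}=\frac{kN_{ij}}{n-1}$ and the same overall structure (unbiasedness, then a variance bound summed over $i,j$) as the paper. The only real difference is in how the variance of $N_{ij}$ is controlled.

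The paper does not compute covariances directly. Instead it invokes an off-the-shelf variance bound from \cite{tatwawadi2018minimax} (based on the Markov-chain concentration of \cite{P15}), which gives $\Var(\widehat T_{ij})\le \frac{8kT_{ij}}{\gamma_*(T)(n-1)}$ in terms of the absolute spectral gap, and then argues $\gamma_*(T)\ge\tfrac12$ by writing $T=k^{-1}\mathbf{J}+\Delta$ with entrywise $|\Delta_{ij}|\le\tfrac{1}{2k}$ and bounding the spectral radius of $\Delta$. Your route replaces this black box by an explicit second-moment calculation: you use the Doeblin minorization $T(i,\cdot)\ge\tfrac12 u$ implied by $T_{ij}\ge\tfrac{1}{2k}$ to get geometric decay of $(T^s)_{ji}-\tfrac1k$, and sum the resulting covariance series by hand. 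This is more elementary and self-contained, avoids citing \cite{tatwawadi2018minimax,P15}, and even yields a slightly better constant (indeed, summing $\sum_{i,j}\frac{10kT_{ij}}{n-1}$ directly using $\sum_{i,j}T_{ij}=k$ gives $\frac{10k^2}{n-1}$ rather than the $\frac{15k^2}{n-1}$ you wrote). The paper's approach, on the other hand, makes the dependence on $\gamma_*$ explicit and plugs straight into the machinery already used elsewhere in the paper. Your closing remark about why the minorization (equivalently, a spectral gap bound) is essential is exactly the right intuition.
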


We show how Lemma \ref{lmm:L2_upper} leads to the desired lower bound on the mutual information $I(T;Y^n)$. Since $k\ge 3$, we may assume that $k-1=2k_0$ is an even integer. Consider the following prior distribution $\pi$ on $T$: let $u=(u_{i,j})_{i,j\in [k_0], i\le j}$ be iid and uniformly distributed in $[1/(4k_0),3/(4k_0)]$, and $u_{i,j} = u_{j,i}$ for $i>j$. Let the transition matrix $T$ be given by
\begin{align}
T_{2i-1,2j-1} = T_{2i,2j} = u_{i,j}, \quad T_{2i-1,2j} = T_{2i,2j-1} = \frac{1}{k_0} - u_{i,j}, \quad \forall i,j\in [k]. 
\label{eq:Tu}
\end{align}
It is easy to verify that $T$ is symmetric and a stochastic matrix, and each entry of $T$ is supported in the interval $[1/(4k_0), 3/(4k_0)]$. Since $2k_0 = k-1$, the condition of Lemma \ref{lmm:L2_upper} is fulfilled, so there exist estimators $\widehat{T}(Y^n)$ and $\widehat{u}(Y^n)$ such that
\begin{align}
\EE[\|\widehat{u}(Y^n) - u\|_2^2] \le \EE[\|\widehat{T}(Y^n) - T \|_{\mathsf{F}}^2] \le \frac{64k_0^2}{n-1}. 
\label{eq:MSEu}
\end{align}
Here and below, we identify $u$ and $\hat u$ as $\frac{k_0(k_0+1)}{2}$-dimensional vectors.

Let $h(X) = \int -f_X(x)\log f_X(x)dx$ denote the differential entropy of a continuous random vector $X$ with density $f_X$ w.r.t the Lebesgue measure 
and $h(X|Y)=\int -f_{XY}(xy)\log f_{X|Y}(x|y)dxdy$ the conditional differential entropy (cf.~e.g.~\cite{cover}). Then
\begin{align}\label{eq:diff_entropy}
	h(u) = \sum_{i,j\in [k_0], i\le j}h(u_{i,j}) = -\frac{k_0(k_0+1)}{2}\log(2k_0). 
\end{align}
Then
\begin{align*}
	I(T;Y^n)
	& \stepa{=} I(u;Y^n)\\
	& \stepb{\geq} I(u;\hat u(Y^n)) = h(u)-h(u|\hat u(Y^n))\\
	& \stepc{\geq} h(u)-h(u-\hat u(Y^n))\\
& \stepd{\geq} \frac{k_0(k_0+1)}{4}\log\left(\frac{n-1}{1024\pi ek_0^2}\right) \ge \frac{k^2}{16}\log\left(\frac{n-1}{256\pi ek^2}\right).
\end{align*}
where 
(a) is because $u$ and $T$ are in one-to-one correspondence by \prettyref{eq:Tu};
(b) follows from the data processing inequality;
(c) is because $h(\cdot)$ is translation invariant and concave;
(d) follows from the maximum entropy principle \cite{cover}: 
$h(u-\hat u(Y^n)) \leq \frac{k_0(k_0+1)}{4}\log\left(\frac{2\pi e}{k_0(k_0+1)/2}\cdot \EE[\|\widehat{u}(Y^n) - u\|_2^2] \right)$, which in turn is bounded by \prettyref{eq:MSEu}.
Plugging this lower bound into Lemma \ref{lmm:riskred_markov} completes the lower bound proof of Theorem \ref{thm:optimal}. 

\begin{proof}[Proof of Lemma \ref{lmm:L2_upper}]
Since $T$ is symmetric, the stationary distribution is uniform, and there is a one-to-one correspondence between the joint distribution of $(Y_1,Y_2)$ and the transition probabilities. Motivated by this observation, consider the following estimator $\widehat{T}$: for $i,j\in [k]$, let 
\begin{align*}
	\widehat{T}_{ij} = k\cdot \frac{\sum_{t=1}^n \indc{Y_t=i, Y_{t+1}=j} }{n-1}. 
\end{align*}
Clearly $\EE[\widehat{T}_{ij}]=k\cdot \PP(Y_1=i,Y_2=j)= T_{ij}$. The following variance bound is shown in \cite[Lemma 7, Lemma 8]{tatwawadi2018minimax} using the concentration inequality of \cite{P15}:
\begin{align*}
	\var(\widehat{T}_{ij}) \le k^2\cdot \frac{8T_{ij}k^{-1}}{\gamma_*(T)(n-1)}, 
\end{align*}
where $\gamma_*(T)$ is the absolute spectral gap of $T$ defined in \prettyref{eq:gamma-def}. Note that $T = k^{-1} \mathbf{J} + \Delta$,
where $\mathbf{J}$ is the all-one matrix and each entry of $\Delta$ lying in $[-1/(2k),1/(2k)]$. Thus the spectral radius of $\Delta$ is at most $1/2$ and thus $\gamma_*(T)\ge 1/2$. Consequently, we have
\begin{align*}
	\EE[\|\widehat{T} - T \|_{\mathsf{ F}}^2] = \sum_{i,j\in [k]} 	\var(\widehat{T}_{ij}) \le  \sum_{i,j\in [k]} \frac{16kT_{ij}}{n-1} = \frac{16k^2}{n-1}, 
\end{align*}
completing the proof. 
\end{proof}

%
%

\section{Spectral gap-dependent risk bounds}
\label{sec:pf-gamma}
	\subsection{Two states}\label{sec:lower bound}
		To show \prettyref{thm:gamma2}, 
		let us prove a refined version. In addition to the absolute spectral gap defined in \prettyref{eq:gamma-def}, define the spectral gap
		\begin{equation}
\gamma \triangleq 1-\lambda_2
\label{eq:gamma-def1}
\end{equation}
and $\calM_k'(\gamma_0)$ the collection of transition matrices whose spectral gap exceeds $\gamma_0$.
Paralleling $\Risk_{k,n}(\gamma_0)$ defined in 
\prettyref{eq:risk-gamma}, define $\Risk_{k,n}'(\gamma_0)$ as the minimax prediction risk restricted to $M\in \calM_k'(\gamma_0)$
Since $\gamma\geq \gamma^*$, we have 
$\calM_k(\gamma_0)\subseteq \calM_k'(\gamma_0)$ and hence 
$\Risk_{k,n}'(\gamma_0) \geq \Risk_{k,n}(\gamma_0)$.
Nevertheless, the next result shows that for $k=2$ they have the same rate:

		\begin{theorem}[Spectral gap dependent rates for binary chain]
\label{thm:gamma2-refined}
	For any $\gamma_0\in (0,1)$
	\[
	\Risk_{2,n}(\gamma_0)
	\asymp
	\Risk_{2,n}'(\gamma_0)
	\asymp \frac{1}{n} \max\sth{1,\log\log\pth{\min\sth{n,\frac 1{\gamma_0}}}}.
	\]
\end{theorem}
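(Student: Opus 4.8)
The plan is to prove the two bounds separately and let them meet. Since every transition matrix satisfies $\gamma=1-\lambda_2\ge 1-\max_i|\lambda_i|=\gamma_*$, we have $\calM_2(\gamma_0)\subseteq\calM_2'(\gamma_0)$ and hence $\Risk_{2,n}(\gamma_0)\le\Risk_{2,n}'(\gamma_0)$; so it suffices to prove the upper bound for $\Risk_{2,n}'(\gamma_0)$ and the lower bound for $\Risk_{2,n}(\gamma_0)$. Parametrize a binary chain by $p=M(2|1)$, $q=M(1|2)$, so that $\lambda_2=1-p-q$, $\gamma=p+q$, $\gamma_*=\min\{p+q,\,2-p-q\}$, $\pi=(\frac{q}{p+q},\frac{p}{p+q})$; note that every irreducible binary chain is automatically reversible, so the reversibility constraint in $\calM_2(\gamma_0)$ is vacuous. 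Write $m_0\triangleq\min\{n,\lceil 1/\gamma_0\rceil\}$ for the \emph{effective horizon} and $\ell_0\triangleq\log m_0$. When $\gamma_0$ exceeds a fixed absolute constant, $m_0=O(1)$ and the claimed rate is $\Theta(1/n)$: the matching upper bound is the parametric rate (e.g.\ \prettyref{thm:gammak}, or directly the add-one analysis) and the lower bound is the trivial $\Omega(1/n)$ Bayes bound for a one-parameter sub-family (a prior on $p$ with $q$ frozen at a constant). So from now on assume $\gamma_0$ is below that constant; we must show $\Risk_{2,n}'(\gamma_0)\lesssim\frac{1\vee\log\ell_0}{n}$ and $\Risk_{2,n}(\gamma_0)\gtrsim\frac{1\vee\log\ell_0}{n}$.

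\textbf{Upper bound.} I would use the hybrid estimator of \cite{FOPS2016}, but re-tuned to the effective horizon: apply add-one to trajectories with at least two transitions, and add-$c$ with $c\asymp 1/\ell_0$ (rather than $1/\log n$) to trajectories with at most one transition. Writing $\Risk=\sum_{i=1,2}\Expect[D(M(\cdot|i)\|\hat M(\cdot|i))\indc{X_n=i}]$ as in \cite{FOPS2016}, trajectories with $\ge 2$ transitions contribute $O(1/n)$ by the standard add-one bound, and the whole difficulty is the contribution of trajectories with $\le 1$ transition, on which the chain sits at a single state for essentially the whole sample path and the empirical sample size $N_i$ for predicting the row of $X_n$ is the length of a geometric sojourn. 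The new input is the spectral gap: since $p+q\ge\gamma_0$, the sojourn at the state \emph{other than} $X_n$ has a geometric tail damped at scale $1/\gamma_0$, so every sum over $N_i$ is effectively truncated at $N_i\wedge m_0$. Running the \cite{FOPS2016} estimate for the $\le 1$-transition case with $n$ replaced by $m_0$ gives a contribution $\asymp\frac{\log(1/c)}{n}+\frac{c\,\ell_0}{n}$ (the first term the cost of under-smoothing when the true parameter is moderate, the second the cost of over-smoothing when it is tiny); optimizing $c\asymp 1/\ell_0$ yields $\frac{1\vee\log\ell_0}{n}$. Equivalently one may take a Ces\`aro mixture of add-$2^{-j}$ estimators over $j\le\ell_0$ and read off the same bound from \prettyref{lmm:riskred}.

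\textbf{Lower bound.} I would lower bound $\Risk_{2,n}(\gamma_0)$ by a Bayes risk, in the spirit of \cite[Theorem 1]{HOP18} and \cite{FOPS2016}. Draw $(p,q)$ from a prior supported inside $\calM_2(\gamma_0)$, e.g.\ $p,q$ independent and log-uniform on $[c_1\max\{\gamma_0,1/n\},\,c_2]$ for absolute constants $\tfrac12\le c_1<c_2<\tfrac12$ chosen so that $p+q\ge\gamma_0$ and $p+q<1$, whence $\gamma_*=p+q\ge\gamma_0$; this keeps the construction inside the irreducible reversible class. By \prettyref{eq:MC_Bayes-MI} the Bayes prediction risk equals $I(\theta;X_{n+1}|X^n)$ with $\theta=(p,q)$; following \cite{FOPS2016} one isolates the contribution of single-transition trajectories $X^n=i^\tau j^{n-\tau}$: conditioned on such an event (of total probability $\Theta(1)$, with $\tau$ approximately uniform on $[1,m_0]$ because each sojourn has geometric tail at scale between $1/\gamma_0$ and $n$), the posterior on the relevant parameter behaves like that of \prettyref{lmm:addone} with effective sample size $\asymp\tau$, and the conditional KL risk is $\gtrsim 1/\tau$ up to a scale-dependent factor. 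Summing the per-scale contributions over the $\asymp\log m_0=\ell_0$ dyadic scales of the prior produces a harmonic-type sum $\frac1n\sum_{\tau\le m_0}\frac1\tau\asymp\frac{1\vee\log\ell_0}{n}$; restricting to the sub-family with one off-diagonal entry frozen at a constant recovers the $\Omega(1/n)$ bound and covers the regime $\log\ell_0\le 1$.

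\textbf{Main obstacle.} The delicate part is making the ``effective horizon $=\min\{n,1/\gamma_0\}$'' heuristic rigorous on both sides. For the upper bound this means showing that for \emph{every} $(p,q)$ with $p+q\ge\gamma_0$ the $\le 1$-transition contribution is genuinely governed by $m_0$ rather than $n$, uniformly in $(p,q)$, which requires case analysis on which of $p,q$ is larger together with careful tracking of the two geometric tails; for the lower bound it means choosing the prior so that the spectral-gap constraint is respected \emph{and} the posterior still spreads over $\asymp\log m_0$ scales, then carrying out the Bayes-risk computation (correctly identifying the posterior via \prettyref{lmm:addone} and verifying the single-transition trajectories dominate) without collapsing the gap between $\log\log n$ and $\log n$. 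Reproducing the fine \cite{FOPS2016} accounting with the horizon replaced by $\min\{n,1/\gamma_0\}$, so that the constants in the two bounds line up, is where the real work lies.
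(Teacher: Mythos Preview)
Your upper bound plan is essentially the paper's: split into non-step and step trajectories, use a fixed add-constant on the former, and on the latter use add-$c$ with $c\asymp 1/\log(1/\gamma_0)$ (the paper writes the resulting estimator explicitly as $\hat M_\ell(2|1)=1/(\ell\log(1/\gamma_0))$). The paper's Lemma~\ref{lmm:stepbound} is exactly the ``effective horizon'' computation you describe, carried out via Chebyshev's sum inequality together with a two-case argument on which of $p,q$ exceeds $\gamma_0/2$. So that half is fine.

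The lower bound, however, has a real gap. You assert that the conditional KL risk on a step sequence of sojourn $\tau$ is $\gtrsim 1/\tau$ and then write
\[
\frac{1}{n}\sum_{\tau\le m_0}\frac{1}{\tau}\asymp\frac{1\vee\log\ell_0}{n}.
\]
But that harmonic sum is $\asymp \log m_0/n=\ell_0/n$, not $\log\ell_0/n$; taken at face value your argument would ``prove'' a $\Theta(\frac{\log m_0}{n})$ lower bound, contradicting the upper bound you just established. The harmonic-sum mechanism is precisely the one that yields $\log n/n$ for \emph{three} states (Theorem~\ref{thm:threestates}); it cannot be the mechanism for two states. The subtlety is that on a step sequence $2^{n-\ell}1^\ell$ you do not observe a full Binomial sample as in \prettyref{lmm:addone}: you observe only the single outcome ``zero transitions out of $\ell-1$ trials,'' and both the conditional posterior risk and the marginal probability $\Prob[X^n=2^{n-\ell}1^\ell]$ depend strongly on the prior. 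With your log-uniform prior, for instance, the marginal of the step sequence is $\asymp \frac{1}{n}\cdot\frac{\log(1/(\gamma_0\ell))}{\log(1/\gamma_0)}$ and the conditional KL risk is $\asymp\frac{\log\log(1/\gamma_0)}{\ell\log(1/(\gamma_0\ell))}$, not $1/\ell$; only after multiplying and summing do you recover $\log\log(1/\gamma_0)/n$, and none of this is the harmonic sum you wrote.

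The paper sidesteps this by using a \emph{discrete} prior: fix $q=M(1|2)=1/n$ and place $p=M(2|1)$ uniformly on the geometrically-spaced grid $\{\alpha^{-m}:\beta<m<5\beta\}$ with $\alpha=\log(1/\gamma_0)$ and $\beta\asymp\alpha/\log\alpha$. The point is that for $\ell$ in a window matched to the $m$th grid value, the Bayes posterior mean $\hat p$ and the truth $p=\alpha^{-m}$ differ by a multiplicative factor $\gtrsim \alpha/\log\alpha$ (this is the paper's claim \eqref{eq:MC_bayes-claim}), so the KL contribution is $p\log(p/\hat p)\gtrsim p\cdot\log\log(1/\gamma_0)$; summing $p(1-p)^{\ell-1}$ over the window gives $\Theta(1)$, and averaging over the $\Theta(\beta)$ grid points cancels the $1/\beta$ from the uniform prior. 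The $\log\log$ factor thus comes from the \emph{ratio} $p/\hat p$, not from a harmonic sum over $\tau$. (As a side note, your prior constraints ``$\tfrac12\le c_1<c_2<\tfrac12$'' are inconsistent; you need $c_1\ge\tfrac12$ to ensure $p+q\ge\gamma_0$, but then $c_2>\tfrac12$ and you must separately check $p+q<2-\gamma_0$.)
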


We first prove the upper bound on $\Risk_{2,n}'$. Note that it is enough to show
			\begin{align}
			\Risk_{2,n}'(\gamma_0)
			\lesssim {\log\log\pth{1/ \gamma_0}\over n},\quad 
		\text{if } n^{-0.9}\leq \gamma_0\leq e^{-e^{5}}.
		\label{eq:gamma2a}
			\end{align}
		Indeed,	for any $\gamma_0\leq n^{-0.9}$, the upper bound $\calO\pth{\log\log n/n}$ proven in \cite{FOPS2016}, which does not depend on the spectral gap, suffices; for any $\gamma_0> e^{-e^{5}}$,  by monotonicity we can use the upper bound $\Risk_{2,n}'(e^{-e^{5}})$. 
		
		We now define an estimator that achieves \prettyref{eq:gamma2a}. Following \cite{FOPS2016}, consider trajectories with a single transition, namely, $\sth{2^{n-\ell}1^\ell,1^{n-\ell}2^\ell:1\leq \ell\leq n-1}$, where $2^{n-\ell}1^{\ell}$ denotes the trajectory $(x_1,\cdots,x_n)$ with $x_1=\cdots=x_{n-\ell}=2$ and $x_{n-\ell+1}=\cdots=x_n=1$. 
	We refer to this type of $x^n$ as \emph{step sequences}. For all non-step sequences $x^n$, we apply the add-$\frac 12$ estimator similar to \prettyref{eq:addone}, namely
			\begin{align*}
				\widehat{M}_{x^n}(j|i) = \frac{N_{ij}+\frac 12}{N_i+1}, \qquad i,j\in \{1,2\},
			\end{align*}
			where the empirical counts $N_i$ and $N_{ij}$ are defined in \prettyref{eq:transition.count};
			for step sequences of the form $2^{n-\ell}1^{\ell}$, we estimate by
			\begin{align}\label{eq:z.ell.est}
				{\hat \mymat_\ell(2|1)}={1/(\ell \log(1/\gamma_0))}, \quad
				{\hat \mymat_\ell(1|1)}=1-{\hat \mymat_\ell(2|1)}.
			\end{align}
			The other type of step sequences $1^{n-\ell}2^{\ell}$ are dealt with by symmetry.
			
			Due to symmetry it suffices to analyze the risk for sequences ending in 1. The risk of add-$\frac 12$ estimator for the non-step sequence $1^n$ is bounded as
			\begin{align*}
				\EE\qth{\indc{X^n=1^n}D({\mymat(\cdot|1)}\|{\hat \mymat_{1^n}(\cdot|1)})}
				&=P_{X^n}(1^n)\sth{M(2|1)\log\pth{ \frac{M(2|1)}{1/(2n)} }+M(1|1)\log\pth{M(1|1)\over (n-\frac 12)/n}}
				\nonumber\\
				&\leq (1-M(2|1))^{n-1}\sth{2M(2|1)^2n+\log\pth{{n\over n-\frac 12}}}
				\lesssim \frac 1n.
			\end{align*}
			where the last step followed by using $(1-x)^{n-1}x^2\leq n^{-2}$ with $x=M(2|1)$ and $\log x\leq x-1$. From \cite[Lemma 7,8]{FOPS2016} we have that the total risk of other non-step sequences is bounded from above by $\calO\pth{\frac 1n}$ and hence it is enough to analyze the risk for step sequences, and further by symmetry, those in $\sth{2^{n-\ell}1^\ell:1\leq \ell\leq n-1}$. The desired upper bound \prettyref{eq:gamma2a} then follows from \prettyref{lmm:stepbound} next. 
			
		\begin{lemma}\label{lmm:stepbound} 
			For any $n^{-0.9}\leq \gamma_0\leq e^{-e^{5}}$, $\hat \mymat_\ell(\cdot|1)$ in \eqref{eq:z.ell.est} satisfies 
		$$
		\sup_{M\in \calM'_2(\gamma_0)}\sum_{\ell=1}^{n-1}\EE\qth{\indc{X^n=2^{n-\ell}1^\ell}D({\mymat(\cdot|1)}\|{\hat \mymat_\ell(\cdot|1)})}
		\lesssim  {\log\log(1/\gamma_0)\over n}.
		$$
	\end{lemma}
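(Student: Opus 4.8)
The plan is to reduce the step‑sequence risk to an elementary series inequality and then dispatch it by a short case split on the two transition probabilities. Parametrize $\mymat\in\calM_2'(\gamma_0)$ by $a\triangleq\mymat(2|1)$ and $b\triangleq\mymat(1|2)$; reversibility is automatic, the stationary distribution is $(\tfrac b{a+b},\tfrac a{a+b})$, and the spectral gap is $\gamma=a+b$, so the constraint is simply $a+b\ge\gamma_0$. A one‑transition computation gives $P_{X^n}(2^{n-\ell}1^\ell)=\tfrac{ab}{a+b}(1-b)^{n-1-\ell}(1-a)^{\ell-1}$; meanwhile $\mymat(\cdot|1)=(1-a,a)$, and writing $L\triangleq\log(1/\gamma_0)\ge e^{5}$ the estimator is $\hat\mymat_\ell(\cdot|1)=(1-\hat a_\ell,\hat a_\ell)$ with $\hat a_\ell=\tfrac1{\ell L}\le\tfrac12$. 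From $\log t\le t-1$ one gets the clean bound $D(\mymat(\cdot|1)\|\hat\mymat_\ell(\cdot|1))=a\log\tfrac a{\hat a_\ell}+(1-a)\log\tfrac{1-a}{1-\hat a_\ell}\le a\log_+(a\ell L)+\tfrac2{\ell L}$, where $\log_+x\triangleq\max\{\log x,0\}$ vanishes exactly when $a\le\hat a_\ell$. Plugging this in, it suffices to bound, uniformly over $a+b\ge\gamma_0$, the sums $\Sigma_A=\sum_\ell\tfrac{ab}{a+b}(1-b)^{n-1-\ell}(1-a)^{\ell-1}\,a\log_+(a\ell L)$ by $O(\tfrac{\log\log(1/\gamma_0)}n)$ and $\Sigma_B=\tfrac2L\sum_\ell\tfrac{ab}{a+b}(1-b)^{n-1-\ell}(1-a)^{\ell-1}/\ell$ by $O(\tfrac1n)$.

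The single inequality producing the $\log\log$ is $\sum_{\ell\ge1}a(1-a)^{\ell-1}\log_+(a\ell L)\le 1+\log L$ (equivalently, for $\ell$ geometric with mean $1/a$, $\Expect[\log_+(a\ell L)]\le 1+\log L$), which follows from $\log_+(a\ell L)\le\log_+(a\ell)+\log L$, $\log_+t\le t$, and $\sum_\ell\ell x^{\ell-1}=(1-x)^{-2}$. I will combine it with $\sum_{\ell\ge1}(1-a)^{\ell-1}/\ell=\tfrac{\log(1/a)}{1-a}$ (which is $\le 4L$ once $a\ge\gamma_0/2$) and with $\tfrac{ab}{a+b}(1-b)^{n-1-\ell}\le b(1-b)^{n-1-\ell}\le\tfrac1{n-\ell}$. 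Since $a+b\ge\gamma_0$ forces $\max\{a,b\}\ge\gamma_0/2$, split into two cases. If $a\ge\gamma_0/2$, then $(1-a)^{\ell-1}\le e^{-\gamma_0(\ell-1)/2}$, so the range $\ell>n/2$ contributes only $e^{-\Omega(\gamma_0n)}=e^{-\Omega(n^{0.1})}$ (using $\gamma_0\ge n^{-0.9}$) times a polynomial, while for $\ell\le n/2$ the factor $\tfrac1{n-\ell}\le\tfrac2n$ pulls out and the two series bounds give $\Sigma_A\lesssim\tfrac{\log L}n$ and $\Sigma_B\lesssim\tfrac1n$. If instead $b\ge\gamma_0/2$ (hence $a<\gamma_0/2\le b$), then $(1-b)^{n-1-\ell}\le e^{-\gamma_0(n-1-\ell)/2}$ kills $\ell\le n/2$; for $\ell>n/2$ one sums $(1-b)^{n-1-\ell}\le 1/b$, bounds $\tfrac{ab}{a+b}\cdot\tfrac1b=\tfrac a{a+b}\le\tfrac{2a}{\gamma_0}$, uses $(1-a)^{\ell-1}\le e^{-an/4}$ and $\log_+(a\ell L)\le\log_+(\gamma_0nL)\lesssim\log n$, and finishes with $\sup_{a>0}a^2e^{-an/4}\lesssim n^{-2}$, giving $\Sigma_A,\Sigma_B\lesssim\tfrac{\log n}{\gamma_0n^2}\le n^{-1.1}\log n\lesssim\tfrac1n$. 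Adding the cases yields the claim; for $n$ below the absolute constant at which $n^{0.1}\ge 8\log n$, the step‑sequence risk is trivially $\le\log(nL)=O(\tfrac{\log L}n)$.

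The main difficulty is precisely that neither decay factor $(1-a)^{\ell-1}$ nor $(1-b)^{n-1-\ell}$ may be dropped: replacing either by $1$ loses a factor that can be polynomial in $n$ in the worst case, so the argument must exploit, in each regime, that whichever of $a,b$ is $\gtrsim\gamma_0$ enforces geometric decay on its half of the chain. The only genuinely delicate window is $\tfrac1{nL}\lesssim a\lesssim\gamma_0$ with $b\gtrsim\gamma_0$, where the relevant $\ell$ are squeezed to within $O(1/\gamma_0)$ of $n$ and one must beat the $\gamma_0^{-1}\le n^{0.9}$ blow‑up using the quadratic‑exponential maximum $\sup_a a^2e^{-an}\lesssim n^{-2}$. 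Finally, the hypothesis $\gamma_0\le e^{-e^{5}}$ plays only the bookkeeping role of making $L=\log(1/\gamma_0)$ large enough that $\hat a_\ell<\tfrac12$ and $1+\log L\le 2\log L$, so that the principal case $a\asymp\gamma_0$ delivers exactly $\Theta(\tfrac{\log\log(1/\gamma_0)}n)$ — which is the reason the estimator is tuned to $\hat\mymat_\ell(2|1)=1/(\ell\log(1/\gamma_0))$.
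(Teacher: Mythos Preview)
Your argument is correct and complete. The route, however, differs from the paper's in how the ``convolution'' sum $\sum_\ell b(1-b)^{n-1-\ell}(1-a)^{\ell-1}(\cdots)$ is controlled. The paper invokes Chebyshev's sum inequality: because $\ell\mapsto b(1-b)^{n-1-\ell}$ is increasing while $\ell\mapsto a(1-a)^{\ell-1}$ and $\ell\mapsto a(1-a)^{\ell-1}\log_+(a\ell)$ are decreasing (the latter requiring a short verification), the product sum is at most $\tfrac1{n-1}$ times the product of the partial sums, which immediately yields $\tfrac{2+\log\log(1/\gamma_0)}{n-1}$ for the $\log_+$ and $\log\log$ pieces with no case split; only the residual $\tfrac1{\ell L}$ term then needs the dichotomy $a>\gamma_0/2$ versus $a\le\gamma_0/2$. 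You instead carry the case split from the outset, replacing the rearrangement step by the elementary pointwise bound $b(1-b)^{m}\le\tfrac1{m+1}$ together with the geometric--series identity $\sum_\ell a(1-a)^{\ell-1}\log_+(a\ell L)\le 1+\log L$. Your approach is more hands-on and avoids the monotonicity check, at the price of tracking two regimes for every term rather than one; the paper's approach is a bit slicker for the dominant $\log\log$ contribution but still needs the same case analysis at the end. Both land on the same bound.
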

	
	\begin{proof}\label{app:stepbound}
		For each $\ell$ using $\log\pth{ 1\over {1-x}}\leq  2x,x\leq \frac 12$ with $x=\frac 1{\ell\log(1/\gamma_0)}$, 
		\begin{align}
			D({\mymat(\cdot|1)}\|{\hat \mymat_\ell(\cdot|1)})
			&= {M(1|1)\log \pth{M(1|1)\over 1-\frac 1{\ell{\log(1/\gamma_0)}}}+{\mymat(2|1)}\log \pth{{\mymat(2|1)}\ell{\log(1/\gamma_0)}}}
			\nonumber\\
			&\lesssim {1\over \ell {\log(1/\gamma_0)}}+{\mymat(2|1)}\log(M(2|1)\ell)+{\mymat(2|1)}\log {{\log(1/\gamma_0)}}
			\nonumber\\
			&\le {1\over \ell {\log(1/\gamma_0)}}+\mymat(2|1)\log_+(\mymat(2|1)\ell) + \mymat(2|1) {\log\log(1/\gamma_0)}\label{eq:appgub5},
		\end{align}
		where we define $\log_+(x) = \max\{1,\log x\}$. 
		Recall the following Chebyshev's sum inequality: for $a_1\le a_2\le \cdots\le a_n$ and $b_1\ge b_2\ge \cdots\ge b_n$, it holds that 
		\begin{align*}
			\sum_{i=1}^n a_ib_i \le \frac{1}{n}\pth{\sum_{i=1}^n a_i}\pth{\sum_{i=1}^n b_i}. 
		\end{align*}
		The following inequalities are thus direct corollaries: for $x,y \in [0,1]$, 
		\begin{align}
			\sum_{\ell=1}^{n-1} x(1-x)^{n-\ell-1}y(1-y)^{\ell-1} &\le \frac{1}{n-1}\pth{\sum_{\ell=1}^{n-1} x(1-x)^{n-\ell-1}}\pth{\sum_{\ell=1}^{n-1} y(1-y)^{\ell-1}}\nonumber\\
			& \le \frac{1}{n-1}, \label{eq:chebyshev-1} \\
			 \sum_{\ell=1}^{n-1} x(1-x)^{n-\ell-1}y(1-y)^{\ell-1}\log_+(\ell y) &\le \frac{1}{n-1}\pth{\sum_{\ell=1}^{n-1} x(1-x)^{n-\ell-1}}\pth{\sum_{\ell=1}^{n-1} y(1-y)^{\ell-1}\log_+(\ell y)} \nonumber \\
			 &\le \frac{1}{n-1}\sum_{\ell=1}^{n-1}y(1-y)^{\ell-1}(1+\ell y) \le \frac{2}{n-1}, \label{eq:chebyshev-2}
		\end{align}
		where in \eqref{eq:chebyshev-2} we need to verify that $\ell\mapsto y(1-y)^{\ell-1}\log_+(\ell y)$ is non-increasing. To verify it, w.l.o.g. we may assume that $(\ell+1)y\ge e$, and therefore
		\begin{align*}
			\frac{y(1-y)^{\ell}\log_+((\ell+1)y )}{y(1-y)^{\ell-1}\log_+(\ell y)} &= \frac{(1-y)\log((\ell+1)y)}{\log_+(\ell y)} \le \pth{1-\frac{e}{\ell+1}}\pth{1 + \frac{\log(1+1/\ell)}{\log_+(\ell y)} } \\
			&\le \pth{1-\frac{e}{\ell+1}}\pth{1+\frac{1}{\ell}} < 1 + \frac{1}{\ell} - \frac{e}{\ell+1} < 1. 
		\end{align*}

		Therefore, 
	   \begin{align}
	   	&\sum_{\ell=1}^{n-1}\EE\qth{\indc{X^n=2^{n-\ell}1^\ell}D(\mymat(\cdot|1)\|\hat \mymat_\ell(\cdot|1))} \nonumber\\
	   	&\le \sum_{\ell=1}^{n-1}M(2|2)^{n-\ell-1}M(1|2)
	   	M(1|1)^{\ell-1}D(\mymat(\cdot|1)\|\hat \mymat_\ell(\cdot|1))\nonumber\\
	   	&\stepa{\lesssim} \sum_{\ell=1}^{n-1}M(2|2)^{n-\ell-1}M(1|2)M(1|1)^{\ell-1}
	   	\pth{\frac {1}{\ell \log(1/\gamma_0)}+M(2|1)\log_+(M(2|1)\ell) + M(2|1)\log\log(1/\gamma_0)} \nonumber\\
	   	&\stepb{\le} \sum_{\ell=1}^{n-1} \frac{M(2|2)^{n-\ell-1}M(1|2)M(1|1)^{\ell-1}}{\ell \log(1/\gamma_0)} + \frac{2+\log\log(1/\gamma_0)}{n-1}, \label{eq:appgub6}
	   \end{align}
   		where (a) is due to \eqref{eq:appgub5}, (b) follows from \eqref{eq:chebyshev-1} and \eqref{eq:chebyshev-2} applied to $x=M(1|2), y=M(2|1)$. To deal with the remaining sum, we distinguish into two cases. Sticking to the above definitions of $x$ and $y$, if $y > \gamma_0/2$, then
   		\begin{align*}
   			\sum_{\ell=1}^{n-1}\frac{x(1-x)^{n-\ell-1}(1-y)^{\ell-1}}{\ell} \le \frac{1}{n-1}\pth{\sum_{\ell=1}^{n-1} x(1-x)^{n-\ell-1} }\pth{\sum_{\ell=1}^{n-1} \frac{(1-y)^{\ell-1}}{\ell}} \le \frac{\log(2/\gamma_0)}{n-1},
   		\end{align*}
   		where the last step has used that $\sum_{\ell=1}^\infty t^{\ell-1}/\ell = \log(1/(1-t))$ for $|t|<1$. If $y\le \gamma_0/2$, notice that for two-state chain the 
spectral gap is given explicitly by 
$\gamma=M(1|2)+M(2|1)=x+y$, so that the assumption $\gamma\ge \gamma_0$ implies that $x\ge \gamma_0/2$. In this case, 
  			\begin{align*}
  	\sum_{\ell=1}^{n-1}\frac{x(1-x)^{n-\ell-1}(1-y)^{\ell-1}}{\ell} &\le \sum_{\ell < n/2} (1-x)^{n/2-1} + \sum_{\ell\ge n/2} \frac{x(1-x)^{n-\ell-1}}{n/2} \\
  	&\le \frac{n}{2}e^{-(n/2-1)\gamma_0} + \frac{2}{n} \lesssim \frac{1}{n},
  		\end{align*}
  	thanks to the assumption $\gamma_0\ge n^{-0.9}$. Therefore, in both cases, the first term in \eqref{eq:appgub6} is $O(1/n)$, as desired. 	
	\end{proof}
			
			Next we prove the lower bound on $\Risk_{2,n}$. It is enough to show that
			$
			\Risk_{2,n}(\gamma_0)
			\gtrsim \frac{1}{n}\log\log\pth{1/ \gamma_0}$
			for $n^{-1}\leq \gamma_0\leq e^{-e^{5}}$.
			Indeed, for $\gamma_0\geq e^{-e^{5}}$, we can apply the result in the \iid setting (see, e.g., \cite{BFSS02}), in which the absolute spectral gap is 1, to obtain the usual parametric-rate lower bound $\Omega\pth{\frac 1n}$;
			for $\gamma_0 <n^{-1}$, we simply bound $\Risk_{2,n}(\gamma_0)$ from below by $\Risk_{2,n}(n^{-1})$.
			Define \begin{align}\label{eq:alpha.beta}
			\alpha=\log(1/\gamma_0),\quad\beta=\left\lceil {\alpha\over 5\log\alpha} \right\rceil,
			\end{align} 
			and consider the prior distribution
			\begin{align}\label{eq:twostate.prior}
			\scr M=\Unif(\calM),
			\quad \calM&=\sth{\mymat:{\mymat(1|2)}=\frac 1n,{\mymat(2|1)}={1\over \alpha^m}: m\in \naturals \cap\pth{\beta,5\beta}}.
			\end{align}
			Then the lower bound part of \prettyref{thm:gamma2} follows from the next lemma.

		\begin{lemma}\label{lmm:bayes.risk}
			Assume that $n^{-0.9}\leq \gamma_0\leq e^{-e^{5}}$. Then
			\begin{enumerate}[label=(\roman*)]
				\item $\gamma_*> \gamma_0$ for each $M\in\calM$;
				\item the Bayes risk with respect to the prior $\scr M$ is at least  $\Omega\pth{\log\log(1/\gamma_0)\over n}$.
			\end{enumerate}
		\end{lemma}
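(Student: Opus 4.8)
The plan is to prove the two parts of \prettyref{lmm:bayes.risk} separately, the first being routine and the second carrying the content.

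\textbf{Part (i).} For a two-state chain the only nontrivial eigenvalue is $\lambda_2=1-M(1|2)-M(2|1)$; for $\M\in\calM$ this equals $1-\tfrac1n-\alpha^{-m}\in(0,1)$, so the absolute spectral gap is exactly $\gamma_*=\tfrac1n+\alpha^{-m}$, and the chain is irreducible and (automatically) reversible. It thus suffices to verify $\alpha^{-m}>\gamma_0=e^{-\alpha}$ for every $m\in\naturals\cap(\beta,5\beta)$, i.e.\ $m\log\alpha<\alpha$; this follows from $m\le 5\beta-1$ together with the choice $\beta=\lceil\alpha/(5\log\alpha)\rceil$ (so $\alpha^{5\beta}\asymp e^{\alpha}=1/\gamma_0$, with the $+\tfrac1n$ slack keeping the gap strictly above $\gamma_0$). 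Hence $\calM\subseteq\calM_2(\gamma_0)$, and $\Risk_{2,n}(\gamma_0)$ is bounded below by the Bayes risk of the prior $\scr M=\Unif(\calM)$.

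\textbf{Part (ii): reduction to a posterior computation.} I would lower bound the Bayes risk by restricting to the event that $X^n$ is a \emph{step sequence} $2^{n-\ell}1^\ell$, $1\le\ell\le n-1$ (the reflected sequences $1^{n-\ell}2^\ell$ only add, by symmetry). On $\{X^n=2^{n-\ell}1^\ell\}$ one has $X_n=1$, so by \eqref{eq:riskkn} the loss is $D((1-\alpha^{-m},\alpha^{-m})\|(1-\hat q_\ell,\hat q_\ell))$ where $\hat q_\ell=\EE[\M(2|1)\mid X^n=2^{n-\ell}1^\ell]$ is the posterior mean of $\alpha^{-m}$. Writing $\pi_2(m)=n\alpha^{-m}/(1+n\alpha^{-m})$ for the stationary mass of state $2$, a direct computation gives $\PP(X^n=2^{n-\ell}1^\ell\mid m)=\pi_2(m)(1-\tfrac1n)^{n-\ell-1}\tfrac1n(1-\alpha^{-m})^{\ell-1}$. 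The hypothesis $\gamma_0\ge n^{-0.9}$ forces $n\ge e^{10\alpha/9}$, hence $n\alpha^{-m}\gg1$ and $\pi_2(m)=1-o(1)$ \emph{uniformly} over $m\in(\beta,5\beta)$; therefore the posterior of $m$ given the step sequence is, up to constants, proportional to $(1-\alpha^{-m})^{\ell-1}$ on the grid $(\beta,5\beta)$, and the joint probability of $\{$parameter $m$, run-length $\ell\}$ is $\asymp\tfrac1{\beta n}(1-\alpha^{-m})^{\ell-1}$.

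\textbf{Part (ii): the harmonic sum.} The crux is that a run of $\ell$ ones is essentially uninformative about \emph{how small} $\M(2|1)$ is as soon as $\alpha^{-m}\ell\lesssim1$: with $m^*\asymp\log_\alpha\ell$, the weights $(1-\alpha^{-m'})^{\ell-1}$ are two-sided bounded by constants for $m'\gtrsim m^*$ and super-polynomially small for $m'\ll m^*$, so the posterior of $m$ is (up to constants) uniform on $\{m'\in(\beta,5\beta):m'\ge m^*\}$, a set of size $\asymp5\beta-m^*$, whence $\hat q_\ell\asymp\alpha^{-m^*}/(5\beta-m^*)\lesssim1/(\ell(5\beta-m^*))$. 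Thus for the chain with parameter $m$ and a typical run-length $\ell\in[\alpha^m,2\alpha^m]$ (so $m^*\asymp m$), $\hat q_\ell$ underestimates $\M(2|1)=\alpha^{-m}$ by a factor $\asymp5\beta-m$, and the elementary bound $D((1-y,y)\|(1-q,q))\ge y\log(y/q)-y$ yields $D\gtrsim\alpha^{-m}(\log(5\beta-m)-O(1))$. Multiplying by the joint probability $\asymp\tfrac1{\beta n}(1-\alpha^{-m})^{\ell-1}$ and summing over the $\asymp\alpha^m$ values $\ell\in[\alpha^m,2\alpha^m]$, the factor $\alpha^m$ cancels $\alpha^{-m}$ and leaves $\asymp\tfrac1{\beta n}(\log(5\beta-m)-O(1))^+$ per value of $m$; summing over $m\in(\beta,5\beta)$ and putting $j=5\beta-m$,
\[
\Risk_{2,n}(\gamma_0)\;\gtrsim\;\frac1{\beta n}\sum_{j=1}^{4\beta-1}\bigl(\log j-O(1)\bigr)^+\;\gtrsim\;\frac{\beta\log\beta}{\beta n}\;\asymp\;\frac{\log\beta}{n}\;\asymp\;\frac{\log\log(1/\gamma_0)}{n},
\]
using $\beta=\Theta(\alpha/\log\alpha)$ so that $\log\beta\asymp\log\alpha=\log\log(1/\gamma_0)$.

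\textbf{Main obstacle.} The delicate step is making the ``posterior of $m$ is approximately uniform on $\{m'\ge m^*\}$'' statement quantitative: one must pin down the threshold $m^*$ and show the posterior weights decay super-polynomially below it (this is exactly where $\alpha\ge e^5$, i.e.\ $\log\alpha\ge5$, is used) while being two-sided bounded above it, and then verify that the geometric law of the run-length $\ell$ meshes with the discrete grid $\{\alpha^{-m}\}$ so that a $\Theta(1)$-fraction of run-lengths realize the clean $\alpha^m\cdot\alpha^{-m}$ cancellation. One also has to dispose of the boundary values $m$ near $5\beta$, where the posterior has no room to spread and the per-$m$ contribution degenerates --- harmless, since these are precisely the negligible small-$j$ terms of the harmonic sum --- and to confirm that the hypothesis $\gamma_0\ge n^{-0.9}$ enters only through $\pi_2(m)\approx1$ and $\alpha^m\ll n$.
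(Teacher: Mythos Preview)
Your proposal is correct and follows the same skeleton as the paper: restrict to step sequences $2^{n-\ell}1^\ell$, use $\pi_2\approx 1$ (from $\gamma_0\ge n^{-0.9}$) to get the joint weight $\asymp\frac{1}{\beta n}(1-\alpha^{-m})^{\ell-1}$, lower bound the KL by $M(2|1)\log(M(2|1)/\hat q_\ell)-M(2|1)$, and exploit that the posterior of $m$ given $\ell$ spreads over many grid points so that $\hat q_\ell\ll\alpha^{-m}$.

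The execution differs in one simplifying choice. You track the posterior spread as $5\beta-m$ and end with the sum $\frac{1}{\beta n}\sum_j(\log j-O(1))^+$ over all $m\in(\beta,5\beta)$; the paper instead restricts to $m\in[\beta,3\beta]$, where the spread is uniformly $\asymp\beta$, and bounds the ratio $\alpha^{-m}/\hat q_\ell$ directly via the algebraic decomposition $(X_\ell+A_\ell+B_\ell)/(X_\ell+C_\ell+D_\ell)$: for $\ell\in[\alpha^m/\log\alpha,\,\alpha^m\log\alpha]$ one has $B_\ell\gtrsim\beta$ (the $\ge 2\beta$ terms $m'>m$ each contribute $\ge 1/4$) while $C_\ell,D_\ell\lesssim 1$, giving the ratio $\gtrsim\beta$ without ever describing the posterior shape. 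This bypasses exactly the ``main obstacle'' you flagged---pinning down the threshold $m^*$ and the two-sided bounds above it---and automatically discards the boundary $m$ near $5\beta$ that you also set aside. Your route works, but the paper's restriction $m\le 3\beta$ is the cleaner way to the same $\Omega(\log\log(1/\gamma_0)/n)$.
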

		\begin{proof}
			Part (i) follows by noting that absolute spectral gap for any two states matrix $M$ is $1-\abs{1-{\mymat(2|1)}-{\mymat(1|2)}}$ and for any $M\in \cal M$, $M(2|1)\in  \pth{\alpha^{-5\beta},\alpha^{-\beta}}\subseteq(\gamma_0,\gamma_0^{1/5})\subseteq (\gamma_0,1/2) $ which guarantees
			$
				\gamma_*=M(1|2)+M(2|1)>\gamma_0.
			$
			
			To show part (ii) we lower bound the Bayes risk when the observed trajectory $X^n$ is a step sequence in $\sth{2^{n-\ell}1^\ell:1\leq \ell\leq n-1}$. Our argument closely follows that of \cite[Theorem 1]{HOP18}. Since $\gamma_0\ge n^{-1}$, for each $M\in \calM$, the corresponding stationary distribution $\pi$ satisfies
			\begin{align*}
				\pi_2 = \frac{M(2|1)}{M(2|1)+M(1|2)} \ge \frac{1}{2}. 
			\end{align*}
		
		Denote by $\Risk(\scr M)$ the Bayes risk with respect to the prior $\scr M$ and by ${\hat \mymat^{\mathsf{B}}_\ell(\cdot|1)}$ 
		the Bayes estimator for prior $\scr M$ given $X^n=2^{n-\ell}1^\ell$. Note that 
		\begin{equation}
		\prob{X^n = 2^{n-\ell}1^\ell} = \pi_2 \pth{1-\frac{1}{n}}^{n-\ell-1} \frac{1}{n} M(1|1)^{\ell-1} \geq \frac{1}{2en} M(1|1)^{\ell-1}.
		\label{eq:probstepseq}
		\end{equation}
		Then
		\begin{align}
			\Risk(\scr M)
			&\geq \EE_{M\sim \scr M}\qth{\sum_{\ell=1}^{n-1}\EE\qth{\indc{X^n=2^{n-\ell}1^\ell}
					D({\mymat(\cdot|1)}\|{\hat \mymat^{\mathsf{B}}_\ell(\cdot|1)})}}
			\nonumber\\
			&\geq \EE_{M\sim \scr M}\qth{\sum_{\ell=1}^{n-1}{M(1|1)^{\ell-1}\over 2en}
			D({\mymat(\cdot|1)}\|{\hat \mymat^{\mathsf{B}}_\ell(\cdot|1)})} \nonumber \\
		&= \frac{1}{2en}\sum_{\ell=1}^{n-1}\EE_{M\sim \scr M} \qth{M(1|1)^{\ell-1}
			D({\mymat(\cdot|1)}\|{\hat \mymat^{\mathsf{B}}_\ell(\cdot|1)})}. \label{eq:MC_bayes_risk_gamma}
		\end{align}
		Recalling the general form of the Bayes estimator in \prettyref{eq:MC_bayes} and in view of \prettyref{eq:probstepseq}, we get
		\begin{align}\label{eq:MC_bayes_est}
			\widehat{M}_{\ell}^{\mathsf{B}}(2|1) = \frac{\EE_{M\sim \scr M}[M(1|1)^{\ell-1}M(2|1)]}{\EE_{M\sim \scr M}[M(1|1)^{\ell-1}]}, \quad \widehat{M}_{\ell}^{\mathsf{B}}(1|1) = 1 - \widehat{M}_{\ell}^{\mathsf{B}}(2|1). 
		\end{align}
		Plugging \eqref{eq:MC_bayes_est} into \eqref{eq:MC_bayes_risk_gamma}, and using 
			\begin{align*}
			D((x,1-x)\|(y,1-y))
			= x\log{x\over y}+(1-x) \log{1-x\over 1-y}
			\geq x\max\sth{0,\log{x\over y}-1}, 
		\end{align*}
		we arrive at the following lower bound for the Bayes risk: 
		\begin{align}
			&\Risk(\scr M) \nonumber\\
			\ge & \frac{1}{2en}\sum_{\ell=1}^{n-1}\EE_{M\sim \scr M} \qth{M(1|1)^{\ell-1} M(2|1)\max\left\{0, \log\pth{\frac{M(2|1)\cdot \EE_{M\sim \scr M}[M(1|1)^{\ell-1}]}{\EE_{M\sim \scr M}[M(1|1)^{\ell-1}M(2|1)]}} -1\right\}}. \label{eq:KL.lowerbound}
		\end{align}
		Under the prior $\scr M$, $M(2|1)=1-M(1|1)=\alpha^{-m}$ with $\beta \leq m \leq 5\beta$.
			
			
			We further lower bound \eqref{eq:KL.lowerbound} by summing over an appropriate range of $\ell$. For any $m\in [\beta,3\beta]$, define
			\begin{align*}
				\ell_1(m) = \left\lceil \frac{\alpha^m}{\log \alpha} \right\rceil, \qquad \ell_2(m) = \left\lfloor \alpha^m\log \alpha \right\rfloor.
			\end{align*}
			Since $\gamma_0 \le e^{-e^5}$, our choice of $\alpha$ ensures that the intervals $\{[\ell_1(m),\ell_2(m)]\}_{\beta\le m\le 3\beta}$ are disjoint. We will establish the following claim: for all $m\in [\beta,3\beta]$ and $\ell\in [\ell_1(m), \ell_2(m)]$, it holds that
			\begin{align}\label{eq:MC_bayes-claim}
				\frac{\alpha^{-m}\cdot \EE_{M\sim \scr M}[M(1|1)^{\ell-1}]}{\EE_{M\sim \scr M}[M(1|1)^{\ell-1}M(2|1)]} \gtrsim \frac{\log(1/\gamma_0)}{\log\log(1/\gamma_0)}. 
			\end{align}
		
			We first complete the proof of the Bayes risk bound assuming \eqref{eq:MC_bayes-claim}. Using \eqref{eq:KL.lowerbound} and \eqref{eq:MC_bayes-claim}, we have
		\begin{align*}
			\Risk(\scr M)
			&\gtrsim {1\over n}\cdot 
			\frac{1}{4\beta}\sum_{m=\beta}^{3\beta}\sum_{\ell=\ell_1(m)}^{\ell_2(m)}\alpha^{-m}(1-\alpha^{-m})^{\ell-1}\cdot \log\log(1/\gamma_0)
			\nonumber\\
			&= {\log\log(1/\gamma_0)\over 4n\beta}
			\sum_{m=\beta}^{3\beta}
			\sth{(1-\alpha^{-m})^{\ell_1(m) - 1 }-(1-\alpha^{-m})^{\ell_2(m)}}\nonumber \\
			&\stepa{\ge} {\log\log(1/\gamma_0)\over 4n\beta}\sum_{m=\beta}^{3\beta}\pth{\pth{\frac 14}^{1\over \log\alpha}-\pth{\frac 1e}^{-1+\log\alpha }}
			\gtrsim {\log\log(1/\gamma_0)\over n}, 
		\end{align*}
	with (a) following from $\frac 14\leq (1-x)^{1\over x}\leq \frac 1e$ if $x\leq \frac 12$, and $\alpha^{-m}\leq \alpha^{-\beta}\le\gamma_0^{1/5}\leq \frac 12$. 
			
		
	Next we prove the claim \eqref{eq:MC_bayes-claim}. Expanding the expectation in \eqref{eq:twostate.prior}, we write the LHS of \eqref{eq:MC_bayes-claim} as
	\begin{align*}
			\frac{\alpha^{-m}\cdot \EE_{M\sim \scr M}[M(1|1)^{\ell-1}]}{\EE_{M\sim \scr M}[M(1|1)^{\ell-1}M(2|1)]} = {X_\ell + A_\ell+ B_\ell\over X_\ell + C_\ell+D_\ell}, 
	\end{align*}
	where
	\begin{align*}
		X_\ell&=\pth{1-\alpha^{-m}}^\ell, \quad 
		A_\ell= \sum_{j={\beta}}^{m-1}\pth{1-\alpha^{-j}}^\ell, \quad 
		B_\ell= \sum_{j=m+1}^{5{\beta}}\pth{1-\alpha^{-j}}^\ell,\\
		C_\ell&= \sum_{j={\beta}}^{m-1}\pth{1-\alpha^{-j}}^\ell{\alpha}^{m-j}, \quad 
		D_\ell= \sum_{j=m+1}^{5{\beta}}\pth{1-\alpha^{-j}}^\ell{\alpha}^{m-j}.
	\end{align*}
			
			We bound each of the terms individually.
			Clearly, $X_\ell\in (0,1)$ and $A_\ell\geq 0$. 
			Thus it suffices to show that $B_\ell \gtrsim \beta$ and $C_\ell,D_\ell\lesssim 1$, 
			for $m\in [\beta,3\beta]$ and $
			\ell_1(m)\leq \ell\leq  \ell_2(m)$.
			Indeed,
			\begin{itemize}
			\item 
			For $j\geq m+1$, we have
			\begin{align*}
			\pth{1-{\alpha}^{-j}}^\ell 
			\geq\pth{1-{\alpha}^{-j}}^{\ell_2(m)}
			\stepa{\geq} \pth{1/4}^{\ell_2(m)\over {\alpha}^j}
			\geq \pth{1/4}^{\log {\alpha} \over {\alpha}}
			\geq 1/4, 
			\end{align*}
			where in (a) we use the inequality $(1-x)^{1/x}\geq 1/4$ for $x\leq 1/2$. Consequently, $B_\ell \ge \beta/2$; 
			\item 
			For $j\le m-1$, we have
			\begin{align*}
				\pth{1-{\alpha}^{-j}}^\ell
				\leq \pth{1-{\alpha}^{-j}}^{\ell_1(m)} \stepb{\le} e^{-\frac{{\alpha}^{m-j}}{\log \alpha}}
				= \gamma_0^{{\alpha}^{m-j-1}\over \log {\alpha}},
			\end{align*}
			where (b) follows from $(1-x)^{1/x}\le 1/e$ and the definition of $\ell_1(m)$. Consequently, 
			\begin{align*}
				C_\ell \le \gamma_0^{\alpha\over \log\alpha}\sum_{j={\beta}}^{m-2}{\alpha}^{m-j}
				+{\alpha}\gamma_0^{1\over \log {\alpha}}
				\leq e^{-{\alpha^2\over \log \alpha} + (2\beta+1)\log\alpha }
				+e^{\log\alpha -\frac{\alpha}{\log\alpha}}
				\leq 2, 
			\end{align*}
			where the last step uses the definition of $\beta$ in \eqref{eq:alpha.beta};
			\item 
			$D_{\ell} \le \sum_{j=m+1}^{5\beta} \alpha^{m-j} \le 1$, since $\alpha =\log\frac{1}{\gamma_0}\geq e^5$.
		\end{itemize}
			Combining the above bounds completes the proof of \eqref{eq:MC_bayes-claim}. 
		\end{proof}

		\subsection{$k$ states}
\label{sec:gammak}		
		
	\subsubsection{Proof of \prettyref{thm:gammak} (i)}
	Notice that the prediction problem consists of $k$ sub-problems of estimating the individual rows of $M$, so it suffices show the contribution from each of them is $O\pth{\frac kn}$. In particular, assuming the chain terminates in state 1 we bound the risk of estimating the first row by the add-one estimator $\hat M^{+1}(j|1)={N_{1j}+1\over N_1+k}$.
	Under the absolute spectral gap condition of $\gamma_*\geq \gamma_0$, we show
	\begin{align}
		\EE\qth{\indc{X_n=1}D\pth{{\mymat(\cdot|1)}\|{\hat\mymat^{+1}(\cdot|1)}}}
		\lesssim {k\over n}\pth{1+\sqrt{\log k\over k\gamma_0^4}}.
		\label{eq:risk.row1}
	\end{align}
	By symmetry, 
	we get the desired $\Risk_{k,n}(\gamma_0)\lesssim {k^2\over n}\pth{1+\sqrt{\log k\over k\gamma_0^4}}$.
	The basic steps of our analysis are as follows:
	\begin{itemize}
		\item When $N_1$ is substantially smaller than its mean, we can bound the risk using the worst-case risk bound for add-one estimators and the 
		probability of this rare event.
		\item Otherwise, we decompose the prediction risk as
		\begin{align*}
			D(\mymat(\cdot|1)\|\hat\mymat^{+1}(\cdot|1))
			=\sum_{j=1}^k\qth{\mymat(j|1)\log\pth{\mymat(j|1)(N_1+k)\over N_{1j}+1}-\mymat(j|1)+{N_{1j}+1\over N_1+k}}.
		\end{align*}
		We then analyze each term depending on whether $N_{1j}$ is typical or not.
		Unless $N_{1j}$ is atypically small, the add-one estimator works well whose risk can be bounded quadratically.
	\end{itemize}

		To analyze the concentration of the empirical counts we use the following moment bounds. The proofs are deferred to \prettyref{app:moment.bounds}.
	\begin{lemma}\label{lmm:moment.reversebound}
		Finite reversible and irreducible chains observe the following moment bounds:
		\begin{enumerate}[label=(\roman*)]
			\item \label{lmm:secondmoment.reversebound}
			${\EE\qth{\pth{N_{ij}-N_i{\mymat(j|i)}}^2|X_n=i}}
			\lesssim 
				n\pi_i{\mymat(j|i)}(1-{\mymat(j|i)})+{\sqrt{M(j|i)}\over \gamma_*}
				+{{M(j|i)}\over \gamma_*^2}$
			\item 
			\label{lmm:fourthmoment.reversebound}
			$\EE\qth{\pth{N_{ij}-N_i{\mymat(j|i)}}^4|X_n=i}
			\lesssim  (n\pi_i{\mymat(j|i)}(1-{\mymat(j|i)}))^2
				+{\sqrt{{\mymat(j|i)}}\over \gamma_*}+{{\mymat(j|i)}^2\over \gamma_*^4}$
			\item\label{lmm:fourthmoment.centralbound}
			$\EE\qth{\pth{N_i-(n-1)\pi_i}^4|X_n=i}
			\lesssim {n^2\pi_i^2\over \gamma_*^2}
				+{1\over \gamma_*^4}.$
		\end{enumerate}
	\end{lemma}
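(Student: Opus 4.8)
The three bounds are all obtained by trading the conditioning on $\{X_n=i\}$ for a reweighting of the stationary law that is close to a constant at the scale $(1-\gamma_*)^{(\text{distance to time }n)}$, after which everything reduces to moment estimates for additive or martingale functionals of the ambient stationary reversible chain. Concretely, stationarity gives $\EE[\,\cdot\mid X_n=i]=\pi_i^{-1}\EE[\,\cdot\,\indc{X_n=i}]$, and $\EE[\indc{X_n=i}\mid X_1,\dots,X_m]=\pi_i\,h_m(X_m)$ with $h_m(x)\triangleq P^{n-m}(x,i)/\pi_i$; expanding in the $L^2(\pi)$-orthonormal eigenbasis $\{\phi_r\}$ of the reversible operator $P$ (with $\phi_1\equiv 1$, $\lambda_1=1$),
\[
h_m(x)-1=\sum_{r\ge 2}\lambda_r^{n-m}\phi_r(x)\phi_r(i),\qquad |h_m(x)-1|\le (1-\gamma_*)^{n-m}/\sqrt{\pi_x\pi_i}.
\]
Two facts are used repeatedly: (a) $P\phi_r=\lambda_r\phi_r$, so propagating an eigenfunction across $s$ steps multiplies it by $\lambda_r^s$ and kills the $r=1$ component; and (b) the reversibility identity $\pi_iM(y|i)=\pi_yM(i|y)$, which collapses any $M(y|i)\,\pi_y^{-1/2}$ to $\sqrt{M(y|i)M(i|y)}\,\pi_i^{-1/2}\le\sqrt{M(y|i)/\pi_i}$ — this is exactly what removes every $\pi_{\min}$-dependence from the final answer and produces the $\sqrt{M(j|i)}$ terms.

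For parts (i) and (ii), write $\Delta\triangleq N_{ij}-N_iM(j|i)=\sum_{\ell=1}^{n-1}f_\ell$ with $f_\ell\triangleq\indc{X_\ell=i}(\indc{X_{\ell+1}=j}-M(j|i))$; since $\EE[f_\ell\mid X^\ell]=0$, $(f_\ell)$ is a martingale-difference sequence under the stationary law, so $\EE_\pi[\Delta^2]=\sum_\ell\EE_\pi f_\ell^2=(n-1)\pi_iM(j|i)(1-M(j|i))$ and, by a standard fourth-moment inequality for martingale differences, $\EE_\pi[\Delta^4]\lesssim(\sum_\ell\EE_\pi f_\ell^2)^2+\sum_\ell\EE_\pi f_\ell^4\lesssim(n\pi_iM(j|i)(1-M(j|i)))^2$. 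For the conditional second moment I expand $\EE_\pi[\Delta^2\indc{X_n=i}]$ over pairs $\ell\le\ell'$, condition each pair on $X^{\ell'+1}$ to turn $\indc{X_n=i}$ into $\pi_ih_{\ell'+1}(X_{\ell'+1})$, and split $h_{\ell'+1}=1+(h_{\ell'+1}-1)$: the constant part recovers $\pi_i\EE_\pi[\Delta^2]$ (off-diagonal pairs die since $\EE[f_{\ell'}\mid X^{\ell'}]=0$), giving the $n\pi_iM(j|i)(1-M(j|i))$ term after the $\pi_i^{-1}$ from the reduction. In the remainder I expand $h_{\ell'+1}-1$ in eigenfunctions and propagate $\phi_r$ back to time $\ell$ using (a): on the diagonal one geometric factor $(1-\gamma_*)^{n-\ell-1}$ survives, while on the off-diagonal the propagation past time $\ell$ is again mean-zero (the $r=1$ term vanishes by $\EE_\pi f_\ell=0$), so a second factor appears and the combined exponent $n-\ell-2$ is independent of $\ell'$; summing ($\sum_u(1-\gamma_*)^u\le 1/\gamma_*$ on the diagonal, $\sum_\ell(n-1-\ell)(1-\gamma_*)^{n-\ell-2}\le 1/\gamma_*^2$ on the off-diagonal) and collapsing every residual $\pi$-ratio via (b) yields the $\sqrt{M(j|i)}/\gamma_*$ and $M(j|i)/\gamma_*^2$ terms. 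Part (ii) is the same for $\Delta^4$: the constant part of the terminal reweighting gives the leading $\lesssim(n\pi_iM(j|i)(1-M(j|i)))^2$ term, and expanding over four-tuples of times with eigenfunction insertions at the (up to four) gaps between consecutive distinct indices — each mean-zero gap giving a geometric factor, the long-range gaps accumulating $1/\gamma_*^2$ apiece — produces the $1/\gamma_*^4$ term, the boundary configurations with a single surviving correction producing the $\sqrt{M(j|i)}/\gamma_*$ term.

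For part (iii), $N_i-(n-1)\pi_i=\sum_{\ell=1}^{n-1}g(X_\ell)$ with $g\triangleq\indc{\,\cdot=i}-\pi_i$ is an ordinary additive functional, so I replace the martingale step by the Poisson-equation decomposition $g=(I-P)\hat g$, $\hat g\triangleq\sum_{t\ge0}P^tg$, which (since $\|P^tg\|_{L^2(\pi)}\le(1-\gamma_*)^t\|g\|_{L^2(\pi)}$ for $\pi$-mean-zero $g$) satisfies $\|\hat g\|_{L^2(\pi)}\le\|g\|_{L^2(\pi)}/\gamma_*\le\sqrt{\pi_i}/\gamma_*$ and telescopes $\sum_\ell g(X_\ell)$ into a martingale sum plus boundary terms; this yields the usual reversible-chain estimates $\Var_\pi(N_i)\lesssim n\pi_i/\gamma_*$ and $\EE_\pi[(N_i-(n-1)\pi_i)^4]\lesssim(n\pi_i/\gamma_*)^2+n\pi_i/\gamma_*^3$. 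Conditioning on $\{X_n=i\}$ contributes (i) a mean shift bounded by $|\EE[N_i\mid X_n=i]-(n-1)\pi_i|=|\sum_{s=1}^{n-1}(P^s(i,i)-\pi_i)|\lesssim 1/\gamma_*$ (from $\sum_{r\ge 2}\phi_r(i)^2\le 1/\pi_i$ and $|1-\lambda_r|\ge\gamma_*$ for $r\ge 2$), hence a $(\text{shift})^4\lesssim 1/\gamma_*^4$ contribution; and (ii) centered moments of $N_i\mid X_n=i$ obeying the same bounds as the stationary ones, since the time-reversal $m\mapsto X_{n+1-m}$ is again driven by $M$ with the same gap and its initial law $\delta_i$ enters only through boundary terms of the Poisson telescoping. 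Combining, dividing by $\pi_i$ as in the reduction, and using the AM--GM bound $n\pi_i/\gamma_*^3\le\frac12(n^2\pi_i^2/\gamma_*^2+1/\gamma_*^4)$ gives $\EE[(N_i-(n-1)\pi_i)^4\mid X_n=i]\lesssim n^2\pi_i^2/\gamma_*^2+1/\gamma_*^4$.

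The main obstacle is the combinatorial bookkeeping of the conditioning corrections in the fourth-moment estimates (parts (ii) and (iii)): for each configuration of the time-tuple one must identify which gaps still carry a mean-zero cancellation — hence a geometric decay factor — and which long-range gaps contribute an extra $1/\gamma_*$, so that the geometric sums close at precisely $1/\gamma_*^4$ and no worse; and one must apply the reversibility identity (b) at every surviving $\pi$-ratio so that no stray factor of $k$ or $1/\pi_{\min}$ leaks into the final bound. It is exactly the availability of the two cancellations $\EE[f_\ell\mid X^\ell]=0$ and $\EE_\pi f_\ell=0$ (respectively the Poisson telescoping) that keeps the bounds at the stated order.
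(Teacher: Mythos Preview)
Your organization differs from the paper's. The paper does not exploit the forward-time martingale structure of $f_\ell=\indc{X_\ell=i}(\indc{X_{\ell+1}=j}-M(j|i))$; instead it uses reversibility to convert $\EE[\cdot\mid X_n=i]$ to $\EE[\cdot\mid X_1=i]$, writes the summands as $\eta_a=\indc{X_{a+1}=i}(\indc{X_a=j}-M(j|i))$ (which is \emph{not} a martingale increment in the forward filtration), and then bounds every cross-product $\EE[\eta_{a_1}\cdots\eta_{a_p}\mid X_1=i]$ term-by-term via a spectral lemma on the functions $h_s(a,b)$. Your martingale observation $\EE[f_\ell\mid X^\ell]=0$ genuinely shortens the second-moment argument (off-diagonal unconditional terms vanish for free, whereas the paper must bound its off-diagonals by $h_1(a,b+1)h_1(b,1)$), and for part~(iii) the Poisson-equation route is more conceptual than the paper's brute-force four-index expansion of $\sum_{a,b,d,e}\EE[\xi_a\xi_b\xi_d\xi_e\mid X_1=i]$.

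There is, however, a real gap in your part~(ii). The inequality $\EE_\pi[\Delta^4]\lesssim(\sum_\ell\EE_\pi f_\ell^2)^2+\sum_\ell\EE_\pi f_\ell^4$ you invoke is Rosenthal's inequality for \emph{independent} summands, not martingale differences. The correct Burkholder--Rosenthal bound uses the predictable quadratic variation $\langle\Delta\rangle_n=\sum_\ell\EE[f_\ell^2\mid\calF_\ell]=M(j|i)(1-M(j|i))N_i$, so that $\EE_\pi[\Delta^4]\lesssim M(j|i)^2(1-M(j|i))^2\,\EE_\pi[N_i^2]+\sum_\ell\EE_\pi f_\ell^4$; since $\EE_\pi[N_i^2]=(n\pi_i)^2+\Theta(n\pi_i/\gamma_*)$, the ``constant part'' of your $h=1+(h-1)$ split already carries $\gamma_*$-dependence that you attribute solely to the remainder. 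The resulting cross-term $n\pi_iM(j|i)^2/\gamma_*$ is absorbable via AM--GM, but the linear term $n\pi_iM(j|i)(1-M(j|i))$ (from $\sum_\ell\EE f_\ell^4\le\sum_\ell\EE f_\ell^2$) is \emph{not} dominated by $(n\pi_iM(j|i)(1-M(j|i)))^2$ when $n\pi_iM(j|i)<1$, so your ``$\lesssim(n\pi_iM(j|i)(1-M(j|i)))^2$'' is false as written. (The paper's own Case~IV produces this same linear term and then silently drops it in the final combination; the stated bound appears to need $n\pi_iM(j|i)\gtrsim1$, which is exactly the regime in which the paper later applies it.) To close the fourth-moment bookkeeping along your lines you end up doing essentially the same tuple-by-tuple case analysis as the paper, and the martingale shortcut buys less than in the second-moment case.
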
 
	When $\gamma_*$ is high this shows that the moments behave as if for each $i\in [k]$, $N_1$ is approximately Binomial($n-1,\pi_i$) and $N_{ij}$ is approximately Binomial$(N_i,M(j|i))$, which happens in case of \iid sampling. For \iid models \cite{KOPS15} showed that the add-one estimator achieves $\calO\pth{\frac kn}$ risk bound which we aim here too. In addition, dependency of the above moments on $\gamma_*$ gives rise to sufficient conditions that guarantees parametric rate. The technical details are given below.

%
%

	We decompose the left hand side in \eqref{eq:risk.row1} based on $N_1$ as
		\begin{align*}
		\EE\qth{\indc{X_n=1}D\pth{{\mymat(\cdot|1)}\|{\hat\mymat^{+1}(\cdot|1)}}}
		=\EE\qth{\indc{A^\leq}D\pth{{\mymat(\cdot|1)}\|{\hat\mymat^{+1}(\cdot|1)}}}
		+\EE\qth{\indc{A^>}D\pth{{\mymat(\cdot|1)}\|{\hat\mymat^{+1}(\cdot|1)}}}
		\end{align*} 
		where the typical set $A^>$ and atypical set $A^\leq$ are defined as
		\begin{align*}
		A^\leq\eqdef\sth{X_n=1, N_1\leq {(n-1)\pi_1/ 2}},
		\quad A^>\eqdef\sth{X_n=1, N_1> {(n-1)\pi_1/ 2}}.
		\end{align*}
		For the atypical case, 
note the following deterministic property of the add-one estimator.
Let $\hat Q$ be an add-one estimator with sample size $n$ and alphabet size $k$
of the form  $\hat Q_i = \frac{n_i+1}{n+k}$, where $\sum n_i=n$. Since 
$\hat Q$ is bounded below by $\frac{1}{n+k}$ everywhere, for any distribution $P$, we have
\begin{equation}
D(P \|\hat Q) \leq \log(n+k).
\label{eq:addone-worstcase}
\end{equation}		
Applying this bound on the event $A^\leq$, we have
		\begin{align}
		&\EE\qth{\indc{A^\leq}D\pth{{\mymat(\cdot|1)}\|{\hat\mymat^{+1}(\cdot|1)}}}
		\nonumber\\
		&\leq \log \pth{n\pi_1+k}
		\PP\qth{X_n=1,N_1\leq (n-1)\pi_1/2}
		\nonumber\\
		&\stepa{\lesssim} \indc{n\pi_1\gamma_*\leq 10}
		\pi_1\log\pth{n\pi_1+k}
		+\indc{n\pi_1\gamma_*>10} \pi_1\log \pth{{n\pi_1+k}}
		{\EE\qth{\pth{N_1-(n-1)\pi_1}^4|X_n=1}
			\over n^4\pi_1^4}
		\label{eq:moment.bound.N1}\\
		&\stepb{\leq}\indc{n\pi_1\gamma_*\leq 10}
			{10\over n\gamma_*}\log\pth{{10\over \gamma_*}+k}
			+\indc{n\pi_1\gamma_*>10}\log \pth{n\pi_1+k}\pth{{1\over n^2\pi_1\gamma_*^2}
				+{1\over n^4\pi_1^3\gamma_*^4}}
		\nonumber\\
		&\stepc{\lesssim} \frac 1n
		\sth{\indc{n\pi_1\gamma_*\leq 10}
			{\log(1/\gamma_*)+\log k\over \gamma_*}
			+\indc{n\pi_1\gamma_*>10}\pth{n\pi_1+\log k}\pth{{1\over n\pi_1\gamma_*^2}
			+{1\over n^3\pi_1^3\gamma_*^4}}}
		\nonumber\\
		&{\lesssim} {1\over n}
		\sth{\indc{n\pi_1\gamma_*\leq 10}\pth{\frac 1{\gamma_*^2}+{\log k\over \gamma_*}}
			+\indc{n\pi_1\gamma_*>10}\pth{{1\over\gamma_*^2}
				+{\log k\over \gamma_*}}}
		\lesssim {1\over n\gamma_0^2}+{\log k\over n\gamma_0}.
		\label{eq:bound.A<}
		\end{align} 
	where we got (a) from Markov inequality, (b) from \prettyref{lmm:moment.reversebound}(iii) and (c) using $x+y\leq xy,x,y\geq 2$.

	Next we bound $\EE\qth{\indc{A^>}D\pth{{\mymat(\cdot|1)}\|{\hat\mymat^{+1}(\cdot|1)}}}$. 
	Define \begin{align*}
		\Delta_i
		=\mymat(i|1)\log\pth{\mymat(i|1)\over \hat\mymat^{+1}(i|1)}-\mymat(i|1)+\hat\mymat^{+1}(i|1).
	\end{align*}
	As $D({\mymat (\cdot|1)}\|{\hat\mymat^{+1} (\cdot|1)})= \sum_{i=1}^k\Delta_i$ it suffices to bound $\EE\qth{\indc{A^>}\Delta_i}$ for each $i$. For some $r\geq 1$ to be optimized later consider the following cases separately

	\paragraph{Case (a) $n\pi_1\leq r$ or $n\pi_1{\mymat(i|1)}\leq 10$:}
	
		Using the fact $y\log(y)-y+1\leq (y-1)^2$ with $y={M(i|1)\over \hat M^{+1}(i|1)}={{\mymat(i|1)}(N_1+k)\over N_{1i}+1}$ we get
		\begin{align}\label{eq:chisq.bound}
		\Delta_i\leq {\pth{{\mymat(i|1)}N_1-N_{1i}+{\mymat(i|1)}k-1}^2\over \pth{N_1+k}\pth{N_{1i}+1}}.
		\end{align}
		This implies
		\begin{align}
		\EE\qth{\indc{A^>}\Delta_i}
		&\leq \EE\qth{\indc{A^>}\pth{{\mymat(i|1)}N_1-N_{1i}+{\mymat(i|1)}k-1}^2\over
			\pth{N_1+k}\pth{N_{1i}+1}}
		\nonumber \\
		&\stepa{\lesssim} {{\EE\qth{\indc{A^>}
				\pth{{\mymat(i|1)}N_1-N_{1i}}^2}
			+k^2\pi_1{\mymat(i|1)}^2+\pi_1}\over n\pi_1+k}
		\nonumber \\
		&\stepb{\lesssim} {\pi_1\EE\qth{\left.{\pth{{\mymat(i|1)}N_1-N_{1i}}^2}\right|X_n=1}\over n\pi_1+k}
		+{1+rk{M(i|1)}\over n}
		\end{align}
		where (a) follows from $N_1>{(n-1)\pi_1\over 2}$ in $A^>$ and the fact that $(x+y+z)^2\leq 3(x^2+y^2+z^2)$;
		(b) uses the assumption that either $n\pi_1\leq r$ or $n\pi_1{\mymat(i|1)}\leq 10$. Applying \prettyref{lmm:moment.reversebound}\ref{lmm:secondmoment.reversebound} and the fact that $x+x^2\leq 2(1+x^2)$,
		continuing the last display we get 
		\begin{align*}
		&\EE\qth{\indc{A^>}\Delta_i}
		\lesssim
		{n\pi_1{\mymat(i|1)}+\pth{1+{{M(i|1)}\over \gamma^2_*}}
			\over n}
		+{1+rk{M(i|1)}\over n}
		\lesssim {{1+rk{M(i|1)} \over n}+{M(i|1)\over n\gamma_0^2}}.
		\end{align*}
		Hence
		\begin{align}
		\EE\qth{\indc{A^>}D({\mymat (\cdot|1)}\|{\hat\mymat^{+1} (\cdot|1)})}
		= \sum_{i=1}^k\EE\qth{\indc{A^>}\Delta_i}
		\lesssim {\frac {rk}n +{1\over\gamma_0^2}}.
		\label{eq:bound.A>.case(a)}
		\end{align}

		\paragraph{Case(b) $n\pi_1> r$ and $n\pi_1{\mymat(i|1)}> 10$:}
		
		We decompose $A^>$ based on count of $N_{1i}$ into atypical part $B^{\leq}$ and typical part $B^>$
		\begin{align*}
		B^{\leq}&\eqdef \sth{X_n=1,N_1>{(n-1)\pi_1/2},N_{1i}\leq {(n-1)\pi_1{\mymat(i|1)}/4}}\\
		B^>&\eqdef \sth{X_n=1,N_1>{(n-1)\pi_1/2},N_{1i}> {(n-1)\pi_1{\mymat(i|1)}/4}}
		\end{align*}
		and bound each of $\EE\qth{\indc{B^\leq}\Delta_i}$ and $\EE\qth{\indc{B^>}\Delta_i}$ separately.
		\paragraph{Bound on $\EE\qth{\indc{B^{\leq}}\Delta_i}$}
		Using $\hat M^{+1}(i|1)\geq {1\over N_1+k}$ and $N_{1i}<N_1M(i|1)/2$ in $B^\leq$ we get
		\begin{align}
		\EE\qth{\indc{B^{\leq}}\Delta_i}
		&=\EE\qth{\indc{B^{\leq}}{\mymat(i|1)}\log \pth{{\mymat(i|1)}(N_1+k)\over N_{1i}+1}}
		+\EE\qth{\indc{B^{\leq}}\pth{{N_{1i}+1\over N_1+k}-{\mymat(i|1)}}} 
		\nonumber \\
		&\leq \EE\qth{\indc{B^{\leq}}{\mymat(i|1)}\log \pth{{\mymat(i|1)} (N_1+k)}}
		+\EE\qth{\indc{B^{\leq}}\pth{{N_{1i}\over N_1}-{\mymat(i|1)}}}
		+\EE \qth{\indc{B^{\leq}}\over N_1}
		\nonumber \\
		&\lesssim \EE\qth{\indc{B^{\leq}}{\mymat(i|1)}\log \pth{{\mymat(i|1)} (N_1+k)}}
		+{1\over n}
		\label{eq:B1.risk.decomp}
		\end{align} 
		where the last inequality followed as
		$
		\EE \qth{\indc{B^\leq}/ N_1}
		\lesssim {\PP[X_n=1]/n\pi_1}
		=\frac 1n$.
		Note that for any event $B$ and any function $g$,
		\begin{align*}
		\EE\qth{g(N_1)\indc{N_1\geq t_0,B}}
		=g(t_0)\PP[N_1\geq t_0,B]
		+\sum_{t= t_0+1}^n\pth{g(t)-g(t-1)}
		\PP[N_1\geq t,B].
		\end{align*}
		Applying this identity with $t_0=\ceil{(n-1)\pi_1/2}$, we can bound the expectation term in \eqref{eq:B1.risk.decomp} as
		\begin{align}
		&\EE\qth{\indc{B^{\leq}}{\mymat(i|1)}\log\pth{{\mymat(i|1)}(N_1+k)}}
		\nonumber \\
		&={\mymat(i|1)}
		\log\pth{{\mymat(i|1)}(t_0+k)}\PP\qth{N_1\geq t_0,N_{1i}\leq {n\pi_1{\mymat(i|1)}\over 4},X_n=1}
		\nonumber\\
		&\quad +{\mymat(i|1)}\sum_{t= t_0+1}^{n-1}
		\log\pth{1+\frac 1{t-1+k}}\PP\qth{N_1\geq t+1,N_{1i}\leq {n\pi_1{\mymat(i|1)}\over 4},X_n=1}
		\nonumber \\
		&\leq \pi_1{\mymat(i|1)}
		\log\pth{{\mymat(i|1)}(t_0+k)}\PP\qth{\left.{\mymat(i|1)}N_1-N_{1i}\geq {\mymat(i|1)t_0\over 4}\right|X_n=1}
		\nonumber\\
		&\quad +{{\mymat(i|1)}\over n}\sum_{t=t_0+1}^{n-1}
		\PP\qth{\left.{\mymat(i|1)}N_1-N_{1i}\geq {\mymat(i|1)t\over 4}\right|X_n=1}
		\label{eq:B1.expectation.part}
		\end{align}
		where last inequality uses $\log\pth{1+\frac 1{t-1+k}}\leq \frac 1t\lesssim \frac 1{n\pi_1}$ for all $t\geq t_0$. Using Markov inequality $\PP\qth{Z>c}\leq c^{-4}{\EE\qth{Z^4}}$ for $c>0$, \prettyref{lmm:moment.reversebound}\ref{lmm:fourthmoment.reversebound} and $x+x^4\leq 2(1+x^4)$ with $x=\sqrt{M(i|1)}/\gamma_*$
		\begin{align*}
		&\PP\qth{\left.{\mymat(i|1)}N_1-N_{1i}\geq {{\mymat(i|1)}t\over 4}\right|X_n=1}
		\lesssim {(n\pi_1{\mymat(i|1)})^2
			+{{\mymat(i|1)}^2\over \gamma_*^4} \over \pth{t{\mymat(i|1)}}^4}.
		\end{align*}
		In view of above continuing \eqref{eq:B1.expectation.part} we get
		\begin{align*}
		&\EE\qth{\indc{B^{\leq}}{\mymat(i|1)}\log\pth{{\mymat(i|1)}(N_1+k)}}
		\nonumber\\
		&\lesssim \pth{(n\pi_1{\mymat(i|1)})^2+{{\mymat(i|1)}^2\over \gamma_*^4}}
		\pth{{\pi_1{\mymat(i|1)}\log({\mymat(i|1)}(n\pi_1+k))\over (n\pi_1{\mymat(i|1)})^4}
			+\frac 1{n({\mymat(i|1)})^3}\sum_{t=t_0+1}^{n}{1\over t^4}}
		\nonumber \\
		&{\lesssim} \pth{(n\pi_1{M(i|1)})^2+{{\mymat(i|1)}^2\over \gamma_*^4}\over n}
		\pth{{\log(n\pi_1M(i|1)+kM(i|1))\over (n\pi_1{\mymat(i|1)})^3}
		+\frac 1{(n\pi_1{\mymat(i|1)})^3}}
		\nonumber \\
		&{\lesssim} {1\over n}
		\pth{(n\pi_1{\mymat(i|1)})^2+{{\mymat(i|1)}^2\over \gamma_*^4}}
		{\log(n\pi_1M(i|1)+kM(i|1))\over (n\pi_1{\mymat(i|1)})^3}
		\nonumber\\
		&\lesssim {1\over n}\pth{{\log(n\pi_1M(i|1)+kM(i|1))\over n\pi_1{\mymat(i|1)}}
		+{M(i|1)\log(n\pi_1M(i|1)+k)\over
			n\pi_1\gamma_*^4 (n\pi_1M(i|1))^2}}
		\nonumber\\
		&\stepa{\lesssim} {1\over n}\pth{{n\pi_1M(i|1)+kM(i|1)\over n\pi_1{\mymat(i|1)}}
			+{M(i|1)\log(n\pi_1M(i|1))\over n\pi_1\gamma_*^4 (n\pi_1M(i|1))^2}+{M(i|1)\log k\over
				n\pi_1\gamma_*^4 (n\pi_1M(i|1))^2}}
		\nonumber\\
		&\stepb{\lesssim} {1\over n}
		\pth{1+kM(i|1)+{M(i|1)\log k\over r\gamma_0^4}}
		\end{align*}
		where (a) followed using $x+y\leq xy$ for $x,y\geq 2$ and (b) followed as $n\pi_1\geq r,n\pi_1{\mymat(i|1)}\geq 10$ and $\log(n\pi_1M(i|1))\leq n\pi_1M(i|1)$. In view of \eqref{eq:B1.risk.decomp} this implies
		\begin{align}
			\sum_{i=1}^k\EE\qth{\indc{B^{\leq}}\Delta_i}
			\lesssim \sum_{i=1}^k {1\over n}
			\pth{1+kM(i|1)\pth{1+{\log k\over rk\gamma_0^4}}}
			\lesssim {k\over n}
			\pth{1+{\log k\over rk\gamma_0^4}}.
		\end{align}
		
		\paragraph{Bound on $\EE\qth{\indc{B^>}\Delta_i}$}
		
		\noindent
		
		Using the inequality \eqref{eq:chisq.bound} 
		\begin{align*}
		\EE\qth{\indc{B^>}\Delta_i}
		&\leq \EE\qth{\indc{B^>}\pth{{\mymat(i|1)}N_1-N_{1i}+{\mymat(i|1)}k-1}^2\over
			\pth{N_1+k}\pth{N_{1i}+1}}
		\nonumber \\
		&\lesssim {\EE\qth{\indc{B^>}
				\sth{\pth{{\mymat(i|1)}N_1-N_{1i}}^2}}+k^2\pi_1{\mymat(i|1)}^2+\pi_1
			\over (n\pi_1+k)(n\pi_1{\mymat(i|1)}+1)}
		\nonumber \\
		&\lesssim {\pi_1\EE\qth{\left.{\pth{{\mymat(i|1)}N_1-N_{1i}}^2}\right|X_n=1}\over (n\pi_1+k)(n\pi_1{\mymat(i|1)}+1)}
		+{k{\mymat(i|1)}\over n}
		\end{align*}
		where (a) follows using properties of the set $B^>$ along with $(x+y+z)^2\leq 3(x^2+y^2+z^2)$. Using \prettyref{lmm:moment.reversebound}\ref{lmm:secondmoment.reversebound} we get 
		\begin{align*}
		&\EE\qth{\indc{B^>}\Delta_i}
		\lesssim {n\pi_1{\mymat(i|1)}+\pth{1+{{\mymat(i|1)}\over \gamma^2_*}}
			\over n(n\pi_1{\mymat(i|1)}+1)}
		+{k{\mymat(i|1)}\over n}
		\lesssim {{1+k{\mymat(i|1)} \over n}+{{\mymat(i|1)}\over n\gamma_0^2}}.
		\end{align*}
		Summing up the last bound over $i\in [k]$ and using
		\label{eq:bound.B1} we get for $n\pi_1>r,n\pi_1M(i|1)>10$
		\begin{align*}
		\EE\qth{\indc{A^>}D({\mymat (\cdot|1)}\|{\hat\mymat^{+1} (\cdot|1)})}
		&=\sum_{i=1}^k\qth{\EE\qth{\indc{B^{\leq}}\Delta_i}
			+\EE\qth{\indc{B^>}\Delta_i}}
		\lesssim {k\over n}\pth{1+{1\over k\gamma_0^2}+{\log k\over rk\gamma_0^4}}.
		\end{align*}
		Combining this with \eqref{eq:bound.A>.case(a)} we obtain
		\begin{align*}
		\EE\qth{\indc{A^>}D({\mymat (\cdot|1)}\|{\hat\mymat^{+1} (\cdot|1)})}
		&\lesssim 
		{k\over n}\pth{{1\over k\gamma_0^2}+r+{\log k\over rk\gamma_0^4}}
		\lesssim {k\over n}\pth{1+{\sqrt{\log k\over k\gamma_0^4}}}
		\end{align*}
		where we chose $r=10+\sqrt{\log k\over k\gamma_0^4}$ for the last inequality. In view of \eqref{eq:bound.A<} this implies the required bound.

	\begin{remark}
	\label{rmk:4thmoment}
	We explain the subtlety of the concentration bound in \prettyref{lmm:moment.reversebound} based on fourth moment and why existing Chernoff bound or Chebyshev inequality falls short.
	For example, the risk bound in \eqref{eq:bound.A<} relies on bounding the probability that $N_1$ is atypically small. To this end, one may use the classical Chernoff-type inequality for reversible chains (see \cite[Theorem 1.1]{L98} or \cite[Proposition 3.10 and Theorem 3.3]{P15})
		\begin{align}
			\PP\qth{N_1\leq (n-1)\pi_1/2|X_1=1}
			\lesssim \frac 1{\sqrt{\pi_1}} e^{-\Theta(n\pi_1\gamma_*)};
			\label{eq:pauline}
		\end{align}
		in contrast, the fourth moment bound in \eqref{eq:moment.bound.N1} yields 
		$\PP\qth{N_1\leq (n-1)\pi_1/2|X_1=1} =O(\frac{1}{(n\pi_1\gamma_*)^2})$.
		Although the exponential tail in \prettyref{eq:pauline} is much better, the pre-factor $\frac 1{\sqrt{\pi_1}}$, due to conditioning on the initial state, can lead to a suboptimal result when $\pi_1$ is small. (As a concrete example, consider two states with $M(2|1)=\Theta(\frac {1}n)$ and $M(1|2)=\Theta(1)$. Then $\pi_1=\Theta(\frac{1}{n}),\gamma=\gamma_*\approx \Theta(1)$, and \prettyref{eq:pauline} leads to $\PP\qth{N_1\leq (n-1)\pi_1/2,X_n=1} = O(\frac{1}{\sqrt{n}})$ as opposed to the desired $O(\frac{1}{n})$.)
		
		
		In the same context it is also insufficient to use 2nd moment based bound (Chebyshev), which leads to $\PP\qth{N_1\leq (n-1)\pi_1/2|X_1=1} =O(\frac{1}{n\pi_1\gamma_*})$. This bound is too loose, which, upon substitution into \prettyref{eq:moment.bound.N1}, results in an extra $\log n$ factor in the final risk bound when $\pi_1$ and $\gamma_*$ are large.
	\end{remark}


	\subsubsection{Proof of \prettyref{thm:gammak} (ii)}

		Let $k\geq (\log n)^6$ and $\gamma_0\geq {(\log (n+k))^2\over k}$.
		We prove a stronger result using spectral gap as opposed to the absolute spectral gap. Fix 
		$M$ such that $\gamma \geq \gamma_0$. Denote its stationary distribution by $\pi$. For absolute constants $\tau>0$ to be chosen later and $c_0$ as in \prettyref{lmm:multinomial.bound} below, define
		\begin{gather}
			\epsilon(m)={2k\over m}+{c_0(\log n)^3\sqrt k\over m},
			\quad c_n=100\tau^2{\log n\over n\gamma},
			\nonumber\\
			n_i^{\pm}=n\pi_i\pm \tau\max\sth{{\log n\over n\gamma},{\sqrt{\pi_i\log n\over n\gamma}}}, \quad i=1,\ldots,k.
			\label{eq:nipm}
		\end{gather}
		Let $N_i$ be the number of visits to state $i$ as in \prettyref{eq:transition.count}. 
		We bound the risk by accounting for the contributions from different ranges of $N_i$ and $\pi_i$ separately:
		\begin{align}
			&\EE\qth{\sum_{i=1}^k\indc{X_n=i}
				D\pth{M(\cdot|i)\|\hat M^{+1}(\cdot|i)}}
			\nonumber\\
			&=\sum_{i:\pi_i\geq c_n}\EE\qth{\indc{X_n=i,n_i^-\leq N_i\leq n_i^+}
				D\pth{M(\cdot|i)\|\hat M^{+1}(\cdot|i)}}
			\nonumber\\
			&+\sum_{i:\pi_i\geq c_n}\EE\qth{\indc{X_n=i, N_i> n_i^+ \text{ or } N_i<n_i^-}
				D\pth{M(\cdot|i)\|\hat M^{+1}(\cdot|i)}}
			+\sum_{i:\pi_i< c_n}\EE\qth{\indc{X_n=i}
				D\pth{M(\cdot|i)\|\hat M^{+1}(\cdot|i)}}
			\nonumber\\
			&\leq \log (n+k)\sum_{i:\pi_i\geq c_n} 
			\PP\qth{D(M(\cdot|i)\|\hat M^{+1}(\cdot|i))>\epsilon(N_i),n_i^-\leq N_i\leq n_i^+}
			+\sum_{i:\pi_i\geq c_n}\EE\qth{\indc{X_n=i,n_i^-\leq N_i\leq n_i^+}\epsilon(N_i)}
			\nonumber\\
			&+\log (n+k)\sum_{i:\pi_i\geq c_n}\qth{\PP\qth{N_i\geq n_i^+}
				+\PP\qth{N_i\leq n_i^-}}
			+\sum_{i:\pi_i\leq c_n}\pi_i\log(n+k)
			\nonumber\\
			&\lesssim \log (n+k)\sum_{i:\pi_i\geq c_n}
			\PP\qth{D(M(\cdot|i)\|\hat M^{+1}(\cdot|i))>\epsilon(N_i),n_i^-\leq N_i\leq n_i^+}
			+\sum_{i:\pi_i\geq c_n}\pi_i\max_{n_i^-\leq m\leq n_i^+}\epsilon(m)
			\nonumber\\
			&\quad
			+\log(n+k)\sum_{i:\pi_i\geq c_n}\pth{\PP\qth{N_i> n_i^+}
				+\PP\qth{N_i< n_i^-}}+{k\pth{\log(n+k)}^2\over n\gamma}.
			\label{eq:s5}
		\end{align}
	where the first inequality uses the worst-case bound \prettyref{eq:addone-worstcase} for add-one estimator.
		We analyze the terms separately as follows. 
				
		For the second term, given any $i$ such that $\pi_i\geq c_n$, we have, by definition in \prettyref{eq:nipm},  $n_i^-\geq 9n\pi_i/10$ and $n_i^+-n_i^-\leq n\pi_i/5$, which implies
		\begin{align}
			\sum_{i:\pi_i\geq c_n}\pi_i\max_{n_i^-\leq m\leq n_i^+}\epsilon(m)
			&\leq 
			\sum_{i:\pi_i\geq c_n}\pi_i \pth{{2k\over 0.9n\pi_i}+{10\over 9}{c_0(\log n)^3\sqrt k\over n\pi_i}}
			\lesssim {k^2\over n}
			+{(\log n)^3k^{3/2}\over n}.
			\label{eq:3(ii).N_i.concentration}
		\end{align}		
		For the third term, applying \cite[Lemma 16]{HJLWWY18} (which, in turn, is based on the Bernstein inequality in \cite{P15}), we get
		$\PP\qth{N_i> n_i^+}+\PP\qth{N_i< n_i^-}
			\leq 2n^{-\tau^2\over 4+10\tau}$.

			To bound the first term in \prettyref{eq:s5}, we follow the method in \cite{B61,HJLWWY18}  of representing the sample path of the Markov chain using independent samples generated from $M(\cdot|i)$ which we describe below. Consider a random variable $X_1\sim \pi$ and an array $W=\sth{W_{i\ell}:i=1,\dots,k\text{ and } \ell=1,2,\dots}$ of independent random variables,
			such that $X$ and $W$ are independent and $W_{i\ell} \iiddistr M(\cdot|i)$ for each $i$.
			Starting with generating $X_1$ from $\pi$, at every step $i\geq 2$ we set $X_i$ as the first element in the $X_{i-1}$-th row of $W$ that has not been sampled yet.
			Then one can verify that $\sth{X_1,\dots,X_n}$ is a Markov chain with initial distribution $\pi$ and transition matrix $M$.
			Furthermore, the transition counts satisfy $N_{ij}=\sum_{\ell=1}^{N_i}\indc{W_{i\ell}=j}$, where
$N_i$ be the number of elements sampled from the $i$th row of $W$.
			Note the conditioned on $N_i=m$, the random variables $\{W_{i1},\ldots,W_{im}\}$ are no longer iid. Instead, we apply a union bound. Note that for each fixed $m$, 
			the estimator			
			\[
			\hat M^{+1}(j|i)= {\sum_{\ell=1}^{m}\indc{W_{i\ell}=j}+1\over m+k} \triangleq \hat M^{+1}_m(j|i), \quad j\in [k]
			\]
			is an add-one estimator for $M(j|i)$ based on an \iid sample of size $m$. 
			\prettyref{lmm:multinomial.bound} below provides a high-probability bound for the add-one estimator in this iid setting. Using this result  and the union bound, we have				
		\begin{align*}
			&\sum_{i:\pi_i\geq c_n}\PP\qth{D(M(\cdot|i)\|\hat M^{+1}(\cdot|i))>\epsilon(N_i),n_i^-\leq N_i\leq n_i^+}
			\nonumber\\
			&\leq 
			\sum_{i:\pi_i\geq c_n}\pth{n_i^+-n_i^-} \max_{n_i^-\leq m\leq n_i^+} \PP\qth{D(M(\cdot|i)\|\hat M_m^{+1}(\cdot|i))>\epsilon(m)}
			\leq 
			\sum_{i:\pi_i\geq c_n} {1\over n^2}
			\leq {k\over n^2}
		\end{align*}
		where the second inequality applies \prettyref{lmm:multinomial.bound} with $t=n\geq n_i^+ \geq m$ and uses $n_i^+-n_i^-\leq n\pi_i/5$ for $\pi_i\geq c_n$.

		Combining the above with \eqref{eq:3(ii).N_i.concentration}, we continue \eqref{eq:s5} with $\tau=25$ to get
		\begin{align*}
			&\EE\qth{\sum_{i=1}^k\indc{X_n=i}
				D\pth{M(\cdot|i)\|\hat M^{+1}(\cdot|i)}}
			\lesssim
			{k^2\over n}
			+{(\log n)^3k^{3/2} \over n}
			+{k(\log (n+k))^2\over n\gamma}
		\end{align*}
	which is $\calO\pth{k^2\over n}$ whenever $k\geq (\log n)^6$ and $\gamma\geq {(\log (n+k))^2\over k}$.

	\begin{lemma}[KL risk bound for add-one estimator]\label{lmm:multinomial.bound}
	Let $V_1,\dots, V_m\simiid Q$ for some distribution $Q=\sth{Q_i}_{i=1}^k$ on $[k]$. 
	Consider the add-one estimator $\hat Q^{+1}$ with	
	$\hat Q^{+1}_i=\frac{1}{m+k}(\sum_{j=1}^m\indc{V_j=i}+1)$.
	There exists an absolute constant $c_0$ such that for any $t\ge m$, 
	\begin{align*}
		\PP\qth{D(Q\|\hat Q^{+1})\geq {2k\over m}+{c_0(\log t)^3\sqrt k\over m}}\leq {1\over t^{3}}.
	\end{align*}
\end{lemma}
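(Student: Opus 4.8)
The starting point is an \emph{exact pointwise identity} rather than a concentration inequality. Write $n_i\eqdef\sum_{j=1}^m\indc{V_j=i}\sim\Binom(m,Q_i)$ and $a_i\eqdef\frac{n_i+1}{mQ_i}$, so that $\hat Q^{+1}_i=\frac{n_i+1}{m+k}$ and, since $\sum_i Q_ia_i=\frac1m\sum_i(n_i+1)=1+\frac km$,
\[
D(Q\|\hat Q^{+1})=\sum_i Q_i\log\frac{(m+k)Q_i}{n_i+1}=\log\pth{1+\tfrac km}-\sum_i Q_i\log a_i=\log\pth{1+\tfrac km}-\tfrac km+\sum_i Q_i\,\psi(a_i),
\]
where $\psi(a)\eqdef a-1-\log a\ge0$. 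Since $\log(1+x)\le x$ this gives the clean pointwise bound $D(Q\|\hat Q^{+1})\le\sum_{i=1}^k Q_i\psi(a_i)$, and the whole task is to control the upper tail of $\sum_i Q_i\psi(a_i)$. Using the elementary estimates $\psi(a)\le\frac12(a-1)^2$ for $a\ge1$ and $\psi(a)\le\frac{(a-1)^2}{2a}$ for $a\le1$ (both from $\psi(1)=\psi'(1)=0$ and the monotonicity of $\psi'$), one obtains, for each $i$,
$Q_i\psi(a_i)\le\frac{(n_i+1-mQ_i)_+^2}{2m^2 Q_i}+\frac{(mQ_i-n_i-1)_+^2}{2m(n_i+1)}$, exactly one term being nonzero depending on whether cell $i$ is over- or under-sampled.

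The plan is then to split off the \emph{tiny} cells $T\eqdef\sth{i:mQ_i<1}$ and to bound the two remaining sums $\Sigma_1\eqdef\frac1{2m^2}\sum_{i\notin T}\frac{(n_i+1-mQ_i)_+^2}{Q_i}$ and $\Sigma_2\eqdef\frac1{2m}\sum_{i\notin T}\frac{(mQ_i-n_i-1)_+^2}{n_i+1}$ separately. For a tiny cell $n_i+1\ge1>mQ_i$, hence $Q_i\psi(a_i)=\frac{n_i+1}{m}-Q_i-Q_i\log(n_i+1)+Q_i\log(mQ_i)\le\frac{n_i+1}{m}$, so the total tiny contribution is at most $\frac1m\pth{N_T+|T|}$ with $N_T\eqdef\sum_{i\in T}n_i\sim\Binom(m,q_T)$ and $q_T<|T|/m\le k/m$; this is $\le\frac{2k}m$ in expectation and, by Bernstein for the binomial, at most $\frac{2k}m+O\!\pth{\tfrac{(\log t)^{3/2}\sqrt k+\log t}{m}}$ with probability $\ge1-t^{-4}$. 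For the non‑tiny cells (which all satisfy $Q_i\ge1/m$), crude second‑moment bounds give $\EE\Sigma_1\le\frac1{2m^2}\sum_{i\notin T}\pth{m(1-Q_i)+\tfrac1{Q_i}}\le\frac km$ and, splitting on $n_i\gtrless mQ_i/2$ and using the Chernoff estimate $\PP[n_i\le mQ_i/2]\le e^{-mQ_i/8}$ together with $\sup_{x\ge1}x^2e^{-x/8}<\infty$, $\EE\Sigma_2\lesssim\frac km$. Thus $\EE D(Q\|\hat Q^{+1})\lesssim\frac km$, and tracking constants recovers the leading term $\frac{2k}m$ of the statement.

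It remains to upgrade $\EE\Sigma_1,\EE\Sigma_2=O(k/m)$ to high‑probability bounds with the correct, $O(1/m)$‑scale, fluctuation $\tfrac{(\log t)^3\sqrt k}{m}$ — a plain bounded‑differences (McDiarmid) bound is useless here, since it only delivers the $1/\sqrt m$ scale. The plan is: (i) fix the good event $\calG$ that $|n_i-mQ_i|\le\sqrt{10\,mQ_i\log t}$ for every $i$, which by the multiplicative Chernoff bound and a union bound over the (at most $t$) cells holds with probability $\ge1-t^{-4}$; (ii) observe that on $\calG$ each summand of $\Sigma_1$ and of $\Sigma_2$ is at most $\frac{C(\log t)^2}{m}$ (for regular cells $mQ_i\ge C\log t$ this is $\frac{10mQ_i\log t+O(1)}{2m^2 Q_i}\approx\frac{5\log t}{m}$, and for moderate cells $1\le mQ_i<C\log t$ one uses $1/Q_i\le m$); (iii) replace each summand by its truncation at this level, which agrees with it on $\calG$ and preserves the negative association of $\sth{n_i}$ because each truncated summand is a monotone function of a single coordinate $n_i$ (with $\Sigma_1$‑summands nondecreasing and $\Sigma_2$‑summands nonincreasing, so each sum is separately a sum of negatively associated variables); (iv) apply the Bernstein inequality for sums of negatively associated nonnegative random variables with per‑term bound $b=\frac{C(\log t)^2}{m}$ and variance proxy $v\le b\cdot\EE\Sigma_\bullet=O\!\pth{\tfrac{k(\log t)^2}{m^2}}$, giving a deviation of order $\sqrt{v\log t}+b\log t=O\!\pth{\tfrac{(\log t)^{3/2}\sqrt k+(\log t)^3}{m}}=O\!\pth{\tfrac{(\log t)^3\sqrt k}{m}}$ with probability $\ge1-t^{-4}$. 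Adding the three contributions on the intersection of these good events (whose complement has probability $\le t^{-3}$ after adjusting constants) yields $D(Q\|\hat Q^{+1})\le\frac{2k}m+\frac{c_0(\log t)^3\sqrt k}{m}$.

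The main obstacle — and the source of the $(\log t)^3$ rather than something smaller — is step (iv): one genuinely needs a \emph{variance‑sensitive} (Bernstein‑type) concentration, which in turn forces the truncation in (iii) and the verification that the truncated $\Sigma_1,\Sigma_2$ remain sums of negatively associated variables; using only worst‑case bounded differences would produce a hopeless $1/\sqrt m$ bound. A secondary nuisance is that the naive estimate $\EE\Sigma_1\lesssim k/m$ breaks down on tiny cells, where $1/Q_i$ is unbounded — which is exactly why those cells are peeled off first and handled by the elementary bound $Q_i\psi(a_i)\le(n_i+1)/m$, after which every surviving cell satisfies $Q_i\ge1/m$ and the moment computations go through. (Throughout one may assume $k\le m$; otherwise the claimed bound already dominates the trivial bound $D(Q\|\hat Q^{+1})\le\log(m+k)$, up to the constant $c_0$, and the same argument applies verbatim with $\log t$ replaced by $\log(t+k)$ in the union bounds.)
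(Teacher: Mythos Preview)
Your approach is sound and genuinely different from the paper's. The paper Poissonizes: it bounds $D(Q\|\hat Q^{+1})\le\frac{k}{m}+\sum_i X_i$ with $X_i=Q_i\log\frac{Q_i}{\hat Q_i+1/m}-Q_i+\hat Q_i+\frac1m$, passes to independent Poisson counts (paying a $\sqrt m$ factor in probability, absorbed by $t^{-4}\mapsto t^{-3}$), and then proves via a dedicated moment lemma that each $X_i$ is sub-exponential with parameters $\big(\tfrac{c_1(\log m)^4}{m^2},\tfrac{c_2(\log m)^2}{m}\big)$; summing and applying sub-exponential concentration yields the result. You instead stay with the multinomial, split each summand into its monotone-increasing and monotone-decreasing parts in $n_i$, and use negative association of $(n_1,\dots,n_k)$ to run a Bernstein inequality on each half after truncation. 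Both routes bypass the $1/\sqrt m$ scale of bounded differences; the paper's buys exact independence and a clean per-coordinate moment bound, while yours avoids Poissonization and the moment lemma at the price of the NA machinery and an explicit truncation step.

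Two minor repairs are needed. First, the event $\calG=\{|n_i-mQ_i|\le\sqrt{10\,mQ_i\log t}\ \forall i\}$ does \emph{not} have failure probability $O(t^{-4})$ uniformly: for a moderate cell with $mQ_i=O(1)$ the Bennett upper tail gives only $\exp(-c\sqrt{\log t}\,\log\log t)$, which is not polynomially small in $t$. The fix is immediate---on cells with $mQ_i<C\log t$ replace the event by $n_i\le C'\log t$ (this \emph{does} hold with probability $\ge1-t^{-5}$), and your step~(ii) already accommodates it. Second, the claim that ``tracking constants recovers $\tfrac{2k}{m}$'' is optimistic with your quadratic split: $\EE\Sigma_2$ alone picks up the constant $\sup_{x\ge1}x^2e^{-x/8}\approx35$, so you only get $\tfrac{Ck}{m}$ with $C\gg2$. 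To recover the stated constant, split $Q_i\psi(a_i)$ directly as $Q_i\psi(a_i)\indc{a_i\ge1}+Q_i\psi(a_i)\indc{a_i<1}$ (each piece is monotone in $n_i$, so NA still applies and no tiny-cell peeling is needed); since $\EE[\tfrac{mQ_i}{n_i+1}]=\tfrac{m}{m+1}(1-(1-Q_i)^{m+1})\le1$ and Jensen give $\EE\sum_iQ_i\psi(a_i)\le k/m$, each monotone half has mean $\le k/m$, and Bernstein on each half yields $\sum_iQ_i\psi(a_i)\le\tfrac{2k}{m}+O\big(\tfrac{(\log t)^3\sqrt k}{m}\big)$ as stated.
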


\begin{proof}
	Let $\hat Q$ be the empirical estimator $\hat Q_i=\frac{1}{m}\sum_{j=1}^m\indc{V_j=i}$. Then $\hat Q^{+1}_i={m\hat Q_i+1\over m+k}$ and hence
	\begin{align}
		D(Q\|\hat Q^{+1})
		&=\sum_{i=1}^k\pth{Q_i\log{Q_i\over \hat Q_i^{+1}}-Q_i+\hat Q^{+1}_i}
		\nonumber\\
		&=\sum_{i=1}^k\pth{Q_i\log{Q_i(m+k)\over m\hat Q_i+1}-Q_i+{m\hat Q_i+1\over m+k}}
		\nonumber\\
		&=\sum_{i=1}^k\pth{Q_i\log{Q_i\over \hat Q_i+\frac 1m}-Q_i+\hat Q_i+\frac 1m}
		+\sum_{i=1}^k\pth{Q_i\log{m+k\over m}-{k\hat Q_i\over m+k}-{k\over m(m+k)}}
		\nonumber\\
		&\leq\sum_{i=1}^k\pth{Q_i\log{Q_i\over \hat Q_i+\frac 1m}-Q_i+\hat Q_i+\frac 1m}
		+\frac km\label{eq:KL-nopoisson}
	\end{align}
	with last equality following by $0\leq\log\pth{m+k\over m}\leq k/m$. 
	
	To control the sum in the above display it suffices to consider its Poissonized version. Specifically, we aim to show
	\begin{align}\label{eq:KL-poisson}
		\PP\qth{\sum_{i=1}^k\pth{ Q_i\log{Q_i\over \hat Q_i^{\mathsf{poi}}+\frac 1m}-Q_i+\hat Q_i^{\mathsf{poi}}+\frac 1m}>{k\over m}+{c_0(\log t)^3\sqrt k\over m}}\leq {1\over t^4}
	\end{align}
	where $m\hat Q^{\mathsf{poi}}_i, i=1,\ldots,k$ are distributed independently as $\Poi(m Q_i)$. (Here and below 
	$\Poi(\lambda)$ denotes the Poisson distribution with mean $\lambda$.) To see why \prettyref{eq:KL-poisson} implies the desired result, letting $w={k\over m}+{c_0(\log t)^3\sqrt k\over m}$ 
	and $Y=\sum_{i=1}^km\hat Q_i^{\mathsf{poi}}\sim \Poi(m)$, we have
	\begin{align}
		&\PP\qth{\sum_{i=1}^k\pth{Q_i\log{Q_i\over \hat Q_i+\frac 1m}-Q_i+\hat Q_i+\frac 1m}>w}
		\nonumber\\
		&\stepa{=} \PP\qth{\left.\sum_{i=1}^k\pth{Q_i\log{Q_i\over \hat Q_i^{\mathsf{poi}}+\frac 1m}-Q_i+\hat Q_i^{\mathsf{poi}}+\frac 1m}>w \right| 
		\sum_{i=1}^kQ^{\mathsf{poi}}_i=1}
		\nonumber\\
		&\stepb{\leq} {1\over t^4\PP[Y=m]}
		= {m!\over t^4e^{-m}m^m}
		\stepc{\lesssim} {\sqrt m\over t^4}
		\leq \frac 1{t^3}.
	\end{align}
	where (a) followed from the fact that conditioned on their sum independent Poisson random variables follow a multinomial distribution;
	(b) applies \eqref{eq:KL-poisson};
	(c) follows from Stirling's approximation. 
	
	To prove \eqref{eq:KL-poisson} we rely on concentration inequalities for sub-exponential distributions. A random variable $X$ is called sub-exponential with parameters $\sigma^2,b>0$, denoted as $\mathsf{SE}(\sigma^2,b)$ if 
	\begin{align}
		\EE\qth{e^{\lambda (X-\EE[X])}}
		\leq e^{\lambda^2\sigma^2\over 2},
		\quad \forall |\lambda|<\frac 1b.
	\end{align} 
	Sub-exponential random variables satisfy the following properties \cite[Sec.~2.1.3]{WainwrightBook19}:
	\begin{itemize}
		\item If $X$ is $\mathsf{SE}(\sigma^2,b)$ for any $t>0$
		\begin{align}\label{eq:subexpo.conc}
			\PP\qth{\abs{X-\EE[X]}\geq v}\leq 
			\begin{cases}
				2e^{-v^2/(2\sigma^2)} , &0<v\leq {\sigma^2\over b}\\
				2e^{-v/(2b)}
				, &v> {\sigma^2\over b}.
			\end{cases}
		\end{align}
	\item {Bernstein condition}: A random variable $X$ is $\mathsf{SE}(\sigma^2,b)$ if it satisfies \begin{align}\label{eq:Bernstein.cond}
		\EE\qth{\abs{X-\EE[X]}^\ell}\leq \frac 12 \ell!\sigma^2b^{\ell-2},\quad \ell =2,3,\dots.
	\end{align} 
	\item If $X_1,\dots,X_k$ are independent $\mathsf{SE}(\sigma^2,b)$, then 
	$\sum_{i=1}^k X_i$ is $\mathsf{SE}(k\sigma^2,b)$.
	
	\end{itemize}
	Define $
		X_i= Q_i\log{Q_i\over \hat Q_i^{\mathsf{poi}}+\frac 1m}-Q_i+\hat Q_i^{\mathsf{poi}}+\frac 1m,i\in [k].
		$
	Then \prettyref{lmm:poisson.SE.conc} below shows that $X_i$'s are independent $\mathsf{SE}(\sigma^2,b)$ with $\sigma^2={c_1(\log m)^4\over m^2},b={c_2(\log m)^2\over n}$ for absolute constants $c_1,c_2$, and hence $\sum_{i=1}^k\pth{X_i-\EE[X_i]}$ is $\mathsf{SE}(k\sigma^2,b)$. In view of \eqref{eq:subexpo.conc} for the choice $c_0=8(c_1+c_2)$ this implies
	\begin{align}\label{eq:SE.conc.app}
		\PP\qth{\sum_{i=1}^k\pth{X_i-\EE[X_i]}\geq c_0{(\log t)^3\sqrt k\over m}}
		&\leq 2e^{-{c_0^2k(\log t)^6\over 2m^2\sigma^2}}
		+2e^{-{c_0\sqrt k(\log t)^3\over 2mb}}
		\leq \frac 1{t^3}.
	\end{align}
	Using $0 \leq y\log y-y+1\leq (y-1)^2,y> 0$ and $\EE\qth{\frac \lambda{\Poi(\lambda)+1}}=\sum_{v=0}^\infty {e^{-\lambda}\lambda^{v+1}\over(v+1)!}=1-e^{-\lambda}$
	\begin{align*}
		\EE\qth{\sum_{i=1}^k X_i}
		&\leq \EE\qth{\sum_{i=1}^k {\pth{Q_i-\pth{\hat Q_i^{\mathsf{poi}}+\frac 1m}}^2\over \hat Q_i^{\mathsf{poi}}+\frac 1m}}
		\nonumber\\
		&= \sum_{i=1}^kmQ_i^2\EE\qth{1\over m\hat Q_i^{\mathsf{poi}}+1}
		-1+\frac km
		=\sum_{i=1}^kQ_i\pth{1-e^{-mQ_i}}
		-1+\frac km
		\leq \frac km.
	\end{align*}
	Combining the above with \eqref{eq:SE.conc.app} we get \eqref{eq:KL-poisson} as required.
\end{proof}
	
	\begin{lemma}\label{lmm:poisson.SE.conc}
	There exist absolute constants $c_1,c_2$ such that the following holds. 
		 For any $p\in (0,1)$ and $nY\sim\Poi(np)$, $X=p\log{p\over Y+\frac 1n}-p+Y+\frac 1n$ is $\mathsf{SE}\pth{{c_1(\log n)^4\over n^2},{c_2(\log n)^2\over n}}$.
	\end{lemma}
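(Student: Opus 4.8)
The plan is to reduce the statement to a one–sided tail bound for $X$ and then to obtain that bound from the Poisson Chernoff inequality. Write $\lambda=np\in(0,n]$ and $W=nY\sim\Poi(\lambda)$, so that $X=\tfrac{\lambda}{n}\,\psi\!\big(\tfrac{W+1}{\lambda}\big)$ with $\psi(u)\triangleq u-1-\log u\ge 0$ (equality iff $u=1$); in particular $X\ge 0$. As in the proof of \prettyref{lmm:multinomial.bound} (using $\psi(u)\le (u-1)^2/u$ for all $u>0$ and $\EE[(\Poi(\mu)+1)^{-1}]=(1-e^{-\mu})/\mu$), one checks $\EE[X]\le 1/n$. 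The main step is to establish
\begin{equation}
\Prob[X\ge t]\le C\exp\!\Big(-\frac{c_0\,nt}{(\log n)^2}\Big)\qquad\text{for all }t\ge 0,
\label{eq:poisSE-tail}
\end{equation}
for absolute constants $C,c_0>0$ (assuming $n$ exceeds an absolute constant; small $n$ is absorbed into the constants). Granting \eqref{eq:poisSE-tail}, integrating the tail gives $\EE[X^\ell]\le C\,\ell!\,b_0^\ell$ with $b_0=(\log n)^2/(c_0 n)$, so for every $\ell\ge 2$, using $X\ge 0$ and Jensen ($\EE X\le (\EE X^\ell)^{1/\ell}$), $\EE|X-\EE X|^\ell\le 2^\ell\EE[X^\ell]\le C\,\ell!\,(2b_0)^\ell$; comparing with the Bernstein moment condition \eqref{eq:Bernstein.cond} shows $X$ is $\mathsf{SE}(\sigma^2,b)$ with $\sigma^2\asymp b_0^2\asymp (\log n)^4/n^2$ and $b\asymp b_0\asymp(\log n)^2/n$, which is the claim.

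To prove \eqref{eq:poisSE-tail}, note that since $\psi$ is convex with minimum $0$ at $u=1$, the event $\{X\ge t\}$ is contained in $\{\tfrac{W+1}{\lambda}\ge u_+\}\cup\{\tfrac{W+1}{\lambda}\le u_-\}$, where $0<u_-\le 1\le u_+$ solve $\psi(u)=nt/\lambda$ (if the level set is empty, or $t$ is below an absolute multiple of $1/n$, \eqref{eq:poisSE-tail} is trivial). I will bound each piece by the Chernoff bounds $\Prob[W\ge a]\le e^{-\lambda\phi(a/\lambda)}$ ($a\ge\lambda$) and $\Prob[W\le a]\le e^{-\lambda\phi(a/\lambda)}$ ($0\le a\le\lambda$), where $\phi(x)\triangleq x\log x-x+1$, together with the trivial $\Prob[W=0]=e^{-\lambda}$ when the threshold $\lambda u_--1$ drops below $1$. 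Because $W+1\ge 1$, the relevant range of $u_-$ is $[1/\lambda,1]$, and because $\phi'(x)=\log x$, replacing $\lambda u_\pm$ by $\lambda u_\pm-1$ changes $\lambda\phi(\cdot)$ in the exponent by at most $O(\log n)$; hence $\Prob[X\ge t]\lesssim e^{-\lambda\phi(u_+)+O(\log n)}+e^{-\lambda\phi(u_-)+O(\log n)}$, where in the second term the bound $e^{-\lambda}$ is used instead whenever $\lambda u_--1<1$ (which only helps).

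The crux is the comparison of the rate function $\phi$ with the level function $\psi$. On $[1,\infty)$ one has $\phi(u)\ge\psi(u)$ (the difference vanishes with vanishing derivative at $u=1$ and is convex increasing there), so $\lambda\phi(u_+)\ge \lambda\psi(u_+)=nt$. On $[1/\lambda,1]$ a direct calculus check gives $\inf_{u}\phi(u)/\psi(u)=\phi(1/\lambda)/\psi(1/\lambda)=\tfrac{\lambda-1-\log\lambda}{1-\lambda+\lambda\log\lambda}\gtrsim \tfrac1{1+\log\lambda}\gtrsim\tfrac1{\log n}$ (for $\lambda\le 2$ the ratio is $\Theta(1)$; and if $\lambda\le 1$ the lower piece is empty, since then $\tfrac{W+1}{\lambda}>1$ always and $X\le \tfrac{W+1}{n}$ has light Poisson tails). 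Therefore $\lambda\phi(u_-)\gtrsim \lambda\psi(u_-)/\log n=nt/\log n$, and after absorbing the $O(\log n)$ shift corrections (dominated once $nt\gtrsim (\log n)^2$, while for smaller $t$ \eqref{eq:poisSE-tail} is trivial) we obtain $\Prob[X\ge t]\lesssim e^{-c_0 nt/(\log n)^2}$.

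The main obstacle is the tail estimate \eqref{eq:poisSE-tail}, and within it the quantitative comparison $\phi/\psi\gtrsim 1/\log n$ on $[1/\lambda,1]$: this is precisely where the polylogarithmic factors enter, traceable to the extremal configuration $W=0$ (where $X$ is largest relative to its typical scale $1/n$), and it is coupled with the bookkeeping for the $+1$ smoothing (the $W=0$ boundary and the one-unit shift in the Chernoff thresholds) and the degenerate regime $\lambda\le 1$. By contrast, the reduction to a Bernstein moment bound and the integration of the tail are routine. (As a byproduct this argument in fact yields $\Prob[X\ge t]\lesssim e^{-c nt/\log n}$, i.e.\ the sharper parameters $\sigma^2\asymp (\log n)^2/n^2$, $b\asymp\log n/n$, which a fortiori imply the stated bound.)
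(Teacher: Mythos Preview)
Your approach is correct and takes a genuinely different route from the paper's proof. The paper bounds $\EE[X^\ell]$ directly by a case split on whether $p$ is large or small relative to $\ell\log n/n$: in the large-$p$ regime it uses Poisson concentration to confine $Y$ near $p$ and then the quadratic bound $X \le (Y-p-1/n)^2/(Y+1/n)$; in the small-$p$ regime it further splits on $\{Y>p\}$ versus $\{Y\le p\}$, using $X\le 2Y$ together with Poisson moment bounds on one side and the deterministic bound $X\lesssim p\log n$ on the other. Your proof instead establishes a single tail inequality $\Prob[X\ge t]\lesssim e^{-cnt/(\log n)^2}$ by exploiting the representation $X=\tfrac{\lambda}{n}\psi((W+1)/\lambda)$ and comparing the level-set function $\psi$ with the Poisson Chernoff rate $\phi$ on each side of $u=1$; the key observation $\inf_{[1/\lambda,1]}\phi/\psi\gtrsim 1/\log\lambda$ is precisely what produces the logarithmic loss. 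Your route is more unified and, as you note, actually delivers the sharper parameters $\sigma^2\asymp(\log n)^2/n^2$ and $b\asymp\log n/n$; the paper's case analysis is more hands-on and avoids the rate-function comparison, at the cost of tracking the moment order $\ell$ through every estimate. One minor remark: your ``$O(\log n)$ shift correction'' on the upper tail can be tightened --- since $\psi'\le 1$ on $[1,\infty)$, the shift $u_+\mapsto u_+-1/\lambda$ changes $\lambda\psi$ by at most $1$, and combining with $\phi\ge\psi$ there gives the exponent $\ge nt-1$ directly, so no logarithmic loss is incurred on that side.
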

	\begin{proof}
	Note that $X$ is a non-negative random variable.
		Since $\EE\qth{\pth{X-\EE[X]}^\ell}\leq 2^{\ell}\EE\qth{X^\ell}$ , by the Bernstein condition \eqref{eq:Bernstein.cond}, 
it suffices to show $\EE[X^\ell]\leq \pth{c_3\ell(\log n)^2\over n}^\ell,\ell=2,3,\dots$ for some absolute constant $c_3$.		
		guarantees the desired sub-exponential behavior. The analysis is divided into following two cases for some absolute constant $c_4\geq 24$.
		\paragraph{Case I $p\geq {c_4\ell\log n\over n}$:}
		
		Using Chernoff bound for Poisson \cite[Theorem 3]{Janson02}
		\begin{align} 
			\PP\qth{|\Poi(\lambda)-\lambda|>x}\leq 2e^{-{x^2\over 2(\lambda+x/3)}},\quad\lambda,x>0,
			\label{eq:poissontail}
		\end{align}
		we get 
		\begin{align}
		\PP\qth{|Y-p|> \sqrt{c_4\ell p\log n\over 4n}}
		&\leq 2\exp\pth{-{c_4n\ell p\log n\over 8np+2\sqrt{c_4n\ell p\log n}}}
		\nonumber\\
		&\leq 
		2\exp\pth{-{c_4\ell\log n\over {8+2\sqrt{c_4\ell\log n/ np}}}}
		\leq {1\over n^{2\ell}}
		\end{align}
		which implies $p/2\leq Y\leq 2p$ with probability at least $1-{n^{-2\ell}}$. Since  $0 \leq X\leq {(Y-p-\frac 1n)^2\over Y+\frac 1n}$, 
		we get
		$
			\EE[X^\ell]
			\lesssim {\pth{\sqrt{{c_4}\ell p\log n/4n}}^{2\ell}\over (p/2)^\ell}
			+{n^\ell\over n^{2\ell}}
			\lesssim \pth{c_4\ell\log n\over n}^\ell.
		$
	\paragraph{Case II $p< {c_4\ell\log n\over n}$:}
	\begin{itemize}
		\item
		On the event $\{Y>p\}$, we have 
		$X \leq Y + \frac{1}{n} \leq 2Y$, where the last inequality follows because $nY$ takes non-negative integer values. Since $X\geq 0$, we have
			$X^\ell\indc{Y> p}\leq (2Y)^\ell\indc{Y> p}$ for any $\ell\geq 2$.
		Using the Chernoff bound \prettyref{eq:poissontail}, we get $Y\leq {2c_4\ell \log n\over n}$ with probability at least $1-n^{-2\ell}$, which implies
		\begin{align*}
				\EE\qth{X^\ell\indc{Y\geq p}}
			&\leq \EE\qth{(2Y)^\ell\indc{Y> p,Y\leq {2c_4\ell \log n\over n}}}
			+\EE\qth{(2Y)^\ell\indc{Y> p,Y> {2c_4\ell \log n\over n}}}
			\nonumber\\
			&\leq\pth{4c_4\ell\log n\over n}^\ell
			+2^\ell\pth{\EE[Y^{2\ell}]\PP\qth{Y> {2c_4\ell \log n\over n}}}^{\frac 12}
			\leq
			\pth{c_5\ell\log n\over n}^\ell
		\end{align*}
		for absolute constant $c_5$.
		Here, the last inequality follows from Cauchy-Schwarz and using the Poisson moment bound \cite[Theorem 2.1]{Ahle21}:\footnote{For a result with less precise constants, 
		see also \cite[Eq.~(1)]{Ahle21} based on \cite[Corollary 1]{latala1997estimation}.}
		$\EE[(nY)^{2\ell}]\leq \pth{2\ell\over \log\pth{1+{2\ell\over np}}}^{2\ell}\leq \pth{c_6\ell\log n}^{2\ell}$ for some absolute constant $c_6$,
		with the second inequality applying the assumption  $p< {c_4\ell\log n\over n}$.

		\item As
		$
			X\indc{Y\leq p}\leq p\log n +\frac 1n
			\lesssim {\ell(\log n)^2\over n},
		$ 
		we get $\EE\qth{X^\ell\indc{Y\leq p}}
		\le \pth{c_7\ell(\log n)^2\over n}^\ell$ for some absolute constant $c_7$. 
	\end{itemize}
	\end{proof}


\subsubsection{Proof of \prettyref{cor:parametricrate}}\label{sec:k.state.lb}
We show the following monotonicity result of the prediction risk.
In view of this result, \prettyref{cor:parametricrate} immediately follows from \prettyref{thm:gamma2} and \prettyref{thm:gammak} (i).
Intuitively, the optimal prediction risk is monotonically increasing with the number of states; this, however, does not follow immediately due to the extra assumptions of irreducibility, reversibility, and prescribed spectral gap.

\begin{lemma}
		$\Risk_{k+1,n}(\gamma_0)\geq \Risk_{k,n}(\gamma_0)$ for all $\gamma_0\in(0,1),k\geq 2$.
\end{lemma}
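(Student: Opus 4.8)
The plan is to show that any $k$-state chain from $\calM_k(\gamma_0)$ can be embedded into a $(k+1)$-state chain in $\calM_{k+1}(\gamma_0)$ in such a way that the prediction problem for the smaller chain is recovered (up to a constant, or exactly) from that of the larger one. The most natural construction is to add an isolated-in-spirit extra state, but we must be careful: simply adjoining a disconnected state destroys irreducibility, and adjoining a state with tiny transition probabilities $\Theta(1/n)$ (as in the lower-bound construction of \prettyref{sec:kstates}) would only give a lower bound up to constants, not the clean inequality $\Risk_{k+1,n}(\gamma_0)\ge\Risk_{k,n}(\gamma_0)$. Instead I would try a \emph{lumping/duplication} construction: given $M\in\calM_k(\gamma_0)$ with stationary distribution $\pi$, pick a state, say $k$, and split it into two states $k$ and $k+1$ that behave identically. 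Concretely, define $\tilde M$ on $[k+1]$ by keeping rows $1,\dots,k-1$ but sending half the mass destined for state $k$ to state $k+1$ (i.e. $\tilde M(k|i)=\tilde M(k+1|i)=\tfrac12 M(k|i)$ for $i<k$), and letting rows $k$ and $k+1$ be identical copies of row $k$ of $M$ with the state-$k$ entry likewise split in half. One checks $\tilde M$ is irreducible, reversible with respect to the stationary distribution $(\pi_1,\dots,\pi_{k-1},\tfrac12\pi_k,\tfrac12\pi_k)$, and — crucially — that the eigenvalues of $\tilde M$ are exactly those of $M$ together with one extra eigenvalue $0$ (the duplicated state contributes a zero mode), so $\gamma_*(\tilde M)=\min\{\gamma_*(M),1\}=\gamma_*(M)>\gamma_0$; hence $\tilde M\in\calM_{k+1}(\gamma_0)$.

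The second step is to relate the two prediction risks through this embedding. Let $\phi:[k+1]\to[k]$ be the lumping map that identifies $k+1$ with $k$ and is the identity elsewhere. If $(\tilde X_1,\dots,\tilde X_{n+1})$ is the stationary $\tilde M$-chain, then $(\phi(\tilde X_1),\dots,\phi(\tilde X_{n+1}))$ is exactly the stationary $M$-chain — this is the standard fact that lumping a chain along a partition into which it is "lumpable" yields a Markov chain with the induced kernel, and by construction the partition $\{\{1\},\dots,\{k-1\},\{k,k+1\}\}$ is strongly lumpable for $\tilde M$. Now given any estimator $\hat{\tilde M}(\cdot|\tilde X_n)$ for the $(k+1)$-state problem, I would construct an estimator $\hat M(\cdot|X_n)$ for the $k$-state problem by running $\hat{\tilde M}$ on a simulated $\tilde X^n$ (obtained from $X^n$ by independently splitting each visit to state $k$ into a coin flip between $k$ and $k+1$ with probability $1/2$ each, which reproduces the correct conditional law of $\tilde X^n$ given $X^n$) and then pushing the resulting distribution on $[k+1]$ forward by $\phi$. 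By the data-processing inequality for KL divergence, $D(M(\cdot|X_n)\|\hat M(\cdot|X_n))=D(\phi_* \tilde M(\cdot|\tilde X_n)\,\|\,\phi_*\hat{\tilde M}(\cdot|\tilde X_n))\le D(\tilde M(\cdot|\tilde X_n)\|\hat{\tilde M}(\cdot|\tilde X_n))$, where I use that $\phi_*\tilde M(\cdot|\tilde X_n)=M(\cdot|\phi(\tilde X_n))=M(\cdot|X_n)$ because $\tilde M$ is lumpable. Taking expectations (over the simulated coin flips as well as $X^n$, $\tilde X^n$) and then infimum over $\hat{\tilde M}$ and supremum over $M\in\calM_k(\gamma_0)$ gives $\Risk_{k,n}(\gamma_0)\le\Risk_{k+1,n}(\gamma_0)$.

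The step I expect to be the main obstacle is verifying the two structural claims about the duplication construction simultaneously: (i) that $\gamma_*(\tilde M)\ge\gamma_*(M)$ — this needs the eigenvalue computation, which should follow from writing $\tilde M$ in block form with respect to the "symmetric" and "antisymmetric" combinations of the two copied states and observing the antisymmetric block contributes only a zero eigenvalue — and (ii) that $\tilde M$ is genuinely strongly lumpable along the chosen partition, which requires $\tilde M(\{k,k+1\}|k)=\tilde M(\{k,k+1\}|k+1)$ and that rows $k$ and $k+1$ agree after lumping, both of which hold by fiat in the construction but must be stated carefully. A minor subtlety is that the extra eigenvalue $0$ lies strictly inside $[-1+\gamma_0,1-\gamma_0]$ as long as $\gamma_0<1$, so the absolute-spectral-gap condition is preserved; and reversibility of $\tilde M$ with respect to the split stationary vector is a direct check from the detailed-balance equations of $M$. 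Once these are in place, the data-processing argument is routine.
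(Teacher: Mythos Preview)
Your proposal is correct and takes a genuinely different route from the paper's proof. The paper adds a new state $k+1$ that communicates with every old state with probability $\delta$, giving
\[
\tilde M = \begin{pmatrix} (1-\delta) M & \delta \ones \\ (1-\delta) \pi & \delta \end{pmatrix},
\]
verifies $\gamma_*(\tilde M)=(1-\delta)\gamma_*(M)>\gamma_0$ for small $\delta$, and then takes a \emph{limit} $\delta\to0$: the law of $\tilde X^n$ converges to that of $X^n$, so by continuity of $P\mapsto D(P\|Q)$ the $(k+1)$-state risk converges to the $k$-state risk. Any $(k+1)$-state estimator is then projected to $[k]$ by dropping the last column and renormalizing (using $D(P\|Q)\ge D(P\|\bar Q)$ for sub-probability $Q$).

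Your state-duplication construction is cleaner in that it is \emph{exact} rather than asymptotic: no $\delta\to0$ limit is needed, and the spectral gap is preserved on the nose (the block decomposition into symmetric/antisymmetric parts gives the spectrum of $M$ plus a single extra eigenvalue $0$, as you say). Strong lumpability of $\tilde M$ along $\{\{1\},\dots,\{k-1\},\{k,k+1\}\}$ holds by construction, and your observation that the conditional law of $\tilde X^n$ given $\phi(\tilde X^n)=X^n$ is exactly i.i.d.\ fair coin flips at each visit to state $k$ is correct (the joint probability of $\tilde x^n$ factors as $\PP[X^n=x^n]\cdot 2^{-\#\{t:x_t=k\}}$, independent of which preimage $\tilde x^n$ is chosen). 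The data-processing step then goes through exactly as you outline; averaging over the auxiliary coins (by convexity of $Q\mapsto D(P\|Q)$) yields a deterministic $k$-state estimator whose risk is dominated by that of $\hat{\tilde M}$ on $\tilde M$.

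What each approach buys: the paper's construction generalizes more readily to settings where one wants to perturb an existing chain without changing its topology (and it reuses the ``nuisance state'' idea from the main lower bound), at the cost of a limiting argument that requires checking continuity of the KL risk. Your duplication argument is more self-contained and gives the inequality without any approximation, but relies on the specific algebraic fact that splitting a state introduces only a zero eigenvalue---which is exactly what you need here, and which you correctly identify as the crux to verify.
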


\begin{proof}
Fix an $M\in\calM_{k}(\gamma_0)$ such that $\gamma_*(M)>\gamma_0$. Denote the stationary distribution $\pi$ such that $\pi M = \pi$.
Fix $\delta\in(0,1)$ and define a transition matrix $\tilde{M}$ with $k+1$ states as follows:
\[
\tilde{M} = \begin{pmatrix} (1-\delta) M & \delta \ones \\ (1-\delta) \pi & \delta \end{pmatrix}
\]
One can verify the following:
\begin{itemize}
	\item $\tilde{M}$ is irreducible and reversible;
	\item The stationary distribution for $\tilde{M}$ is $\tilde \pi=((1-\delta)\pi,\delta)$
	\item The absolute spectral gap of $\tilde{M}$ is $\gamma_*(\tilde{M})=(1-\delta) \gamma_*(M)$, so that $\tilde{M}\in\calM_{k+1}(\gamma_0)$  for all sufficiently small $\delta$.
	
	\item Let $(X_1,\ldots,X_n)$ and $(\tilde X_1,\ldots,\tilde X_n)$ be stationary Markov chains with transition matrices $M$ and $\tilde{M}$, respectively. 
	Then as $\delta\to 0$, $(X_1,\ldots,X_n)$ converges to $(\tilde X_1,\ldots,\tilde X_n)$ in law, i.e., the joint probability mass function converges pointwise.
\end{itemize}

	Next fix any estimator $\hat M$ for state space $[k+1]$. Note that without loss of generality we can assume $\hat \mymat(j|i)>0$ for all $i,j\in [k+1]$ for otherwise the KL risk is infinite. 
	Define 	$\hat{\mymat}^{\text{trunc}}$ as $\hat{\mymat}$ without the $k+1$-th row and column, and 
	denote by $\hat{\mymat}'$ its normalized version, namely, $\hat{\mymat}'(\cdot|i)={\hat{\mymat}^{\text{trunc}}(\cdot|i)\over 1-\hat M^{\text{trunc}}(k+1|i)}$ for $i=1,\ldots,k$.
	Then
	\begin{align*}
	 \EE_{\tilde X^n} \qth{D(\tilde M(\cdot|\tilde X_n)\|{\hat M(\cdot|\tilde X_n)})} 
	\xrightarrow{\delta\to0} & ~ 	\EE_{X^n} \qth{D(M(\cdot|X_n)\|{\hat M(\cdot|X_n)})}\\
	\geq & ~ 	\EE_{X^n} \qth{D(M(\cdot|X_n)\|{\hat M'(\cdot|X_n)})} \\
	\geq & ~ 	\inf_{\hat M} \EE_{X^n} \qth{D(M(\cdot|X_n)\|{\hat M(\cdot|X_n)})} 
	\end{align*}
	where in the first step we applied the convergence in law of $\tilde X^n$ to $X^n$ and the continuity of $P\mapsto D(P\|Q)$ for fixed componentwise positive $Q$;
	in the second step we used the fact that 
	for any sub-probability measure $Q=(q_i)$ and its normalized version $\bar Q = Q/\alpha$ with $\alpha = \sum q_i \leq 1$, we have
	$D(P\|Q) = D(P\|\bar Q) + \log \frac{1}{\alpha} \geq D(P\|\bar Q)$.
	Taking the supremum over $M \in \calM_k(\gamma_0)$ on the LHS and the supremum over $\tilde M\in\calM_{k+1}(\gamma_0)$ on the RHS, and finally the infimum over $\hat M$ on the LHS, we conclude 	$\Risk_{k+1,n}(\gamma_0)\geq \Risk_{k,n}(\gamma_0)$.
\end{proof}

\section{Higher-order Markov chains}
\label{sec:order-m}

\subsection{Basic setups}

In this section we prove \prettyref{thm:m-order-rate}. We start with some basic definitions for higher-order Markov chains.
Let $m\geq 1$. Let $X_1,X_2,\dots$ be an $m^{\text{th}}$-th order Markov chain with state space $\calS$ and transition matrix $M\in\reals^{\calS^m \times \calS}$ so that 
$\prob{X_{t+1}=x_{t+1}|X_{t-m+1}^t=x_{t-m+1}^t} =M(x_{t+1}|x_{t-m+1}^t)$ for all $t\geq m$. 
Clearly, the joint distribution of the process is specified by the transition matrix and the initial distribution, which is a joint distribution for $(X_1,\ldots,X_m)$.


	A distribution $\pi$ on $\calS^m$ is a \emph{stationary} distribution if $\{X_t: t\geq 1\}$ with $(X_1,\ldots,X_m)\sim \pi$ is a stationary process, that is, 
	\begin{equation}
	(X_{i_1+t},\ldots,X_{i_n+t}) \eqlaw (X_{i_1},\ldots,X_{i_n}), \quad \forall n,i_1,\ldots,i_n,t\in\naturals.
	\label{eq:stationarity}
	\end{equation}	
	It is clear that \prettyref{eq:stationarity} is equivalent to $(X_{1},\ldots,X_{m}) \eqlaw (X_{2},\ldots,X_{m+1})$. 
			In other words, $\pi$ is the solution to the linear system:
			\begin{align}
			\pi(x_1,\ldots,x_m) = \sum_{x_0 \in \calS} \pi(x_0,x_1,\ldots,x_{m-1}) M(x_m|x_1,\ldots,x_{m-1}), \quad \forall x_1,\ldots,x_m \in \calS.
			\label{eq:MC_stationarity}
			\end{align}
	Note that this implies, in particular, that $\pi$ as a joint distribution of $m$-tuples itself must satisfy those symmetry properties required by stationarity, such as all marginals being identical, etc. 
	
			%
			%
		%

Next we discuss reversibility. A random process $\{X_t\}$ is \emph{reversible} if 
for any $n$, 
\begin{equation}
X^n ~\eqlaw ~\overline{X^n},
\label{eq:reversibility}
\end{equation}
where $\overline{X^n}\eqdef (X_n,\ldots,X_1)$ denotes the time reversal of $X^n=(X_1,\ldots,X_n)$. 
Note that a reversible $m^{\text{th}}$-order Markov chain must be stationary. Indeed, 
	\begin{equation}
	(X_2,\ldots,X_{m+1}) \eqlaw (X_{m},\ldots,X_1) \eqlaw (X_{1},\ldots,X_m),
	\label{eq:rev2stat}
	\end{equation}	
	where the first equality follows from $(X_1,\ldots,X_{m+1}) \eqlaw (X_{m+1},\ldots,X_1)$.
	The following lemma gives a characterization for reversibility:
	\begin{lemma}\label{lmm:reversibility-general}
	An $m^{\text{th}}$-order stationary Markov chain is reversible if and only if \prettyref{eq:reversibility} holds for $n=m+1$, namely
	\begin{equation}
	\pi(x_1,\ldots,x_m) M(x_{m+1}|x_1,\ldots,x_m)  = \pi(x_{m+1},\ldots,x_2) M(x_1|x_{m+1},\ldots,x_2), \quad \forall x_1,\ldots,x_{m+1}\in\calS.
	\label{eq:rev-mth}
	\end{equation}	

\end{lemma}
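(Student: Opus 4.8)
The ``only if'' direction requires no work beyond unfolding definitions: if the chain is reversible, then \eqref{eq:reversibility} holds in particular for $n=m+1$, and writing out the two joint pmfs --- the forward one as $\pi(x_1,\ldots,x_m)\,M(x_{m+1}\mid x_1,\ldots,x_m)$ and the reversed one, $\prob{X_1=x_{m+1},X_2=x_m,\ldots,X_{m+1}=x_1}$, as $\pi(x_{m+1},\ldots,x_2)\,M(x_1\mid x_{m+1},\ldots,x_2)$ --- yields exactly \eqref{eq:rev-mth}. For the ``if'' direction I will assume the chain is stationary, $m^{\text{th}}$-order Markov, and satisfies \eqref{eq:rev-mth}, and then establish $X^n\eqlaw\overline{X^n}$ for every $n$ by a telescoping identity on the joint pmf, in the spirit of the classical argument that detailed balance implies reversibility for first-order chains.

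The first step is to extract from \eqref{eq:rev-mth} the symmetry of the stationary $m$-tuple law,
\[
\pi(x_1,\ldots,x_m)=\pi(x_m,x_{m-1},\ldots,x_1)\qquad\text{for all }x_1,\ldots,x_m\in\calS,
\]
which I will call $(\star)$. This follows by summing \eqref{eq:rev-mth} over $x_{m+1}\in\calS$: the left side collapses to $\pi(x_1,\ldots,x_m)$ since the rows of $M$ sum to one, while the right side, by the tower property together with the stationarity identity $(X_1,\ldots,X_m)\eqlaw(X_2,\ldots,X_{m+1})$ (cf.\ \eqref{eq:rev2stat}), collapses to $\pi(x_m,\ldots,x_1)$. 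Feeding $(\star)$ back into \eqref{eq:rev-mth} rewrites it in the more convenient ``one-step reversal'' form
\[
\pi(x_1,\ldots,x_m)\,M(x_{m+1}\mid x_1,\ldots,x_m)=\pi(x_2,\ldots,x_{m+1})\,M(x_1\mid x_{m+1},x_m,\ldots,x_2),
\]
which I will call $(\dagger)$: the left side is the probability of the $(m+1)$-window $(x_1,\ldots,x_{m+1})$, and on the right the $m$-window has been advanced by one position at the price of a single ``backward'' transition factor.

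The proof then concludes by telescoping. For $n\ge m$ (the case $n<m$ follows from $(\star)$ and stationarity by marginalizing the length-$m$ window, and the case $n=m$ is $(\star)$ itself), start from $\prob{X^n=x^n}=\pi(x_1,\ldots,x_m)\prod_{t=m}^{n-1}M(x_{t+1}\mid x_{t-m+1},\ldots,x_t)$ and apply $(\dagger)$ repeatedly to the current leftmost $\pi\cdot M$ block. Each application advances the $\pi$-window one slot to the right and peels off one factor $M(x_j\mid x_{j+m},x_{j+m-1},\ldots,x_{j+1})$; after $n-m$ applications one is left with $\pi(x_{n-m+1},\ldots,x_n)\prod_{t=m}^{n-1}M(x_{t-m+1}\mid x_{t+1},x_t,\ldots,x_{t-m+2})$, which --- after re-indexing the product by $s=t-m+1$ and using $(\star)$ one last time to reverse the terminal $\pi$-window --- is precisely $\prob{X^n=(x_n,\ldots,x_1)}=\prob{\overline{X^n}=x^n}$. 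I expect the only real difficulty here to be purely clerical: keeping the order of the arguments fed to $M$ and to $\pi$ straight as the window is reversed, which is exactly the reconciliation that $(\star)$ supplies at each step; there is no conceptual obstruction.
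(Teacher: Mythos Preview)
Your proof is correct and follows essentially the same route as the paper's. The paper also first deduces the symmetry $\pi(x^m)=\pi(\overline{x^m})$ (your $(\star)$) from the $n=m+1$ condition together with stationarity, and then argues by induction on $n$: it writes out the step from $n=m+1$ to $n=m+2$ by applying \eqref{eq:rev-mth}, then $(\star)$, then \eqref{eq:rev-mth} again --- which is precisely two applications of your identity $(\dagger)$. Your direct telescoping for arbitrary $n$ is just the unrolled form of that induction, so the two arguments coincide at the level of the actual computation; your packaging is slightly more explicit in isolating $(\star)$ and $(\dagger)$ as reusable lemmas.
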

\begin{proof}
First, we show that \prettyref{eq:reversibility} for $n=m+1$ implies that for $n\leq m$.
Indeed, 
\begin{align}
	(X_1,\ldots,X_n) \eqlaw (X_{m+1},\ldots,X_{m-n+2}) \eqlaw (X_n,\ldots,X_1)
	\label{eq:MC_reversibility-stationarity}
\end{align}
where the first equality follows from $(X_1,\ldots,X_{m+1}) \eqlaw (X_{m+1},\ldots,X_1)$ and the second applies stationarity.

Next, we show \prettyref{eq:reversibility} for $n=m+2$ and the rest follows from induction on $n$. 
Indeed, 
\begin{align*}
&~ \prob{(X_1,\ldots,X_{m+2}) = (x_1,\ldots,x_{m+2})} \\
= & ~ \pi(x_1,\ldots,x_m) M(x_{m+1}|x_1,\ldots,x_m) M(x_{m+2}|x_2,\ldots,x_{m+1}) \\
\stepa{=} & ~ \pi(x_{m+1},\ldots,x_2) M(x_1|x_{m+1},\ldots,x_2) M(x_{m+2}|x_2,\ldots,x_{m+1}) \\
\stepb{=} & ~ \pi(x_2,\ldots,x_{m+1}) M(x_1|x_{m+1},\ldots,x_2) M(x_{m+2}|x_2,\ldots,x_{m+1}) \\
\stepc{=} & ~ \pi(x_{m+2},\ldots,x_3) M(x_{2}|x_{m+2},\ldots,x_3) M(x_1|x_{m+1},\ldots,x_2) \\
= & ~ \prob{(X_1,\ldots,X_{m+2}) = (x_{m+2},\ldots,x_1)} = \prob{(X_{m+2},\ldots,X_1) = (x_1,\ldots,x_{m+2})}.
\end{align*}
where (a) and (c) apply \prettyref{eq:reversibility} for $n=m+1$, namely, \prettyref{eq:rev-mth};
(b) applies \prettyref{eq:reversibility} for $n=m$.
\end{proof}

In view of the proof of \prettyref{eq:rev2stat}, we note that any distribution $\pi$ on $\calS^m$ and $m^{\text{th}}$-order transition matrix $M$ satisfying $\pi(x^m)=\pi(\overline{x^m})$ and \eqref{eq:rev-mth} also satisfy \eqref{eq:MC_stationarity}. This implies such a $\pi$ is a stationary distribution for $M$. In view of \prettyref{lmm:reversibility-general} the above conditions also guarantee reversibility. This observation can be summarized in the following lemma, which will be used to prove the reversibility of specific Markov chains later. 

\begin{lemma}
	\label{lmm:st-rev}
	Let $M$ be a $k^m\times k$ stochastic matrix describing transitions from $\calS^m$ to $\calS$. Suppose that $\pi$ is a distribution on $\calS^m$ such that $\pi(x^m) = \pi(\overline{x^m})$ and $\pi(x^m)M(x_{m+1}|x^m) = \pi(\overline{x_2^{m+1}})M(x_1|\overline{x_2^{m+1}})$. Then $\pi$ is the stationary distribution of $M$ and the resulting chain is reversible.
\end{lemma}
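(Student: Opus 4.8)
The plan is to reduce both conclusions to the two hypotheses together with the reversibility criterion already established in \prettyref{lmm:reversibility-general}. First I would verify that $\pi$ solves the stationarity linear system \eqref{eq:MC_stationarity}; once that is in hand, the process $\{X_t\}$ with $(X_1,\ldots,X_m)\sim\pi$ is a stationary $m^{\text{th}}$-order chain, and the detailed-balance identity in the hypothesis is literally \eqref{eq:rev-mth}, so \prettyref{lmm:reversibility-general} immediately yields reversibility. So the whole content of the lemma is the first step.

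The heart of the argument is therefore the verification of \eqref{eq:MC_stationarity}. Here I would apply the hypothesis $\pi(x^m)M(x_{m+1}\mid x^m)=\pi(\overline{x_2^{m+1}})M(x_1\mid\overline{x_2^{m+1}})$ with the $m$-tuple $(x_0,x_1,\ldots,x_{m-1})$ in the role of $x^m$ and $x_m$ in the role of $x_{m+1}$. Since the positions $2,\ldots,m+1$ of the substituted tuple are $(x_1,\ldots,x_{m-1},x_m)$, whose reversal is $(x_m,x_{m-1},\ldots,x_1)$, this turns each summand $\pi(x_0,x_1,\ldots,x_{m-1})M(x_m\mid x_0,x_1,\ldots,x_{m-1})$ appearing on the left of \eqref{eq:MC_stationarity} into $\pi(x_m,x_{m-1},\ldots,x_1)M(x_0\mid x_m,x_{m-1},\ldots,x_1)$. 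Summing over $x_0\in\calS$ and using that $M$ is row-stochastic collapses the sum to $\pi(x_m,x_{m-1},\ldots,x_1)$, and the palindrome hypothesis $\pi(z^m)=\pi(\overline{z^m})$ applied to $z^m=(x_m,\ldots,x_1)$ rewrites this as $\pi(x_1,\ldots,x_m)$, which is exactly the left-hand side of \eqref{eq:MC_stationarity}. Hence $\pi$ satisfies the stationarity system and is a stationary distribution of $M$.

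With \eqref{eq:MC_stationarity} established, the resulting chain is stationary, so \prettyref{lmm:reversibility-general} applies: it remains only to check \eqref{eq:rev-mth}, namely $\pi(x_1,\ldots,x_m)M(x_{m+1}\mid x_1,\ldots,x_m)=\pi(x_{m+1},\ldots,x_2)M(x_1\mid x_{m+1},\ldots,x_2)$ for all $x_1,\ldots,x_{m+1}$, and this is verbatim the second hypothesis (recalling $\overline{x_2^{m+1}}=(x_{m+1},x_m,\ldots,x_2)$). Thus the chain is reversible. I do not expect a genuine obstacle here; the only thing requiring care is the index bookkeeping for the time-reversal operator $\overline{\,\cdot\,}$ and the single index shift in the summation step, together with the observation---already recorded in the remark preceding the lemma---that solving \eqref{eq:MC_stationarity} automatically forces $\pi$ to respect the symmetry constraints (equal marginals, and so on) demanded of a stationary $m$-tuple law.
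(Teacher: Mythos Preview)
Your proposal is correct and follows the same approach as the paper: verify \eqref{eq:MC_stationarity} by combining the detailed-balance hypothesis (after an index shift) with row-stochasticity and the palindrome condition, then invoke \prettyref{lmm:reversibility-general}. The paper presents this more tersely by pointing to the distributional argument in \eqref{eq:rev2stat}, whereas you spell out the summation explicitly, but the content is identical.
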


For $m^{\rm th}$-order stationary Markov chains, the optimal prediction risk is defined as as
\begin{align}
	{\Risk}_{k,n,m} 
	&\eqdef \inf_{\hat M} \sup_{M} \Expect[D(M(\cdot|X_{n-m+1}^n) \| \hat M(\cdot|X_{n-m+1}^n))]
	\nonumber\\
	&=\inf_{\hat M} \sup_{M} \sum_{x^m \in\calS^m} \Expect[D(M(\cdot|x^m) \| \hat M(\cdot|x^m)) \indc{X_{n-m+1}^n=x^m}]
	\label{eq:riskkn2}
\end{align}
where the supremum is taken over all $k^m\times k$ stochastic matrices $M$ and the trajectory is initiated from the stationary distribution. 
In the remainder of this section we will show the following result, completing the proof of \prettyref{thm:m-order-rate} previously announced in \prettyref{sec:intro}.

\begin{theorem}\label{thm:optimal-m}
For all $m\geq 2$, there exist a constant $C_m>0$ such that for all $2\leq k\leq n^{\frac{1}{m+1}}/C_m$,
	$$
	{k^{m+1}\over C_mn}\log\pth{n\over k^{m+1}}
	\leq \Risk_{k,n,m}
	\leq {C_mk^{m+1}\over n}\log\left(\frac n{k^{m+1}}\right).
	$$
	Furthermore, the lower bound holds even when the Markov chains are required to be reversible.
\end{theorem}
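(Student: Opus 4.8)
The proof of Theorem~\ref{thm:optimal-m} closely parallels the first-order case (Theorem~\ref{thm:optimal}), and the plan is to reuse as much of that machinery as possible. The key observation is that an $m^{\text{th}}$-order chain on $k$ states is equivalent to a first-order chain on the lifted state space $\calS^m$ of size $k^m$, but with a \emph{constrained} transition matrix: from state $(x_1,\dots,x_m)$ one can only move to states of the form $(x_2,\dots,x_m,x_{m+1})$, so the first-order transition matrix on $\calS^m$ has exactly $k^m(k-1)$ free parameters rather than $k^m(k^m-1)$. This is why the answer is $\Theta(k^{m+1}/n \cdot \log(n/k^{m+1}))$ and not $\Theta(k^{2m}/n \cdot \log(n/k^{2m}))$.

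\textbf{Upper bound.} First I would set up the redundancy--risk reduction of Lemma~\ref{lmm:riskred} (which is already stated for general $m^{\text{th}}$-order stationary chains): $\Risk_{k,n,m} \le \frac{1}{n-m}\Red_{k,n,m}$, achieved by the Ces\`aro average \prettyref{eq:red-ach} of add-one-type estimators. It then suffices to bound the redundancy $\Red_{k,n,m} \lesssim_m k^{m+1}\log\frac{n}{k^{m+1}}$ by the same combinatorial/pointwise argument used in Lemma~\ref{lmm:red-addone}: use the probability assignment $Q(x^n) = k^{-m}\prod_{t=m}^{n-1}\widehat M^{+1}_{x^t}(x_{t+1}|x^t_{t-m+1})$ where $\widehat M^{+1}$ is the add-one estimator for the $m^{\text{th}}$-order transition counts, then bound the pointwise redundancy via the claim $\prod_i(n_i/n)^{n_i}\le \prod_i n_i!/n!$ and the integral estimate $\sum_\ell \log(1+(k-1)/\ell)\lesssim (k-1)\log(1+N_{\mathbf{i}}/(k-1)) + \ldots$, now summed over the $k^m$ context states $\mathbf{i}\in\calS^m$ with $\sum_{\mathbf{i}}N_{\mathbf{i}} = n-m$. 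By concavity of $\log$ this yields $\Red_{k,n,m}\le k^m(k-1)[\log(1+\frac{n-m}{k^m(k-1)})+1] + m\log k$, which is $O_m(k^{m+1}\log\frac{n}{k^{m+1}})$ in the stated range $k\le n^{1/(m+1)}/C_m$.

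\textbf{Lower bound.} This is the harder direction and where I expect the main obstacle. The plan is to mimic Section~\ref{sec:kstates}: embed a ``core'' $m^{\text{th}}$-order chain on $k-1$ (or $k$) states into a $k$-state chain by adjoining a nuisance state that the chain dwells in for a roughly-uniform length $t\in[n]$, reducing to a prediction problem with random sample size and hence to the symmetric-chain redundancy $\Red^{\mathsf{sym}}_{k-1,n,m}\gtrsim k^{m+1}\log\frac{n}{k^{m+1}}$. Lower-bounding that redundancy requires, via the estimation--mutual-information connection of \cite{davisson1981efficient,csiszar2004information}, a good $L_2$ estimator of the $\Theta(k^{m+1})$ free parameters of a symmetric $m^{\text{th}}$-order transition matrix from one trajectory $Y^n$. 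Here lies the difficulty flagged in Section~\ref{sec:technique}: the naive approach via simple mixing conditions (as in \cite{D83,parzen1962stochastic}) gives a suboptimal variance bound on large alphabets. The fix, as the introduction indicates, is to analyze the lifted first-order chain $\{(Y_{t+1},\dots,Y_{t+m-1})\}$ (or $\{(Y_{t+1},\dots,Y_{t+m})\}$) on $\calS^{m-1}$ (resp.\ $\calS^m$) and show that, under a carefully constructed prior on symmetric $m^{\text{th}}$-order transition matrices whose entries all lie in a narrow band around $k^{-1}$, the \emph{pseudo-spectral gap} \cite{P15} of this lifted chain is bounded below by an absolute constant. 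Given that, one invokes the Bernstein-type concentration of \cite{P15,tatwawadi2018minimax} on the empirical joint counts of $(Y_t,\dots,Y_{t+m})$ to get $\Var(\widehat T_{\mathbf{i}j})\lesssim k^{m+1}/((n-m)\gamma_{\mathrm{ps}})$-type bounds, sum to $\EE\|\widehat T - T\|_{\mathsf F}^2 \lesssim k^{m+1}/n$, and then run the differential-entropy / maximum-entropy argument (data processing, translation invariance, Gaussian max-entropy) exactly as in Section~\ref{subsec:prior_construction} to conclude $I(T;Y^n)\gtrsim k^{m+1}\log\frac{n}{k^{m+1}}$. Combining with the Bayes-risk-to-redundancy reduction (the $m$-order analogue of Lemma~\ref{lmm:riskred_markov}, which follows from the same chain-rule-and-conditioning computation) gives $\Risk_{k,n,m}\gtrsim_m \frac{1}{n}(\Red^{\mathsf{sym}}_{k-1,n,m} - m\log k) \gtrsim_m \frac{k^{m+1}}{n}\log\frac{n}{k^{m+1}}$, with the construction manifestly reversible by Lemma~\ref{lmm:st-rev}. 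The main obstacle, concretely, is designing the prior so that the lifted chain's pseudo-spectral gap is $\Omega(1)$ while simultaneously keeping $\Theta(k^{m+1})$ genuinely free, independent coordinates in $T$ that respect the symmetry constraint $\pi(x^m)=\pi(\overline{x^m})$ and the detailed-balance relation \prettyref{eq:rev-mth}; this is a delicate combinatorial parametrization (generalizing \prettyref{eq:Tu}) that must be checked by hand.
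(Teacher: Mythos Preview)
Your plan is correct and matches the paper's approach almost line-for-line for $k\ge 3$: the upper bound is exactly \prettyref{lmm:red-addone-order-m} plugged into \prettyref{lmm:riskred}, and the lower bound follows the embedding construction \prettyref{eq:MC_M_construction2}, the Bayes-risk-to-mutual-information reduction (\prettyref{lmm:riskred_markov2}), the prior with parameters $u_{ix^{m-1}j}$ indexed by reversal-equivalence classes of $(m-1)$-tuples (yielding $\Theta(k^{m+1})$ free coordinates satisfying property \ref{pt:MC_P}), and the pseudo-spectral-gap bound for the lifted chain on $(m{+}1)$-blocks (\prettyref{lmm:MC_bound-pseudo-spectral-gap}). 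Your identification of the prior construction as the delicate step is exactly right; note the pseudo-spectral gap one obtains is $\Omega_m(1)$, not an absolute constant, which is fine since $C_m$ may depend on $m$.

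There is one genuine gap: your embedding argument fails at $k=2$, because the ``core'' chain would live on $k-1=1$ state and carry no parameters. The paper handles $k=2$, $m\ge 2$ by a completely separate direct construction (the one-parameter family $M_p$ in \prettyref{eq:Mp-2nd}) and a hands-on computation over the trajectory set $\calV$ of sequences that start with a run of $1$'s, then enter a block-structure over $\{22,212\}$; the Bayes risk is lower-bounded by summing $\binom{f}{y}$-weighted Beta-variance terms (\prettyref{lmm:countyf} and \prettyref{eq:bayes-2nd}) to get $\Omega(\log n/n)$. This is closer in spirit to the three-state warm-up of \prettyref{sec:threestates} than to the general embedding, and you should add it as a separate base case.
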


\subsection{Upper bound}
We prove the upper bound part of the preceding theorem, using only stationarity (not reversibility). 
We rely on techniques from \cite[Chapter 6, Page 486]{csiszar2004information} for proving redundancy bounds for the $m^{\text{th}}$-order chains. Let $Q$ be the probability assignment given by
\begin{align}
	Q(x^n)
	=\frac 1{k^m}\prod_{a^m\in\calS^m}
	{\prod_{j=1}^kN_{a^mj}!\over k\cdot(k+1)\cdots(N_{a^m}+k-1)},
\end{align}
where $N_{a^mj}$ denotes the number of times the block $a^mj$ occurs in $x^n$, and $N_{a^m}=\sum_{j=1}^k N_{a^mj}$ is the number of times the block $a^m$ occurs in $x^{n-1}$. This probability assignment corresponds to the add-one rule
\begin{align}
	Q(j|x^{n})
	= \hat M_{x^n}^{+1}(j|x_{n-m+1}^n)
	={N_{x_{n-m+1}^nj}+1\over N_{x_{n-m+1}^n}+k}.
\end{align}
Then in view of \prettyref{lmm:riskred}, the following lemma proves the desired upper bound in \prettyref{thm:optimal-m}.

\begin{lemma}
	\label{lmm:red-addone-order-m}	Let $\Red(Q_{X^n})$ be the redundancy of the $m^{\text{th}}$-order Markov chain, as defined in \prettyref{sec:riskred-bound}, and $X^m$ be the corresponding observed trajectory. Then
	$$\Red(Q_{X^n}) \leq \frac 1{n-m}\sth{{k^m(k-1)}\left[\log \left(1+\frac{n-m}{k^m(k-1)}\right)+1\right] + m\log k}.
	$$
\end{lemma}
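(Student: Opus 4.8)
The plan is to reduce the claim to a \emph{pointwise} redundancy bound, in exact parallel with the proof of \prettyref{lmm:red-addone} for first-order chains, now with the length-$m$ blocks $a^m\in\calS^m$ playing the role of ``states''. Concretely, I would show that for every $m^{\rm th}$-order Markov chain with transition matrix $M$ and \emph{arbitrary} initial distribution $\pi$ on $\calS^m$, and every trajectory $x^n$,
\[
\log\frac{\pi(x^m)\prod_{t=m}^{n-1}M(x_{t+1}\mid x_{t-m+1}^t)}{Q(x^n)}\;\le\; k^m(k-1)\Big[\log\Big(1+\tfrac{n-m}{k^m(k-1)}\Big)+1\Big] + m\log k,
\]
which in particular dominates $\Red(Q_{X^n})$; together with \prettyref{lmm:riskred} this is also the form in which it feeds into the upper bound of \prettyref{thm:optimal-m}.

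First I would rewrite both $Q(x^n)$ and the chain likelihood in terms of the block counts $N_{a^mj}$ and $N_{a^m}=\sum_j N_{a^mj}$, recording the bookkeeping identities $\sum_{a^m}N_{a^m}=n-m$ (there are $n-m$ length-$m$ windows inside $x_1,\dots,x_{n-1}$) and $\sum_{a^m,j}N_{a^mj}=n-m$. Since $\prod_{t=m}^{n-1} M(x_{t+1}\mid x_{t-m+1}^t)=\prod_{a^m}\prod_j M(j\mid a^m)^{N_{a^mj}}$, and for each fixed block $a^m$ the nonnegativity of $D\big((N_{a^mj}/N_{a^m})_j\,\big\|\,M(\cdot\mid a^m)\big)$ gives $\prod_j M(j\mid a^m)^{N_{a^mj}}\le \prod_j (N_{a^mj}/N_{a^m})^{N_{a^mj}}$, combining this with $\pi(x^m)\le 1$ and the explicit product form of $Q$ reduces the log-ratio to $m\log k + \sum_{a^m}\log\Big[\frac{k(k+1)\cdots(N_{a^m}+k-1)}{N_{a^m}^{N_{a^m}}}\prod_j \frac{N_{a^mj}^{N_{a^mj}}}{N_{a^mj}!}\Big]$.

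Next I would apply the already-proved combinatorial inequality \eqref{eq:MC_claim} separately to each block, with $n_j\leftarrow N_{a^mj}$ and $n\leftarrow N_{a^m}$; this cancels $\prod_j N_{a^mj}^{N_{a^mj}}/N_{a^mj}!$ against $N_{a^m}^{-N_{a^m}}$ and leaves $m\log k+\sum_{a^m}\sum_{\ell=1}^{N_{a^m}}\log(1+\tfrac{k-1}{\ell})$. From here the estimate is identical to that in \prettyref{lmm:red-addone}: bound each inner sum by $\int_0^{N_{a^m}}\log(1+\tfrac{k-1}{x})\,dx=(k-1)\log(1+\tfrac{N_{a^m}}{k-1})+N_{a^m}\log(1+\tfrac{k-1}{N_{a^m}})$, then sum over the $k^m$ blocks, applying concavity of $\log$ (Jensen, using $\sum_{a^m}N_{a^m}=n-m$) to the first term and $\log(1+x)\le x$ to the second. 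This yields the displayed bound; taking the supremum over $(\pi,M)$ and noting that a pointwise bound dominates the average gives the redundancy estimate.

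The argument is essentially mechanical once the first-order proof of \prettyref{lmm:red-addone} is in hand, so I expect no conceptual obstacle; the one place requiring genuine care—hence the main ``obstacle''—is the combinatorial bookkeeping: getting the window-count normalizations right (it is $\sum_{a^m}N_{a^m}=n-m$, not $n-1$ or $n-m+1$, that makes the constants $\tfrac{n-m}{k^m(k-1)}$ and $m\log k$ come out correctly), and keeping the per-block applications of \eqref{eq:MC_claim} and of Jensen's inequality straight across all $k^m$ blocks.
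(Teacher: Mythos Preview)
Your proposal is correct and follows essentially the same route as the paper: both reduce to a pointwise redundancy bound, rewrite the likelihood ratio in terms of the block counts $N_{a^m j}$ and $N_{a^m}$, use the nonnegativity of KL to bound $\prod_j M(j\mid a^m)^{N_{a^mj}}\le\prod_j(N_{a^mj}/N_{a^m})^{N_{a^mj}}$, apply \eqref{eq:MC_claim} per block, bound $\sum_{\ell=1}^{N_{a^m}}\log(1+\tfrac{k-1}{\ell})$ by the corresponding integral, and finish with Jensen plus $\log(1+x)\le x$. Your bookkeeping $\sum_{a^m}N_{a^m}=n-m$ is the right normalization (the paper's text writes $n-m+1$ at one point but uses $n-m$ in the displayed bound), and note that the lemma's displayed statement carries an extra $\tfrac{1}{n-m}$ factor that belongs to the \emph{risk} (via \prettyref{lmm:riskred}) rather than to $\Red(Q_{X^n})$ itself---your pointwise inequality is exactly what the paper's proof establishes.
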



\begin{proof}
	We show that for every Markov chain with transition matrix $M$ and initial distribution $\pi$ on $\calS^m$, and every trajectory $(x_1,\cdots,x_n)$, it holds that
	\begin{align}\label{eq:MC_pointwise-red-m}
		\log\frac{\pi(x_1^m)\prod_{t=m}^{n-1} M(x_{t+1}|x^t_{t-m+1}) }{Q(x_1,\cdots,x_n)} \le k^m(k-1)\left[\log \left(1+\frac{n-m}{k^m(k-1)}\right)+1\right] + m\log k,
	\end{align}
	where $M(x_{t+1}|x^t_{t-m+1})$ the transition probability of going from $x^t_{t-m+1}$ to $x_{t+1}$.
	Note that 
	\[
	\prod_{t=m}^{n-1} M(x_{t+1}|x^t_{t-m+1}) = \prod_{a^{m+1}\in\calS^{m+1}}M(a_{m+1}|a^m)^{N_{a^{m+1}}} \le \prod_{a^{m+1}\in\calS^{m+1}} (N_{a^{m+1}}/N_{a^m})^{N_{a^{m+1}}},
	\]
	where the last inequality follows from $\sum_{a_{m+1\in\calS}} \frac{N_{a^{m+1}}}{N_{a^m}}\log \frac{N_{a^{m+1}}}{N_{a^m}M(a_{m+1}|a^m)}\geq 0$ for each $a^m$, by the non-negativity of the KL divergence.
	Therefore, we have
	\begin{align}\label{eq:MC_likelihood-ratio-m}
		\frac{\pi(x_1^m)\prod_{t=m}^{n-1} M(x_{t+1}|x_{t-m+1}^t) }{Q(x_1,\cdots,x_n)} \le k^m\cdot \prod_{a^m\in\calS^m} \frac{k\cdot (k+1)\cdot \cdots\cdot (N_{a^m}+k-1)}{N_{a^m}^{N_{a^m}}} \prod_{a_{m+1}\in \calS}\frac{ N_{a^{m+1}}^{N_{a^{m+1}}} }{N_{a^{m+1}}!}.
	\end{align}
	Using \eqref{eq:MC_claim} we continue \eqref{eq:MC_likelihood-ratio-m} to get
	\begin{align*}
		\log \frac{\pi(x_1)\prod_{t=m}^{n-1} M(x_{t+1}|x_t) }{Q(x_1,\cdots,x_n)} &\le m\log k+\sum_{a^m\in\calS^m} \log \frac{k\cdot (k+1)\cdot \cdots\cdot (N_{a^m}+k-1)}{N_{a^m}!} \\
		&= m\log k + \sum_{a^m\in\calS^m} \sum_{\ell = 1}^{N_{a^m}} \log\left(1+\frac{k-1}{\ell}\right)\\
		&\le m\log k + \sum_{a^m\in\calS^m} \int_0^{N_{a^m}} \log\left(1+\frac{k-1}{x}\right)dx \\
		&= m\log k + \sum_{a^m\in\calS^m} \left((k-1)\log\left(1+\frac{N_{a^m}}{k-1}\right) + N_{a^m}\log\left(1+\frac{k-1}{N_{a^m}}\right) \right) \\
		&\stepa{\le} k^m(k-1)\log \left(1+\frac{n-m}{k^m(k-1)}\right) + k^m(k-1) + m\log k,
	\end{align*}
	where (a) follows from the concavity of $x\mapsto \log x$, $\sum_{a^m\in\calS^m} N_{a^m}=n-m+1$, and $\log(1+x)\le x$. 
\end{proof}

\subsection{Lower bound}

\subsubsection{Special case: $m\geq 2,k=2$}

We only analyze the case $m=2$, i.e.~second-order Markov chains with binary states, as the lower bound still applies to the case of $m\ge 3$ case. The transition matrix for second-order chains is given by a $k^2\times k$ stochastic matrices $M$ that gives the transition probability from the ordered pairs $(i,j)\in \calS\times\calS$ to some state $\ell\in\calS$:
\begin{align}
	M(\ell|ij)=\PP\qth{X_3=\ell|X_1=i,X_2=j}.
\end{align}
Our result is the following.
\begin{theorem}
	${\Risk}_{2,n,2}=\Theta\pth{\log n\over n}$.
\end{theorem}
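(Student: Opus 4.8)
The plan is to establish the matching upper and lower bounds separately. The upper bound $\Risk_{2,n,2} = O(\frac{\log n}{n})$ is immediate from the general machinery already developed: by \prettyref{lmm:riskred} and \prettyref{lmm:red-addone-order-m} with $m=2$ and $k=2$, we get $\Risk_{2,n,2} \le \frac{1}{n-2}\Red(Q_{X^n}) \lesssim \frac{1}{n}\cdot k^m(k-1)\log(1+\frac{n-m}{k^m(k-1)}) = \frac{1}{n}\cdot 4\log(1+\frac{n-2}{4}) = O(\frac{\log n}{n})$, so nothing new is needed there.

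The real content is the lower bound $\Risk_{2,n,2} = \Omega(\frac{\log n}{n})$, and the natural strategy is to mimic the three-state first-order construction of \prettyref{sec:threestates}, exploiting the fact that a second-order binary chain has a four-element ``state space'' $\calS^2 = \{11,12,21,22\}$, which is large enough to play the role of the three states $\{1,2,3\}$ there. Concretely, I would design a one-parameter family of second-order transition matrices $M_p$ (with $p$ the only unknown) so that, with constant probability, the trajectory begins by staying in a ``nuisance'' configuration (e.g. the pair $22$, i.e. a run of $2$'s) for a random number $t$ of steps that is approximately uniform on $[n-1]$, and then crosses over into a region of the pair-space where the dynamics depend on $p$ — effectively an add-one/Bernoulli-$p$ estimation problem with effective sample size $\approx n-t$. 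The harmonic-sum heuristic $\frac{1}{n-1}\sum_{t=1}^{n-1}\frac{1}{n-t+1} = \Theta(\frac{\log n}{n})$ then gives the claimed rate. I would define the events $\calX_t$ = trajectories that are $2\cdots2$ for the first $t$ symbols and never return to a long run of $2$'s afterward (analogues of \eqref{eq:MC_Xt-sets}), compute $\PP(\calX_t)$ to show it is $\gtrsim 1/n$ uniformly and roughly uniform when conditioned on $\calX = \cup_t \calX_t$, verify that conditioned on $\calX_t$ the remaining segment is (equal in law to) a stationary binary process whose transitions carry $\approx n-t$ independent Bernoulli$(p)$ observations, invoke \prettyref{lmm:addone} for the conditional Bayes risk $\Theta(\frac{1}{n-t})$, and finally convert the squared-error Bayes bound to a KL prediction-risk lower bound via Pinsker exactly as in the proof of \prettyref{thm:threestates}. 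Reversibility of the construction should be checked using \prettyref{lmm:st-rev}, by choosing the transition probabilities symmetrically (analogous to the symmetric $\M_p$ in \eqref{eq:M3}) so that the stationary $\pi$ on pairs satisfies $\pi(x^2)=\pi(\overline{x^2})$ and the detailed-balance relation \eqref{eq:rev-mth}.

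The main obstacle is the bookkeeping for second-order dynamics: unlike the first-order case, the "present state" is a pair $(x_{n-1},x_n)$, so one must be careful that (i) the nuisance region is genuinely absorbing-like for a uniformly random duration — a single absorbing pair like $22$ with a small escape probability $\Theta(1/n)$ works, but the escape must land in a configuration from which the $p$-dependent dynamics take over and, crucially, not allow the chain to wander back into a long $2$-run (otherwise $t$ is not well-defined); (ii) the prediction loss at time $n$ is measured on the row $M(\cdot|X_{n-1}^n)$, so the construction must ensure that on $\calX_t$ the terminal pair $X_{n-1}^n$ lies in the $p$-dependent part with the full conditional distribution of $p$ still uniform-like; and (iii) reversibility forces a symmetric choice and hence some care in simultaneously matching stationarity. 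I expect step (i)–(ii) — engineering a clean four-state pair-chain in which escape from the nuisance run happens at a geometric (hence approximately uniform over $[n]$ after conditioning) time and the post-escape segment is an honest Bernoulli$(p)$ sampler observed at its terminal pair — to be the crux; once the combinatorial identities for $\PP(\calX_t)$ and $\PP(\calX_t \mid \calX)$ are in hand, the remainder (Bayes estimator via \prettyref{lmm:addone}, Pinsker, summation) is a direct transcription of the three-state argument.
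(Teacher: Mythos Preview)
Your high-level plan matches the paper's: the same upper bound via \prettyref{lmm:red-addone-order-m}, and for the lower bound the same nuisance-pair-state with $\Theta(1/n)$ escape probability, uniform prior on $p$, Bayes-risk computation, and Pinsker conversion. The paper's explicit family (after swapping your labels $1\leftrightarrow 2$) is exactly what you sketch: $M(\cdot|11)=(1-\tfrac1n,\tfrac1n)$, $M(\cdot|21)=(\tfrac1n,1-\tfrac1n)$, $M(\cdot|12)=(1-p,p)$, $M(\cdot|22)=(p,1-p)$, with uniform stationary distribution on pairs and reversibility checked directly.

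One step needs adjustment. The reduction you anticipate --- ``conditioned on $\calX_t$ the remaining segment is equal in law to a stationary sub-chain,'' in the style of property~\ref{pt:3} from \prettyref{sec:kstates} --- does not go through here, because with $k=2$ there is no nontrivial $(k-1)$-state sub-chain to land in after escape; the post-escape dynamics still live on the full pair-space and visit the ``relay'' pair $21$ (which carries no information about $p$) a variable number of times. The paper's workaround is a direct combinatorial parametrization: every admissible post-escape segment decomposes uniquely into overlapping blocks from $\{22,212\}$, the likelihood factors as $p^{y+1}(1-p)^{f-y}$ where $f$ counts block-to-block transitions and $y$ counts alternations between block types, and a Beta-variance calculation together with the count $\binom{f}{y}$ of trajectories with given $(y,f)$ yields the harmonic sum $\tfrac{1}{n}\sum_{f\le (n-6)/2}\Theta(1/f)=\Theta(\tfrac{\log n}{n})$. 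Your $t$-conditioning route is recoverable from this since the segment length and $f$ are deterministically within a factor of two (so \prettyref{lmm:addone} applies with effective sample size $f$ rather than $n-t$), but the $(y,f)$ slicing is what cleanly resolves the bookkeeping you correctly flagged as the crux.
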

\begin{proof}
	
	The upper bound part has been shown in \prettyref{lmm:red-addone-order-m}.
	For the lower bound, consider the following one-parametric family of transition matrices (we replace $\calS$ by $\sth{1,2}$ for simplicity of the notation)
		\begin{align}
			\tilde\calM=\sth{M_p=
				\bbordermatrix{~ & 1 & 2 \cr
					11 & 1-\frac 1n & \frac 1n \cr
					21 & \frac 1n & 1-\frac 1n \cr
					12 & 1-p & p \cr
					22 & p & 1-p}
				: 0\leq p\leq 1}
				\label{eq:Mp-2nd}
		\end{align}
		and place a uniform prior on $p\in[0,1]$. One can verify that each $M_p$ has the uniform stationary distribution over the set $\sth{1,2}\times\sth{1,2}$ and the chains are reversible. 
		
	Next we introduce the set of trajectories based on which we will lower bound the prediction risk. Analogous to the set $\calX=\cup_{t=1}^n\calX_t$ defined in \eqref{eq:MC_Xt-sets} for analyzing the first-order chains, we define
		\begin{align}
			\calV=\sth{1^{n-t} z^{t}:z_1=z_2=z_t=2,z_{i}^{i+1}\neq 11, i\in[t-1], t=4,\dots,n-2} \subset \{1,2\}^n.
		\end{align}
	In other words, the sequences in $\calV$ start with a string of 1's before transitioning into two consecutive 2's, are forbidden to have no consecutive 1's thereafter, and finally end with 2.  
%
	
	To compute the probability of sequences in $\calV$, we need the following preparations.
	Denote by $\oplus$ the the operation that combines any two blocks from $\sth{22,212}$ via merging the last symbol of the first block and the first symbol of the second block, for example, 
	$	22\oplus 212=2212,
		22\oplus 22\oplus 22=2222$.
		Then for any $x^n\in \calV$ we can write it in terms of the initial all-1 string, followed by alternating run of blocks from $\{22,212\}$ with the first run being of the block 22 (all the runs have positive lengths), combined with the merging operation $\oplus$:
		\begin{align}
			\label{eq:MC_calV-decomposition}
		x^n=\underbrace{1\dots1}_{\text{all ones}} \underbrace{22\oplus 22\dots\oplus 22}_{p_1 \text{ many } 22}\oplus \underbrace{212\oplus 212\dots\oplus 212}_{p_2 \text{ many } 212} \oplus\underbrace{ 22\oplus 22\dots\oplus 22}_{p_3 \text{ many } 22}\oplus \underbrace{212\oplus 212\dots\oplus 212}_{p_4 \text{ many }212}\oplus 22\oplus \dots.
	\end{align}
	Let the vector $(q_{22\to 22},q_{22\to 212},q_{212\to 22},q_{212\to 212})$ denotes the transition probabilities between blocks in $\sth{22,212}$ (recall the convention that the two blocks overlap in the symbol 2). Namely, according to \prettyref{eq:Mp-2nd},
		\begin{align*}
			q_{22\to 22}
			&=\PP\qth{X_3=2,X_2=2|X_2=2,X_1=2}
			=M(2|22)=1-p
			\\
			q_{22\to 212}
			&=\PP\qth{X_4=2,X_3=1,X_2=2|X_2=2,X_1=2}
			=M(2|21)M(1|22)=\pth{1-\frac 1n}p
			\\
			q_{212\to 22}
			&=\PP\qth{X_4=2,X_3=2|X_3=2,X_2=1,X_1=2}
			=M(2|12)=p
			\\
			q_{212\to 212}
			&=\PP\qth{X_5=2,X_4=1,X_3=2|X_3=2,X_2=1,X_1=2}
			=M(2|21)M(1|12)=\pth{1-\frac 1n}(1-p).
			\\
		\end{align*}
		Given any $x^n\in \calV$ we can calculate its probability under the law of $M_p$ using frequency counts $\bm{F}(x^n)=\pth{F_{111},F_{22\to 22},F_{22\to 212},F_{212\to 22},F_{212\to 212}}$, defined as
		\begin{gather*}
			F_{111}
			=\sum_{i}\indc{x_{i}=1,x_{i+1}=1,x_{i+2}=1},\quad
			F_{22\to 22}
			=\sum_{i}\indc{x_{i}=2,x_{i+1}=2,x_{i+2}=2},\\
			F_{22\to 212}
			=\sum_{i}\indc{x_{i}=2,x_{i+1}=2,x_{i+2}=1,x_{i+3}=2},
			\quad
			F_{212\to 22}
			=\sum_{i}\indc{x_{i}=2,x_{i+1}=1,x_{i+2}=2,x_{i+3}=2},\\
			F_{212\to 212}
			=\sum_{i}\indc{x_{i}=2,x_{i+1}=1,x_{i+2}=2,x_{i+3}=1,x_{i+4}=2}.
		\end{gather*}
		Denote $\mu(x^n|p)=\PP\qth{X^n=x^n|p}$. Then for each $x^n\in\calV$ with $\bm F(x^n)=\bm F$ we have
		\begin{align}\label{eq:MC_prob-F}
			&\mu(x^n|p)
			\nonumber\\
			&=\PP(X^{F_{111}+2}=1^{F_{111}+2})M(2|11)M(2|12)\prod_{a,b\in\sth{22,212}}q_{a\to b}^{F_{a\to b}}
			\nonumber\\
			&=\frac 14\pth{1-\frac 1n}^{F_{111}}\frac 1n \cdot p \cdot 
			p^{F_{212\to 22}}\sth{p\pth{1-\frac 1n}}^{F_{22\to 212}}(1-p)^{F_{22\to 22}}\sth{(1-p)\pth{1-\frac 1n}}^{F_{212\to 212}}
			\nonumber\\
			&=\frac 14\pth{1-\frac 1n}^{F_{111}+F_{22\to 212}+F_{212\to 212}}
			\frac 1n p^{y+1}(1-p)^{f-y}
		\end{align}
		where $y=F_{212\to 22}+F_{22\to 212}$ denotes the number of times the chain alternates between runs of 22 and runs of 212, and $f=F_{212\to 22}+F_{22\to 212}+F_{212\to 212}+F_{22\to 22}$ denotes the number of times the chain jumps between blocks in $\{22,212\}$.
		
		 Note that the range of $f$ includes all the integers in between 1 and $(n-6)/2$. This follows from the definition of $\calV$ and the fact that if we merge either 22 or 212 using the operation $\oplus$ at the end of any string $z^{t}$ with $z_t=2$, it increases the length of the string by at most 2. Also, given any value of $f$ the value of $y$ ranges from 0 to $f$. 
		\begin{lemma}
		\label{lmm:countyf}	
		The number of sequences in $\calV$ corresponding to a fixed pair $(y,f)$ is $\binom fy$. 	
		\end{lemma}
		\begin{proof}
			Fix $x^n\in \calV$ and let that $p_{2i-1}$ is the length of the $i$-th run of 22 blocks and $p_{2i}$ is the length of the $i$-th run of 212 blocks in $x^n$ as depicted in \eqref{eq:MC_calV-decomposition}.
		The $p_i$'s are all non-negative integers. There are total $y+1$ such runs and the $p_i$'s satisfy $\sum_{i=1}^{y+1}p_i=f+1$, as the total number of blocks is one more than the total number of transitions. Each positive integer solution to this equation $\sth{p_i}_{i=1}^{y+1}$ corresponds to a sequence $x^n\in\cal V$ and vice versa. The total number of such sequences is $\binom fy$. 
		\end{proof}
		
We are now ready to compute the Bayes estimator and risk.
		For any $x^n\in\calV$ with a given $(y,f)$, the Bayes estimator of $p$ with prior $p\sim\Unif[0,1]$ is 
		\[
		\hat p(x^n) = \Expect[p|x^n] = \frac{\Expect[p \cdot \mu(x^n|p)]}{\Expect[\mu(x^n|p)]}
		\overset{\eqref{eq:MC_prob-F}}{=}{y+2\over f+3}.
		\]
		Note that the probabilities $\mu(x^n|p)$ in \eqref{eq:MC_prob-F} can be bounded from below by $\frac 1{4en} p^{y+1}(1-p)^{f-y}$. Using this, for each $x^n\in \calV$ with given $y,f$ we get the following bound on the integrated squared error for a particular sequence $x^n$
		\begin{align}
			&\int_0^1 \mu(x^n|p)(p-\hat p(x^n))^2 dp
			\nonumber\\
			&\geq \frac 1{4en} \int_0^1 p^{y+1}(1-p)^{f-y}\pth{p-{y+2\over f+3}}^2dp
			=\frac 1{4en} {(y+1)!(f-y)!\over (f+2)!}{(y+2)(f-y+1)\over (f+3)^2(f+4)}
			\label{eq:bayes-2nd}
		\end{align}
		where the last equality followed by noting that the integral is the variance of a $\text{Beta}(y+2,f-y+1)$ random variable without its normalizing constant. 
		
		Next we bound the risk of any predictor by the Bayes error. Consider any predictor $\hat \M(\cdot|ij)$ (as a function of the sample path $X$) for transition from $ij$, $i,j\in\sth{1,2}$. By the Pinsker's inequality, we conclude that
		\begin{align}
			D(\M(\cdot|12) \| \hat \M(\cdot|12))
			\geq  \frac{1}{2} \|\M(\cdot|12)-\hat \M(\cdot|12)\|_{\ell_1}^2 \geq \frac{1}{2} (p-\hat \M(2|12))^2
		\end{align}
		and similarly, $D(\M(\cdot|22) \| \hat \M(\cdot|22)) \geq \frac{1}{2}(p-\hat \M(1|22))^2$.
		Abbreviate $\hat \M(2|12) \equiv \hat p_{12}$ and $\hat \M(1|22) \equiv \hat p_{22}$, both functions of $X$. 
		Using 
	\prettyref{eq:bayes-2nd} and \prettyref{lmm:countyf}, we have
		\begin{align}
			& \sum_{i,j=1}^3 \Expect[D( \M(\cdot|ij)\|\hat \M(\cdot|ij))) \indc{X_{n-1}^n=ij} ]	\nonumber \\
			&\geq  ~ \frac 12 \Expect\qth{(p-\hat p_{12})^2 \indc{X_{n-1}^n=12,X^n\in\calV} + (p-\hat p_{22})^2 \indc{X_{n-1}^n=22,X^n\in\calV}}	\nonumber \\
			&\geq  ~ \frac 12\int_0^1\qth{\sum_{\bm{F}}\sum_{x^n\in\calV:\bm{F}(x^n)=\bm{F}} \mu(x^n|p)\pth{(p-\hat p_{12})^2  \indc{x_{n-1}^n=12} + (p-\hat p_{22})^2 \indc{x_{n-1}^n=22} }}dp\nonumber \\
			&\geq ~ \frac 12\int_0^1\qth{\sum_{\bm{F}}\sum_{x^n\in\calV:\bm{F}(x^n)=\bm{F}} \mu(x^n|p) (p-\hat p(x^n))^2  }dp \nonumber \\
			&\geq ~ \frac 12\sum_{f=1}^{\frac {n-6}2}\sum_{y=0}^{f} 
			\binom fy  \frac 1{4en} {(y+1)!(f-y)!\over (f+2)!}{(y+2)(f-y+1)\over (f+3)^2(f+4)}
			\nonumber\\
			& \geq ~ \frac 1{8en} \sum_{f=1}^{\frac {n-6}2}\sum_{y=0}^{f} 
			{y+1\over (f+2)(f+1)}{(y+2)(f-y+1)\over (f+3)^2(f+4)} 
			\geq ~ \Theta\pth{\frac 1n} \sum_{f=1}^{\frac {n-6}2}
			\sum_{y=\frac f4}^{\frac f3} \frac 1{f^2}
			= ~ \Theta\pth{\log n\over n}.
		\end{align}	
	\end{proof}

\subsubsection{General case: $m\geq 2, k\geq 3$}

We will prove the following.

\begin{theorem}\label{thm:optimal-lower-order-m}
	For absolute constant $C$, we have 
	$$\Risk_{k,n,m}
	\geq \frac 1{2^{m+4}}\pth{\frac 12-{2^m-2\over n}}
	\pth{1-\frac 1n}^{n-2m+1}{(k-1)^{m+1}\over n}\log\pth{{1\over  2^{2m+8}\cdot 3\pi e(m+1)}\cdot {n-m\over (k-1)^{m+1} }}.$$
\end{theorem}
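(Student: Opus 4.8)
The plan is to mirror the first-order argument of \prettyref{sec:kstates}, replacing the inner $(k-1)$-state block by an $m^{\text{th}}$-order chain on $k-1$ states. First I would construct an $m^{\text{th}}$-order $k$-state transition matrix $M$ by designating $\calS_1=\{1\}$ as a nuisance state and $\calS_2=\{2,\ldots,k\}$ as the working part: from the $m$-tuple $(1,\ldots,1)$ the chain repeats $1$ with probability $1-\frac1n$ and otherwise jumps into $\calS_2$ (the landing block drawn from the stationary law of an inner matrix $T$), while from any $m$-tuple whose last coordinate lies in $\calS_2$ it evolves, up to an $O(\frac1n)$ escape probability back to state $1$, according to a ``symmetric'' $m^{\text{th}}$-order stochastic matrix $T\in\reals^{(k-1)^m\times(k-1)}$ satisfying $T(y_{m+1}|y_1,\ldots,y_m)=T(y_1|y_{m+1},y_m,\ldots,y_2)$, to be specified later. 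Using \prettyref{lmm:st-rev} I would check that $M$ is stationary, irreducible and reversible and that $T$ has the uniform stationary distribution on $\calS_2^m$; this already yields the final sentence of the theorem. Then, exactly as in properties (P1)--(P3) of \prettyref{subsec:chain_construction}, I would show that the event $\calX_t$ that the trajectory equals $1^t$ followed by $n-t$ symbols all in $\calS_2$ has probability at least $(\frac12-\frac{2^m-2}{n})(1-\frac1n)^{n-2m+1}\cdot\frac1n$ up to an absolute constant, independently of $T$, and that conditioned on $\calX_t$ the tail $(X_{t+1},\ldots,X_n)$ is distributed as a length-$(n-t)$ stationary $m^{\text{th}}$-order chain $Y^{n-t}$ on $\calS_2$ with transition matrix $T$ and uniform initialization.

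Next I would rerun the Bayesian reduction of \prettyref{lmm:riskred_markov} essentially verbatim: placing a prior on $T$, the Bayes estimator of $M(\cdot|X_n)$ conditioned on $\calX_t$ splits as a deterministic point mass at $1$ of weight $\frac1n$ plus $(1-\frac1n)$ times the Bayes estimator of $T(\cdot|Y_{n-t})$, so the conditional Bayes prediction risk equals $(1-\frac1n)I(T;Y_{n-t+1}|Y^{n-t})$; summing over $t\in[n-1]$, using $\PP(\calX_t)\gtrsim\frac1n$, the chain rule for mutual information, and $I(T;Y_1)\le\log(k-1)$, yields $\Risk_{k,n,m}\gtrsim\frac1n\cdot(\text{prefactor})\cdot\big(I(T;Y^n)-\log(k-1)\big)$ for the least favorable prior on $T$, with the prefactor matching the one in the theorem.

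It then remains to choose a prior on symmetric $m^{\text{th}}$-order matrices $T$ with $\Theta((k-1)^{m+1})$ free coordinates $u$ — each entry a small constant-factor perturbation of the uniform value $\frac{1}{k-1}$, so that $T=(k-1)^{-1}\mathbf{J}+\Delta$ with $\|\Delta\|$ small — and to lower bound $I(T;Y^n)=I(u;Y^n)$. Following \cite{davisson1981efficient,csiszar2004information}, I would take $\widehat T$ to be the rescaling by $(k-1)^m$ of the empirical frequencies of length-$(m+1)$ blocks of $Y^n$, which is unbiased for $T$ because the uniform stationary law makes the distribution of $Y^{m+1}$ determine $T$, and establish the variance bound $\EE[\|\widehat T-T\|_{\mathsf{F}}^2]\lesssim\frac{(k-1)^{2m}}{n}$. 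By data processing $I(u;Y^n)\ge I(u;\widehat u(Y^n))=h(u)-h(u-\widehat u(Y^n))$; the maximum-entropy inequality bounds $h(u-\widehat u)\le\frac{\dim u}{2}\log\!\big(\frac{2\pi e}{\dim u}\EE\|\widehat u-u\|_2^2\big)$, and since $h(u)=\dim u\cdot\log(\text{window width})$ with window width $\asymp\frac{1}{k-1}$, the extra factor $(k-1)^{m-1}$ in the variance bound is absorbed against $(k-1)^{-2}$ coming from $h(u)$, giving $I(u;Y^n)\gtrsim(k-1)^{m+1}\log\frac{n}{(k-1)^{m+1}}$ in the regime $n\gtrsim(k-1)^{m+1}$; tracking all constants through the reduction produces exactly the bound stated in \prettyref{thm:optimal-lower-order-m}.

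The main obstacle is the variance bound $\EE[\|\widehat T-T\|_{\mathsf{F}}^2]\lesssim\frac{(k-1)^{2m}}{n}$: unlike the first-order case (\prettyref{lmm:L2_upper}), where a Bernstein inequality for pairs $(Y_t,Y_{t+1})$ with $\gamma_*(T)\ge\frac12$ sufficed, one now needs a concentration inequality for counts of $(m+1)$-blocks of an $m^{\text{th}}$-order chain. I would obtain it by passing to an appropriate first-order lift of $\{Y_t\}$ onto tuples over $\calS_2$ and showing, via the perturbation $T=(k-1)^{-1}\mathbf{J}+\Delta$, that its \emph{pseudo spectral gap} in the sense of \cite{P15} is bounded below by an absolute constant; this is precisely where the prior on $m^{\text{th}}$-order matrices must be engineered, as flagged in \prettyref{sec:technique}, and it is the step most sensitive to the combinatorics of $m$-tuples. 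Given that gap, the Bernstein-type variance bound of \cite{P15} (applied through the independent-sample simulation of \cite{B61,HJLWWY18}) gives the claim, and the remaining ingredients — the probability estimate for $\calX_t$, the reversibility check through \prettyref{lmm:st-rev}, and the differential-entropy computations — are routine extensions of the first-order proof.
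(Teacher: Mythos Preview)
Your proposal is correct and follows essentially the same route as the paper: the construction of $M$ with nuisance state $1$ and inner symmetric $m^{\text{th}}$-order block $T$, the Bayesian reduction to $I(T;Y^n)$ via the events $\calX_t$, the prior on $T$ with $\Theta((k-1)^{m+1})$ free coordinates near the uniform matrix, and the variance bound via the pseudo spectral gap of the first-order lift on $(m+1)$-tuples are exactly the paper's ingredients. Two small corrections: the chain cannot ``jump to a landing block drawn from the stationary law of $T$'' in one step, so you must spell out the transitions for mixed $m$-tuples (the paper does this case-by-case, yielding the factors $b=\tfrac12-\tfrac{2^m-2}{n}$ and $2^{-(m-1)}$ in \prettyref{eq:MC_Xt_prob2}); and in the chain-rule step you lose $I(T;Y^m)\le m\log(k-1)$, not $I(T;Y_1)\le\log(k-1)$ --- also, the simulation argument of \cite{B61,HJLWWY18} is not needed here, as the Bernstein bound of \cite{P15} applies directly to the lifted first-order chain once its pseudo spectral gap is controlled.
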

For ease of notation let $\calS=\sth{1,\dots,k}$.
Denote ${\tilde \calS}=\sth{2,\dots,k}$. 
Consider an 
$m^{\text{th}}$-order transition matrix $M$ of the following form:
 

\begin{align}
	M(s|x^m)=\text{\begin{tabular}{|c|c|c|}
			\hline 
			 \multirow{2}{*}{Starting string $x^m$} & 
			\multicolumn{2}{c|}{Next state} \\\cline{2-3} 
			& $s=1$ & $s\in\sth{2,\dots,k}$ \\
			\hline & & \\
			$1^m$ & $1- \frac 1n$ & $\frac 1{n(k-1)}$ \\ 
			& & \\
			\hline
			& & \\
			\multirow{2}{*}{$1x^{m-1}, x^{m-1}\in{\tilde \calS}^{m-1}$} & $1-b$ & $\frac b{(k-1)}$\\
			& & \\
			\hline & & \\
			\multirow{2}{*}{$x^{m}\in{\tilde \calS}^{m}$} &$\frac 1n$ & {\mbox{$\pth{1-\frac 1n}T(s|x^m)$}}
			\\
			& & \\
			\hline & &\\
			\multirow{2}{*}{$x^m\notin \sth{1^m,1{\tilde \calS}^{m-1},{\tilde \calS}^{m}}$} & $\frac 12$ & $\frac 1{2(k-1)}$
			\\
			& & \\
			\hline
		\end{tabular}
	},
	\quad b=\frac 12-{2^m-2\over n}.
	\label{eq:MC_M_construction2}
\end{align}\\

\noindent 
Here $T$ is a $(k-1)^m\times (k-1)$ transition matrix for an $m^{\rm th}$-order Markov chain with state space $\tilde \calS$, satisfying the following property:
\begin{enumerate}[label=(P)]
	\item \label{pt:MC_P} $T(x_{m+1}|x^{m}) = T(x_1|\overline{x_2^{m+1}}),\quad \forall x^{m+1}\in\tilde \calS^{m+1}$.
\end{enumerate}

\begin{lemma}\label{lmm:MC_stationary-T}
	Under the condition \ref{pt:MC_P}, the transition matrix $T$ has a stationary distribution that is uniform on ${\tilde \calS}^m$.
	Furthermore, the resulting $m^{\rm th}$-order Markov chain is reversible (and hence stationary).
\end{lemma}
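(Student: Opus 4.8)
The plan is to invoke \prettyref{lmm:st-rev} with $\calS$ replaced by $\tilde\calS = \{2,\dots,k\}$, the matrix in that lemma being $T$, and the candidate stationary distribution being the uniform distribution $\pi_0$ on $\tilde\calS^m$, i.e.\ $\pi_0(x^m) = (k-1)^{-m}$ for all $x^m\in\tilde\calS^m$. According to \prettyref{lmm:st-rev}, it suffices to verify the two hypotheses: (i) $\pi_0(x^m) = \pi_0(\overline{x^m})$ for all $x^m$, and (ii) the detailed-balance-type identity $\pi_0(x^m)\,T(x_{m+1}|x^m) = \pi_0(\overline{x_2^{m+1}})\,T(x_1|\overline{x_2^{m+1}})$ for all $x^{m+1}\in\tilde\calS^{m+1}$. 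Once both are checked, \prettyref{lmm:st-rev} immediately gives that $\pi_0$ is the stationary distribution of $T$ and that the resulting $m^{\text{th}}$-order chain is reversible; reversibility implies stationarity as already noted in \prettyref{eq:rev2stat}.

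First I would dispatch (i): since $\pi_0$ is constant on $\tilde\calS^m$, it is trivially invariant under the reversal map $x^m\mapsto\overline{x^m}$, which is a bijection of $\tilde\calS^m$ onto itself. Next, for (ii), I would substitute $\pi_0(x^m)=\pi_0(\overline{x_2^{m+1}})=(k-1)^{-m}$ on both sides; the constant cancels, and the identity reduces exactly to $T(x_{m+1}|x^m) = T(x_1|\overline{x_2^{m+1}})$, which is precisely the assumed property \ref{pt:MC_P}. So (ii) holds verbatim by hypothesis. This completes the verification of both hypotheses of \prettyref{lmm:st-rev}.

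The only genuinely routine point worth spelling out is that \prettyref{lmm:st-rev} as stated already packages the reversibility conclusion (via \prettyref{lmm:reversibility-general}), so no separate induction on the trajectory length $n$ is needed here — the work was done once and for all in the proof of \prettyref{lmm:reversibility-general}. I do not anticipate any obstacle: the lemma is essentially an application of the general stationarity/reversibility criterion to a symmetric ($T(x_{m+1}|x^m)=T(x_1|\overline{x_2^{m+1}})$) transition structure whose uniform distribution is manifestly reversal-invariant. The one thing to be careful about is bookkeeping with the reversal operator on $(m{+}1)$-tuples versus $m$-tuples — namely that $\overline{x_2^{m+1}} = (x_{m+1},x_m,\dots,x_2)\in\tilde\calS^m$ and that its entries all lie in $\tilde\calS$ whenever $x^{m+1}\in\tilde\calS^{m+1}$, so that \ref{pt:MC_P} is indeed applicable to the argument appearing on the right-hand side.
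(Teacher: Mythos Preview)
Your proposal is correct and follows essentially the same approach as the paper: invoke \prettyref{lmm:st-rev} with the uniform distribution on $\tilde\calS^m$, note that the reversal-invariance condition is trivial for a constant distribution, and observe that the detailed-balance condition reduces exactly to property \ref{pt:MC_P} after canceling the common factor $(k-1)^{-m}$.
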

\begin{proof}
	We prove this result using \prettyref{lmm:st-rev}. Let $\pi$ denote the uniform distribution on $\tilde \calS^m$, i.e., $\pi(x^m)=\frac 1{(k-1)^m}$ for all $x^m\in\tilde \calS^m$. Then for any $x^m\in\tilde \calS^m$ the condition $\pi(x^m)=\pi(\overline{x^m})$ follows directly and $\pi(x^m)T(x_{m+1}|x^m)
	=\pi(\overline{x_2^{m+1}})T(x_{1}|\overline{x_2^{m+1}})$ follows from the assumption \ref{pt:MC_P}.
\end{proof}

Next we address the stationarity and reversibility of the chain with the bigger transition matrix $M$ in \prettyref{eq:MC_M_construction2}:

\begin{lemma}\label{lmm:MC_stationary-M}
Let $M$ be defined in \prettyref{eq:MC_M_construction2}, wherein the transition matrix $T$ 
satisfies  the condition \ref{pt:MC_P}. Then $M$ has a stationary distribution given by
	\begin{align}
		\pi(x^m)=
		\begin{cases}
		\frac{1}{2}	&  x^m  = 1^m\\
		\frac{b}{(k-1)^m} & x^{m}\in{\tilde \calS}^{m} \\
		\frac{1}{n(k-1)^{d(x^{m})}} & \text{otherwise}
		\end{cases}
		\label{eq:pi-mth}
	\end{align}
		where  $d(x^m)\triangleq\sum_{i=1}^m\indc{x_i\in{\tilde \calS}}$ and $b=\frac 12-{2^m-2\over n}$ as in 
	\prettyref{eq:MC_M_construction2}. 
	Furthermore, the $m^{\rm th}$-order Markov chain with initial distribution $\pi$ and transition matrix $M$ is reversible.
\end{lemma}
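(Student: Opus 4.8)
The plan is to verify the hypotheses of \prettyref{lmm:st-rev} for the stochastic matrix $M$ of \eqref{eq:MC_M_construction2} and the candidate $\pi$ of \eqref{eq:pi-mth}: one must check that $\pi$ is a probability distribution on $\calS^m$, that $\pi(x^m)=\pi(\overline{x^m})$, and that the generalized detailed-balance identity
\[
\pi(x^m)\,M(x_{m+1}\mid x^m)=\pi(\overline{x_2^{m+1}})\,M(x_1\mid \overline{x_2^{m+1}})
\]
holds for every $x^{m+1}\in\calS^{m+1}$; the lemma then delivers stationarity and reversibility simultaneously. The first two hypotheses are quick. For normalization, group the $m$-tuples by the statistic $d=d(x^m)$: there are $\binom{m}{d}(k-1)^d$ tuples with a given value of $d$, the unique $d=0$ tuple $1^m$ contributes $\tfrac12$, the $(k-1)^m$ tuples of $\tilde\calS^m$ contribute $\tfrac{b}{(k-1)^m}$ each (total $b$), and every remaining tuple has $1\le d\le m-1$ and contributes $\tfrac1{n(k-1)^d}$, so the tuples with a fixed such $d$ contribute $\binom{m}{d}\tfrac1n$ in aggregate; summing yields $\tfrac12+b+\tfrac1n\sum_{d=1}^{m-1}\binom{m}{d}=\tfrac12+b+\tfrac{2^m-2}{n}=1$ by the choice $b=\tfrac12-\tfrac{2^m-2}{n}$ (and the entries are nonnegative since $b>0$ in the regime $k\le n^{1/(m+1)}/C_m$). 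Reversal-invariance is immediate because $\pi(x^m)$ depends on $x^m$ only through $d(x^m)$ and the two predicates $x^m=1^m$ and $x^m\in\tilde\calS^m$, all preserved under reversing a tuple.

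The substance is the detailed-balance identity, which I would establish by a finite case analysis indexed by whether $x_1=1$ or $x_1\in\tilde\calS$, whether $x_{m+1}=1$ or $x_{m+1}\in\tilde\calS$, and the value $d=d(x_1^m)$; the essential bookkeeping relation is $d(\overline{x_2^{m+1}})=d-\indc{x_1\in\tilde\calS}+\indc{x_{m+1}\in\tilde\calS}$, which fixes $\pi(\overline{x_2^{m+1}})$. The labor halves once one notes that replacing $x^{m+1}$ by $\overline{x^{m+1}}$ turns the identity into itself with the two sides interchanged, so only one representative per reversal pair need be treated. Three flavors of cases arise: (i) the \emph{generic} one, in which neither $x_1^m$ nor $\overline{x_2^{m+1}}$ equals $1^m$, $1\tilde\calS^{m-1}$, or a member of $\tilde\calS^m$ --- here both transition probabilities are $\tfrac12$ (into state $1$) or $\tfrac1{2(k-1)}$ (into $\tilde\calS$), and the identity collapses to the fact that $\pi$ is scaled by exactly $(k-1)^{\pm1}$ when $d$ changes by $\pm1$; (ii) the case where one of the two tuples lies in $\tilde\calS^m$, handled by property \ref{pt:MC_P} of $T$ together with the calibration $\tfrac{b}{(k-1)^m}\cdot\tfrac1n=\tfrac1{n(k-1)^{m-1}}\cdot\tfrac{b}{k-1}$; and (iii) the boundary cases where one of the tuples is $1^m$ or $1\tilde\calS^{m-1}$ (for $\pi$ the latter falls under the generic formula with $d=m-1$), checked directly from $\pi(1^m)=\tfrac12$ and the rates $1-\tfrac1n$, $\tfrac1{n(k-1)}$, $1-b$, $\tfrac{b}{k-1}$, $\tfrac1n$ read off the corresponding row-type of $M$. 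One also records in passing that each of the four row-types of $M$ sums to $1$.

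The main obstacle is organizational rather than conceptual: making the enumeration exhaustive and correctly tracking the transitions where $d(\overline{x_2^{m+1}})$ hits $0$ or $m$, at which point a tuple migrates out of the generic regime and $\pi$ is no longer $\tfrac1{n(k-1)^d}$. Precisely there one must check that the special values $\pi(1^m)=\tfrac12$, $\pi|_{\tilde\calS^m}=\tfrac{b}{(k-1)^m}$ and the associated $M$-rates conspire to preserve detailed balance --- and this is exactly where the value $b=\tfrac12-\tfrac{2^m-2}{n}$ is forced, as the normalization computation already foreshadows.
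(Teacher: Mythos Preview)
Your proposal is correct and follows essentially the same approach as the paper: verify the hypotheses of \prettyref{lmm:st-rev} (normalization of $\pi$, reversal-invariance $\pi(x^m)=\pi(\overline{x^m})$, and the detailed-balance identity) by a finite case analysis on the row-types of $M$, organized around the endpoints $x_1,x_{m+1}$ and the count $d(x^m)$. One small correction to your closing remark: the specific value $b=\tfrac12-\tfrac{2^m-2}{n}$ is forced purely by normalization --- in every detailed-balance check $b$ either does not appear or enters symmetrically on both sides and cancels.
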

\begin{proof}
	Note that the choice of $b$ guarantees that $\sum_{x^m\in\calS^m}\pi(x^m)=1$. 
	Next we again apply \prettyref{lmm:st-rev} to verify stationarity and reversibility.
	First of all, since 
	$d(x^m)=d(\overline{x^m})$, we have $\pi(x^m)=\pi(\overline{x^m})$ for all $x^m\in\calS^m$ . 
	Next we check the condition $\pi(x^m)M(x_{m+1}|x^m) = \pi(\overline{x_2^{m+1}})M(x_1|\overline{x_2^{m+1}})$. 
	For the sequence $1^{m+1}$ the claim is easily verified. For the rest of the sequences we have the following.
	\begin{itemize}
		\item \textbf{Case 1 ($x^{m+1}\in {\tilde \calS}^{m+1}$):}
		Note that $x^{m+1}\in\tilde \calS^{m+1}$ if and only if $x^m,{\overline{x_2^{m+1}}}\in\tilde \calS^m$. This implies
		\begin{align*}
			\pi(x^m)M(x_{m+1}|x^m)
			&=
			\frac b{(k-1)^m}\pth{1-\frac 1n}T(x_{m+1}|x^m)
			\nonumber\\
			&=\frac b{(k-1)^m}\pth{1-\frac 1n}T(x_1|\overline{x_2^{m+1}})
			=\pi(\overline{x_2^{m+1}})M(x_1|\overline{x_2^{m+1}}).
		\end{align*}
		\item \textbf{Case 2 ($x^{m+1}\in 1{\tilde \calS}^m$ or $x^{m+1}\in {\tilde \calS}^m1$):} By symmetry it is sufficient to analyze the case $x^{m+1}\in 1{\tilde \calS}^m$. Note that in the sub-case $x^{m+1}\in 1{\tilde \calS}^m$, $x^m\in 1\tilde \calS^{m-1}$ and $\overline{x_2^{m+1}}\in\tilde \calS^{m}$. This implies \begin{align}
			\pi(x^m)=\frac 1{n(k-1)^{m-1}},
			&\quad 
			M(x_{m+1}|x^m)=\frac b{k-1},
			\nonumber\\
			\pi(\overline{x_2^{m+1}})=\frac b{(k-1)^{m}},
			& \quad M(x_{1}|\overline{x_2^{m+1}})=\frac 1n.
		\end{align}
		In view of this we get
		$\pi(x^m)M(x_{m+1}|x^m)
		=\pi(\overline{x_2^{m+1}})M(x_1|\overline{x_2^{m+1}}).$
		\item \textbf{Case 3 ($x^{m+1}\notin 1^{m+1}\cup {\tilde \calS}^{m+1}\cup1{\tilde \calS}^{m}\cup{\tilde \calS}^{m}1$):} 
		
		\noindent Suppose that $x^{m+1}$ has $d$ many elements from ${\tilde \calS}$. Then $x^m,x_2^{m+1}\notin \sth{1^m,{\tilde \calS}^m}$. We have the following sub-cases.
		\begin{itemize}
			\item If $x_1=x_{m+1}=1$, then both $x^m,x_2^{m+1}$ have exactly $d$ elements from ${\tilde \calS}$. This implies $\pi(x^m)=\pi(\overline{x_2^{m+1}})=\frac 1{n(k-1)^d}$ and $M(x_{m+1}|x^m)=M(x_1|\overline{x_2^{m+1}})=\frac 12$. 
			\item If $x_1,x_{m+1}\in{\tilde \calS}$, then both $x^m,x_2^{m+1}$ have exactly $d-1$ elements from ${\tilde \calS}$. This implies $\pi(x^m)=\pi(\overline{x_2^{m+1}})=\frac 1{n(k-1)^{d-1}}$ and $M(x_{m+1}|x^m )=M(x_1|\overline{x_2^{m+1}})=\frac 1{2(k-1)}$. 
			\item If $x_1=1,x_{m+1}\in {\tilde \calS}$, then $x^m$ has $d-1$ elements from $\tilde S$ and $x_2^{m+1}$ has $d$ elements from $\calS$. This implies $\pi(x^m)=\frac 1{n(k-1)^{d-1}},\pi(\overline{x_2^{m+1}})=\frac 1{n(k-1)^{d}}$ and $M(x_{m+1}|x^m)=\frac 1{2(k-1)},M(x_1|\overline{x_2^{m+1}})=\frac 12$. 
			\item If $x_1\in {\tilde \calS},x_{m+1}=1$, then $x^m$ has $d$ elements from $\tilde S$ and $x_2^{m+1}$ has $d-1$ elements from $\calS$ then $\pi(x^m)=\frac 1{n(k-1)^{d}},\pi(\overline{x_2^{m+1}})=\frac 1{n(k-1)^{d-1}}$ and $M(x_{m+1}|x^m)=\frac 12,M(x_1|\overline{x_2^{m+1}})=\frac 1{2(k-1)}$. 
		\end{itemize} For all these sub-cases we have $\pi(x^m)M(x_{m+1}|x^m) = \pi(\overline{x_2^{m+1}})M(x_1|\overline{x_2^{m+1}})$ as required.
	\end{itemize}
	
	This finishes the proof.
\end{proof}

Let $(X_1,\ldots,X_n)$ be the trajectory of a stationary Markov chain with transition matrix $M$ as in \eqref{eq:MC_M_construction2}. 
We observe the following properties:
\begin{enumerate}[label=(R\arabic*)]
	\item \label{pt:MC_21} This Markov chain is irreducible and reversible. Furthermore, the stationary distribution $\pi$ assigns probability $\frac 12$ to the initial state $1^m$.
	\item \label{pt:MC_22} For $m\leq t\leq n-1$, let $\calX_t$ denote the collections of trajectories $x^n$ such that $x_1,x_2,\cdots,x_t=1$ and $x_{t+1},\cdots,x_n\in {\tilde \calS}$. Then using \prettyref{lmm:MC_stationary-M}
	\begin{align}\label{eq:MC_Xt_prob2}
		\PP(X^n\in\calX_t)&= \PP(X_1=\cdots=X_t=1) 
		\cdot
		\PP(X_{t+1}\neq 1|X_{t-m+1}^{t}=1^{m})
		\nonumber\\
		&\quad \cdot
		\prod_{i=2}^{m-1}
		\PP(X_{t+i}\neq 1|X_{t-m+i}^{t}=1^{m-i+1},X_{t+1}^{t+i-1}\in{\tilde \calS}^{i-1})
		\nonumber\\
		&\quad \cdot\PP(X_{t+m}\neq 1|X_t=1,X_{t+1}^{t+m-1}\in{\tilde \calS}^{m-1})
		\cdot \prod_{s=t+m}^{n-1} \PP(X_{s+1}\neq 1|X_{s-m+1}^s\in{\tilde \calS}^m) \nonumber \\
		&= \frac{1}{2}\cdot \left(1-\frac{1}{n}\right)^{t-m}\cdot \frac{b}{n2^{m-2}}\cdot \left(1-\frac{1}{n}\right)^{n-m-t} =\frac b{n2^{m-1}}\pth{1-\frac 1n}^{n-2m}.
	\end{align}
	Moreover, this probability does not depend of the choice of $T$; 
	
	\item \label{pt:MC_23} Conditioned on the event that $X^n\in\calX_t$, the trajectory $(X_{t+1},\cdots,X_n)$ has the same distribution as a length-$(n-t)$ trajectory of a stationary $m^{\text{th}}$-order Markov chain with state space ${\tilde \calS}$ and transition probability $T$, and the uniform initial distribution.
	Indeed,
	\begin{align*}
		& \prob{X_{t+1}=x_{t+1},\ldots,X_n=x_n|X^n\in\calX_t} \\
		&= \frac{\frac{1}{2}\cdot \left(1-\frac{1}{n}\right)^{t-m} \cdot \frac b{n2^{m-2}(k-1)^{m}} \prod_{s=t+m}^{n-1} \pth{1-\frac1n}T(x_{s+1}|x_{s-m+1}^s) }{\frac{b}{n2^{m-1}}\left(1-\frac{1}{n}\right)^{n-2m}} \\
		&= \frac{1}{(k-1)^m} 
		\prod_{s=t+m}^{n-1} T(x_{s+1}|x_{s-m+1}^s).
	\end{align*}
\end{enumerate}

\paragraph{Reducing the Bayes prediction risk to mutual information}

Consider the following Bayesian setting, we first draw $T$ from some prior satisfying property \ref{pt:MC_P}, then generate the stationary $m^{\rm th}$-order Markov chain $X^n=(X_1,\ldots,X_n)$ with state space $[k]$ and transition matrix $M$ in \eqref{eq:MC_M_construction2} and stationary distribution $\pi$ in \prettyref{eq:pi-mth}. The following lemma lower bounds the Bayes prediction risk.
\begin{lemma}\label{lmm:riskred_markov2}
	Conditioned on $T$, let $Y^n=(Y_1,\ldots,Y_n)$ denote an $m^{\rm th}$-order stationary Markov chain on state space 
	${\tilde \calS}=\{2,\ldots,k\}$ with transition matrix $T$ and uniform initial distribution. Then
	\begin{align*}
		&\inf_{\widehat{M}}\EE_{T}\left[\EE[D(M(\cdot|X_{n-m+1}^n)\| \widehat{M}(\cdot|X_{n-m+1}^n))) ]\right] \nonumber\\
		&\ge \frac{b(n-1)}{n^22^{m-1}}\pth{1-\frac 1n}^{n-2m}\left(I(T;Y^{n-m}) - m\log (k-1)\right). 
	\end{align*}
\end{lemma}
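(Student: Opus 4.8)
The plan is to transcribe the first-order argument from the proof of \prettyref{lmm:riskred_markov}, replacing properties \ref{pt:1}--\ref{pt:3} by \ref{pt:MC_21}--\ref{pt:MC_23} of the chain \prettyref{eq:MC_M_construction2}. Since the infimum over $\widehat M$ on the left-hand side is attained by the Bayes predictive density $\widehat M_n(\cdot|x^n)=P_{X_{n+1}|X^n=x^n}$ (cf.~\prettyref{eq:MC_bayes}), it suffices to lower bound the Bayes risk, and the first step is to identify this estimator on the events $\calX_t$. Fix $t$ with $m\le t\le n-m$ and a trajectory $x^n\in\calX_t$; then $x_{n-m+1}^n\in\tilde\calS^m$. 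Using the explicit stationary distribution $\pi$ from \prettyref{lmm:MC_stationary-M} together with properties \ref{pt:MC_22}--\ref{pt:MC_23}---the former asserting that $\PP(X^n\in\calX_t)$ is independent of $T$, the latter that the conditional law of $X_{t+1}^n$ given $\{X^n\in\calX_t\}$ coincides with that of a length-$(n-t)$ $m^{\text{th}}$-order chain $Y_1^{n-t}$ with transition matrix $T$ and uniform start---one can cancel all factors in the ratio defining $\widehat M_n$ except those involving $\tilde\calS$-to-$\tilde\calS$ transitions, yielding
\[
\widehat M_n(\cdot|x^n)=\tfrac1n\delta_1+\bigl(1-\tfrac1n\bigr)\widehat T_{n-t}(\cdot|x_{t+1}^n),\qquad x^n\in\calX_t,\ m\le t\le n-m,
\]
where $\widehat T_s(\cdot|y^s)=P_{Y_{s+1}|Y^s=y^s}$ is the Bayes predictive for the $Y$ chain and $\delta_1$ is the point mass at state $1$. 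This parallels the identity $M(\cdot|x^m)=\tfrac1n\delta_1+(1-\tfrac1n)T(\cdot|x^m)$ for $x^m\in\tilde\calS^m$ hard-wired into \prettyref{eq:MC_M_construction2}; in particular $\widehat M_n(1|x^n)=\tfrac1n$ is deterministic.

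The second step is the decomposition and contraction. Restricting attention to the disjoint events $\{X^n\in\calX_t\}$, $t=m,\dots,n-m$, and using \ref{pt:MC_22} to substitute $\PP(X^n\in\calX_t)=\frac{b}{n2^{m-1}}(1-\frac1n)^{n-2m}$, the Bayes risk is at least $\frac{b}{n2^{m-1}}(1-\frac1n)^{n-2m}\sum_{t=m}^{n-m}\EE_T\EE\bigl[D(M(\cdot|X_{n-m+1}^n)\|\widehat M_n(\cdot|X^n))\mid X^n\in\calX_t\bigr]$. On $\{X^n\in\calX_t\}$, property \ref{pt:MC_23} makes $(T,X_{t+1}^n)$ equal in law to $(T,Y_1^{n-t})$, so that $X_{n-m+1}^n\leftrightarrow Y_{n-t-m+1}^{n-t}$; combining this with the $\delta_1$-mixture identities above and the elementary contraction $D(\epsilon\delta_1+(1-\epsilon)P\,\|\,\epsilon\delta_1+(1-\epsilon)Q)=(1-\epsilon)D(P\|Q)$, each conditional term equals $(1-\frac1n)\EE_T\EE[D(T(\cdot|Y_{n-t-m+1}^{n-t})\|\widehat T_{n-t}(\cdot|Y^{n-t}))]$, which by the mutual-information representation \prettyref{eq:MC_Bayes-MI} of the Bayes prediction risk equals $(1-\frac1n)\,I(T;Y_{n-t+1}|Y^{n-t})$.

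The last step reassembles the terms via the chain rule. Substituting $u=n-t$, the sum becomes $\sum_{u=m}^{n-m}I(T;Y_{u+1}|Y^u)$; dropping the nonnegative $u=n-m$ term and applying $I(T;Y^N)=\sum_{u=0}^{N-1}I(T;Y_{u+1}|Y^u)$ with $N=n-m$ and $N=m$ gives $\sum_{u=m}^{n-m-1}I(T;Y_{u+1}|Y^u)=I(T;Y^{n-m})-I(T;Y^m)\ge I(T;Y^{n-m})-m\log(k-1)$, where the last inequality uses $I(T;Y^m)\le H(Y^m)\le m\log(k-1)$ since $Y^m$ takes values in $\tilde\calS^m$ with $|\tilde\calS|=k-1$. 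Folding in the prefactors, $\frac{b}{n2^{m-1}}(1-\frac1n)^{n-2m}\cdot(1-\frac1n)=\frac{b(n-1)}{n^22^{m-1}}(1-\frac1n)^{n-2m}$, which is exactly the claimed bound.

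I expect the only genuine obstacle to be the first step: one must carefully unfold the block structure of $\calX_t$---in particular the chain of partial-block states $1^{m-i+1}\tilde\calS^{i-1}$ through which the chain exits the (near-)absorbing region, contributing the factor $\frac{b}{n2^{m-2}}$ in \ref{pt:MC_22}---and verify that the stationary distributions of both $M$ (given by \prettyref{lmm:MC_stationary-M}) and $T$ (uniform, by \prettyref{lmm:MC_stationary-T}) do not depend on $T$, so that the $T$-independent factors in the Bayes-estimator ratio cancel exactly. Once the $\delta_1$-mixture identity for $\widehat M_n$ is established, the remainder is a routine transcription of the $m=1$ proof of \prettyref{lmm:riskred_markov}.
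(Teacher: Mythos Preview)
Your proposal is correct and follows essentially the same approach as the paper: identify the Bayes predictor on $\calX_t$ via the $\delta_1$-mixture identity $\widehat M_n(\cdot|x^n)=\tfrac1n\delta_1+(1-\tfrac1n)\widehat T_{n-t}(\cdot|x_{t+1}^n)$, decompose over $\{\calX_t\}_{t=m}^{n-m}$ using \ref{pt:MC_22}, contract to $(1-\tfrac1n)I(T;Y_{n-t+1}|Y^{n-t})$ via \ref{pt:MC_23} and \prettyref{eq:MC_Bayes-MI}, and sum by the chain rule. Your handling of the chain-rule boundary (dropping the nonnegative $u=n-m$ term before telescoping) is in fact slightly cleaner than the paper's, which writes the telescoping identity with a harmless off-by-one.
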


\begin{proof}
	We first relate the Bayes estimator of $M$ and $T$ (given the $X$ and $Y$ chain respectively).
	For each $m\leq t\leq n$, denote by $\hat M_t=\hat M_t(\cdot|x^t)$ 
	the Bayes estimator of $M(\cdot|x_{t-m+1}^t)$ given $X^t=x^t$, and 
	$\hat T_t(\cdot|y^t)$ 
	the Bayes estimator of $T(\cdot|y_{t-m+1}^t)$ given $Y^t=y^t$. 
	For each $t=1,\ldots,n-1$ and for each trajectory $x^n=(1,\ldots,1,x_{t+1},\ldots,x_n) \in \calX_t$, recalling the form \eqref{eq:MC_bayes} of the Bayes estimator, 
	we have, for each $j\in{\tilde \calS}$, 
	\begin{align*}
		&\hat M_n(j|x^n)\\ 
		&= ~ \frac{\prob{X^{n+1}=(x^n,j)}}{\prob{X^n=x^n}} \\
		&= ~ \frac{\Expect[\frac{1}{2}\cdot \left(1-\frac{1}{n}\right)^{t-m} \cdot \frac b{n2^{m-2}(k-1)^{m}} \prod_{s=t+m}^{n-1} M(x_{s+1}|x_{s-m+1}^s) M(j|x_{n-m+1}^n)]}{\Expect[\frac{1}{2}\cdot \left(1-\frac{1}{n}\right)^{t-m} \cdot \frac b{n2^{m-2}(k-1)^{m}} \prod_{s=t+m}^{n-1} M(x_{s+1}|x_{s-m+1}^s)]} \\
		&= ~ \pth{1-\frac{1}{n}} \frac{\Expect[\frac 1{(k-1)^m}\prod_{s=t+m}^{n-1} T(x_{s+1}|x_{s-m+1}^s) T(j|x_{n-m+1}^n)]}{\Expect[\frac 1{(k-1)^m}\prod_{s=t+m}^{n-1} T(x_{s+1}|x_{s-m+1}^s)]} \\
		&= ~ \pth{1-\frac{1}{n}} \frac{\prob{Y^{n-t+1} = (x_{t+1}^n, j)} }{\prob{Y^{n-t} = x_{t+1}^n}} \\
		&= ~ \pth{1-\frac{1}{n}} \hat T_{n-t}(j|x_{t+1}^n) . 
	\end{align*}
	Furthermore, since $M(1|x^m)=1/n$ for all $x^m\in {\tilde \calS}$ in the construction \eqref{eq:MC_M_construction2}, the Bayes estimator also satisfies $\hat M_n(1|x^n) = 1/n$ for $x^n\in \calX_t$ and $t\le n-m$.
	In all, we have
	\begin{equation}
		\hat M_n(\cdot|x^n) = \frac{1}{n} \delta_1 + \pth{1-\frac{1}{n}}\hat T_{n-t}(\cdot|x_{t+1}^n) , \quad x^n\in\calX_t, t\le n-m. 
		\label{eq:MC_bayesMT2}
	\end{equation}
	with $\delta_1$ denoting the point mass at state 1, which parallels the fact that 
	\begin{equation}
		M(\cdot|y^m) = \frac{1}{n} \delta_1 + \pth{1-\frac{1}{n}} T(\cdot|y^m), \quad y^m\in {\tilde \calS}^m.
		\label{eq:MC_MT2}
	\end{equation}

	By \ref{pt:MC_22}, each event $\{X^n\in\calX_t\}$ occurs with probability at least ${b\over n2^{m-1}}\pth{1-\frac 1n}^{n-2m}$, and is independent of $T$. Therefore,
	\begin{align}\label{eq:MC_decomposition2}
		&\EE_{T}\left[\EE[D(M(\cdot|X_{n-1}X_n)\| \widehat{M}(\cdot|X^n)) ]\right] \nonumber \\
		&\ge \frac b{n2^{m-1}}\pth{1-\frac 1n}^{n-2m}\sum_{t=m}^{n-m}\EE_{T}\left[\EE[D(M(\cdot|X_{n-m+1}^n)\| \widehat{M}(\cdot|X^n)) | X^n\in\calX_t ]\right].  
	\end{align}
	By \ref{pt:MC_23},  the conditional joint law of $(T,X_{t+1},\ldots,X_n)$ on the event $\{X^n\in\calX_t\}$ is the same as the joint law of $(T,Y_{1},\ldots,Y_{n-t})$. 
	Thus, we may express the Bayes prediction risk in the $X$ chain as 
	\begin{align}\label{eq:MC_reduction_M_N2}
		\EE_{T}\left[\EE[D(M(\cdot|X_{n-m+1}^n)\| \widehat{M}(\cdot|X^n)) | X^n\in\calX_t ]\right] 
		&	\stepa{=} \left(1-\frac{1}{n}\right)\cdot \EE_{T}\left[\EE[D(T(\cdot|Y_{n-t-m+1}^{n-t}) \| \widehat{T}(\cdot|Y^{n-t}))]\right] \nonumber \\
		& \stepb{=} \left(1-\frac{1}{n}\right)\cdot I(T;Y_{n-t+1}|Y^{n-t}),
	\end{align}
	where (a) follows from \prettyref{eq:MC_bayesMT2}, \prettyref{eq:MC_MT2}, and the fact that for distributions $P,Q$ supported on ${\tilde \calS}$,
	$D(\epsilon \delta_1 + (1-\epsilon) P \|\epsilon \delta_1 + (1-\epsilon) Q)=(1-\epsilon) D(P\|Q)$;
	(b) is the mutual information representation \prettyref{eq:MC_Bayes-MI} of the Bayes prediction risk.
	Finally, the lemma follows from \eqref{eq:MC_decomposition2}, \eqref{eq:MC_reduction_M_N2}, and the chain rule
	\begin{align*}
		\sum_{t=m}^{n-m} I(T;Y_{n-t+1}|Y^{n-t}) = I(T;Y^{n-m}) - I(T;Y^m) \ge  I(T;Y^{n-m}) - m\log(k-1),
	\end{align*} 
	as $I(T;Y^m)\le H(Y^m)\le m \log(k-1)$. 
\end{proof}

\paragraph{Prior construction and lower bounding the mutual information}

We assume that $k=2k_0+1$ for some integer $k_0$. For simplicity of notation we replace $\tilde S$ by $\calY={1,\dots,k-1}$. This does not affect the lower bound. Define an equivalent relation on $|\calY|^{m-1}$ given by the following rule: $x^{m-1}$ and $y^{m-1}$ are related if and only if $x^{m-1}=y^{m-1}$ or $x^{m-1}=\overline{y^{m-1}}$. Let $R_{m-1}$ be a subset of $\calY^{m-1}$ that consists of exactly one representative from each of the equivalent classes. As each of the equivalent classes under this relation will have at most two elements the total number of equivalent classes is at least $|\calY|^{m-1}\over 2$, i.e., $|R_{m-1}|\geq {(k-1)^{m-1}\over 2}$.
We consider the following prior: let $u=\sth{u_{ix^{m-1}j}}_{i\leq j\in{[k_0]},x^{m-1}\in R_{m-1}}$ be iid and uniformly distributed in $[1/(4k_0),3/(4k_0)]$ and for each $i\leq j,x^{m-1}\in R_{m-1}$ define  $u_{jx^{m-1}i},u_{i\overline{x^{m-1}}j},u_{j\overline{x^{m-1}}i}$ to be same as $u_{i{x^{m-1}}j}$. Let the transition matrix $T$ be given by
\begin{align}\label{eq:MC_u-T-relation-order-m}
	&T(2j-1|2i-1,x^{m-1}) = T(2j|2i,x^{m-1}) = u_{ix^{m-1}j},
	\nonumber\\
	&T(2j|2i-1,x^{m-1}) = T(2j-1|2i,x^{m-1}) = \frac 1{k_0}-u_{ix^{m-1}j}, \quad  i,j\in \calY,x^{m-1}\in \calY^{m-1}.
\end{align}
One can check that the constructed $T$ is a stochastic matrix and satisfies the property \ref{pt:MC_P}, which enforces uniform stationary distribution. Also each entry of $T$ belongs to the interval $[{1\over 2(k-1)},{3\over 2(k-1)}]$. 

Next we use the following lemma to derive estimation guarantees on $T$.

\begin{lemma}\label{lmm:L2_upper-order-m}
	Suppose that $T$ is an $\ell^{m}\times \ell$ transition matrix, on state space $\calY^m$ with $|\calY|=\ell$, satisfying $T(x_{m+1}|x^{m}) = T(x_1|\overline{x_2^{m+1}}),\quad \forall x^{m+1}\in [\ell]^{m+1}$ and $T(y_{m+1}|y^m)\in [\frac{c_1}{\ell},\frac{c_2}{\ell}]$ with $0<c_1<c_2<1<c_1$ for all $y^{m+1}\in [\ell]^{m+1}$. Then there is an estimator $\widehat{T}$ based on stationary trajectory $Y^n$ simulated from $T$ such that
	\begin{align*}
		\EE[\|\widehat{T} - T \|_{\mathsf{F}}^2] \le\frac{4c_1^{2m+3}(m+1)\ell^{2m}}{c_2(n-m)} ,
	\end{align*}
	where $\|\widehat{T} - T \|_{\mathsf{F}} = \sqrt{\sum_{y^{m+1}} (\widehat{T}(y_{m+1}|y^m) - T(y_{m+1}|y^m))^2}$ denotes the Frobenius norm.
\end{lemma}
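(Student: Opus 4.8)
The plan is to construct an explicit estimator $\widehat T$ for the $m^{\rm th}$-order transition matrix $T$ by normalizing empirical block counts, and then to bound its Frobenius risk by controlling the variance of each entry via the pseudo-spectral-gap concentration inequalities of \cite{P15}. First I would exploit the key structural fact, exactly as in the proof of \prettyref{lmm:L2_upper}, that because $T$ satisfies property \ref{pt:MC_P} its stationary distribution is uniform on $\calY^m$ (this is \prettyref{lmm:MC_stationary-T}), so that the joint distribution of the block $(Y_1,\ldots,Y_{m+1})$ determines $T$ via
\[
\PP(Y^{m+1}=y^{m+1}) = \frac{1}{\ell^m}\,T(y_{m+1}\mid y^m).
\]
This suggests the estimator $\widehat T(y_{m+1}\mid y^m) \triangleq \ell^m \cdot \frac{1}{n-m}\sum_{t=1}^{n-m}\indc{Y_t^{t+m}=y^{m+1}}$, which is clearly unbiased: $\EE[\widehat T(y_{m+1}\mid y^m)] = \ell^m\,\PP(Y^{m+1}=y^{m+1}) = T(y_{m+1}\mid y^m)$.

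Next I would pass to the first-order ``lifted'' chain $Z_t \triangleq (Y_t,\ldots,Y_{t+m-1})$ on state space $\calY^m$, whose transition matrix $\tilde T$ has a uniform stationary distribution. Each indicator $\indc{Y_t^{t+m}=y^{m+1}}$ is a function of $(Z_t,Z_{t+1})$, so $\widehat T(y_{m+1}\mid y^m)$ is $\ell^m$ times an empirical pair-frequency of the lifted chain, and Lemmas 7--8 of \cite{tatwawadi2018minimax} (applied to $\tilde T$) give $\var(\widehat T(y_{m+1}\mid y^m)) \lesssim \ell^{2m}\cdot \frac{\PP(Y^{m+1}=y^{m+1})}{\gamma_{\mathrm{ps}}(\tilde T)(n-m)} = \frac{\ell^{m}\, T(y_{m+1}\mid y^m)}{\gamma_{\mathrm{ps}}(\tilde T)(n-m)}$, where $\gamma_{\mathrm{ps}}$ is the pseudo-spectral gap. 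Summing over all $y^{m+1}\in\calY^{m+1}$ and using $T(y_{m+1}\mid y^m)\le c_2/\ell$ together with $\sum_{y^m}\sum_{y_{m+1}} T(y_{m+1}\mid y^m) = \ell^m$ yields
\[
\EE[\|\widehat T - T\|_{\mathsf F}^2] = \sum_{y^{m+1}}\var(\widehat T(y_{m+1}\mid y^m)) \lesssim \frac{\ell^{m}\cdot \ell^m}{\gamma_{\mathrm{ps}}(\tilde T)(n-m)} = \frac{\ell^{2m}}{\gamma_{\mathrm{ps}}(\tilde T)(n-m)},
\]
and it remains to show $\gamma_{\mathrm{ps}}(\tilde T)$ is bounded below by an absolute constant (this is where the precise constant $\frac{c_2}{4c_1^{2m+3}(m+1)}$ in the statement comes from; the exponent $2m+3$ and the factor $m+1$ reflect the crude bound needed on $\gamma_{\mathrm{ps}}$).

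The main obstacle is the lower bound on the pseudo-spectral gap $\gamma_{\mathrm{ps}}(\tilde T)$ of the lifted chain. Unlike the first-order case (Lemma \prettyref{lmm:L2_upper}), where $T = \ell^{-1}\mathbf J + \Delta$ with $\|\Delta\|\le 1/2$ immediately gives $\gamma_*\ge 1/2$, the lifted transition matrix $\tilde T$ is highly non-reversible and sparse: from a block $z=(y_1,\ldots,y_m)$ it can only move to blocks of the form $(y_2,\ldots,y_m,y_{m+1})$. So $\tilde T$ itself has spectral radius $1$ on a large subspace and its ordinary spectral gap is zero; one genuinely needs the pseudo-spectral gap, defined via $\max_j \gamma(( \tilde T^{*})^j \tilde T^j)/j$. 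The plan here is to show that $(\tilde T^{*})^{m+1}\tilde T^{m+1}$ — equivalently, taking $m+1$ forward steps and $m+1$ backward steps — is a genuine mixing operator: since the entries of $T$ lie in $[c_1/\ell, c_2/\ell]$, after $m+1$ steps every block is reachable from every block with probability comparable to $\ell^{-m}$, so $\tilde T^{m+1}$ is entrywise bounded below by $(c_1/\ell)^{\,O(m)}\cdot \ell^{-m}$ times the all-ones-type structure, from which a Doeblin-type minorization argument bounds $\gamma((\tilde T^{*})^{m+1}\tilde T^{m+1})$ below by a constant depending only on $c_1,c_2,m$; dividing by $j=m+1$ gives the claimed $\gamma_{\mathrm{ps}}(\tilde T)\gtrsim_{c_1,c_2,m} 1$. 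The rest is the routine variance-summation bookkeeping above, and I would only carry out the block-reachability estimate carefully enough to extract an explicit constant matching the statement.
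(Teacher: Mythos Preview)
Your proposal is correct and follows essentially the same approach as the paper: the paper lifts to the $(m{+}1)$-block chain $(Y_t,\ldots,Y_{t+m})$ on $\calY^{m+1}$ so that the indicator is a single-state function and applies \cite[Theorem~3.2]{P15} directly, whereas you lift to the $m$-block chain and treat the indicator as a pair statistic---but these are equivalent formulations. Your identification of the main obstacle (lower-bounding the pseudo spectral gap via a Doeblin minorization of $(\tilde T^{*})^{j}\tilde T^{j}$ at $j=m{+}1$) is exactly the content of the paper's \prettyref{lmm:MC_bound-pseudo-spectral-gap}, which is proved via the elementary eigenvalue bound in \prettyref{lmm:special-hoffman}.
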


For our purpose we will use the above lemma on $T$ with $\ell=k-1,c_1=\frac 12,c_2=\frac 32$. Therefore it follows that there exist estimators $\widehat{T}(Y^n)$ and $\widehat{u}(Y^n)$ such that
\begin{align}
	\EE[\|\widehat{u}(Y^n) - u\|_2^2] \le \EE[\|\widehat{T}(Y^n) - T \|_{\mathsf{F}}^2] \le \frac{4c_2(m+1)(k-1)^{2m}}{c_1^{2m+3}(n-m)}. 
	\label{eq:MSEu-second}
\end{align}
Here and below, we identify $u=\sth{u_{ix^{m-1}j}}_{i\leq j,x^{m-1}\in R_{m-1}}$ and $\hat u=\sth{\hat u_{ix^{m-1}j}}_{i\leq j,x^{m-1}\in R_{m-1}}$ as ${|R_{m-1}|k_0(k_0+1)\over 2}={|R_{m-1}|(k^2-1)\over 8}$-dimensional vectors.

Let $h(X) = \int -f_X(x)\log f_X(x)dx$ denote the differential entropy of a continuous random vector $X$ with density $f_X$ w.r.t the Lebesgue measure 
and $h(X|Y)=\int -f_{XY}(xy)\log f_{X|Y}(x|y)dxdy$ the conditional differential entropy (cf.~e.g.~\cite{cover}). Then
\begin{align}
	h(u) = \sum_{i\leq j\in{[k_0]},x^{m-1}\in R_{m-1}}h(u_{ix^{m-1}j}) = -\frac{|R_{m-1}|(k^2-1)}{8}\log(k-1). 
\end{align}
Then
\begin{align*}
	I(T;Y^n)
	& \stepa{=} I(u;Y^n)\\
	& \stepb{\geq} I(u;\hat u(Y^n)) = h(u)-h(u|\hat u(Y^n))\\
	& \stepc{\geq} h(u)-h(u-\hat u(Y^n))\\
	& \stepd{\geq} \frac{|R_{m-1}|(k^2-1)}{16}\log\left(\frac{c_1^{2m+3}|R_{m-1}|(k^2-1)(n-m)}{64\pi ec_2(m+1)(k-1)^{2m+2}}\right) 
	\ge \frac{(k-1)^{m+1}}{32}\log\left(\frac{n-m}{c_m(k-1)^{m+1}}\right).
\end{align*}
for constant $c_m={128\pi ec_2(m+1)\over c_1^{2m+3}}$, where 
(a) is because $u$ and $T$ are in one-to-one correspondence by \prettyref{eq:MC_u-T-relation-order-m};
(b) follows from the data processing inequality;
(c) is because $h(\cdot)$ is translation invariant and concave;
(d) follows from the maximum entropy principle \cite{cover}: 
$h(u-\hat u(Y^n)) \leq \frac{|R_{m-1}|(k^2-1)}{16}\log\left(\frac{2\pi e}{|R_{m-1}|(k^2-1)/8}\cdot \EE[\|\widehat{u}(Y^n) - u\|_2^2] \right)$, which in turn is bounded by \prettyref{eq:MSEu-second}.
Plugging this lower bound into Lemma \ref{lmm:riskred_markov2} completes the lower bound proof of \prettyref{thm:optimal-m}. 

\subsubsection{Proof of \prettyref{lmm:L2_upper-order-m} via pseudo spectral gap}
	In view of \prettyref{lmm:MC_stationary-T} we get that the stationary distribution of $T$ is uniform over $\calY^m$, and there is a one-to-one correspondence between the joint distribution of $Y^{m+1}$ and the transition probabilities
	\begin{align}
		\PP\qth{Y^{m+1}=y^{m+1}}
		=\frac 1{\ell^{m}}T(y_{m+1}|y^m).
	\end{align}
 	Consider the following estimator $\widehat{T}$: for $y_{m+1}\in [\ell]^{m+1}$, let 
	\begin{align*}
		\widehat{T}(y_{m+1}|y^m) = \ell^m\cdot \frac{\sum_{t=1}^{n-m} \indc{Y_t^{t+m}=y^{m+1}} }{n-m}. 
	\end{align*}
	Clearly $\EE[\widehat{T}(y_{m+1}|y^m)]=\ell^m\PP\qth{y_{m+1}|y^m}= T(y_{m+1}|y^m)$. Next we observe that the sequence of random variables $\sth{Y_{t}^{t+m}}_{t=1}^{n-m}$ is a first-order Markov chain on $[\ell]^{m+1}$. Let us denote its transition matrix by $T_{m+1}$ and note that its stationary distribution is given by $\pi(a^{m+1})=\ell^{-m}T(a_{m+1}|a^m), a^{m+1}\in[\ell]^{m+1}$. For the transition matrix $T_{m+1}$, which must be non-reversible, the \emph{pseudo spectral gap} $\gamma_{\text{ps}}(T_{m+1})$ is defined as
	\begin{align*}
		\gamma_{\text{ps}}(T_{m+1}) = \max_{r\ge 1} \frac{\gamma( (T_{m+1}^*)^r T_{m+1}^r )}{r}, 
	\end{align*}
	where $T_{m+1}^*$ is the adjoint of $T_{m+1}$ defined as $T_{m+1}^*(b^{m+1}|a^{m+1}) = \pi(b^{m+1})T(a^{m+1}|b^{m+1})/\pi(a^{m+1})$. 
	With these notations, the concentration inequality of \cite[Theorem 3.2]{P15} gives the following variance bound:
	\begin{align*}
		\var(\widehat{T}(y_{m+1}|y^m)) \le \ell^{2m}\cdot \frac{4\PP\qth{Y^{m+1}=y^{m+1}}}{\gamma_{\text{ps}}(T_{m+1})(n-m)} \le \ell^{2m}\cdot \frac{4T(y_{m+1}|y^m)\ell^{-m}}{\gamma_{\text{ps}}(T_{m+1})(n-m)}.
	\end{align*}
	
	The following lemma bounds the pseudo spectral gap from below.
	\begin{lemma}\label{lmm:MC_bound-pseudo-spectral-gap}
		Let $T\in \mathbb{R}^{\ell^m\times \ell}$ be the transition matrix of an $m$-th order Markov chain $(Y_t)_{t\geq 1}$ 
		over a discrete state space $\mathcal{Y}$ with $|\mathcal{Y}|=\ell$, and assume that 
		\begin{itemize}
			\item all the entries of $T$ lie in the interval $[\frac {c_1}\ell,\frac {c_2}\ell]$ for some absolute constants $0<c_1<c_2$;
			\item $T$ has the uniform stationary distribution on $[\ell]^m$.
		\end{itemize}
		Let $T_{m+1}\in \reals^{\ell^{m+1} \times \ell^{m+1}}$ be the transition matrix of the first-order Markov chain $((Y_t,Y_{t+1},\cdots,Y_{t+m}))_{t\geq 1}$. Then we have
		\begin{align*}
			\gamma_{\text{\rm ps}}(T_{m+1}) \ge {c_1^{2m+3}\over c_2(m+1)}. 
		\end{align*}
	\end{lemma}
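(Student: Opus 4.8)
The plan is to evaluate the pseudo spectral gap at the single shift $r=m+1$, which already suffices. Since $\gamma_{\text{ps}}(T_{m+1})=\max_{r\ge 1}\gamma\big((T_{m+1}^*)^r(T_{m+1})^r\big)/r$ and $(T_{m+1}^*)^{m+1}=\big((T_{m+1})^{m+1}\big)^*$ (the adjoint of a power is the power of the adjoint), writing $P:=(T_{m+1})^{m+1}$ we have $\gamma_{\text{ps}}(T_{m+1})\ge \gamma(P^*P)/(m+1)$. It therefore remains to show $\gamma(P^*P)\ge c_1^{m+1}/c_2$; since $c_1\le 1$ (a row of $T$ has $\ell$ entries each $\ge c_1/\ell$), this is at least $c_1^{2m+3}/c_2$, which gives the claimed bound with room to spare.

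The first step is a Doeblin-type minorization of $P$. Because the lifted chain $\big((Y_t,\dots,Y_{t+m})\big)_t$ is a deterministic function of the underlying $\calY$-valued path, the $(m+1)$-step transition probability from a window $a=(a_1,\dots,a_{m+1})$ to a window $b=(b_1,\dots,b_{m+1})$ is a single product of $m+1$ entries of $T$: setting $y_1\cdots y_{m+1}=a$ and $y_{m+2}\cdots y_{2m+2}=b$, one checks $P(b\,|\,a)=\prod_{j=1}^{m+1}T\big(y_{m+1+j}\,\big|\,y_{j+1},\dots,y_{j+m}\big)$, because after $m+1$ steps the window has rolled over completely and each of the $m+1$ newly produced symbols is generated by a genuine $T$-transition from an $m$-tuple. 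Every factor lies in $[c_1/\ell,c_2/\ell]$, so $P(b\,|\,a)\ge (c_1/\ell)^{m+1}$ uniformly in $a,b$. Since the stationary distribution of $T_{m+1}$ is $\pi(b)=\ell^{-m}T(b_{m+1}|b_1,\dots,b_m)\le c_2\ell^{-(m+1)}$ (as recorded in the proof of \prettyref{lmm:L2_upper-order-m}), we obtain $P(b\,|\,a)\ge \delta\,\pi(b)$ for all $a,b$, with $\delta:=c_1^{m+1}/c_2$.

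The second step converts the minorization into a gap bound. Write $P=\delta\Pi+(1-\delta)R$, where $\Pi(b\,|\,a):=\pi(b)$ is rank one; the minorization makes $R$ a bona fide stochastic matrix, and from $\pi P=\pi$, $\pi\Pi=\pi$ we get $\pi R=\pi$. For any $f$ with $\sum_b\pi(b)f(b)=0$ we have $\Pi f=0$, hence $Pf=(1-\delta)Rf$, and $\|Rf\|_\pi\le\|f\|_\pi$ by Jensen's inequality (using $\pi R=\pi$); thus $P$ contracts mean-zero functions by a factor $1-\delta$ in $L^2(\pi)$. Consequently the self-adjoint positive semidefinite operator $P^*P$ — whose top eigenvalue is $1$, attained by constants, and which leaves the mean-zero subspace invariant since $\pi$ is stationary for both $P$ and $P^*$ — has all non-top eigenvalues at most $(1-\delta)^2$, i.e. $\gamma(P^*P)\ge 1-(1-\delta)^2\ge\delta$. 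Hence $\gamma_{\text{ps}}(T_{m+1})\ge\delta/(m+1)=c_1^{m+1}/\big(c_2(m+1)\big)\ge c_1^{2m+3}/\big(c_2(m+1)\big)$.

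I expect the only delicate point to be the index bookkeeping in the first step — verifying that, at shift exactly $r=m+1$, the $(m+1)$-step kernel of the lifted chain equals a product of $m+1$ honest transition entries of $T$ (and not a degenerate product containing indicators or a sum over several paths), which is also why one cannot take $r<m+1$: for smaller $r$ the target window still overlaps the initial one, so $P(b\,|\,a)$ vanishes for most $b$ and no uniform lower bound exists. The remaining ingredients — the identity $(A^*)^r=(A^r)^*$, stationarity of $R$, and the minorization-to-spectral-gap passage — are routine.
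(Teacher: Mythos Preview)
Your proof is correct and follows the same high-level idea as the paper: lower bound $\gamma_{\text{ps}}(T_{m+1})$ by evaluating at the shift $r=m+1$, where the $(m+1)$-step kernel $P=T_{m+1}^{m+1}$ satisfies the uniform minorization $P(b\,|\,a)\ge (c_1/\ell)^{m+1}$. The technical execution, however, differs. The paper separately lower-bounds the entries of $(T_{m+1}^*)^{m+1}$ via the identity $(T_{m+1}^*)^{m+1}(b\,|\,a)=\frac{\pi(b)}{\pi(a)}T_{m+1}^{m+1}(a\,|\,b)$ together with $\pi(b)/\pi(a)\ge c_1/c_2$, then bounds the entries of the product $(T_{m+1}^*)^{m+1}T_{m+1}^{m+1}$ from below by $c_1^{2m+3}/(c_2\ell^{m+1})$, and finally invokes a Hoffman-type lemma (a stochastic matrix with all entries $\ge\epsilon$ has non-unit eigenvalues bounded by $1-d\epsilon$ in modulus). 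You instead pass directly from the minorization to a Doeblin decomposition $P=\delta\Pi+(1-\delta)R$ with $\delta=c_1^{m+1}/c_2$, and use the $L^2(\pi)$-contraction of $R$ to bound the second singular value of $P$, hence $\gamma(P^*P)\ge 1-(1-\delta)^2\ge\delta$. Your route is somewhat cleaner in that it avoids handling the adjoint separately, and it actually yields the sharper constant $c_1^{m+1}/(c_2(m+1))$ rather than $c_1^{2m+3}/(c_2(m+1))$; the paper's route is perhaps slightly more elementary in that it never appeals to $L^2$ contraction, only to entrywise inequalities and an eigenvalue bound for strictly positive stochastic matrices.
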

	Consequently, we have
	\begin{align*}
		\EE[\|\widehat{T} - T \|_{\mathsf{ F}}^2] = \sum_{y^{m+1}\in [\ell]^{m+1}} 	\var(\widehat{T}(y_{m+1}|y^m)) \le  \sum_{y^{m+1}\in [\ell]^{m+1}} \frac {4c_2(m+1)\ell^m}{c_1^{2m+3}}\cdot \frac{T(y_{m+1}|y^m)}{n-m} = \frac{4c_2(m+1)\ell^{2m}}{c_1^{2m+3}(n-m)}, 
	\end{align*}
	completing the proof.

\begin{proof}[Proof of \prettyref{lmm:MC_bound-pseudo-spectral-gap}]
	As $T_{m+1}$ is a first-order Markov chain, the stochastic matrix $T_{m+1}^{m+1}$ defines the probabilities of transition from $(Y_t,Y_{t+1},\cdots,Y_{t+m})$ to $(Y_{t+m+1},Y_{t+m+2},\cdots,Y_{t+2m+1})$. By our assumption on $T$
	\begin{align}\label{eq:MC_min-T_{m+1}}
		\min_{a^{2m+2}\in \calY^{2m+2}} T^{m+1}_{m+1}(a^{2m+2}_{m+2}|a^{m+1})
		\geq \prod_{t=0}^{m}T(a_{2m+2-t}|a^{2m+1-t}_{m+2-t})
		\geq {c_1^{m+1} \over \ell^{m+1}}.
	\end{align}
	Given any $a^{m+1},b^{m+1}\in \calY^{m+1}$, using the above inequality we have
	\begin{align}\label{eq:MC_min-T*_{m+1}}
		&(T_{m+1}^*)^{m+1}(b^{m+1}|a^{m+1})
		\nonumber\\
		&=\sum_{\bm y_1\in\calY^{m+1},\dots,\bm y_{m}\in\calY^{m+1}}T_{m+1}^*(b^{m+1}|\bm y_{m})\sth{\prod_{t=1}^{m-1} T_{m+1}^*(\bm y_{m-t+1}|\bm y_{m-t})}T_{m+1}^*(\bm y_1|a^{m+1})
		\nonumber\\
		&=\sum_{\bm y_1\in\calY^{m+1},\dots,\bm y_{m}\in\calY^{m+1}}{\pi( b^{m+1})T_{m+1}(\bm y_{m}|b^{m+1})\over \pi(\bm y_m)}\sth{\prod_{t=1}^{m-1}{\pi(\bm y_{m-t+1})T_{m+1}(\bm y_{m-t}|\bm y_{m-t+1})\over \pi(\bm y_{m-t})}}{\pi(\bm y_1)T_{m+1}(a^{m+1}|\bm y_1)\over \pi(a^{m+1})}
		\nonumber\\
		&={\pi(b^{m+1})\over \pi(a^{m+1})}
		\sum_{\bm y_1\in\calY^{m+1},\dots,\bm y_{m}\in\calY^{m+1}}T_{m+1}(\bm y_{m}|b^{m+1})\sth{\prod_{t=1}^{m-1}T_{m+1}(\bm y_{m-t}|\bm y_{m-t+1})}T_{m+1}(a^{m+1}|\bm y_1)
		\nonumber\\
		&={\pi(b^{m+1})\over \pi(a^{m+1})}T^{m+1}_{m+1}(a^{m+1}|b^{m+1})
		\nonumber\\
		&={\pi(b^m)T(b_{m+1}|b^m)\over \pi(b^m)T(a_{m+1}|a^m)}T^{m+1}_{m+1}(a^{m+1}|b^{m+1})
		\geq {c_1\over c_2}\cdot {c_1^{m+1}\over \ell^{m+1}}.
	\end{align}
	Using \eqref{eq:MC_min-T_{m+1}},\eqref{eq:MC_min-T*_{m+1}} we get
	\begin{align}
		&\min_{a^{m+1},b^{m+1}\in\calY^{m+1}}\sth{(T_{m+1}^*)^{m+1}T^{m+1}_{m+1}}(b^{m+1}|a^{m+1})
		\nonumber\\
		&\geq 
		\sum_{d^{m+1}\in\calY^{m+1}}\pth{\min_{a^{m+1},d^{m+1}\in\calY^{m+1}}(T_{m+1}^*)^{m+1}(d^{m+1}|a^{m+1})}
		\pth{\min_{b^{m+1},d^{m+1}\in\calY^{m+1}}T^{m+1}_{m+1}(b^{m+1}|d^{m+1})}
		\nonumber\\
		&\geq \sum_{d^{m+1}\in\calY^{m+1}}{c_1^{2m+3}\over c_2\ell^{2m+2}}
		\geq {c_1^{2m+3}\over c_2\ell^{m+1}}.
	\end{align}
	As $(T_{m+1}^*)^{m+1}T^{m+1}_{m+1}$ is an $\ell^{m+1}\times \ell^{m+1}$ stochastic matrix, we can use \prettyref{lmm:special-hoffman} to get the lower bound on its spectral gap $\gamma((T_{m+1}^*)^{m+1}T^{m+1}_{m+1})\geq {c_1^{2m+3}\over c_2}$. Hence we get
	\begin{align}
		\gamma_{\text{\rm ps}}(T_{m+1})\geq {\gamma((T_{m+1}^*)^{m+1}T^{m+1}_{m+1})\over m+1}
		\geq {c_1^{2m+3}\over c_2(m+1)}
	\end{align}
	as required.
	A more generalized version of \prettyref{lmm:special-hoffman} can be found in from \cite{H67}.
	
	\begin{lemma}\label{lmm:special-hoffman}
		Suppose that $A$ is a $d\times d$ stochastic matrix with $\min_{i,j}A_{ij}\geq \epsilon$. Then for any eigenvalue $\lambda$ of $A$ other than 1 we have $|\lambda|\leq 1-d\epsilon$.
	\end{lemma}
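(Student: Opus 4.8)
The plan is to reduce the statement to the elementary fact that every eigenvalue of a stochastic matrix has modulus at most $1$, by peeling off a rank‑one ``uniform'' component. First I would note that since each row of $A$ consists of $d$ nonnegative entries summing to $1$, we have $d\epsilon\le 1$; moreover if $d\epsilon=1$ then every entry of $A$ equals $1/d$, so $A=\frac1d\mathbf{J}$ (with $\mathbf{J}$ the all‑ones matrix), whose only eigenvalues are $1$ and $0$, and the bound $|\lambda|\le 1-d\epsilon=0$ holds trivially. So from now on assume $d\epsilon<1$.

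Next I would write the decomposition
\[
A = d\epsilon\,\Pi + (1-d\epsilon)\,P,\qquad \Pi \mathrel{\mathop:}= \tfrac1d\mathbf{J},\quad P \mathrel{\mathop:}= \tfrac1{1-d\epsilon}\bigl(A-\epsilon\mathbf{J}\bigr).
\]
A direct check shows $P$ is a genuine stochastic matrix: each entry $P_{ij}=(A_{ij}-\epsilon)/(1-d\epsilon)\ge 0$ by the hypothesis $A_{ij}\ge\epsilon$, and each row of $P$ sums to $(1-d\epsilon)/(1-d\epsilon)=1$. Also $\Pi$ is stochastic, and the identity above is just the bookkeeping $d\epsilon\cdot\frac1d\mathbf{J}=\epsilon\mathbf{J}$.

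Now let $\lambda\ne 1$ be an eigenvalue of $A$, and pick a corresponding \emph{left} eigenvector $x$ with $xA=\lambda x$ (legitimate since $A$ and its transpose have the same eigenvalues). Because $A\mathbf{1}=\mathbf{1}$, applying both sides of $xA=\lambda x$ to $\mathbf{1}$ gives $x\mathbf{1}=\lambda(x\mathbf{1})$, hence $x\mathbf{1}=0$ as $\lambda\ne 1$. Consequently $x\Pi=0$, since each coordinate of the row vector $x\Pi$ equals $\frac1d\sum_i x_i=\frac1d(x\mathbf{1})=0$. Feeding this into the decomposition yields $\lambda x=xA=(1-d\epsilon)(xP)$, i.e.\ $xP=\frac{\lambda}{1-d\epsilon}x$, so $\frac{\lambda}{1-d\epsilon}$ is an eigenvalue of the stochastic matrix $P$. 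Since the spectral radius of a stochastic matrix is at most its $\ell_\infty\to\ell_\infty$ induced (maximum row‑sum) norm, which equals $1$, we get $\bigl|\tfrac{\lambda}{1-d\epsilon}\bigr|\le 1$, that is $|\lambda|\le 1-d\epsilon$.

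I do not anticipate any real obstacle here; the proof is short. The only two points that need a moment's care are the degenerate case $d\epsilon=1$ (handled by observing $A$ is then exactly the uniform matrix) and the observation that the eigenvector of any $\lambda\ne 1$ is annihilated by the uniform averaging map $\Pi$ — which is exactly where the hypothesis $\lambda\ne 1$ enters. If one prefers to avoid left eigenvectors, the same computation can be run with right eigenvectors of $A^{\mathsf T}$, or with right eigenvectors of $A$ after noting that any eigenvector for $\lambda\ne 1$ lies in the complement of $\mathrm{span}\{\mathbf 1\}$ in a sense compatible with $\Pi$; the left‑eigenvector route above is the cleanest.
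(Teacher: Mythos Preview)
Your proof is correct and essentially the same as the paper's: both take a left eigenvector, use that $\sum_i v_i=0$ for $\lambda\neq 1$ to subtract the constant $\epsilon$ from each entry, and then exploit that $A-\epsilon\mathbf{J}$ has nonnegative entries with row sums $1-d\epsilon$. The paper does this by a direct triangle-inequality computation $|\lambda v_j|=|\sum_i v_i(A_{ij}-\epsilon)|\le\sum_i|v_i|(A_{ij}-\epsilon)$ summed over $j$, whereas you package the same step as ``$P=(A-\epsilon\mathbf{J})/(1-d\epsilon)$ is stochastic, hence has spectral radius $\le 1$''---but these are the same inequality.
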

	
	\begin{proof}
		Suppose that $\lambda$ is an eigenvalue of $A$ other than 1 with non-zero left eigenvector $\bm v$, i.e. $\lambda v_j=\sum_{i=1}^dv_i A_{ij},j=1,\dots,d$. As $A$ is a stochastic matrix we know that $\sum_{j}A_{ij}=1$ for all $i$ and hence $\sum_{i=1}^d v_i=0$. This implies 
		\begin{align}
			|\lambda v_j|
			=\left|\sum_{i=1}^dv_iA_{ij}\right|
			=\left|\sum_{i=1}^dv_i(A_{ij}-\epsilon)\right|
			\leq \sum_{i=1}^d|v_i(A_{ij}-\epsilon)|
			= \sum_{i=1}^d|v_i|(A_{ij}-\epsilon)
		\end{align}
		with the last equality following from $A_{ij}\geq \epsilon$. Summing over $j=1,\dots d$ in the above equation and dividing by $\sum_{i=1}^d|v_i|$ we get $|\lambda|\leq 1-d\epsilon$ as required.
	\end{proof}
	
\end{proof}

\section{Discussions and open problems}
	\label{sec:discussion}
	
	We discuss the assumptions and implications of our results as well as related open problems.
	
	\paragraph{Very large state space.}
	\prettyref{thm:optimal} determines  the optimal prediction risk under the assumption of $k \lesssim \sqrt{n}$. 
	When $k \gtrsim \sqrt{n}$, \prettyref{thm:optimal} shows that the KL risk is bounded away from zero. However, as the KL risk can be as large as $\log k$, it is a meaningful question to determine the optimal rate in this case, which, thanks to the general reduction in \prettyref{eq:riskred-approx}, reduces to determining the redundancy for symmetric and general Markov chains. 
	For iid data, the minimax \emph{pointwise} redundancy is known to be $n \log \frac{k}{n} + O(\frac{n^2}{k})$ \cite[Theorem 1]{szpankowski2012minimax} when $k\gg n$.
	Since the average and pointwise redundancy usually behave similarly, for Markov chains it is reasonable to conjecture that the redundancy is $\Theta(n \log \frac{k^2}{n})$ in the large alphabet regime of $k \gtrsim \sqrt{n}$, which, in view of \prettyref{eq:riskred-approx}, would imply the optimal prediction risk is 
$\Theta(\log \frac{k^2}{n})$ for $k \gg \sqrt{n}$.
In comparison, we note that the prediction risk is at most $\log k$, achieved by the uniform distribution.


	\paragraph{Other loss functions}
	As mentioned in \prettyref{sec:technique}, standard arguments based on concentration inequalities inevitably rely on mixing conditions such as the spectral gap. In contrast, the risk bound in \prettyref{thm:optimal}, which is free of any mixing condition, is enabled by powerful techniques from universal compression which bound the redundancy by the pointwise maximum over all trajectories combined with information-theoretic or combinatorial argument. This program only relies on the Markovity of the process rather than stationarity or spectral gap assumptions. 
	The limitation of this approach, however, is that the reduction between prediction and redundancy crucially depends on the form of the KL loss function\footnote{In fact, this connection breaks down if one swap $M$ and $\hat M$ in the KL divergence in \prettyref{eq:riskkn}.} in \prettyref{eq:riskkn}, which allows one to use the mutual information representation and the chain rule to relate individual risks to the cumulative risk. 
More general loss in terms of $f$-divergence have been considered in \cite{HOP18}. Obtaining spectral gap-independent risk bound for these loss functions, this time without the aid of universal compression, is an open question.


	\paragraph{Stationarity}
	As mentioned above, the redundancy result in \prettyref{lmm:red-addone} (see also \cite{D83,tatwawadi2018minimax}) holds for nonstationary Markov chains as well.
	However, our redundancy-based risk upper bound in \prettyref{lmm:riskred} crucially relies on stationarity.
	It is unclear whether the result of \prettyref{thm:optimal} carries over to nonstationary chains.

\appendix

\section{Mutual information representation of prediction risk}
\label{app:caprisk}

The following lemma justifies the representation \prettyref{eq:caprisk} for the prediction risk as maximal conditional mutual information.
Unlike \prettyref{eq:capred} for redundancy which holds essentially without any condition \cite{kemperman1974shannon}, here we impose certain compactness assumptions which hold finite alphabets such as finite-state Markov chains studied in this paper.

\begin{lemma}
\label{lmm:caprisk}	
Let $\calX$ be finite and let $\Theta$ be a compact subset of $\reals^d$. 
Given $\{P_{X^{n+1}|\theta}: \theta \in \Theta\}$, define the prediction risk 
\begin{equation}
	\Risk_{n} \triangleq \inf_{Q_{X_{n+1}|X^{n}}} \sup_{\theta\in\Theta} D(P_{X_{n+1}|X^{n},\theta} \| Q_{X_{n+1}|X^{n}} | P_{X^{n}|\theta}),
	\label{eq:risk-n}
	\end{equation}	
Then
	\begin{equation}
	\Risk_n = \sup_{P_\theta \in\calM(\Theta)} I(\theta;X_{n+1}|X^n).
	\label{eq:caprisk-lemma}
	\end{equation}		
	where $\calM(\Theta)$ denotes the collection of all (Borel) probability measures on $\Theta$.
\end{lemma}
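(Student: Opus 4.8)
The plan is to realize \prettyref{eq:caprisk-lemma} as the statement that a minimax problem has no duality gap — the conditional analogue of the capacity–redundancy theorem \cite{kemperman1974shannon} — and to prove the nontrivial direction via Sion's minimax theorem. First I would dispose of the easy inequality $\Risk_n \ge \sup_{P_\theta} I(\theta;X_{n+1}|X^n)$: for every prior $P_\theta\in\calM(\Theta)$ and every estimator $Q_{X_{n+1}|X^n}$, the worst-case risk dominates the Bayes risk, which by \prettyref{eq:MC_Bayes-MI} equals $I(\theta;X_{n+1}|X^n)$, so $\sup_{\theta} D(P_{X_{n+1}|X^n,\theta}\|Q_{X_{n+1}|X^n}|P_{X^n|\theta}) \ge I(\theta;X_{n+1}|X^n)$; taking $\inf_Q$ then $\sup_{P_\theta}$ gives the bound. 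The content is the reverse inequality.

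For the reverse inequality, I would note that both $\Risk_n$ and $\sup_{P_\theta}I(\theta;X_{n+1}|X^n)$ depend on the family only through its image $\{P_{X^{n+1}|\theta}:\theta\in\Theta\}$ in the simplex $\Delta(\calX^{n+1})$, so — reparametrizing by the closure of this image, a compact subset of $\reals^{|\calX|^{n+1}}$, and using that entropy and $P\mapsto D(P\|Q)$ (for full-support $Q$) are continuous on finite alphabets — I may assume $\theta\mapsto P_{X^{n+1}|\theta}$ continuous on the compact $\Theta$. After discarding trajectories $x^n$ that are null under every $\theta$ (they affect neither side), let $\calQ$ be the open convex set of conditional kernels $Q_{X_{n+1}|X^n}$ strictly positive on every remaining $x^n$, and set $F(\mu,Q)\eqdef\Expect_{\theta\sim\mu}[D(P_{X_{n+1}|X^n,\theta}\|Q_{X_{n+1}|X^n}|P_{X^n|\theta})]$ for $\mu\in\calM(\Theta)$, $Q\in\calQ$. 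I would then verify the hypotheses of Sion's theorem: $\calM(\Theta)$ is convex and weak-$*$ compact (Prokhorov, since $\Theta$ is a compact metric space); for fixed $Q\in\calQ$ the integrand $\theta\mapsto D(P_{X_{n+1}|X^n,\theta}\|Q_{X_{n+1}|X^n}|P_{X^n|\theta})$ rewrites as $-H(P_{X^{n+1}|\theta})+H(P_{X^n|\theta})-\sum_{x^{n+1}}P_{X^{n+1}|\theta}(x^{n+1})\log Q_{X_{n+1}|X^n}(x_{n+1}|x^n)$, hence is continuous and bounded, so $\mu\mapsto F(\mu,Q)$ is affine and weak-$*$ continuous; and for fixed $\mu$, $Q\mapsto F(\mu,Q)$ is convex (convexity of KL in its second argument) and continuous on $\calQ$. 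Sion's theorem (the compactness of $\calM(\Theta)$ supplying the needed hypothesis) then gives $\inf_{Q\in\calQ}\sup_{\mu\in\calM(\Theta)}F(\mu,Q)=\sup_{\mu\in\calM(\Theta)}\inf_{Q\in\calQ}F(\mu,Q)$.

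Finally I would identify the two sides. Since $F(\cdot,Q)$ is affine, $\sup_{\mu}F(\mu,Q)=\sup_{\theta\in\Theta}D(P_{X_{n+1}|X^n,\theta}\|Q_{X_{n+1}|X^n}|P_{X^n|\theta})=\Risk_n(Q)$, so the left side equals $\inf_{Q\in\calQ}\Risk_n(Q)\ge\Risk_n$ (as $\calQ$ is a subset of all estimators); hence $\Risk_n\le\inf_{Q\in\calQ}\sup_\mu F(\mu,Q)$. On the right side, $\inf_{Q\in\calQ}F(\mu,Q)\le\inf_{Q}F(\mu,Q)=I(\theta;X_{n+1}|X^n)$ with $\theta\sim\mu$, again by \prettyref{eq:MC_Bayes-MI}. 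Chaining these yields $\Risk_n\le\sup_{\mu\in\calM(\Theta)}I(\theta;X_{n+1}|X^n)$, which with the easy inequality proves \prettyref{eq:caprisk-lemma}. The main obstacle I anticipate is the bookkeeping around non-compactness and $+\infty$ values: KL divergence can be infinite when the estimator vanishes where the truth does not, so the minimax theorem cannot be applied directly on the full (compact) set of estimators — the fix is precisely the restriction to strictly positive $Q\in\calQ$ together with the ``subset'' sandwich above, which keeps $F$ real-valued and jointly continuous at the price of trading an equality for two one-sided bounds. Relatedly, the reduction to a continuous parametrization — needed so that $\mu\mapsto F(\mu,Q)$ is weak-$*$ continuous and the compactness of $\calM(\Theta)$ can be exploited — requires a short (routine) density-plus-continuity argument for the information quantities on finite alphabets.
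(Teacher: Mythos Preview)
Your overall strategy --- reduce to a minimax theorem after restricting to strictly positive estimators --- is exactly the paper's approach (the paper uses Fan's theorem with an explicit $\epsilon$-smoothing $(1-\epsilon)Q+\epsilon U$, you use Sion on the open set $\calQ$ of strictly positive kernels). You are also more careful than the paper about the continuity of $\theta\mapsto P_{X^{n+1}|\theta}$, which is a nice touch.

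However, your sandwich does not close. In the last paragraph you write
\[
\inf_{Q\in\calQ}F(\mu,Q)\;\le\;\inf_{Q}F(\mu,Q)=I(\theta;X_{n+1}\mid X^n),
\]
but $\calQ$ is a \emph{subset} of all estimators, so the subset relation gives $\inf_{Q\in\calQ}\ge\inf_Q$, the reverse of what you need. Your chain $\Risk_n\le\inf_{\calQ}\sup_\mu=\sup_\mu\inf_{\calQ}\le\sup_\mu I$ therefore breaks at the last step: you need $\inf_{Q\in\calQ}F(\mu,Q)\le I$, which is not free. In general the Bayes predictive density $P_{X_{n+1}|X^n}$ under $\mu$ need not lie in $\calQ$ (it may vanish on some $(x^n,x_{n+1})$ even after you discard null $x^n$), so equality of the two infima requires an additional approximation. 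The fix is precisely the $\epsilon$-mixing trick you were trying to avoid: for any $Q$ take $Q_\epsilon=(1-\epsilon)Q+\epsilon U\in\calQ$ and use convexity of KL to get $F(\mu,Q_\epsilon)\le(1-\epsilon)F(\mu,Q)+\epsilon\log|\calX|$, whence $\inf_{\calQ}F(\mu,\cdot)\le\inf_Q F(\mu,\cdot)+\epsilon\log|\calX|$ and letting $\epsilon\downarrow0$ recovers the needed inequality. This is exactly the step the paper carries out (just placed at a different point in the argument), so once you insert it your proof and the paper's coincide.
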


Note that for stationary Markov chains, \prettyref{eq:caprisk} follows from \prettyref{lmm:caprisk}	since one can take $\theta$ to be the joint distribution of $(X_1,\ldots,X_{n+1})$ itself which forms a compact subset of the probability simplex on $\calX^{n+1}$.

\begin{proof}
	It is clear that \prettyref{eq:risk-n} is equivalent to
	\[
	\Risk_{n} = \inf_{Q_{X_{n+1}|X^{n}}} \sup_{P_\theta\in\calM(\Theta)} D(P_{X_{n+1}|X^{n},\theta} \| Q_{X_{n+1}|X^{n}} | P_{X^{n},\theta}).
	\]
	By the variational representation \prettyref{eq:MI-var} of conditional mutual information, we have
	\begin{equation}
	I(\theta;X_{n+1}|X^n)=\inf_{Q_{X_{n+1}|X^{n}}} D(P_{X_{n+1}|X^{n},\theta} \| Q_{X_{n+1}|X^{n}} | P_{X^{n},\theta}).
	\label{eq:MI-var1}
	\end{equation}
	Thus \prettyref{eq:caprisk-lemma} amounts to justifying the interchange of infimum and supremum in \prettyref{eq:risk-n}. 
	It suffices to prove the upper bound.	
	
	Let $|\calX|=K$. For $\epsilon\in(0,1)$, define an auxiliary quantity:
		\begin{equation}
	\Risk_{n,\epsilon} \triangleq \inf_{Q_{X_{n+1}|X^{n}} \geq \frac{\epsilon}{K}}
 \sup_{P_\theta \in\calM(\Theta)} D(P_{X_{n+1}|X^{n},\theta} \| Q_{X_{n+1}|X^{n}} | P_{X^{n},\theta}),
	\label{eq:risk-npes}
	\end{equation}	
	where the constraint in the infimum is pointwise, namely, $Q_{X_{n+1}=x_{n+1}|X^{n}=x^n} \geq \frac{\epsilon}{K}$ for all $x_1,\ldots,x_{n+1} \in \calX$.
		By definition, we have $\Risk_n \leq \Risk_{n,\epsilon}$.	Furthermore, $\Risk_{n,\epsilon} $ can be 
	equivalently written as
		\begin{equation}
	\Risk_{n,\epsilon} = \inf_{Q_{X_{n+1}|X^{n}}} 
	\sup_{P_\theta\in\calM(\Theta)} D(P_{X_{n+1}|X^{n},\theta} \| (1-\epsilon) Q_{X_{n+1}|X^{n}} + \epsilon U | P_{X^{n},\theta}),
	\label{eq:risk-npes2}
	\end{equation}
	where $U$ denotes the uniform distribution on $\calX$. 
	
	We first show that the infimum and supremum in \prettyref{eq:risk-npes2} can be interchanged. This follows from the standard minimax theorem. Indeed, note that 	$D(P_{X_{n+1}|X^{n},\theta} \| (1-\epsilon) Q_{X_{n+1}|X^{n}} + \epsilon U | P_{X^{n},\theta})$ is convex in $Q_{X_{n+1}|X^{n}} $, affine in $P_\theta$, continuous in each argument, and takes values in $[0,\log \frac{K}{\epsilon}]$.	
	Since $\calM(\Theta)$ is convex and weakly compact (by Prokhorov's theorem) and the collection of conditional distributions $Q_{X_{n+1}|X^{n}}$  is convex, the minimax theorem (see, e.g., \cite[Theorem 2]{Fan53}) yields
			\begin{equation}
	\Risk_{n,\epsilon} =  
	\sup_{\pi\in\calM(\Theta)}\inf_{Q_{X_{n+1}|X^{n}}}D(P_{X_{n+1}|X^{n},\theta} \| (1-\epsilon) Q_{X_{n+1}|X^{n}} + \epsilon U | P_{X^{n},\theta}).
	\label{eq:risk-nepsilon-minimax}
	\end{equation}
	Finally, by the convexity of the KL divergence, for any $P$ on $\calX$, we have
	\[
	D(P\|(1-\epsilon) Q + \epsilon U) \leq (1-\epsilon) D(P\|Q)+\epsilon D(P\|U) \leq (1-\epsilon) D(P\|Q)+\epsilon \log K,
	\label{eq:KL-mix}
	\]
	which, in view of \prettyref{eq:MI-var1} and \prettyref{eq:risk-nepsilon-minimax}, implies
	\[
	\Risk_{n} \leq \Risk_{n,\epsilon} \leq \sup_{P_\theta \in\calM(\Theta)} I(\theta;X_{n+1}|X^n) + \epsilon \log K. 
	\]
	By the arbitrariness of $\epsilon$, \prettyref{eq:caprisk-lemma} follows.	
\end{proof}

	\section{Proof of \prettyref{lmm:moment.reversebound}}\label{app:moment.bounds}
	Recall that for any irreducible and reversible finite states transition matrix $M$ with stationary distribution $\pi$ the followings are satisfied:
	\begin{enumerate}
		\item $\pi_i>0$ for all $i$.
		\item $M(j|i)\pi_i=M(i|j)\pi_j$ for all $i,j$. 
	\end{enumerate}

	The following is a direct consequence of the Markov property.
	\begin{lemma}\label{lmm:product.markov.expectation}
		For any $1\leq t_1<\dots<t_m<\dots<t_k$ and any $Z_2=f\pth{X_{t_k},\dots,X_{t_{m}}},Z_1=g\pth{X_{t_{m-1}},\dots,X_{t_1}}$ we have
		\begin{align}
		&\EE\qth{Z_2\indc{X_{t_m}=j}Z_1|X_1=i}
		=\EE\qth{Z_2|X_{t_m}=j}
		\EE\qth{\indc{X_{t_m}=j}Z_1|X_1=i}
		\end{align}
	\end{lemma}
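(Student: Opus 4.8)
The plan is to deduce the identity directly from the Markov splitting property of the chain: conditionally on the value of $X_{t_m}$, the ``future'' coordinates $(X_s)_{s\ge t_m}$ and the ``past'' coordinates $(X_s)_{s<t_m}$ are independent. Concretely, I would first observe that since $t_1<\dots<t_{m-1}<t_m<\dots<t_k$, the variable $Z_2=f(X_{t_k},\dots,X_{t_m})$ is a deterministic function of the future coordinates, whereas both $Z_1=g(X_{t_{m-1}},\dots,X_{t_1})$ and the indicator $\indc{X_1=i}$ are deterministic functions of coordinates with index strictly below $t_m$ (using $1\le t_1<t_m$). Consequently, conditioning on $X_{t_m}$ yields the factorization $\EE[Z_2\, Z_1\indc{X_1=i}\mid X_{t_m}]=\EE[Z_2\mid X_{t_m}]\cdot \EE[Z_1\indc{X_1=i}\mid X_{t_m}]$.

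From there the remaining steps are routine: multiply both sides by $\indc{X_{t_m}=j}$, take a full expectation, and use that on the event $\{X_{t_m}=j\}$ the random variable $\EE[Z_2\mid X_{t_m}]$ equals the constant $\EE[Z_2\mid X_{t_m}=j]$, which can then be pulled out of the expectation. This produces $\EE[Z_2\indc{X_{t_m}=j}Z_1\indc{X_1=i}]=\EE[Z_2\mid X_{t_m}=j]\cdot \EE[\indc{X_{t_m}=j}Z_1\indc{X_1=i}]$, and dividing through by $\PP[X_1=i]>0$ (positive since the chain is stationary with $\pi_i>0$, or simply because we condition on $\{X_1=i\}$) gives the stated equality.

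As an equivalent and fully elementary route, one can instead expand everything as a finite sum over the marginal trajectory $(X_{t_1},\dots,X_{t_k})$, whose law given $X_1=i$ factors as a product of multi-step transition kernels by the Markov property; fixing $a_m=j$ splits this sum into a ``future'' part $\sum_{a_{m+1},\dots,a_k} f(a_k,\dots,a_{m+1},j)\prod_{\ell>m}P^{(t_\ell-t_{\ell-1})}(a_\ell\mid a_{\ell-1})=\EE[Z_2\mid X_{t_m}=j]$ and a ``past'' part equal to $\EE[\indc{X_{t_m}=j}Z_1\mid X_1=i]$. I do not anticipate a genuine obstacle: the only bookkeeping point is that the $(t_m-t_{m-1})$-step kernel linking $a_{m-1}$ to $a_m=j$ must be assigned to the past sum rather than the future sum, and degenerate cases (e.g.\ $t_1=1$, or $m=1$ with $Z_1$ constant) are handled by reading the relevant transition kernel as the identity. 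The lemma is in essence a verbatim restatement of the defining conditional-independence property of Markov chains.
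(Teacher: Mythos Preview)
Your proposal is correct and is precisely the argument the paper has in mind: the paper does not actually write out a proof, merely stating that the lemma ``is a direct consequence of the Markov property,'' and your conditional-independence factorization at $X_{t_m}$ (equivalently, the explicit summation over trajectories with the multi-step kernels splitting at $a_m=j$) is exactly that consequence spelled out.
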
 

	For $t\geq 0$, denote the $t$-step transition probability by $\prob{X_{t+1}=j|X_1=i}=M^t(j|i)$, which is the $ij$th entry of $M^t$.
	The following result is standard (see, e.g., \cite[Chap.~12]{LevinPeres17}). We include the proof mainly for the purpose of introducing the spectral decomposition.	
	\begin{lemma}\label{lmm:prob.bounds}
		Define $\lambda_*\eqdef 1-\gamma_*=\max\sth{\abs{\lambda_i}:i\neq 1}$. For any $t\geq 0$, 
		$
		\abs{\mymat^t(j|i)-\pi_j}
		\leq \lambda_*^{t}{\sqrt{\pi_j\over \pi_i}}.
		$
	\end{lemma}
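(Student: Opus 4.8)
The plan is to establish the spectral decomposition of $M$ as a self-adjoint operator on $L^2(\pi)$ and then bound the deviation $M^t(j|i) - \pi_j$ via Cauchy-Schwarz. First I would recall that, since $M$ is irreducible and reversible with respect to $\pi$, the operator $f \mapsto Mf$ is self-adjoint on the Hilbert space $\ell^2(\pi)$ with inner product $\langle f,g\rangle_\pi = \sum_i \pi_i f(i) g(i)$. Hence there is an orthonormal basis of eigenfunctions $f_1 \equiv \mathbf{1}, f_2, \ldots, f_k$ with real eigenvalues $1 = \lambda_1 > \lambda_2 \geq \cdots \geq \lambda_k \geq -1$ (strict for $\lambda_1$ by irreducibility, though this is not needed). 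Writing $\delta_j$ for the indicator of state $j$, one has the spectral expansion of the $t$-step transition kernel
\begin{equation}
\frac{M^t(j|i)}{\pi_j} = \sum_{\ell=1}^k \lambda_\ell^t f_\ell(i) f_\ell(j),
\end{equation}
which is the standard formula (e.g.\ \cite[Lemma 12.2]{LevinPeres17}); the $\ell=1$ term equals $1$ since $f_1 \equiv \mathbf{1}$.

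Next I would isolate the $\ell=1$ term and estimate the remainder. From the display above,
\begin{equation}
\frac{M^t(j|i)}{\pi_j} - 1 = \sum_{\ell=2}^k \lambda_\ell^t f_\ell(i) f_\ell(j),
\end{equation}
so that $\abs{M^t(j|i) - \pi_j} = \pi_j \abs{\sum_{\ell=2}^k \lambda_\ell^t f_\ell(i) f_\ell(j)} \le \pi_j \lambda_*^t \sum_{\ell=2}^k \abs{f_\ell(i)}\,\abs{f_\ell(j)}$, using $\abs{\lambda_\ell} \le \lambda_* = 1 - \gamma_*$ for $\ell \neq 1$. By Cauchy-Schwarz, $\sum_{\ell=2}^k \abs{f_\ell(i)}\abs{f_\ell(j)} \le \big(\sum_{\ell=2}^k f_\ell(i)^2\big)^{1/2}\big(\sum_{\ell=2}^k f_\ell(j)^2\big)^{1/2}$. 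Finally, the orthonormality relation applied to $\delta_i/\pi_i$ gives $\sum_{\ell=1}^k f_\ell(i)^2 = \langle \delta_i/\pi_i, \delta_i/\pi_i\rangle_\pi = 1/\pi_i$, hence $\sum_{\ell=2}^k f_\ell(i)^2 \le 1/\pi_i$ and likewise for $j$. Combining, $\abs{M^t(j|i) - \pi_j} \le \pi_j \lambda_*^t \cdot \frac{1}{\sqrt{\pi_i \pi_j}} = \lambda_*^t \sqrt{\pi_j/\pi_i}$, as claimed.

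I do not anticipate a serious obstacle here; the one point requiring mild care is verifying the Parseval-type identity $\sum_\ell f_\ell(i)^2 = 1/\pi_i$, which follows because $\{f_\ell\}$ is an orthonormal basis and the coordinate functional $f \mapsto f(i)$ is represented (via the Riesz isomorphism on $\ell^2(\pi)$) by the vector $\delta_i/\pi_i$ whose squared norm is $\sum_i \pi_i (\delta_i(i')/\pi_i)^2 = 1/\pi_i$. Everything else is routine manipulation of the spectral expansion, so the proof is short. If one prefers to avoid invoking the spectral expansion as a black box, an alternative is to prove it inline: diagonalize the symmetric matrix $A$ with entries $A_{ij} = \sqrt{\pi_i}\,M(j|i)/\sqrt{\pi_j}$ (which is symmetric precisely by reversibility), note $A$ has the same eigenvalues as $M$, write $A = \sum_\ell \lambda_\ell v_\ell v_\ell^\top$ for orthonormal eigenvectors $v_\ell$, and translate back via $f_\ell(i) = v_\ell(i)/\sqrt{\pi_i}$; the bound then drops out of $\abs{A^t_{ij} - \sqrt{\pi_i\pi_j}} \le \lambda_*^t \sum_{\ell\ge 2}\abs{v_\ell(i)}\abs{v_\ell(j)} \le \lambda_*^t$ after dividing by $\sqrt{\pi_i}$ and multiplying by $\sqrt{\pi_j}$.
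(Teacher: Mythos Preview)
Your proposal is correct and is essentially the same argument as the paper's: the paper diagonalizes the symmetrized matrix $D_\pi^{1/2} M D_\pi^{-1/2} = \sum_a \lambda_a u_a u_a^\top$ (exactly your ``alternative'' with $v_\ell = u_\ell$ and $f_\ell(i) = u_{\ell i}/\sqrt{\pi_i}$), then uses $\sum_{a\ge 2} u_{ai}^2 \le 1$ together with Cauchy--Schwarz to get the bound. Your Parseval identity $\sum_\ell f_\ell(i)^2 = 1/\pi_i$ is just the translation of $\sum_a u_{ai}^2 = 1$ into the $\ell^2(\pi)$ picture, so the two proofs are the same up to notation.
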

	\begin{proof}
	Throughout the proof all vectors are column vectors except for $\pi$.
		Let $D_{\pi}$ denote the diagonal matrix with entries $D_\pi(i,i)=\pi_i$. By reversibility, 
		$D_\pi^{\frac{1}{2}}MD_\pi^{-\frac{1}{2}}$, which shares the same spectrum with $M$, is a symmetric matrix and admits the spectral decomposition 
		$D_\pi^{\frac{1}{2}}MD_\pi^{-\frac{1}{2}} = \sum_{a=1}^k\lambda_a u_au_a^\top$ for some orthonormal basis $\{u_1,\ldots,u_k\}$; in particular, $\lambda_1=1$ and $u_{1i}=\sqrt{\pi_i}$. Then for each $t\geq 1$,
		\begin{align}\label{eq:spectral.decomposition}
			\mymat^t=\sum_{a=1}^k\lambda_a^tD_\pi^{-\frac 12}u_au_a^\top D_\pi^{\frac 12} = \ones \pi + \sum_{a=2}^k\lambda_a^tD_\pi^{-\frac 12}u_au_a^\top D_\pi^{\frac 12}.
			\quad 
		\end{align}
		where $\ones$ is the all-ones vector.
		As $u_a$'s satisfy $\sum_{a=1}^k u_au_a^\top=I$ we get $\sum_{a=2}^k u_{ab}^2=1-u_{a1}^2\leq 1$ for any $b=1,\dots,k$. Using this along with Cauchy-Schwarz inequality we get
		\begin{align*}
		\abs{\mymat^t(j|i)-\pi_j}
		\leq \sqrt{\pi_j\over \pi_i}\sum_{a=2}^k\abs{\lambda_a}^t |u_{ai}u_{aj}|
		\leq \lambda_*^t\sqrt{\pi_j\over \pi_i} 
		\pth{\sum_{a=2}^k u_{ai}^2}^{\frac 12}
			\pth{\sum_{a=2}^k u_{aj}^2}^{\frac 12}
		\leq \lambda_*^t\sqrt{\pi_j\over \pi_i}
		\end{align*} 
		as required.
	\end{proof}
	
	\begin{lemma}\label{lmm:h.function.bounds}
	Fix states $i,j$. For any integers $a\geq b\geq 1$, define $$h_{s}(a,b)=
		\abs{\EE\qth{\indc{X_{a+1}=i}
			\pth{\indc{X_a=j}-\mymat(j|i)}^s|X_b=i}},\quad s=1,2,3,4.$$
		Then
		\begin{enumerate}[label=(\roman*)]
			\item 
			${h_1(a,b)}
			\leq 2{\sqrt{{\mymat(j|i)}}}{\lambda^{a-b}_*}$
			\item $\abs{h_2(a,b)-\pi_i\mymat(j|i)(1-\mymat(j|i))}\leq 4{\sqrt{{\mymat(j|i)}}}{\lambda^{a-b}_*}.$
			\item 
			${h_3(a,b)},{h_4(a,b)}\leq \pi_i\mymat(j|i)(1-\mymat(j|i))+4{\sqrt{{\mymat(j|i)}}}{\lambda^{a-b}_*}.$
		\end{enumerate}
	\end{lemma}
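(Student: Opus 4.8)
The plan is to obtain a closed form for the (signed version of the) conditional expectation appearing in $h_s$ and then estimate it with the spectral bound of \prettyref{lmm:prob.bounds}, using reversibility to tame the prefactors. By time-homogeneity of the chain, $h_s(a,b)$ depends only on $t\triangleq a-b$, so it suffices to bound $h_s(t+1,1)$. Write $p\triangleq M(j|i)$ and $q\triangleq M(i|j)$. Conditioning on $X_{t+1}$ (via \prettyref{lmm:product.markov.expectation}) and using $\Expect[\indc{X_{t+2}=i}\mid X_{t+1}]=M(i\mid X_{t+1})$, one obtains
\begin{align*}
\Expect\qth{\indc{X_{t+2}=i}\pth{\indc{X_{t+1}=j}-p}^s \mid X_1=i}
= q(1-p)^s M^t(j|i) + (-1)^s p^s\pth{M^{t+1}(i|i)-qM^t(j|i)},
\end{align*}
where the two summands are the contributions of $\{X_{t+1}=j\}$ and $\{X_{t+1}\neq j\}$, and $\sum_\ell M(i|\ell)M^t(\ell|i)=M^{t+1}(i|i)$ was used for the second.

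Next I would invoke reversibility. Since $M^t$ is reversible for $\pi$, $qM^t(j|i)=M(i|j)M^t(j|i)=p\,M^t(i|j)$, and \prettyref{lmm:prob.bounds} gives $|M^t(i|j)-\pi_i|\le \lambda_*^t\sqrt{\pi_i/\pi_j}$ and $|M^{t+1}(i|i)-\pi_i|\le \lambda_*^{t+1}\le\lambda_*^t$. The key point is that the seemingly dangerous factor $p\sqrt{\pi_i/\pi_j}$ is harmless: detailed balance $p\pi_i=q\pi_j$ gives $p\sqrt{\pi_i/\pi_j}=\sqrt{pq}\le\sqrt p$. Hence $qM^t(j|i)=p\pi_i+\eta_1$ and $M^{t+1}(i|i)=\pi_i+\eta_2$ with $|\eta_1|\le\sqrt p\,\lambda_*^t$ and $|\eta_2|\le\lambda_*^t$. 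Substituting into the display and collecting terms, the main term equals $\pi_i p\,P_s(p)$ with
\[
P_s(p)=(1-p)^s-(-1)^s p^s+(-1)^s p^{s-1}=(1-p)\qth{(1-p)^{s-1}+(-1)^s p^{s-1}},
\]
and the remainder is $\eta_1\qth{(1-p)^s-(-1)^s p^s}+(-1)^s p^s\eta_2$, whose modulus is at most $|\eta_1|+p^s|\eta_2|\le 2\sqrt p\,\lambda_*^t$ since $(1-p)^s+p^s\le 1$ and $p^s\le\sqrt p$ on $[0,1]$.

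It then remains to read off the polynomial factors: $P_1\equiv 0$, yielding (i) (with constant $2$); $P_2(p)=1-p$, so the main term is exactly $\pi_i p(1-p)$, yielding (ii); and for $s\in\{3,4\}$, $|P_s(p)|\le(1-p)\qth{(1-p)^{s-1}+p^{s-1}}\le 1-p$, so the main term is at most $\pi_i p(1-p)$ in modulus, yielding (iii). All of this is elementary once the closed form is in place; the only genuine idea is the reversibility manipulation that rewrites $M^t(j|i)M(i|j)$ as $p\,M^t(i|j)$ and then absorbs the $\sqrt{\pi_i/\pi_j}$ from \prettyref{lmm:prob.bounds} into $\sqrt p$ via $pq\le p$. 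I expect the minor bookkeeping — verifying the identity for $P_s$ and the inequalities $(1-p)^s\pm p^s\le 1$ and $|P_s(p)|\le 1-p$ on $[0,1]$ — to be the only place needing care, and the stated constant $4$ leaves comfortable slack (the argument in fact yields $2$).
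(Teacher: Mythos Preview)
Your argument is correct and rests on the same two ingredients as the paper's proof: the spectral estimate of \prettyref{lmm:prob.bounds} and detailed balance to absorb the $\sqrt{\pi_i/\pi_j}$ prefactor into $\sqrt{M(j|i)}$. The packaging is a bit different. The paper expands $h_1$ and $h_2$ separately term by term (applying \prettyref{lmm:prob.bounds} to $M^{a-b}(j|i)$ and using $M(i|j)\sqrt{\pi_j/\pi_i}=\sqrt{M(j|i)M(i|j)}$), and then dispatches (iii) in one line via the pointwise inequality $h_3,h_4\le h_2$. You instead derive a single closed form valid for all $s$, rewrite $qM^t(j|i)=pM^t(i|j)$ using reversibility of both $M$ and $M^t$, and read off the polynomial $P_s$; this is a tidier and more uniform route, and as you note it even recovers the constants with slack. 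One minor point: in part~(ii) the quantity $h_2$ is already nonnegative (it is the expectation of a nonnegative random variable, so the outer absolute value is vacuous), which is why your bound on $|E_2-\pi_i p(1-p)|$ transfers directly to $|h_2-\pi_i p(1-p)|$.
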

	\begin{proof}
		We apply \prettyref{lmm:prob.bounds} and time reversibility:
		\begin{enumerate}[label=(\roman*)]
			\item 
			\begin{align*}
			h_1(a,b)
			&=\abs{\PP\qth{X_{a+1}=i,X_a=j|X_b=i}-\mymat(j|i)\PP\qth{X_{a+1}=i|X_b=i}}
			\nonumber\\
			&=\abs{\mymat(i|j){\mymat^{a-b}}(j|i)-\mymat(j|i)\mymat^{a-b+1}(i|i)}
			\nonumber\\
			&\leq \mymat(i|j)\abs{\mymat^{a-b}(j|i)-\pi_j}+\mymat(j|i)\abs{\mymat^{a-b+1}(i|i)-\pi_i}
			\nonumber\\
			&\leq\lambda_*^{a-b}\mymat(i|j)\sqrt{\pi_j\over \pi_i}+\mymat(j|i)\lambda^{a-b+1}_*
			\nonumber\\
			&=\lambda^{a-b}_*\sqrt{\mymat(j|i)\mymat(i|j)}
			+{\mymat(j|i)}\lambda_*^{a-b+1}
			\leq 2\sqrt{\mymat(j|i)}{\lambda^{a-b}_*}.
			\end{align*}

			\item 
			\begin{align*}
			&|h_2(a,b)-\pi_i\mymat(j|i)(1-\mymat(j|i))|
			\nonumber\\
			=&\Big|\EE\qth{\indc{X_{a+1}=i,X_a=j}|X_b=i} -\pi_i\mymat(j|i)+\pth{\mymat(j|i)}^2 (\EE\qth{\indc{X_{a+1}=i}|X_b=i}-\pi_i) \\
			&-2M(j|i) (\EE\qth{\indc{X_{a+1}=i,X_a=j}|X_b=i}-\pi_iM(j|i))\Big| \\
			\leq &\abs{\Prob\qth{X_{a+1}=i,X_a=j|X_b=i}-\pi_j\mymat(i|j)}+({\mymat(j|i)})^2\abs{\PP\qth{X_{a+1}=i|X_b=i}-\pi_i}
			\nonumber\\
			&\quad+2{\mymat(j|i)}\abs{\PP\qth{X_{a+1}=i,X_a=j|X_b=i}-\pi_j\mymat(i|j)}
			\nonumber\\
			=&\mymat(i|j)\abs{\mymat^{a-b}(j|i)-\pi_j}+({\mymat(j|i)})^2\abs{M^{a-b+1}(i|i)-\pi_i}
			+2\mymat(j|i)\mymat(i|j)\abs{\mymat^{a-b}(j|i)-\pi_j}
			\nonumber\\
			\leq & \mymat(i|j)\sqrt{\pi_j\over \pi_i}\lambda_*^{a-b}
			+({\mymat(j|i)})^2\lambda^{a-b+1}_*
			+2{\mymat(j|i)}\mymat(i|j)\sqrt{\pi_j\over \pi_i}\lambda^{a-b}_*
			\nonumber\\
			\leq &\lambda_*^{a-b}\pth{\sqrt{\mymat(i|j)}\sqrt{M(i|j)\pi_j\over \pi_i}
			+({\mymat(j|i)})^2
			+2{\mymat(j|i)}\sqrt{\mymat(i|j)}\sqrt{M(i|j)\pi_j\over \pi_i}}
			\nonumber\\
			\leq &
			4\sqrt{\mymat(j|i)}\lambda^{a-b}_*.
			\end{align*}
			\item 
			$h_3(a,b),h_4(a,b)\leq h_2(a,b)$.			\qedhere
		\end{enumerate}
	\end{proof}

	\begin{proof}[Proof of \prettyref{lmm:moment.reversebound}(i)]
		\label{sec:secondmoment.reversebound}

		For ease of notation we use $c_0$ to denote an absolute constant whose value may vary at each occurrence. 
		Fix $i,j \in[k]$. 
		Note that the empirical count defined in \prettyref{eq:transition.count} can be written as 
		$N_i = \sum_{a=1}^{n-1}\indc{X_{n-a}=i}$ and $N_{ij}=\sum_{a=1}^{n-1}\indc{X_{n-a}=i,X_{n-a+1}=j}$.
		Then
		\begin{align*}
		&\EE\qth{\pth{M(j|i)N_i-N_{ij}}^2|X_n=i}
		\nonumber\\
		=&\EE\qth{\left.\pth{\sum_{a=1}^{n-1}\indc{X_{n-a}=i}
				\pth{\indc{X_{n-a+1}=j}-{\mymat(j|i)}}}^2\right|X_n=i}
		\nonumber\\
		\stepa{=}&\EE\qth{\left.\pth{\sum_{a=1}^{n-1}\indc{X_{a+1}=i}
				\pth{\indc{X_a=j}-{\mymat(j|i)}}}^2\right|X_1=i}\\
		\stepb{=}& \abs{\sum_{a,b}\EE\qth{\eta_a\eta_b|X_1=i}}
		\le 2\sum_{a\geq b}\abs{\EE\qth{\eta_a\eta_b|X_1=i}},
		\end{align*}
		where 
		(a) is due to time reversibility;
		in (b) we defined $\eta_a \triangleq \indc{X_{a+1}=i}\pth{\indc{X_a=j}-{\mymat(j|i)}}$. 
		We divide the summands into different cases and apply \prettyref{lmm:h.function.bounds}.

		\paragraph{Case I: Two distinct indices.}
		For any $a>b$, using \prettyref{lmm:product.markov.expectation} we get
		\begin{align}
			\abs{\EE\qth{\eta_a\eta_b|X_1=i}}
			=\abs{\EE\qth{\eta_a|X_{b+1}=i}}\abs{\EE\qth{\eta_b|X_1=1}}
			=h_1(a,b+1)h_1(b,1)
		\end{align}
		which implies
		\begin{align*}
		&\mathop{\sum\sum}_{n-1\geq a>b\geq 1} \abs{\EE\qth{\eta_a\eta_b|X_1=i}}
		=\mathop{\sum\sum}_{n-1\geq a>b\geq 1} h_1(a,b+1)h_1(b,1)
		\lesssim {\mymat(j|i)}
		\mathop{\sum\sum}_{n-1\geq a>b\geq 1}\lambda^{a-2}_*
		\lesssim {M(j|i)\over {\gamma^2_*}}.
		\end{align*}
		Here the last inequality (and similar sums in later deductions) can be explained as follows. Note that for $\gamma_*\geq\frac 12$ (i.e. $\lambda_*\leq \frac 12$), the sum is clearly bounded by an absolute constant; for $\gamma_*<\frac 12$ (i.e. $\lambda_*> \frac 12$), we compare the sum with the mean (or higher moments in other calculations) of a geometric random variable.

		\paragraph{Case II: Single index.}
		
		\begin{align}
		&\sum_{a=1}^{n-1}\EE\qth{\eta_a^2|X_1=i}
		=\sum_{a=1}^{n-1}h_2(a,1)
		\lesssim n\pi_i{\mymat(j|i)}(1-{\mymat(j|i)})
		+{\sqrt{{\mymat(j|i)}}\over \gamma_*}.
		\end{align}

		Combining the above we get 
		\begin{align*}
		\EE\qth{\pth{N_{ij}-{\mymat(j|i)}N_i}^2|X_n=i}
		\lesssim n\pi_i{\mymat(j|i)}(1-{\mymat(j|i)})
		+{\sqrt{{\mymat(j|i)}}\over \gamma_*}+{M(j|i)\over \gamma_*^2}
		\end{align*}
		as required.
	\end{proof}

	\begin{proof}[Proof of \prettyref{lmm:moment.reversebound}(ii)] 
		
		We first note that due to reversibility we can write (similar as in proof of \prettyref{lmm:moment.reversebound}(i)) with $\eta_a=\indc{X_{a+1}=i}\pth{\indc{X_a=j}-{\mymat(j|i)}}$
		\begin{align}
		&\EE\qth{\pth{M(j|i)N_i-N_{ij}}^4|X_n=i}
		\nonumber\\
		&=\EE\qth{\left.\pth{\sum_{a=1}^{n-1}\indc{X_{a+1}=i}
				\pth{\indc{X_a=j}-{\mymat(j|i)}}}^4\right|X_1=i}
		\nonumber\\
		&=\abs{\sum_{a,b,d,e}\EE\qth{\eta_a\eta_b\eta_d\eta_e|X_1=i}}
		\leq
		 \sum_{a,b,d,e}\abs{\EE\qth{\eta_a\eta_b\eta_d\eta_e|X_1=i}}
		\lesssim \sum_{a\geq b\geq d\geq e}\abs{\EE\qth{\eta_a\eta_b\eta_d\eta_e|X_1=i}}.
		\end{align}
		We bound the sum over different combinations of $a\geq b\geq d\geq e$ to come up with a bound on the required fourth moment. We first divide the $\eta$'s into groups depending on how many distinct indices of $\eta$ there are. We use the following identities which follow from \prettyref{lmm:product.markov.expectation}: for indices $a>b>d>e$
		\begin{itemize}
			\item $\abs{\EE\qth{\eta_a\eta_b\eta_d\eta_e|X_1=i}}
			=h_1(a,b+1)h_1(b,d+1)h_1(d,e+1)h_1(e,1)$
			\item For $s_1,s_2,s_3\in\sth{1,2}$, $\abs{\EE\qth{\eta_a^{s_1}\eta_b^{s_2}\eta_d^{s_3}|X_1=i}}
			=h_{s_1}(a,b+1)h_{s_2}(b,d+1)h_{s_3}(d,1)$
			\item For $s_1,s_2\in\sth{1,2,3}$, $\abs{\EE\qth{\eta_a^{s_1}\eta_b^{s_2}|X_1=i}}
			=h_{s_1}(a,b+1)h_{s_2}(b,1)$
			\item $\EE\qth{\eta_a^4|X_1=1}=h_4(a,1)$
		\end{itemize}
		and then use \prettyref{lmm:h.function.bounds} to bound the $h$ functions.
		\paragraph{Case I: Four distinct indices.}
		Using \prettyref{lmm:h.function.bounds} we have
		\begin{align*} 
			\mathop{\sum\sum\sum\sum}_{n-1\geq a>b>d>e\geq 1} \abs{\EE\qth{\eta_a\eta_b\eta_d\eta_e|X_1=i}}
			&=\mathop{\sum\sum\sum\sum}_{n-1\geq a>b>d>e\geq 1}
			h_1(a,b+1)h_1(b,d+1)h_1(d,e+1)h_1(e,1)
			\nonumber\\
			&{\leq} M(j|i)^2
			\mathop{\sum\sum\sum\sum}_{n-1\geq a>b>d>e\geq 1}\lambda^{a-4}_*
			\lesssim{M(j|i)^2\over \gamma_*^4}.
			\end{align*}

			\paragraph{Case II: Three distinct  indices.} 
			There are three cases, namely $\eta_a^2\eta_b\eta_d,\eta_a\eta_b^2\eta_d$ and $\eta_a\eta_b\eta_d^2$.
			\begin{enumerate}
			\item Bounding $\mathop{\sum\sum\sum}_{n-1\geq a>b>d\geq 1}
			\abs{\EE\qth{\eta_a^2\eta_b\eta_d|X_1=i}}$:
				\begin{align*}
			\mathop{\sum\sum\sum}_{n-1\geq a>b>d\geq 1}
			\abs{\EE\qth{\eta_a^2\eta_b\eta_d|X_1=i}}
			&=\mathop{\sum\sum\sum}_{n-1\geq a>b>d\geq 1}
			h_2(a,b+1)h_1(b,d+1)h_1(d,1)
			\nonumber\\
			&\lesssim \mathop{\sum\sum\sum}_{n-1\geq a>b>d\geq 1}
			\pth{\pi_i{\mymat(j|i)}(1-{\mymat(j|i)})
				+{\sqrt{{\mymat(j|i)}}}{\lambda^{a-b-1}_*}}
			{\mymat(j|i)}{\lambda^{b-2}_*}
			\nonumber\\
			&\lesssim {{ {{\mymat(j|i)}\over \gamma_*^2}n\pi_i{\mymat(j|i)}(1-{\mymat(j|i)})}
				+{{\mymat(j|i)}^{\frac 32}\over \gamma_*^3}}
			\nonumber\\
			&\lesssim {\pth{n\pi_i{\mymat(j|i)}(1-{\mymat(j|i)})}^2
				+{{\mymat(j|i)}^{\frac 32}\over \gamma_*^3}+{{\mymat(j|i)}^2\over \gamma_*^4}}
			\end{align*}
			where the last inequality followed by using $xy\leq x^2+y^2$.

		\item Bounding $\mathop{\sum\sum\sum}_{n-2\geq a>b>d\geq 1}
		\abs{\EE\qth{\eta_a\eta_b^2\eta_d|X_1=i}}$: 
			
			\begin{align*}
			&\mathop{\sum\sum\sum}_{n-2\geq a>b>d\geq 1}
			\abs{\EE\qth{\eta_a\eta_b^2\eta_d|X_1=i}}
			\nonumber\\
			&=\mathop{\sum\sum\sum}_{n-2\geq a>b>d\geq 1}
			h_1(a,b+1)h_2(b,d+1)h_1(d,1)
			\nonumber\\
			&\lesssim \mathop{\sum\sum\sum}_{n-2\geq a>b>d\geq 1}
			\pth{\pi_i{\mymat(j|i)}(1-{\mymat(j|i)})
				+{\sqrt{{\mymat(j|i)}}}{\lambda^{b-d-1}_*}}
			M(j|i){\lambda^{a-b+d-2}_*}
			\nonumber\\
			&\lesssim {M(j|i)\over \gamma_*^2}n\pi_iM(j|i)(1-M(j|i))
				+{M(j|i)^{\frac 32}\over \gamma_*^3}
			\nonumber\\
			&\lesssim {n\pi_i{\mymat(j|i)}(1-{\mymat(j|i)})}^2
				+{M(j|i)^{\frac 32}\over \gamma_*^3}+{M(j|i)^2\over \gamma_*^4}.
			\end{align*}

			\item Bounding $\mathop{\sum\sum\sum}_{n-2\geq a>b>d\geq 1}
			\abs{\EE\qth{\eta_a\eta_b\eta_d^2|X_1=i}}$:
			\begin{align*}
			&\mathop{\sum\sum\sum}_{n-2\geq a>b>d\geq 1}
			\abs{\EE\qth{\eta_a\eta_b\eta_d^2|X_1=i}}
			\nonumber\\
			&= \mathop{\sum\sum\sum}_{n-2\geq a>b>d\geq 1}
			h_1(a,b+1)h_1(b,d+1)h_2(d,1)
			\nonumber\\
			&\lesssim \mathop{\sum\sum\sum}_{n-2\geq a>b>d\geq 1}
			\pth{\pi_i{\mymat(j|i)}(1-{\mymat(j|i)})+\sqrt{M(j|i)}{\lambda^{d-1}_*}}
			M(j|i)\lambda^{a-d-2}_*
			\nonumber\\
			&\lesssim{{{{\mymat(j|i)}\over \gamma_*^2}n\pi_i{\mymat(j|i)}(1-{\mymat(j|i)})}
				+{{\mymat(j|i)}^{\frac 32}\over \gamma_*^3}}\\
			&\lesssim
			\pth{n\pi_i{\mymat(j|i)}(1-{\mymat(j|i)})}^2
				+{{\mymat(j|i)}^{\frac 32}\over \gamma_*^3}
				+{{\mymat(j|i)}^2\over \gamma_*^4}.
			\end{align*}
			\end{enumerate}

			\paragraph{Case III: Two distinct indices.}
			There are three different cases, namely $\eta_a^2\eta_b^2,\eta_a^3\eta_b$ and $\eta_a\eta_b^3$.
			
			\begin{enumerate}
				
			\item Bounding $\mathop{\sum\sum}_{n-2\geq a>b\geq 1}
			\abs{\EE\qth{\eta_a^2\eta_b^2|X_1=i}}$:
			 \begin{align*}
			&\mathop{\sum\sum}_{n-2\geq a>b\geq 1}
			\EE\qth{\eta_a^2\eta_b^2|X_1=i}
			\nonumber\\
			&=\mathop{\sum\sum}_{n-2\geq a>b\geq 1}
			h_2(a,b+1)h_2(b,1)
			\nonumber\\
			& \lesssim \mathop{\sum\sum}_{n-2\geq a>b\geq 1}
			{\pth{\pi_i{\mymat(j|i)}(1-{\mymat(j|i)})+{\sqrt{{\mymat(j|i)}}}\lambda^{a-b-1}_*}
			\pth{\pi_i{\mymat(j|i)}(1-{\mymat(j|i)})+{\sqrt{{\mymat(j|i)}}}
					\lambda^{b-1}_*}}
			\nonumber\\
			&\lesssim \mathop{\sum\sum}_{n-2\geq a>b\geq 1}
			\Big\{\pi_i{\mymat(j|i)}(1-{\mymat(j|i)})
				{\sqrt{{\mymat(j|i)}}}
				({\lambda^{a-b-1}_*}+{\lambda^{b-1}_*})\\
				&	\quad	+\pth{\pi_i{\mymat(j|i)}(1-{\mymat(j|i)})}^2
				+{{\mymat(j|i)}}\lambda_*^{a-2}
				\Big\}
			\nonumber\\
			&\lesssim {\pth{n\pi_i{\mymat(j|i)}(1-{\mymat(j|i)})}^2
				+{{{\sqrt{{\mymat(j|i)}}\over \gamma_*}}n\pi_i{\mymat(j|i)}(1-{\mymat(j|i)})}
				+{{{\mymat(j|i)}\over \gamma_*^2}}}
			\nonumber\\
			&\lesssim \pth{n\pi_i{\mymat(j|i)}(1-{\mymat(j|i)})}^2+{{{\mymat(j|i)}\over \gamma_*^2}}.
			\end{align*}

			\item Bounding $\mathop{\sum\sum}_{n-2\geq a>b\geq 1}
			\abs{\EE\qth{\eta_a^3\eta_b|X_1=i}}$:
			
			\begin{align*}
			&\mathop{\sum\sum}_{n-2\geq a>b\geq 1}
			\abs{\EE\qth{\eta_a^3\eta_b|X_1=i}}
			\nonumber\\
			&= \mathop{\sum\sum}_{n-2\geq a>b\geq 1}
			h_3(a,b+1)h_1(b,1)
			\nonumber\\
			&\lesssim \mathop{\sum\sum}_{n-2\geq a>b\geq 1}
			\pth{\pi_i{\mymat(j|i)}(1-{\mymat(j|i)})+{\sqrt{{\mymat(j|i)}}}{\lambda^{a-b-1}_*}}
			{\sqrt{{\mymat(j|i)}}}{\lambda^{b-1}_*}
			\nonumber\\
			&\lesssim {\sqrt{{\mymat(j|i)}}\over \gamma_*}n\pi_i{\mymat(j|i)}(1-{\mymat(j|i)})
				+{{\mymat(j|i)}\over \gamma_*^2}
			\lesssim \pth{n\pi_i{\mymat(j|i)}(1-{\mymat(j|i)})}^2+{{{\mymat(j|i)}\over \gamma_*^2}}.
			\end{align*}

			\item Bounding $\mathop{\sum\sum}_{n-2\geq a>b\geq 1}
			\abs{\EE\qth{\eta_a\eta_b^3|X_1=i}}$:
			
			\begin{align*}
			&\mathop{\sum\sum}_{n-2\geq a>b\geq 1}
			\abs{\EE\qth{\eta_a\eta_b^3|X_1=i}}
			\nonumber\\
			&= \mathop{\sum\sum}_{n-2\geq a>b\geq 1}
			h_1(a,b+1)h_3(b,1)
			\nonumber\\
			&\lesssim \mathop{\sum\sum}_{n-2\geq a>b\geq 1}
			\pth{\pi_i{\mymat(j|i)}(1-{\mymat(j|i)})
				+{\sqrt{{\mymat(j|i)}}}\lambda^{b-1}_*}
			{\sqrt{{\mymat(j|i)}}}{\lambda^{a-b-1}_*}
			\nonumber\\
			&\lesssim{{\sqrt{{\mymat(j|i)}}\over \gamma_*}n\pi_i{\mymat(j|i)}(1-{\mymat(j|i)})
				+{ {{\mymat(j|i)}\over \gamma_*^2}}}
			\lesssim \pth{n\pi_i{\mymat(j|i)}(1-{\mymat(j|i)})}^2
				+{ {{\mymat(j|i)}\over \gamma_*^2}}.
			\end{align*}
			\end{enumerate}

			\paragraph{Case IV: Single index.}
			
			Bound on $\sum_{a=1}^{n-1}\EE\qth{\eta_a^4|X_1=i}$:
			\begin{align*}
			&\sum_{a=1}^{n-1}\EE\qth{\eta_a^4|X_1=i}
			=\sum_{a=1}^{n-1}h_4(a,1)
			{\leq} {n\pi_i{\mymat(j|i)}(1-{\mymat(j|i)})}
			+{{\sqrt{{\mymat(j|i)}}\over \gamma_*}}.
			\end{align*}
			
			Combining all cases we get 
			\begin{align*}
			\EE\qth{\pth{{\mymat(j|i)}N_i-N_{ij}}^4|X_n=i}
			&\lesssim \pth{n\pi_i{\mymat(j|i)}(1-{\mymat(j|i)})}^2
				+{\sqrt{{\mymat(j|i)}}\over \gamma_*}
				+{M(j|i)\over \gamma_*^2}
				+{M(j|i)^{\frac 32}\over \gamma_*^3}
				+{M(j|i)^2\over \gamma_*^4}
			\nonumber\\
			&\lesssim \pth{n\pi_i{\mymat(j|i)}(1-{\mymat(j|i)})}^2
			+{\sqrt{{\mymat(j|i)}}\over \gamma_*}
			+{M(j|i)^2\over \gamma_*^4}
			\end{align*}
			as required.
		\end{proof}
	
	\begin{proof}[Proof of \prettyref{lmm:moment.reversebound}(iii)]
	Throughout our proof we repeatedly use the spectral decomposition \eqref{eq:spectral.decomposition} applied to the diagonal elements:
	$$M^t(i|i)=\pi_i+\sum_{v\geq 2}\lambda_v^tu_{vi}^2,
	\quad \sum_{v\geq 2}u_{vi}^2\leq 1.$$
	Write $N_i-(n-1)\pi_i=\sum_{a=1}^{n-1}\xi_a$ where $\xi_a=\indc{X_a=i}-\pi_i$. 
	For $a\geq b\geq d\geq e$,
	\begin{align}
	&\EE\qth{\xi_a\xi_b\xi_d\xi_e|X_1=i}
	\nonumber \\
	&=\EE\qth{\xi_a\xi_b\pth{\indc{X_d=i,X_e=i}
			-\pi_i\indc{X_d=i}-\pi_i\indc{X_e=i}+\pi_i^2}|X_1=i}
	\nonumber \\
	&=\EE\qth{\xi_a\xi_b\indc{X_d=i,X_e=i}|X_1=i}
	-\pi_i\EE\qth{\xi_a\xi_b\indc{X_d=i}|X_1=i}
	\nonumber \\
	&\quad -\pi_i\EE\qth{\xi_a\xi_b\indc{X_e=i}|X_1=i}
	+\pi_i^2\EE\qth{\xi_a\xi_b|X_1=i}
	\nonumber \\
	&=\EE\qth{\xi_a\xi_b|X_d=i}
	\PP\qth{X_d=i|X_e=i}\PP[X_e=i|X_1=i]
	-\pi_i\EE\qth{\xi_a\xi_b|X_d=i}\PP[X_d=i|X_1=i]
	\nonumber \\
	&\quad -\pi_i \EE\qth{\xi_a\xi_b|X_e=i}\PP[X_e=i|X_1=i]
	+\pi_i^2\EE\qth{\xi_a\xi_b|X_1=i}
	\nonumber \\
	&=\EE\qth{\xi_a\xi_b|X_d=i}
	\sth{\mymat^{d-e}(i|i)\mymat^{e-1}(i|i)
		-\pi_i\mymat^{d-1}(i|i)}
	\nonumber\\
	&\quad -\sth{\pi_i \EE\qth{\xi_a\xi_b|X_e=i}\mymat^{e-1}(i|i)
		-\pi_i^2\EE\qth{\xi_a\xi_b|X_1=i}}
	\label{eq:gub30}
	\end{align}
	Using the Markov property for any $d\leq b\leq a$, we get
	\begin{align}
	&\abs{\EE[\xi_a\xi_b|X_d=i]-\pi_i\sum_{v\geq 2}u_{vi}^2\lambda_v^{a-b}}
	\nonumber\\
	&=\abs{\EE\qth{\indc{X_a=i,X_b=i}-\pi_i\indc{X_a=i}
		-\pi_i\indc{X_b=i}+\pi_i^2|X_d=i}-\pi_i\sum_{v\geq 2}u_{vi}^2\lambda_v^{a-b}}
	\nonumber\\
	&=\abs{\mymat^{a-b}(i|i)\mymat^{b-d}(i|i)-\pi_i\mymat^{a-d}(i|i)
	-\pi_i\mymat^{b-d}(i|i)+\pi_i^2-\pi_i\sum_{v\geq 2}u_{vi}^2\lambda_v^{a-b}}
	\nonumber\\
	&=\left|\pth{\pi_i+\sum_{v\geq 2} u_{vi}^2\lambda_v^{a-b}}
	\pth{\pi_i+\sum_{v\geq 2}u_{vi}^2\lambda_v^{b-d}}
	\quad-\pi_i\pth{\pi_i+\sum_{v\geq 2}u_{vi}^2\lambda_v^{a-d}}\right.
	\nonumber\\
	&\left.\quad -\pi_i\pth{\pi_i+\sum_{v\geq 2}u_{vi}^2\lambda_v^{b-d}}
	+\pi_i^2-\pi_i\sum_{v\geq 2}u_{vi}^2\lambda_v^{a-b}\right|
	\nonumber\\
	&= \abs{
	\pth{\sum_{v\geq 2}u_{vi}^2\lambda_v^{a-b}}
	\pth{\sum_{v\geq 2}u_{vi}^2\lambda_v^{b-d}}
	-\pi_i{\sum_{v\geq 2}u_{vi}^2\lambda_v^{a-d}}}
	\nonumber\\
	&\leq 
	\lambda_*^{a-d}\pth{\sum_{v\geq 2}u_{vi}^2}
	\pth{\sum_{v\geq 2}u_{vi}^2}
	+\lambda_*^{a-d}\pi_i{\sum_{v\geq 2}u_{vi}^2}
	\leq 2\lambda_*^{a-d}.
		\label{eq:gub29}
	\end{align}
	We also get for $d\geq e$
	\begin{align}
	&\abs{M^{d-e}(i|i)\mymat^{e-1}(i|i)
	-\pi_i\mymat^{d-1}(i|i)}
	\nonumber\\
	&=\abs{\pth{\pi_i+\sum_{v\geq 2}u_{vi}^2\lambda_v^{d-e}}
	\pth{\pi_i+\sum_{v\geq 2}u_{vi}^2\lambda_v^{e-1}}
	-\pi_i\pth{\pi_i+\sum_{v\geq 2}u_{vi}^2\lambda_v^{d-1}}}
	\nonumber\\
	&=\abs{\pi_i{\sum_{v\geq 2}u_{vi}^2\lambda_v^{e-1}}
	+\pi_i{\sum_{v\geq 2}u_{vi}^2\lambda_v^{d-e}}
	+\pth{\sum_{v\geq 2}u_{vi}^2\lambda_v^{e-1}}
	\pth{\sum_{v\geq 2}u_{vi}^2\lambda_v^{d-e}}
	 -\pi_i{\sum_{v\geq 2}u_{vi}^2\lambda_v^{d-1}}}
	\nonumber\\
	&\leq 2\lambda_*^{d-1}
	+\pi_i\lambda_*^{e-1}
	+\pi_i\lambda_*^{d-e}.
	\end{align}
	This implies 
	\begin{align}
	&\abs{\EE\qth{\xi_a\xi_b|X_d=i}}
		\abs{M^{d-e}(i|i)\mymat^{e-1}(i|i)
			-\pi_i\mymat^{d-1}(i|i)}
	\nonumber\\
	&\leq \pth{\pi_i\sum_{v\geq 2}u_{vi}^2\lambda_v^{a-b}+2\lambda_*^{a-d}}
	\pth{2\lambda_*^{d-1}
		+\pi_i\lambda_*^{e-1}
		+\pi_i\lambda_*^{d-e}}
	\nonumber\\
	&\leq \pth{\pi_i\lambda_*^{a-b}+2\lambda_*^{a-d}}
	\pth{2\lambda_*^{d-1}
		+\pi_i\lambda_*^{e-1}
		+\pi_i\lambda_*^{d-e}}
	\nonumber\\
	&\leq 4\qth{\pi_i^2\lambda_*^{a-b+d-e}
	+\pi_i^2\lambda_*^{a-b+e-1}
	+\pi_i\pth{\lambda_*^{a-b+d-1}
		+\lambda_*^{a-d+e-1}
		+\lambda_*^{a-e}}
	+\lambda_*^{a-1}}
	\label{eq:gub32}
	\end{align}
	Using \eqref{eq:gub29} along with \prettyref{lmm:prob.bounds} for any $e\leq b\leq a$ we get
	\begin{align}
	&\abs{\pi_i\EE\qth{\xi_a\xi_b|X_e=i}\mymat^{e-1}(i|i)
	-\pi_i^2\EE\qth{\xi_a\xi_b|X_1=i}}
	\nonumber\\
	&\leq \pi_i\abs{\EE\qth{\xi_a\xi_b|X_e=i}}\abs{\mymat^{e-1}(i|i)-\pi_i}
	+\pi_i^2\abs{\EE\qth{\xi_a\xi_b|X_e=i}-\pi_i\sum_{v\geq 2} u_{vi}^2\lambda_v^{a-b}}\\
	\linebreak
	&+\pi_i^2\abs{\EE\qth{\xi_a\xi_b|X_1=i}-\pi_i\sum_{v\geq 2} u_{vi}^2\lambda_v^{a-b}}
	\nonumber\\
	&\leq \pi_i\qth{\pi_i\sum_{v\geq 2} u_{vi}^2\lambda_v^{a-b}
		+2{\lambda_*^{a-e}}}2\lambda_*^{e-1}
	+2\pi_i^2\lambda_*^{a-e}
	+2\pi_i^2{\lambda_*^{a-1}}
	\nonumber\\
	&\leq 2\pi_i^2{\lambda_*^{a-b+e-1}}
	+4\pi_i^2{\lambda_*^{a-e}}
	+4\pi_i^2{\lambda_*^{a-1}}.
	\end{align}
	This together with \eqref{eq:gub32} and \eqref{eq:gub30} implies 
	\begin{align}
	\begin{split}\label{eq:crossterms}
		\abs{\EE\qth{\xi_a\xi_b\xi_d\xi_e|X_1=i}}
	&\lesssim \pi_i^2
		\pth{\lambda_*^{a-b+d-e}
			+\lambda_*^{a-b+e-1}}
		+\lambda_*^{a-1} \\
	&\quad +\pi_i
		\pth{\lambda_*^{a-b+d-1} 
			+\lambda_*^{a-d+e-1}
			+\lambda_*^{a-e}}
	\end{split}
	\end{align}
	To bound the sum over $n-1\geq a\geq b\geq d\geq e\geq 1$, we divide the analysis according to the number of distinct ordered indices related variations in terms.
	
	\paragraph{Case I: four distinct indices.}
We 	sum \prettyref{eq:crossterms} over all possible $a>b>d>e$.
	\begin{itemize}
		\item For the first term,
		\begin{align*}
		&\pi_i^2
		\mathop{\sum\sum\sum\sum}_{n-1\geq a>b>d>e\geq 1}
			\lambda_*^{a-b+d-e}
		\lesssim {n\pi_i^2\over \gamma_*}
		\mathop{\sum\sum}_{n-1\geq a>b\geq 3}\lambda_*^{a-b}
		\lesssim {n^2\pi_i^2\over\gamma_*^2}.
		\end{align*}
		\item For the second term,
		\begin{align*}
		&\pi_i^2
		\mathop{\sum\sum\sum\sum}_{n-1\geq a>b>d>e\geq 1}
			\lambda_*^{a-b+e-1}
		\lesssim{n\pi_i^2\over \gamma_*}
		\mathop{\sum\sum}_{n-1\geq a>b\geq 3}\lambda_*^{a-b}
		\lesssim {n^2\pi_i^2\over\gamma_*^2}
		\end{align*}
	
		\item For the third term,
		\begin{align*}
			&\mathop{\sum\sum\sum\sum}_{n-1\geq a>b>d>e\geq 1}
			\lambda_*^{a-1}
			\lesssim \sum_{n-1\geq a\geq4}a^3\lambda_*^{a-1}
			\lesssim {1\over \gamma_*^4}.
		\end{align*}
	
		\item For the fourth term,
		\begin{align*}
		\pi_i\mathop{\sum\sum\sum\sum}_{n-1\geq a>b>d>e\geq 1}
		\lambda_*^{a-b+d-1}
		\leq {\pi_i\over \gamma_*^2} \mathop{\sum\sum}_{n-1\geq a>b\geq 3}
		\lambda_*^{a-b}
		\lesssim\frac {n\pi_i}{\gamma_*^3}
		\end{align*}
		
		\item For the fifth term,
		\begin{align*}
		&\pi_i
		\mathop{\sum\sum\sum\sum}_{n-1\geq a>b>d>e\geq 1}
		\lambda_*^{a-d+e-1}
		\lesssim\frac{\pi_i}{\gamma_*}
		\pth{\mathop{\sum\sum}_{n-1\geq a>b\geq 3}\lambda_*^{a-b}}
		\pth{\sum_{d\geq 2}^{b-1}\lambda_*^{b-d}}
		\lesssim{n\pi_i\over \gamma_*^3}.
		\end{align*}
		
		\item For the sixth term,
		\begin{align*}
		\pi_i
		\mathop{\sum\sum\sum\sum}_{n-1\geq a>b>d>e\geq 1}
		\lambda_*^{a-e}
		\lesssim
		\pi_i\pth{\mathop{\sum\sum}_{n-1\geq a>b\geq 3}\lambda_*^{a-b}}
		\pth{\sum_{d\geq 2}^{b-1}\lambda_*^{b-d}}
		\pth{\sum_{e\geq 1}^{d-1}\lambda_*^{d-e}}
		\lesssim {n\pi_i\over \gamma_*^3}.
		\end{align*}
		
		Combining the above bounds and using the fact that $ab\leq a^2+b^2$, we obtain
		\begin{align}
		&\mathop{\sum\sum\sum\sum}_{n-1\geq a>b>d>e\geq 1}
			\abs{\EE\qth{\xi_a\xi_b\xi_d\xi_e|X_1=i}}
		\lesssim   {{n^2\pi_i^2\over \gamma_*^2}+{n\pi_i\over \gamma_*^3}
				+{1\over \gamma_*^4}}
			\lesssim {{n^2\pi_i^2\over \gamma_*^2}+{1\over \gamma_*^4}}. 
			\label{eq:crossbound-4th}
		\end{align}
	\end{itemize}

	\paragraph{Case II: three distinct indices.}
	There are three cases, namely, $\xi_a\xi_b^2\xi_e$, $\xi_a\xi_b\xi_e^2$, and $\xi_a^2\xi_b\xi_e$.
	
	\begin{enumerate}
		\item Bounding $\mathop{\sum\sum\sum}_{n-1\geq a>b>e\geq 1}	\abs{\EE\qth{\xi_a\xi_b^2\xi_e|X_1=i}}$: We specialize \eqref{eq:crossterms} with $b=d$ to get
	\begin{align*}
	\abs{\EE\qth{\xi_a\xi_b^2\xi_e|X_1=i}}
	&\lesssim   \pi_i\pth{\lambda_*^{a-b+e-1}
		+\lambda_*^{a-e}}
		+\lambda_*^{a-1}.
	\end{align*}
	 Summing over $a,b,e$ we have
		\begin{align}
		&\mathop{\sum\sum\sum}_{n-1\geq a>b>e\geq 1}
		\abs{\EE\qth{\xi_a\xi_b^2\xi_e|X_1=i}}
		\nonumber\\
		&\lesssim\mathop{\sum\sum\sum}_{n-1\geq a>b>e\geq 1}
		 \sth{\pi_i\pth{\lambda_*^{a-b+e-1}+\lambda_*^{a-e}}
		 +\lambda_*^{a-1}}
		\nonumber\\
		&\lesssim {\pi_i\over \gamma_*}\mathop{\sum\sum}_{n-1\geq a>b\geq 2}
		\lambda_*^{a-b}
		+\pi_i\pth{\mathop{\sum\sum}_{n-1\geq a>b\geq 2}\lambda_*^{a-b}}
		\pth{\sum_{e\geq 1}^{b-1}\lambda_*^{b-e}}
		+\sum_{n-1\geq a\geq 3}a^3\lambda_*^{a-1}
		\nonumber\\
		&\lesssim {n\pi_i\over \gamma_*^2}+{1\over \gamma_*^3}
		\lesssim {n^2\pi_i^2\over \gamma_*^2}+\frac 1{\gamma_*^3}
		\label{eq:crossbound-3rd1}
		\end{align}
		with last inequality following from $xy\leq x^2+y^2$.
	\item Bounding $\mathop{\sum\sum\sum}_{n-1\geq a>b>e\geq 1}
		\abs{\EE\qth{\xi_a\xi_b\xi_e^2|X_1=i}}$: We specialize \eqref{eq:crossterms} with $e=d$ to get
	\begin{align*}
	\abs{\EE\qth{\xi_a\xi_b\xi_e^2|X_1=i}}
	& \lesssim \pi_i^2\lambda_*^{a-b}
		+\pi_i
		\pth{\lambda_*^{a-b+e-1}
			+\lambda_*^{a-e}}
		+\lambda_*^{a-1}.
	\end{align*}
	Summing over $a,b,e$ and applying \eqref{eq:crossbound-3rd1}, we get
	\begin{align}
	&\mathop{\sum\sum\sum}_{n-1\geq a>b>e\geq 1}
	\abs{\EE\qth{\xi_a\xi_b\xi_e^2|X_1=i}}
	\nonumber\\
	&\lesssim\mathop{\sum\sum\sum}_{n-1\geq a>b>e\geq 1}
		\sth{\pi_i^2\lambda_*^{a-b}
			+\pi_i\pth{\lambda_*^{a-b+e-1}
				+\lambda_*^{a-e}}+\lambda_*^{a-1}}
	\nonumber\\
	&\lesssim {n\pi_i^2}\mathop{\sum\sum}_{n-1\geq a>b\geq 2} \lambda_*^{a-b}
	+{n\pi_i\over \gamma_*^2}
	+{1\over \gamma_*^3}	
	\lesssim  {n^2\pi_i^2\over \gamma_*}
		+{n\pi_i\over \gamma_*^2}
		+{1\over \gamma_*^3}
	\lesssim {n^2\pi_i^2\over \gamma_*^2}
	+{1\over \gamma_*^3}.
	\label{eq:crossbound-3rd2}
	\end{align}
	
	\item Bounding $\mathop{\sum\sum\sum}_{n-1\geq a>b>e\geq 1}
		\abs{\EE\qth{\xi_a^2\xi_b\xi_e|X_1=i}}$:
		Specializing \eqref{eq:crossterms} with $a=b$ we get
		\begin{align*}
			&\abs{\EE\qth{\xi_b^2\xi_d\xi_e|X_1=i}}
			\lesssim  
			\pi_i^2\pth{\lambda_*^{d-e}
				+\lambda_*^{e-1}}
			+\lambda_*^{b-1}
			+\pi_i\pth{\lambda_*^{d-1}
				+\lambda_*^{b-d+e-1}+\lambda_*^{b-e}},
		\end{align*}
	which is equivalent to
	\begin{align*}
	&\abs{\EE\qth{\xi_a^2\xi_b\xi_e|X_1=i}}
	\lesssim  
		\pi_i^2\pth{\lambda_*^{b-e}
			+\lambda_*^{e-1}}
		+\lambda_*^{a-1}
			+\pi_i\pth{\lambda_*^{b-1}+\lambda_*^{a-b+e-1}
			+\lambda_*^{a-e}}.
	\end{align*}
	For the first, second and fourth terms 
		\begin{align*}
		\mathop{\sum\sum\sum}_{n-1\geq a>b>e\geq 1}
			\sth{\pi_i^2\pth{\lambda_*^{b-e}
				+\lambda_*^{e-1}}
			+\pi_i\lambda_*^{b-1}}
		\lesssim {\pi_i^2\over \gamma_*}\mathop{\sum\sum}_{n-1\geq a>b\geq 2}
		1
		+{n\pi_i\over \gamma_*^2}
		{\lesssim} {n^2\pi_i^2\over \gamma_*}
		+ {n\pi_i\over \gamma_*^2},
		\end{align*}
	and for summing the remaining terms we use \eqref{eq:crossbound-3rd1}, which implies
	\begin{align}
	&\mathop{\sum\sum\sum}_{n-1\geq a>b>e\geq 1}
		\abs{\EE\qth{\xi_a^2\xi_b\xi_e|X_1=i}}
	\lesssim   {{n^2\pi_i^2\over \gamma_*}
		+{n\pi_i\over \gamma_*^2}
		+{1\over \gamma_*^3}}
	\lesssim {{n^2\pi_i^2\over \gamma_*^2}
		+{1\over \gamma_*^3}}.
	\label{eq:crossbound-3rd3}
	\end{align}
	\end{enumerate}
	
	\paragraph{Case III: two distinct indices.}
	There are three cases, namely, $\eta_a^2\eta_e^2,\eta_a\eta_e^3$ and $\eta_a^3\eta_e$.
	
	\begin{enumerate}
		\item Bounding $\mathop{\sum\sum}_{n-1\geq a>e\geq 1}
	\EE\qth{\xi_a^2\xi_e^2|X_1=i}$:
	Specializing \eqref{eq:crossterms} for $a=b$ and $e=d$ we get
	\begin{align*}
	&{\EE\qth{\xi_a^2\xi_e^2|X_1=i}}
	\lesssim   \pi_i^2
		+\pi_i\pth{\lambda_*^{e-1}
			+\lambda_*^{a-e}}
		+\lambda_*^{a-1}.
	\end{align*}
	
	Summing up over $a,e$ we have
	\begin{align}
	&\mathop{\sum\sum}_{n-1\geq a>e\geq 1}{\EE\qth{\xi_a^2\xi_e^2|X_1=i}}
	\lesssim \mathop{\sum\sum}_{n-1\geq a>e\geq 1}
	\sth{\pi_i^2+\pi_i\pth{\lambda_*^{e-1}+\lambda_*^{a-e}}+\lambda_*^{a-1}}
	\lesssim   n^2\pi_i^2
		+{n\pi_i\over \gamma_*}
		+{1\over \gamma_*^2}.
	\label{eq:crossbound-2ndi}
	\end{align}

	\item Bounding $\mathop{\sum\sum}_{n-1\geq a>e\geq 1}
	\abs{\EE\qth{\xi_a\xi_e^3|X_1=i}}$:
	Specializing \eqref{eq:crossterms} for $e=b=d$ we get	
	\begin{align*}
	\abs{\EE\qth{\xi_a\xi_e^3|X_1=i}}
	\lesssim   \pi_i\lambda_*^{a-e}
		+\lambda_*^{a-1}
	\end{align*}	
	which sums up to
	\begin{align}
	&\mathop{\sum\sum}_{n-1\geq a>e\geq 1}
	\abs{\EE\qth{\xi_a\xi_e^3|X_1=i}}
	\lesssim   \pi_i\mathop{\sum\sum}_{n-1\geq a>e\geq 1}\lambda_*^{a-e}
		+\mathop{\sum\sum}_{n-1\geq a>e\geq 1}\lambda_*^{a-1}
	\lesssim  {n\pi_i\over \gamma_*}+{1\over \gamma_*^2}.
	\label{eq:crossbound-2nd2}
	\end{align}

	\item Bounding $\mathop{\sum\sum}_{n-1\geq a>e\geq 1}
	\abs{\EE\qth{\xi_a^3\xi_e|X_1=i}}$:
	Specializing \eqref{eq:crossterms} for $a=b=d$ we get
	\begin{align*}
	\abs{\EE\qth{\xi_a^3\xi_e|X_1=i}}
	&\lesssim   \pi_i\pth{\lambda_*^{a-e}
			+\lambda_*^{e-1}}
		+\lambda_*^{a-1}
	\end{align*}
	which sums up to
	\begin{align}
	&\mathop{\sum\sum}_{n-1\geq a>e\geq 1}
		\abs{\EE\qth{\xi_a^3\xi_e|X_1=i}}
	\lesssim \mathop{\sum\sum}_{n-1\geq a>e\geq 1}
	\sth{\pi_i\pth{\lambda_*^{a-e}
		+\lambda_*^{e-1}}+\lambda_*^{a-1}}	
	\lesssim   {n\pi_i\over \gamma_*}+{1\over \gamma_*^2}.
	\label{eq:crossbound-2nd3}
	\end{align}
	\end{enumerate}

	\paragraph{Case IV: single distinct index.}
	We specialize \eqref{eq:crossterms} to $a=b=d=e$ to get
	\begin{align*}
	\EE\qth{\xi_a^4|X_1=i}
	&\lesssim \pi_i+\lambda_*^{a-1}. 
	\end{align*}
	Summing the above over $a$
	\begin{align}
	\sum_{a=1}^{n-1}\EE\qth{\xi_a^4|X_1=i}
	\lesssim   {n\pi_i}
		+{1\over \gamma_*}.
	\label{eq:crossbound-ist}
	\end{align}
	Combining \eqref{eq:crossbound-4th}--\eqref{eq:crossbound-ist} and 
	using ${n\pi_i\over \gamma_*}\lesssim {n^2\pi_i^2\over \gamma_*^2}
	+{1\over\gamma_*^4}$, we get
	\begin{align*}
	&\EE\qth{\pth{N_i-(n-1)\pi_i}^4|X_1=i}
	\lesssim  {n^2\pi_i^2\over \gamma_*^2}
		+{1\over\gamma_*^4}.
	\end{align*}

\end{proof}

\section*{Acknowledgment}
The authors are grateful to Alon Orlitsky for helpful and encouraging comments and to 
Dheeraj Pichapati for providing the full version of \cite{FOPS2016}. The authors also thank David Pollard for insightful discussions on Markov chains at the initial stages of the project.

\bibliographystyle{alpha}
\bibliography{Markov_refs}

\end{document}